\theoremstyle{plain}
\newtheorem{theorem}{Theorem}[section]
\newtheorem{corollary}[theorem]{Corollary}
\newtheorem{lemma}[theorem]{Lemma}
\newtheorem{proposition}[theorem]{Proposition}
\newtheorem{remark}[theorem]{Remark}
\newtheorem{deff}[theorem]{Definition}
\newtheorem{example}[theorem]{Example}
\DeclareMathOperator{\diag}{diag}
\DeclareMathOperator{\und}{und}
\DeclareMathOperator{\lnd}{lnd}
\DeclareMathOperator{\uni}{uni}
\DeclareMathOperator{\lni}{lni}
\DeclareMathOperator{\se}{se}
\DeclareMathOperator{\scop}{sc}
\DeclareMathOperator{\cl}{cl}
\newcommand{\xo}{\frac{\xi}{\omega}}
\renewcommand{\mathcal}{\mathscr}
\begin{document}

\title[Soft Ideals and Arithmetic Mean Ideals]{Soft Ideals and Arithmetic Mean Ideals}

\author[Victor Kaftal]{Victor Kaftal}

\address{University of Cincinnati\\
Department of mathematical sciences\\ Cincinnati, OH 45221-0025\\
USA}

\email{victor.kaftal@uc.edu}

\author[Gary Weiss]{Gary Weiss}

\address{University of Cincinnati\\
Department of mathematical sciences\\ Cincinnati, OH 45221-0025\\
USA}

\email{gary.weiss@uc.edu}

\thanks{Both authors were partially supported by grants of the Charles
Phelps Taft Research Center; the second named author was partially
supported by NSF Grants DMS 95-03062 and DMS 97-06911.}

\subjclass{Primary 47B47, 47B10, 47L20;  Secondary 46A45, 46B45}

\keywords{Arithmetic means, operator ideals, countably generated
ideals, Lorentz ideals, Orlicz ideals, Marcinkiewicz ideals, Banach
ideals}

\begin{abstract}
This article investigates the soft-interior (se) and the soft-cover
(sc) of operator ideals. These operations, and especially the first
one, have been widely used before, but making their role explicit
and analyzing their interplay with the arithmetic mean operations is
essential for the study  in \cite{10} of the multiplicity of traces. 
Many classical ideals are ``soft'',
i.e., coincide with their soft interior or with their soft cover,
and many ideal constructions yield soft ideals. Arithmetic mean (am)
operations were proven to be intrinsic to the theory of operator
ideals in \cite{6, 7} and arithmetic mean operations at infinity
(am-$\infty$) were studied in \cite{10}. Here we focus on the
commutation relations between these operations and soft operations.
In the process we characterize the am-interior and the am-$\infty$
interior of an ideal.
\end{abstract}

\maketitle

\section{Introduction}\label{sec:1}
Central to the theory of operator ideals (the two-sided ideals of the
algebra $B(H)$ of bounded operators on a separable Hilbert space
$H$)  are the notions of the commutator space of an ideal $I$ (the linear span of the
commutators $TA - AT$, $A \in I$, $T \in B(H)$) and of a trace
supported by the ideal. In this context,
the arithmetic (Cesaro) mean of monotone sequences first appeared
implicitly in \cite{20}, then played  in \cite{14} an explicit and  key role for determining the
 commutator space of the trace class,
and more recently entered center stage in \cite{6, 7} by providing the framework
for the characterization of the commutator space of arbitrary ideals. 
This prompted \cite{7} to associate more formally to a
given ideal $I$ the arithmetic mean ideals $I_a$, $_aI$, $I^o =
(_aI)_a$ (the am-interior of $I$) and $I^- = \text{$_a$}(I_a)$ (the
am-closure of $I$). (See Section 2 for definitions.) In particular, the
arithmetically mean closed ideals (those equal to their am-closure)
played an important role in the study of single commutators in
\cite{7}.

This paper and \cite{10}-\cite{12A} are part of an ongoing program
announced in \cite{9} dedicated to the study of arithmetic mean ideals
and their applications.

In \cite{10} we investigated the question: ``How many traces (up to
scalar multiples) can an ideal support?'' We found that for the
following two classes of ideals which we call ``soft'' the answer
is always zero, one or uncountably many: the soft-edged ideals
that coincide with their soft-interior $\se I: = IK(H)$ and the
soft-complemented ideals that coincide with their soft complement
$\scop I: = I : K(H)$ ($K(H)$ is the ideal of compact operators on
$H$ and for quotients of ideals see Section~\ref{sec:3}).

Softness properties have often played a role in the theory of
operator ideals, albeit that role was mainly implicit and sometimes
hidden. Taking the product of $I$ by $K(H)$ corresponds at the
sequence level to the ``little $o$'' operation, which figures so
frequently in operator ideal techniques. M. Wodzicki employs
explicitly the notion of soft interior of an ideal (although he does
not use this terminology) to investigate obstructions to the
existence of positive traces on an ideal (see 
\cite[Lemma 2.15,
Corollary 2.17]{21}. A special but important case of quotient is the
celebrated K\"{o}the dual of an ideal and general quotients have
been studied among others by Salinas \cite{17}. But to the best of
our knowledge the power of combining these two soft operations has
gone unnoticed along with their investigation and a systematic use
of their properties. Doing just that permitted us in \cite{10} to
considerably extend and simplify our study of the codimension of
commutator spaces.
In particular, we depended in a crucial way on
the interplay between soft operations and arithmetic mean
operations.

Arithmetic mean operations on ideals were first introduced in
\cite{7} and further studied in \cite{10}. For summable sequences,
the arithmetic mean must be replaced by the arithmetic mean at
infinity (am-$\infty$ for short), see for instance \cite{1, 7, 13,
21}. In \cite{10} we defined am-$\infty$ ideals and found that their
theory is in a sense dual to the theory of am-ideals, including the
role of $\infty$-regular sequences studied in
\cite[Theorem 4.12]{10}). In \cite{10} we considered only the ideals $_aI$, $I_a$,
$_{a_\infty}I$, and $I_{a_\infty}$, and so in this paper we focus
mostly on the other am and am-$\infty$ ideals.

In Section~\ref{sec:2} we prove that the sum of two am-closed ideals is am-closed 
(Theorem~\ref{thm:2.5}) by using the connection between 
majorization of infinite sequences and infinite substochastic matrices due to Markus \cite{15}. 
(Recent outgrowths from [ibid] from the classical theory for finite sequences and stochastic matrices to the 
infinite is one focus of \cite{11}.)
This leads naturally to defining a largest 
am-closed ideal $I_{-} \subset I$. 
We prove that $I_{-} = {}_aI$ for countably generated ideals 
(Theorem~\ref{thm:2.9}) while in general the
inclusion is proper. An 
immediate consequence is that a countably
generated ideal is am-closed $(I = I^-)$ 
if and only if it is
am-stable $(I = I_a)$ (Theorem~\ref{thm:2.11}). This generalizes a
result from \cite[Theorem 3.11]{2}. Then we prove that arbitrary
intersections of am-open ideals must be am-open
(Theorem~\ref{thm:2.17}) by first obtaining a characterization of the
am-interior of a principal ideal (Lemma~\ref{lem:2.14}) and then of
an arbitrary ideal (Corollary~\ref{cor:2.16}). This leads naturally
to defining the smallest am-open ideal $I^{oo} \supset I$.

In Section~\ref{sec:3} we obtain analogous results for the
am-$\infty$ case. But while the statement are similar, the
techniques employed in proving them are often substantially
different. For instance, the proof that the sum of two am-$\infty$
closed ideals is am-$\infty$ closed (Theorem~\ref{thm:3.2}) depends
on a $w^*$-compactness argument rather than a matricial one.

In Section~\ref{sec:4} we study soft ideals. The soft-interior $\se
I$ and the soft-cover $\scop I$ are, respectively, the largest
soft-edged ideal contained in $I$ and the smallest soft-complemented ideal
 containing $I$. The pair $\se I \subset \scop I$ is the
generic example of what we call a soft pair. Many classical ideals,
i.e., ideals whose characteristic set is a classical sequence space,
turn out to be soft. Among soft-edged ideals are minimal Banach
ideals $\mathfrak{S}^{(o)}_\phi$ for a symmetric norming function
$\phi$, Lorentz ideals $\mathcal{L}(\phi)$, small Orlicz ideals
$\mathcal{L}_M^{(o)}$, and idempotent ideals.

To prove soft-complementedness of an ideal we often find it
convenient to prove instead a stronger property which we call strong
soft-complementedness (Definition~\ref{def:4.4},
Proposition~\ref{prop:4.5}). Among strongly soft complemented ideals
are principal and more generally countably generated ideals, maximal
Banach ideals ideals $\mathfrak{S}_\phi$, Lorentz ideals
$\mathcal{L}(\phi)$, Marcinkiewicz ideals $_a(\xi)$, and Orlicz
ideals $\mathcal{L}_M^{(o)}$. K\"{o}the duals and idempotent ideals
are always soft-complemented but can fail to be strongly
soft-complemented.

Employing the properties of soft pairs for the embedding
$\mathfrak{S}_\phi^{(o)} \subset  \mathfrak{S}_\phi$ in the
principal ideal case, we present a simple proof of the fact that if
a principal ideal is a Banach ideal then its generator must be
regular, which is due to Allen and Shen \cite[Theorem 3.23]{2} and
was also obtained by Varga \cite{19} (see Remark~\ref{rem:4.8}(iv)
and \cite[Theorem 5.20]{7}). The same property of the embedding yields
a simpler proof of part of a result by Salinas in  \cite[Theorem 2.3]{17}. 
Several results relating small Orlicz and Orlicz ideals given in theorems in \cite{7}
follow immediately from the fact that $\mathcal{L}_M^{(o)} \subset
\mathcal{L}_M$ are also soft pairs (see remarks after
Proposition~\ref{prop:4.11}).

Various operations on ideals produce additional soft ideals. 
Powers of
soft-edged ideals, directed unions (by inclusion) of soft-edged ideals, 
finite intersections and finite sums of soft-edged ideals
are all soft-edged. Powers of soft-complemented ideals and arbitrary
intersections of soft-complemented ideals are also soft-comp\-lemented
(Section~\ref{sec:4}). As consequences follow the softness properties
of the am and am-$\infty$ stabilizers of the
trace-class $\mathcal{L}_1$ (see Sections \ref{sec:2} (\P 4) and \ref{sec:3}(\P 2) for the 
definitions)
which play an important role in \cite{9}-\cite{10}.
However, whether the sum of two soft-complemented ideals or even two
strongly soft-complemented ideals
is always soft-complemented
remains unknown. We prove that it is under the
additional hypothesis
that one of the ideals is countably generated and the other is
strongly soft-complemented (Theorem~\ref{thm:5.7}).

Some of the commutation relations between the soft-interior and
soft-cover operations and the am and am-$\infty$ operations played a
key role in \cite{10}. 
We investigate the commutation relations with
the remaining operations in 
Section~\ref{sec:6}
(Theorems~\ref{thm:6.1}, \ref{thm:6.4}, \ref{thm:6.9}, and
\ref{thm:6.10}). As a consequence we obtain which operations
preserve soft-complementedness and soft-edgedness. Some of the
relations remain open, e.g., we do not know if $\scop I_a = (\scop
I)_a$ (see Proposition~\ref{prop:6.8}).

Following this paper in the program outlined in \cite{9} will be
\cite{11} where we clarify the interplay between arithmetic mean
operations, infinite convexity, and diagonal invariance and
\cite{12} where we investigate the lattice properties of several
classes of operator ideals proving results of the kind: between two
proper ideals, at least one of which is am-stable (resp.,
am-$\infty$  stable) lies a third am-stable (resp., am-$\infty$
stable) principal ideal and applying them to various arithmetic
mean cancellation and inclusion properties (see \cite[Theorem 11 and
Propositions 12--14]{9}. Example, for which ideals $I$ does the $I_a
= J_a$ (resp., $I_a \subset J_a$, $I_a \supset J_a$ ) imply $I = J$
(resp., $I \subset J$, $I \supset J$) and in the latter cases, is
there an ``optimal'' $J$?

\section{Preliminaries and Arithmetic Mean Ideals}\label{sec:2}

Calkin \cite{5} established a correspondence between
\textit{two-sided ideals} of bounded operators on a complex
separable infinite dimensional Hilbert space and
\textit{characteristic sets}, i.e., hereditary (i.e., solid) cones
$\Sigma \subset c_o^*$ (the collection of sequences decreasing to
0), that are invariant under ampliations. For each $m \in
\mathbb{N}$ , the $m$-fold ampliation $D_m$ is defined by:
\[
c_o^* \backepsilon \xi\longrightarrow  D_m \xi := \langle
\xi_1,\ldots, \xi_1, \xi_2,\ldots, \xi_2, \xi_3,\ldots,
\xi_3,\ldots\rangle
\]
with each entry $\xi_i$ repeated $m$ times. The Calkin
correspondence $I \rightarrow \Sigma(I)$ induced by $I \owns
X \rightarrow s(X) \in \Sigma(I)$, where $s(X)$ denotes the sequence
of the $s$-numbers of $X$, is a lattice isomorphism between ideals
and characteristic sets and its inverse is the map from a
characteristic set $\Sigma$ to the ideal generated by the collection
of the diagonal operators $\{\diag \xi \mid \xi \in \Sigma\}$. For a
sequence $0\leq  \xi \in c_o$, denote by $\xi^* \in c_o^*$ the
decreasing rearrangement of $\xi$, and for each $\xi \in c_o^*$ denote by
$(\xi)$ the principal ideal generated by $\diag \xi$, so that
$(s(X))$ denotes the principal ideal generated by the operator 
$X \in K(H)$ (the ideal of compact operators on the Hilbert space $H$).\linebreak
Recall that $\eta \in \Sigma((\xi))$ precisely when  $\eta=
O(D_m\xi)$ for some $m$. Thus the equivalence between $\xi$ and
$\eta$ ($\xi \asymp\eta$ if $\xi = O(\eta)$ and  $\eta= O(\xi)$) is
only sufficient for $(\xi) = (\eta)$. It is also necessary if one of
the two sequences (and hence both) satisfy the
$\Delta_{1/2}$-condition. Following the notations of \cite{21}, we
say that $\xi$ satisfies the $\Delta_{1/2}$-condition if
$\sup\frac{\xi_n}{\xi_{2n}} < \infty$, i.e., $D_2\xi = O(\xi)$,
which holds if and only if $D_m\xi = O(\xi)$ for all $m \in
\mathbb{N}$.

Dykema, Figiel, Weiss and Wodzicki \cite{6, 7} showed that the (Cesaro)
\textit{arithmetic mean} plays an essential role in the theory of operator ideals by 
using it to characterize the normal operators in the commutator space of an ideal. 
(The commutator space $[I, B(H)]$ of an ideal $I$, also called the commutator ideal of
$I$, is the span of the commutators of elements of $I$ with elements of $B(H)$).  
This led them to introduce and study the arithmetic mean
and pre-arithmetic mean of an ideal and the consequent notions of
am-interior and am-closure of an ideal.

The arithmetic mean of any sequence $\eta$ is the sequence 
$\eta_a\,:=\, \langle\,\frac1n \sum_{i=1}^n\,\eta_i \rangle$. 
For every ideal $I$, the pre-arithmetic mean ideal $_aI$
and the arithmetic mean ideal $I_a$ are the ideals with
characteristic sets
\begin{align*}
\Sigma(_aI) &= \{\xi \in c_o^* \mid \xi_a \in \Sigma(I)\}
\\
\Sigma(I_a) &= \{\xi \in c_o^* \mid \xi=O(\eta) \text{ for some
}\eta\in \Sigma(I)\}.
\end{align*}

A consequence of one of the main results in \cite[Theorem 5.6]{7} is
that the positive part of the commutator space $[I, B(H)]$ coincides
with the positive part of the pre-arithmetic mean ideal $_aI$, that is: 
\[
[I,B(H)]^+= (_aI)^+
\]
In particular, ideals that fail to support any nonzero trace, i.e., ideals for
which $I = [I, B(H)]$, are precisely those for which $I =
\text{$_a$}I$ (or, equivalently, $I = I_a$) and are called
arithmetically mean stable (am-stable for short). 
The smallest nonzero am-stable ideal is the upper stabilizer of the trace-class ideal $\mathcal{L}_1$ (in the notations of \cite{7})
\[
st^a(\mathcal{L}_1) :=
\bigcup^\infty_{m=0}(\omega)_{a^m}=
\bigcup^\infty_{m=0}(\omega\log^m)
\]
 where $\omega = \langle
1/n\rangle$ denotes the harmonic sequence (see
\cite[Proposition~4.18]{10}). There is no largest proper am-stable
ideal. Am-stability for many classical ideals was studied in
\cite[Sections 5.11--5.27]{7}.

Arithmetic mean operations on ideals were introduced in \cite[Sections
2.8 and 4.3]{7} and employed, in particular, in the study of single
commutators \cite[Section 7]{7}: the arithmetic mean closure $I^-$
and the arithmetic mean interior $I^o$ of an ideal $I$ are defined
respectively as $I^- := \text{$_a$}(I_a)$ and $I^o := (_aI)_a$. The
following 5-chain inclusion holds:
\[
_aI \subset  I^o \subset I
\subset  I^- \subset  I_a
\]
Ideals that coincide with their
am-closure (resp., am-interior) are called am-closed (resp., 
am-open), and $I^-$ is the smallest am-closed ideal containing $I$
(resp.,
$I^o$ is the largest am-open ideal contained in $I$). We
list here some of the elementary properties of am-closed and am-open
ideals, and since there is a certain symmetry between them, we shall
consider both in parallel.

An ideal $I$ is am-closed (resp., am-open) if and only if $I =
\text{$_a$}J$ (resp., $I = J_a$) for some ideal $J$. The necessity
follows from the definition of $I^-$ (resp., $I^o$) and the
sufficiency follows from the identities $I_a = (_a(I_a))_a$ and $_aI
= \text{$_a$}((_aI)_a)$ that are simple consequences of the 5-chain
of inclusions listed above.

The characteristic set  $\Sigma(\mathcal{L}_1)$ of the trace-class ideal is $ \ell_1^*$, 
the collection of monotone nonincreasing nonnegative summable sequences.
It is elementary to show $\mathcal{L}_1 = \text{$_a$}(\omega)$,
 $\mathcal{L}_1$ is the smallest nonzero am-closed ideal,  $(\omega) = F_a = (\mathcal{L}_1)_a$, 
 and so $(\omega)$ is the smallest nonzero am-open ideal ($F$ denotes the finite rank
ideal.)

In terms of characteristic sets:
\[
\Sigma(I^-) = \{\xi \in  c_o^* \mid \xi_a \leq \eta_a \text{ for
some }\eta\in \Sigma(I)\}
\]
\[
\Sigma(I^o) = \{\xi \in  c_o^* \mid \xi \leq \eta_a \in \Sigma(I)
\text{ for some }\eta\in c_o^*\}
\]
Here and throughout, the relation between sequences ``$\leq$''
denotes pointwise, i.e., for all $n$. The relation $\xi\prec\eta$ defined by $\xi_a
\leq \eta_a$ is called majorization and plays an important role in
convexity theory (e.g., see \cite{15,16}). We will investigate it
further in this context in \cite{11} (see also \cite{9}). But for
now, notice that $I$ is am-closed if and only if $\Sigma(I)$ is
hereditary (i.e., solid) under majorization.

The two main results in this section are that the (finite) sum of
am-closed ideals is am-closed and that intersections of am-open
ideals are am-open. These will lead to two additional natural
arithmetic mean ideal operations, $I_-$ and $I^{oo}$, see
Corollary~\ref{cor:2.6} and Definition~\ref{def:2.18}.

We start by determining how the arithmetic mean operations
distribute with respect to direct unions and intersections of ideals
and with respect to finite sums. Recall that the union of a
collection of ideals that is directed by inclusion and the
intersection of an arbitrary collection of ideals are ideals. The
proofs of the following three lemmas are elementary, with the
exception of one of the inclusions in Lemma~\ref{lem:2.2}(iii)
which is a simple consequence of Theorem~\ref{thm:2.17} below.

\begin{lemma}\label{lem:2.1}
For $\{I_\gamma, \gamma\in\Gamma\}$ a collection of ideals directed by inclusion:

\item[(i)]
$_a(\bigcup_\gamma I_\gamma)= \bigcup_\gamma{} _a(I_\gamma)$

\item[(ii)]
$(\bigcup_\gamma I_\gamma)_a= \bigcup_\gamma (I_\gamma)_a$

\item[(iii)]
$(\bigcup_\gamma I_\gamma)^o= \bigcup_\gamma (I_\gamma)^o$

\item[(iv)]
$(\bigcup_\gamma I_\gamma)^-= \bigcup_\gamma (I_\gamma)^-$

\item[(v)]
If all $I_\gamma$ are am-stable, (resp., am-open, am-closed) then
$\bigcup_\gamma I_\gamma$ is am-stable, (resp., am-open, am-closed).
\end{lemma}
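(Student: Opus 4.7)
The plan is to verify each identity at the level of characteristic sets, using the Calkin correspondence. The key preliminary observation is that since the family $\{I_\gamma\}$ is directed by inclusion, $\bigcup_\gamma I_\gamma$ is an ideal and $\Sigma(\bigcup_\gamma I_\gamma) = \bigcup_\gamma \Sigma(I_\gamma)$. Moreover, the operations $I \mapsto {}_aI$ and $I \mapsto I_a$ are monotone in $I$ (immediate from their characteristic set descriptions), so the families $\{{}_aI_\gamma\}$, $\{(I_\gamma)_a\}$, $\{(I_\gamma)^o\}$, and $\{(I_\gamma)^-\}$ are again directed by inclusion; hence their unions are ideals whose characteristic sets are the corresponding unions of characteristic sets.

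For \textbf{(i)}, I would simply unfold definitions: $\xi \in \Sigma({}_a(\bigcup_\gamma I_\gamma))$ iff $\xi_a \in \bigcup_\gamma \Sigma(I_\gamma)$ iff $\xi_a \in \Sigma(I_\gamma)$ for some $\gamma$ iff $\xi \in \Sigma({}_aI_\gamma)$ for some $\gamma$ iff $\xi \in \bigcup_\gamma \Sigma({}_aI_\gamma) = \Sigma(\bigcup_\gamma {}_aI_\gamma)$. The proof of \textbf{(ii)} is analogous, using the characterization $\Sigma(J_a) = \{\xi \in c_o^* \mid \xi = O(\eta_a)\text{ for some }\eta \in \Sigma(J)\}$ and commuting the existential quantifier over $\gamma$ with the existential quantifier over $\eta$.

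Parts \textbf{(iii)} and \textbf{(iv)} then follow by chaining (i) and (ii) with the definitions $I^o = ({}_aI)_a$ and $I^- = {}_a(I_a)$. For example,
\[
\Bigl(\bigcup_\gamma I_\gamma\Bigr)^{o} = \Bigl({}_a\bigcup_\gamma I_\gamma\Bigr)_a = \Bigl(\bigcup_\gamma {}_aI_\gamma\Bigr)_a = \bigcup_\gamma ({}_aI_\gamma)_a = \bigcup_\gamma (I_\gamma)^o,
\]
with the analogous chain for $I^-$ obtained by exchanging the roles of $(\cdot)_a$ and ${}_a(\cdot)$. Finally, \textbf{(v)} is a direct corollary: if every $I_\gamma$ is am-stable (resp., am-open, am-closed), then applying (ii) (resp., (iii), (iv)) gives $\bigcup_\gamma I_\gamma = \bigcup_\gamma (I_\gamma)_a = (\bigcup_\gamma I_\gamma)_a$, and similarly for the other two properties.

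The only point requiring a modicum of care is the directedness of the image families $\{{}_aI_\gamma\}$ and $\{(I_\gamma)_a\}$, which is what legitimizes identifying the characteristic set of each such union with the union of the characteristic sets; this directedness is immediate from monotonicity of the two basic operations, so no genuine obstacle arises.
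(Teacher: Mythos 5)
Your proof is correct and takes precisely the elementary route the paper has in mind; the authors omit the argument, noting only that the proofs of Lemmas~\ref{lem:2.1}--\ref{lem:2.3} are elementary. Unfolding definitions at the characteristic-set level for (i)--(ii), observing directedness of the image families, chaining for (iii)--(iv), and reading (v) off as an immediate corollary is exactly what is intended.
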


\begin{lemma} \label{lem:2.2}
For $\{I_\gamma, \gamma\in\Gamma\}$ a collection of ideals:

\begin{enumerate}
\item[(i)]
$_a(\bigcap_\gamma I_\gamma)= \bigcap_\gamma{} _a(I_\gamma)$

\item[(ii)]
$(\bigcap_\gamma I_\gamma)_a \subset \bigcap_\gamma (I_\gamma)_a$
\quad
(inclusion can be proper by Example~\ref{ex:2.4}(i))

\item[(iii)]
$(\bigcap_\gamma I_\gamma)^o= \bigcap_\gamma (I_\gamma)^o$
\quad
(equality holds by Theorem~\ref{thm:2.17})

\item[(iv)]
$(\bigcap_\gamma I_\gamma)^- \subset \bigcap_\gamma (I_\gamma)^-$
\quad (inclusion can be proper by Example~\ref{ex:2.4}(i))

\item[(v)]
If all $I_\gamma$ are am-stable, (resp., am-open, am-closed) then
$\bigcap_\gamma I_\gamma$ is am-stable, (resp., am-open, am-closed).
\end{enumerate}
\end{lemma}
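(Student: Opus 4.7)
The plan is to push everything through the Calkin correspondence and exploit the identity $\Sigma(\bigcap_\gamma I_\gamma) = \bigcap_\gamma \Sigma(I_\gamma)$, which is immediate from the definition of intersection. With this in hand, each part reduces to a routine statement about characteristic sets, except for one inclusion in (iii), where the only real work occurs.

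For (i), I would observe that $\xi \in \Sigma({}_a(\bigcap_\gamma I_\gamma))$ is the condition $\xi_a \in \bigcap_\gamma \Sigma(I_\gamma)$, i.e., $\xi_a \in \Sigma(I_\gamma)$ for every $\gamma$, which is exactly $\xi \in \bigcap_\gamma \Sigma({}_aI_\gamma)$. For (ii), if $\xi \in \Sigma((\bigcap_\gamma I_\gamma)_a)$, then by definition $\xi = O(\eta_a)$ for some $\eta \in \Sigma(\bigcap_\gamma I_\gamma)$; the same $\eta$ then witnesses $\xi \in \Sigma((I_\gamma)_a)$ for each $\gamma$. For (iv), monotonicity of $J \mapsto J^-$ together with $\bigcap_\gamma I_\gamma \subset I_\gamma$ gives the claimed inclusion, and the same monotonicity yields the easy half $(\bigcap_\gamma I_\gamma)^o \subset \bigcap_\gamma (I_\gamma)^o$ of (iii).

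The reverse inclusion in (iii) is the single point where Theorem~\ref{thm:2.17} must be invoked: granted that an arbitrary intersection of am-open ideals is am-open, each $(I_\gamma)^o$ is am-open, so $\bigcap_\gamma (I_\gamma)^o$ is am-open as well; since it is contained in $\bigcap_\gamma I_\gamma$ and $(\bigcap_\gamma I_\gamma)^o$ is by definition the largest am-open ideal contained in $\bigcap_\gamma I_\gamma$, I conclude $\bigcap_\gamma (I_\gamma)^o \subset (\bigcap_\gamma I_\gamma)^o$. This forward reference is the main obstacle; everything else is essentially formal bookkeeping with characteristic sets.

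Finally, for (v) I would combine the preceding items with the always-valid inclusions $(\bigcap_\gamma I_\gamma)^o \subset \bigcap_\gamma I_\gamma \subset (\bigcap_\gamma I_\gamma)_a$ and $\bigcap_\gamma I_\gamma \subset (\bigcap_\gamma I_\gamma)^-$. If every $I_\gamma$ is am-stable, then (ii) yields $(\bigcap_\gamma I_\gamma)_a \subset \bigcap_\gamma (I_\gamma)_a = \bigcap_\gamma I_\gamma$, forcing equality; if every $I_\gamma$ is am-closed, then (iv) yields $(\bigcap_\gamma I_\gamma)^- \subset \bigcap_\gamma (I_\gamma)^- = \bigcap_\gamma I_\gamma$, again forcing equality; and if every $I_\gamma$ is am-open, then (iii) gives $\bigcap_\gamma I_\gamma = \bigcap_\gamma (I_\gamma)^o = (\bigcap_\gamma I_\gamma)^o$.
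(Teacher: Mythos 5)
Your proposal is correct and matches the paper's (unstated) approach exactly: the paper declares the lemma elementary except for the one inclusion $\bigcap_\gamma (I_\gamma)^o \subset (\bigcap_\gamma I_\gamma)^o$, which it attributes to Theorem~\ref{thm:2.17}, and you have filled in precisely that skeleton. One small observation: in (v) the am-stable case can be handled even more directly via (i), since $_a(\bigcap_\gamma I_\gamma) = \bigcap_\gamma {}_aI_\gamma = \bigcap_\gamma I_\gamma$ with (i) giving equality rather than merely an inclusion, and similarly the am-open case follows immediately from Theorem~\ref{thm:2.17} without routing through (iii); but your derivations are equally valid.
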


\begin{lemma} \label{lem:2.3}
For all ideals $I$, $J$:

\item[(i)]
$I_a + J_a = (I + J)_a$

\item[(ii)]
$_aI + \text{$_a$}J \subset  \text{$_a$}(I + J)$ \quad (the
inclusion can be proper by Example~\ref{ex:2.4}(ii))

\item[(iii)]
$I^o + J^o \subset (I + J)^o$
\quad
(the inclusion can be proper by Example~\ref{ex:2.4}(ii))

\item[(iv)]
$I^- + J^- \subset (I + J)^-$ \quad (equality is Theorem~\ref{thm:2.5})

\item[(v)]
If $I$ and $J$ are am-open, so is $I + J$.
\end{lemma}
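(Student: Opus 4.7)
The plan is to prove (i) and (ii) directly from the definitions of the characteristic sets, and then derive (iii), (iv), (v) by formally combining them, exploiting the identities $I^o = ({}_aI)_a$ and $I^- = {}_a(I_a)$. The two workhorse facts throughout will be: (a) the arithmetic mean $\eta \mapsto \eta_a$ is linear and monotone in the pointwise order on sequences in $c_o^*$, and (b) the sum of two nonnegative nonincreasing sequences is again nonincreasing, so that the characteristic set of $I+J$ can be written concretely as $\Sigma(I+J) = \{\xi\in c_o^* : \xi \leq \alpha+\beta,\ \alpha\in\Sigma(I),\ \beta\in\Sigma(J)\}$.

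For (i), the inclusion $I_a + J_a \subset (I+J)_a$ is immediate from the monotonicity of the map $K \mapsto K_a$ in $K$ together with the fact that $\alpha_a + \beta_a$ is again nonincreasing. For the reverse inclusion, I would take $\eta \in \Sigma((I+J)_a)$, so $\eta \leq M\xi_a$ for some $M>0$ and some $\xi \in \Sigma(I+J)$; writing $\xi \leq \alpha+\beta$ as above and using monotonicity and linearity of the arithmetic mean yields $\eta \leq (M\alpha)_a + (M\beta)_a$ with $M\alpha \in \Sigma(I)$ and $M\beta \in \Sigma(J)$ (characteristic sets being cones), which places $\eta$ in $\Sigma(I_a + J_a)$. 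For (ii), if $\xi\in\Sigma({}_aI)$ and $\eta\in\Sigma({}_aJ)$, then linearity of the mean gives $(\xi+\eta)_a = \xi_a + \eta_a \in \Sigma(I) + \Sigma(J) \subset \Sigma(I+J)$, so $\xi+\eta \in \Sigma({}_a(I+J))$; hereditariness then gives the inclusion on characteristic sets.

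Parts (iii) and (iv) should follow by a purely formal manipulation. For (iii): $I^o + J^o = ({}_aI)_a + ({}_aJ)_a = ({}_aI + {}_aJ)_a \subset ({}_a(I+J))_a = (I+J)^o$, applying (i) in the middle equality and (ii) (followed by monotonicity of $K \mapsto K_a$) in the inclusion. For (iv): $I^- + J^- = {}_a(I_a) + {}_a(J_a) \subset {}_a(I_a + J_a) = {}_a((I+J)_a) = (I+J)^-$, applying (ii) then (i). Part (v) is an immediate consequence of (iii): if $I=I^o$ and $J=J^o$, then $I+J \subset (I+J)^o$, while the reverse inclusion holds for every ideal, so $I+J$ is am-open.

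There is no serious obstacle here; the only point demanding care is the concrete description of $\Sigma(I+J)$ in step (i), which I would justify by noting that monotone nonnegative sequences are closed under addition so no rearrangement is required, and by remembering that each characteristic set is a cone (absorbing positive scalars) in order to push the constant $M$ from a big-$O$ bound inside the membership relation $M\alpha \in \Sigma(I)$.
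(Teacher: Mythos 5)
Your proof is correct and supplies the argument the paper declines to spell out: the authors declare Lemmas~\ref{lem:2.1}--\ref{lem:2.3} elementary and leave them to the reader, so there is no paper proof to compare against. Establishing (i) and (ii) directly at the characteristic-set level and then deducing (iii), (iv), and (v) by formal substitution through $I^o = ({}_aI)_a$ and $I^- = {}_a(I_a)$ is the natural route, and each formal step you write checks out. One small inaccuracy in your closing remark deserves correction: the identification $\Sigma(I+J) = \{\xi \in c_o^* : \xi \leq \alpha+\beta,\ \alpha\in\Sigma(I),\ \beta\in\Sigma(J)\}$ does \emph{not} entirely avoid rearrangement. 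The containment $\supset$ is immediate because sums of nonnegative nonincreasing sequences stay nonincreasing, as you say; but the containment $\subset$ --- the one your reverse inclusion in (i) actually uses --- starts from a not-necessarily-monotone decomposition supplied by $S(I+J) = S(I)+S(J)$ and must be monotonized, e.g.\ via the Fan-type inequality $(\rho+\mu)^* \leq D_2\rho^* + D_2\mu^*$ that the paper itself later invokes in the proof of Lemma~\ref{lem:5.6}. This is a standard and correct fact, so your proof stands; you have simply understated what justifies that description of $\Sigma(I+J)$. (Incidentally, for (ii) no description of $\Sigma(I+J)$ is needed at all: ${}_aI \subset {}_a(I+J)$ and ${}_aJ \subset {}_a(I+J)$ follow from inclusion-preservation of the pre-arithmetic mean, and then ${}_aI + {}_aJ \subset {}_a(I+J)$ because the right-hand side is an ideal.)
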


\begin{example}\label{ex:2.4}
(i) In general, equality does not hold in Lemma~\ref{lem:2.2}(ii)
or, equivalently, in (iv) even when $\Gamma$ is finite. Indeed it is
easy to construct two nonsummable sequences $\xi$  and $\eta$  in
$c_o^*$ such that $\min(\xi , \eta)$ is summable. But then, as it is 
elementary to show, $(\xi ) \cap (\eta) = (\min(\xi , \eta))$ and
hence $((\xi ) \cap (\eta))_a = (\omega)$ while 
\[
(\xi )_a \cap
(\eta)_a = (\xi_a) \cap (\eta_ a) = (\min(\xi_ a, \eta_a))
\supsetneqq (\omega), 
\]
the inclusion since  $\omega = o(\xi_ a)$, $\omega = o(\eta_a)$, 
hence $\omega = o(\min(\xi_ a, \eta_a))$, and the inequality since $\omega$ satisfies the 
$\Delta_{1/2}$-condition and then equality leads to a contradiction.

(ii) In general, equality does not hold in Lemma~\ref{lem:2.3}(ii)
or (iii). Indeed take the principal ideals generated by two
sequences $\xi$  and $\eta$  in $c_o^*$ such that $\xi  + \eta =
\omega$ but $\omega \neq O(\xi )$ and $\omega \neq O(\eta)$, which
implies that
\[
_a(\xi ) = \text{$_a$}(\eta) = \{0\} \neq \mathcal{L}_1 =
\text{$_a$}(\omega) = \text{$_a$}((\xi ) + (\eta)).
\]

The same example shows that
\[
(\xi)^o + (\eta)^o = \{0\} \neq (\omega) = ((\xi ) + (\eta))^o.
\]
\end{example}

That the sum of finitely many am-open ideals is am-open (Lemma~\ref{lem:2.3}(v)), is an immediate consequence of
Lemma~\ref{lem:2.3}(iii). Less trivial is the fact that the sum of
finitely many am-closed ideals is am-closed, or, equivalently, that
equality holds in Lemma~\ref{lem:2.3}(iv). This result was
announced in \cite{9}. The proof we present here exploits the role
of substochastic matrices in majorization theory (\cite{15}, see
also \cite{11}). Recall that a matrix $P$ is called substochastic if
$P_{ij} \geq 0$, $\sum_{i=1}^\infty P_{ij}\leq 1$  for all $j$ and
$\sum_{j=1}^\infty P_{ij}\leq 1$ for all $i$. By extending the
well-known result for finite sequence majorization (e.g., see
\cite{16}), Markus showed in \cite[Lemma 3.1]{15} that if $\eta$,
$\xi  \in c_o^*$, then $\eta_a \leq \xi_ a$ if and only if there is
a substochastic matrix $P$ such that $ \eta = P\xi$. Finally, recall
also the Calkin \cite{5} isomorphism between proper two sided ideals
of $B(H)$ and ideals of $\ell_\infty$  that associates to an ideal
$J$ the symmetric sequence space $S(J)$ defined by $S(J) := \{\eta
\in c_o \mid \diag \eta \in J\}$ (e.g., see \cite{5} or
\cite[Introduction]{7}). It is immediate to see that $S(J) = \{\eta
\in c_o \mid |\eta|^* \in \Sigma(J)\}$ and that for any two ideals,
$S(I + J) = S(I) + S(J)$.

\begin{theorem} \label{thm:2.5}
$(I + J)^- = I^- + J^-$ for all ideals $I$, $J$. 

In particular, the sum of two am-closed ideals is am-closed.
\end{theorem}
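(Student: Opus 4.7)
The plan is to prove $(I+J)^- \subset I^- + J^-$, since the reverse inclusion is Lemma~\ref{lem:2.3}(iv). The main tool will be the Markus substochastic-matrix characterization of majorization recalled in the paragraph preceding the theorem: for $\xi,\eta \in c_o^*$, $\xi_a \leq \eta_a$ if and only if $\xi = P\eta$ for some substochastic matrix $P$.

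First I would take $\xi \in \Sigma((I+J)^-)$, so by definition $\xi_a \leq \eta_a$ for some $\eta \in \Sigma(I+J)$, and Markus's lemma yields a substochastic $P$ with $\xi = P\eta$. Next, applying the singular number inequality $s_{2n-1}(A+B) \leq s_n(A) + s_n(B)$ to a decomposition $T = A+B$ of an operator realizing $\eta = s(T)$, together with ampliation invariance of characteristic sets, I would produce $\mu := D_2(s(A)) \in \Sigma(I)$ and $\nu := D_2(s(B)) \in \Sigma(J)$ with $\eta \leq \mu + \nu$ pointwise. Hence $\xi = P\eta \leq P\mu + P\nu$.

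The key step is a weighted splitting of $\xi$. I would set
\[
\xi_1(k) := \xi(k)\,\frac{(P\mu)(k)}{(P\mu)(k)+(P\nu)(k)},\qquad \xi_2(k) := \xi(k)\,\frac{(P\nu)(k)}{(P\mu)(k)+(P\nu)(k)}
\]
(with the convention $0/0 = 0$), so $\xi = \xi_1 + \xi_2$ with $\xi_1 \leq P\mu$ and $\xi_2 \leq P\nu$ pointwise. The sub-lemma one needs is that $(P\mu)^*_a \leq \mu_a$ for any substochastic $P$ and $\mu \in c_o^*$. This is a direct row-column counting argument: for any $n$-subset $F$ of $\mathbb{N}$, the coefficients $c_j := \sum_{k \in F}P_{kj}$ satisfy $0 \leq c_j \leq 1$ and $\sum_j c_j \leq n$, so since $\mu$ is decreasing, $\sum_{k\in F}(P\mu)_k = \sum_j c_j\mu_j \leq \sum_{j=1}^n \mu_j$, and taking the supremum over $|F|=n$ gives the claim. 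Combined with monotonicity of the decreasing rearrangement under pointwise domination, this yields $(\xi_1)^*_a \leq \mu_a$ and $(\xi_2)^*_a \leq \nu_a$, whence $(\xi_1)^* \in \Sigma(I^-)$ and $(\xi_2)^* \in \Sigma(J^-)$ by the formula for $\Sigma(I^-)$ given earlier.

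Finally, the identity $S(I'+J') = S(I')+S(J')$ recalled just before the theorem (applied with $I'= I^-$, $J' = J^-$) gives $\xi = \xi_1+\xi_2 \in S(I^-) + S(J^-) = S(I^-+J^-)$, and since $\xi \in c_o^*$, this means $\xi \in \Sigma(I^-+J^-)$. I expect the splitting step to be the main obstacle: one must convert the pointwise domination $\xi \leq P\mu + P\nu$ into a genuine additive decomposition of $\xi$ whose two pieces still respect the substochastic-majorization estimates needed to land in $I^-$ and $J^-$ respectively, and the weighted formula above is exactly what accomplishes this.
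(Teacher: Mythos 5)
Your proof is correct and rests on the same key tool as the paper (Markus's substochastic-matrix characterization of majorization and the identity $S(I^-)+S(J^-)=S(I^-+J^-)$), but you take a detour that the paper avoids by a simple reordering of operations. The paper first decomposes the majorizing sequence in $\Sigma(I+J)$ as $\rho+\eta$ with $\rho\in\Sigma(I)$ and $\eta\in\Sigma(J)$, and \emph{then} applies Markus, obtaining $\xi = P(\rho+\eta) = P\rho+P\eta$; linearity of $P$ hands the additive decomposition of $\xi$ for free, and one checks $(P\rho)^*\in\Sigma(I^-)$ by monotonizing with a permutation $\Pi$ and invoking Markus again for the substochastic matrix $\Pi P$, exactly as your row–column counting sub-lemma does. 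Because you apply Markus \emph{before} decomposing $\eta$, you are left with only the pointwise inequality $\xi\leq P\mu+P\nu$, which is why you need the weighted-splitting device to recover a genuine additive decomposition $\xi=\xi_1+\xi_2$. That device is correct, and your sub-lemma $(P\mu)^*_a\leq\mu_a$ is a valid direct argument, but the entire step you flag as ``the main obstacle'' is unnecessary in the paper's order of operations. Likewise, your route through operator singular values and the inequality $s_{2n-1}(A+B)\leq s_n(A)+s_n(B)$ is more machinery than needed: the existence of $\rho\in\Sigma(I)$, $\mu\in\Sigma(J)$ with $\zeta\leq\rho+\mu$ for $\zeta\in\Sigma(I+J)$ is immediate from $S(I+J)=S(I)+S(J)$ together with the Fan-type rearrangement inequality already recalled in the paper (cf. Lemma~\ref{lem:5.6}).
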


\begin{proof}
The inclusion $I^- + J^- \subset  (I + J)^-$ is elementary and was stated in
Lemma~\ref{lem:2.3}(iv). 
Let $\xi  \in \Sigma((I+ J)^-)$,
then $\xi_ a \in \Sigma((I + J)_a)$ so that $\xi_a \leq (\rho + \eta)_a$ for some $\rho\in \Sigma(I)$ and $\eta \in \Sigma(J)$. 
Then by Markus' lemma \cite[Lemma 3.1]{15}, there is a substochastic matrix $P$ such that $\xi  = P(\rho + \eta)$. 
Let $\Pi$ be a permutation matrix monotonizing $P\rho$, i.e., $(P\rho)^* = \Pi P\rho$, then $\Pi P$ too is substochastic and hence by the same result, 
$((P\rho )^*)_a \leq \rho_a$, i.e., $(P\rho )^* \in
\Sigma(I^-)$, or equivalently, $P\rho  \in S(I^-)$. Likewise, $P\eta
\in S(J^-)$, whence $\xi  \in S(I^-) + S(J^-) = S(I^- + J^-)$ and
hence  $\xi  \in  \Sigma (I^- + J^-)$. Thus $(I + J)^- \subset I^- + J^-$, concluding the proof.
\end{proof}

As a consequence, the collection of all the am-closed ideals
contained in an ideal $I$ is directed and hence its union is an
am-closed ideal by Lemma~\ref{lem:2.1}(v).

\begin{corollary} \label{cor:2.6}
Every ideal $I$ contains a largest am-closed ideal denoted by $I_-$,
which is given by
\[
I_- := \bigcup\{J \mid J \subset I\text{ and }J \text{ is
am-closed}\}.
\]
\end{corollary}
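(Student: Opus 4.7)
The plan is to verify that the collection $\mathcal{F} := \{J \mid J \subset I \text{ and } J \text{ is am-closed}\}$ is nonempty and directed under inclusion, and then to invoke Lemma~\ref{lem:2.1}(v) to conclude that $\bigcup_{J \in \mathcal{F}} J$ is itself am-closed. Since this union clearly contains every member of $\mathcal{F}$, it will be the desired largest am-closed ideal $I_-$.

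First, $\mathcal{F}$ is nonempty because the zero ideal $\{0\}$ is trivially am-closed and contained in $I$. For directedness, given $J_1, J_2 \in \mathcal{F}$, consider $J_1 + J_2$. Since $I$ is an ideal (hence closed under addition) and both summands are contained in $I$, we have $J_1 + J_2 \subset I$. By Theorem~\ref{thm:2.5}, the sum of two am-closed ideals is am-closed, so $J_1 + J_2 \in \mathcal{F}$, and obviously $J_1 \subset J_1 + J_2$ and $J_2 \subset J_1 + J_2$. Hence $\mathcal{F}$ is directed.

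Now apply Lemma~\ref{lem:2.1}(v): the directed union $\bigcup_{J \in \mathcal{F}} J$ is am-closed. It is contained in $I$ since each $J \subset I$, so it belongs to $\mathcal{F}$ and is by construction the largest element. Defining $I_- := \bigcup\{J \mid J \subset I \text{ and } J \text{ is am-closed}\}$ therefore yields an am-closed ideal with the stated maximality property.

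There is essentially no obstacle here — the corollary is a formal consequence of Theorem~\ref{thm:2.5} together with Lemma~\ref{lem:2.1}(v), and the only point requiring a moment's attention is recording that $J_1 + J_2 \subset I$ whenever $J_1, J_2 \subset I$, which uses only that $I$ itself is an ideal. The substantive content has already been discharged in the proof of Theorem~\ref{thm:2.5}.
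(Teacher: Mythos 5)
Your proof is correct and is essentially identical to the paper's: the paper likewise observes immediately after Theorem~\ref{thm:2.5} that the collection of am-closed ideals contained in $I$ is directed (via the sum), and then invokes Lemma~\ref{lem:2.1}(v) to conclude the union is am-closed.
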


Thus $I_- \subset I \subset  I^-$ and $I$ is am-closed if and only
if one of the inclusions and hence both of them are equalities.
Since $_aI \subset I$ and $_aI$ is am closed, $_aI \subset  I_-$.
The inclusion can be proper as seen by considering any am-closed but not
am-stable ideal $I$, e.g, $I = \mathcal{L}_1$ where
$_a(\mathcal{L}_1) = \{0\}$. If equality holds, we have the
following equivalences:

\begin{lemma} \label{lem:2.7}
For every ideal $I$, the following conditions are equivalent.

\item[(i)]
$I_- = \text{$_a$}I$

\item[(ii)]
If $J^- \subset I$ for some ideal $J$, then $J^- \subset ~_aI$.

\item[(iii)]
If $J^- \subset I$ for some ideal $J$, then $J_a \subset  I$.

\item[(iv)]
If $_aJ \subset I$ for some ideal $J$, then $J^o\subset  I$.
\end{lemma}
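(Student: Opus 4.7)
The plan is to prove the cyclic equivalences (i) $\Leftrightarrow$ (ii), (ii) $\Leftrightarrow$ (iii), (iii) $\Leftrightarrow$ (iv), using essentially two ingredients: the maximality characterization of $I_-$ from Corollary~\ref{cor:2.6}, and the identity $(K^-)_a = K_a$ for every ideal $K$, which follows from $K^- := {}_a(K_a)$ and the fact that $K_a$ is am-open, so that $(K_a)^o = ({}_a(K_a))_a = K_a$.

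For (i) $\Rightarrow$ (ii), assume $I_- = {}_aI$ and $J^- \subset I$. Since $J^-$ is am-closed and contained in $I$, the maximality in Corollary~\ref{cor:2.6} gives $J^- \subset I_- = {}_aI$. Conversely, for (ii) $\Rightarrow$ (i), apply (ii) to the ideal $I_-$ itself: it is am-closed, so $(I_-)^- = I_- \subset I$, whence by (ii) $I_- \subset {}_aI$. The reverse inclusion ${}_aI \subset I_-$ holds without assumption because ${}_aI$ is am-closed and contained in $I$.

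For (ii) $\Leftrightarrow$ (iii), observe that by the definition of ${}_a(\,\cdot\,)$ via characteristic sets we have $J^- \subset {}_aI$ iff $(J^-)_a \subset I$; by the identity $(J^-)_a = J_a$ established above, this is in turn equivalent to $J_a \subset I$. This makes the equivalence immediate.

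Finally, for (iii) $\Leftrightarrow$ (iv), I would exploit the duality $J^- = {}_a(J_a)$ versus $J^o = ({}_aJ)_a$. Assuming (iii), if ${}_aJ \subset I$, set $K := {}_aJ$; then $K$ is am-closed so $K^- = K \subset I$, hence by (iii) $K_a \subset I$, i.e., $({}_aJ)_a = J^o \subset I$, proving (iv). Conversely, assuming (iv), if $J^- \subset I$, set $K := J_a$; then ${}_aK = {}_a(J_a) = J^- \subset I$, so by (iv) $K^o \subset I$, and since $K = J_a$ is am-open we have $K^o = K = J_a$, giving (iii).

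The main obstacle is spotting the identity $(J^-)_a = J_a$, which collapses condition (ii) to condition (iii); once this is in hand, the remaining implications are just bookkeeping with the definitions $J^- = {}_a(J_a)$, $J^o = ({}_aJ)_a$, and the fact that am-closedness (resp.\ am-openness) is equivalent to being of the form ${}_aM$ (resp.\ $M_a$).
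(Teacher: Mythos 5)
Your proposal is correct. The paper itself leaves this lemma as an exercise (``We leave the proof to the reader''), so there is no paper proof to compare against; your argument is complete and sound. The three ingredients you isolate are exactly what is needed: (a) the maximality of $I_-$ among am-closed ideals contained in $I$ (Corollary~\ref{cor:2.6}) together with the automatic inclusion $_aI \subset I_-$ handles (i)$\Leftrightarrow$(ii); (b) the adjunction $L \subset {}_aI \Leftrightarrow L_a \subset I$ (an immediate consequence of the characteristic-set definitions and heredity of $\Sigma(I)$) combined with the identity $(J^-)_a = J_a$ collapses (ii) into (iii); and (c) the dual pair of identities $({}_aJ)^- = {}_aJ$ and $(J_a)^o = J_a$, both of which the paper records as $_aI = {}_a(({}_aI)_a)$ and $I_a = ({}_a(I_a))_a$, lets you swap $J$ for $_aJ$ or $J_a$ to move between (iii) and (iv). Each step is explicitly justified from the preliminaries in Section~2.
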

\noindent We leave the proof to the reader. Notice that the converses (ii)--(iv) hold trivially for any pair of ideals $I$ and $J$.

Theorem~\ref{thm:2.9} below will show that for countably generated
ideals the equality $_aI = I_-$ always holds, i.e., $_aI$ is the
largest am-closed ideal contained in $I$.

We first need the following lemma.

\begin{lemma} \label{lem:2.8}
If $I$ is a countably generated ideal and $\mathcal{L}_1\subset  I$,
then $(\omega)\subset I$.
In particular, $(\omega)$ is the smallest
principal ideal containing $\mathcal{L}_1$.
\end{lemma}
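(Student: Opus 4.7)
My plan is to argue by contradiction: assuming $\omega \notin \Sigma(I)$, I will produce a summable decreasing sequence $\xi \in \Sigma(\mathcal{L}_1)$ with $\xi \notin \Sigma(I)$, contradicting $\mathcal{L}_1 \subset I$. Granting this, the ``in particular'' statement is immediate: $\mathcal{L}_1 \subset (\omega)$ because any $\xi \in \ell_1^*$ satisfies $\xi_n \leq \|\xi\|_1/n = \|\xi\|_1 \omega_n$ (equivalently, $\mathcal{L}_1 = {}_a(\omega) \subset (\omega)$), and any principal ideal containing $\mathcal{L}_1$ is certainly countably generated, so the main assertion forces it to contain $(\omega)$.

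First I present $I$ as a directed union of principal ideals. Enumerating a countable generating family and letting $\eta^{(n)}$ be the sum of the first $n$ characteristic sequences gives $I = \bigcup_n (\eta^{(n)})$ with $(\eta^{(1)}) \subset (\eta^{(2)}) \subset \cdots$. The contradiction hypothesis $\omega \notin I$ then says $\omega \notin (\eta^{(n)})$ for every $n$, i.e.\ $\omega \neq O(D_m \eta^{(n)})$ for every pair $(m,n) \in \mathbb{N}^2$; equivalently, $\limsup_k \omega_k / (D_m \eta^{(n)})_k = \infty$ for every such pair.

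Next, enumerate $\mathbb{N}^2$ as $(m_j, n_j)_{j\geq 1}$ in such a way that each pair recurs infinitely often. Using the unboundedness above, inductively pick indices $0 = k_0 < k_1 < k_2 < \cdots$ with $(D_{m_j} \eta^{(n_j)})_{k_j} < \omega_{k_j}/2^j = 1/(2^j k_j)$, and define
\[
\xi_k := \frac{1}{j^2 k_j} \qquad \text{for } k_{j-1} < k \leq k_j.
\]
Since $j^2 k_j$ is strictly increasing in $j$, $\xi$ is monotone nonincreasing, and it is summable because
\[
\sum_{k \geq 1} \xi_k = \sum_{j \geq 1} \frac{k_j - k_{j-1}}{j^2 k_j} \leq \sum_{j \geq 1} \frac{1}{j^2} < \infty,
\]
so $\xi \in \ell_1^* = \Sigma(\mathcal{L}_1) \subset \Sigma(I)$. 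But at each index $k_j$,
\[
\frac{\xi_{k_j}}{(D_{m_j} \eta^{(n_j)})_{k_j}} \; > \; \frac{1/(j^2 k_j)}{1/(2^j k_j)} \; = \; \frac{2^j}{j^2},
\]
and for each fixed pair $(m_0, n_0)$ this holds for the infinitely many $j$ with $(m_j, n_j) = (m_0, n_0)$, so the ratio $\xi_{k_j}/(D_{m_0} \eta^{(n_0)})_{k_j}$ is unbounded. Hence $\xi \neq O(D_{m_0} \eta^{(n_0)})$, i.e.\ $\xi \notin (\eta^{(n_0)})$; since $n_0$ was arbitrary, $\xi \notin \bigcup_n (\eta^{(n)}) = I$, the desired contradiction.

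The main delicacy is calibrating $\xi$ so that it is simultaneously summable and yet still large enough at each witness index $k_j$ to exceed $(D_{m_j} \eta^{(n_j)})_{k_j}$ by an unbounded factor. The mechanism is that the full strength of $\limsup = \infty$ lets me force an arbitrary gap $2^j$ between $\omega_{k_j}$ and $(D_{m_j} \eta^{(n_j)})_{k_j}$, leaving ample room to shrink $\xi$ by the milder polynomial factor $1/j^2$ for summability while still retaining the exponential margin $2^j/j^2$ at each $k_j$. The trick of enumerating pairs with infinite multiplicity is what converts ``one violation per pair'' into the unbounded ratio needed to defeat $\xi = O(D_{m_0}\eta^{(n_0)})$ for each fixed $(m_0,n_0)$.
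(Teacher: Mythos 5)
Your proof is correct and is essentially the same diagonal argument as the paper's: in both, one constructs a step-function $\xi \in \ell_1^*$, constant on blocks $(k_{j-1}, k_j]$ with value roughly $\omega_{k_j}$ shrunk by a summability factor, so that $\xi$ still dominates the $j$-th generator at $k_j$ by an unbounded margin. The one cosmetic difference is bookkeeping of the ampliation index: the paper first enlarges the generating family to include all ampliations and passes to the increasing partial sums, reducing to a single index so that $\xi \in \Sigma(I)$ iff $\xi = O(\rho^{(m)})$ for some $m$; you instead keep $(m,n)$ as a pair and diagonalize over an enumeration of $\mathbb{N}^2$ in which every pair recurs infinitely often, which achieves the same effect.
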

\begin{proof}
Let $\rho^{(k)}$ be a sequence of generators for the characteristic
set $\Sigma(I)$, i.e., for every $\xi  \in  \Sigma(I)$ there are $m,
k \in \mathbb{N}$  for which $\xi  = O(D_m \rho^{(k)})$. By adding
if necessary to this sequence of generators all their ampliations
and then by passing to the sequence $\rho^{(1)} + \rho^{(2)} +
\cdots + \rho^{(k)}$, we can assume that $\rho^{(k)} \leq
\rho^{(k+1)}$ and that then $\xi  \in  \Sigma(I)$ if and only if
$\xi  = O(\rho^{(m)})$ for some $m \in \mathbb{N}$. Thus if $\omega
\notin \Sigma(I)$ there is an increasing sequence of indices $n_k$
such that $(\frac{\omega}{\rho^{(k)}})_{n_k}\geq k^3$ for all $k
\geq 1$. Set $n_o := 0$ and define $\xi_ j := \frac{1}{k^2n_k}$ for
$n_{k-1}<j\leq  n_k$ and $k \geq 1$. Then it is immediate 
that $\xi  \in  \ell_1^*$. On the other hand, $\xi  \neq O(\rho^{(m)})$ for any $m \in \mathbb{N}$ since for every $k \geq m$,
\[
\bigg(\frac{\xi}{\rho^{(m)}} \bigg)_{n_k} \geq
\bigg(\frac{\xi}{\rho^{(k)}} \bigg)_{n_k}=
\frac{1}{k^2n_k\rho^{(k)}_{n_k}}\geq k.
\]
and hence $\xi  \notin \Sigma(I)$, against the hypothesis $\mathcal{L}_1 \subset I$.
\end{proof}

\begin{theorem}\label{thm:2.9}
If $I$ is a countably generated ideal, then $I_- = ~_aI$.
\end{theorem}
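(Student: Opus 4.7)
The plan is to prove both inclusions of $I_- = {}_aI$. The containment ${}_aI \subset I_-$ is immediate: since every pre-arithmetic mean ideal is am-closed (an ideal is am-closed iff it equals ${}_aJ$ for some $J$) and ${}_aI \subset I$, by definition of $I_-$ as the largest am-closed subideal of $I$ we get ${}_aI \subset I_-$. For the reverse inclusion, by Lemma~\ref{lem:2.7}(iii) it suffices to show that whenever $J$ is am-closed and $J \subset I$, one has $J_a \subset I$.

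My approach is a direct generalization of Lemma~\ref{lem:2.8}, which is the special case $J = \mathcal{L}_1 = {}_a(\omega)$. I argue by contradiction: if some $\eta \in \Sigma(J)$ has $\eta_a \notin \Sigma(I)$, then, as in Lemma~\ref{lem:2.8}, choose nondecreasing generators $\rho^{(k)}$ of $\Sigma(I)$ so that $\zeta \in \Sigma(I)$ iff $\zeta = O(\rho^{(m)})$ for some $m$. Since $\eta_a/\rho^{(k)}$ is unbounded for every $k$, inductively pick a strictly increasing sequence $n_k$ (with $n_0 := 0$) satisfying both $(\eta_a)_{n_k} \geq k^3 \rho^{(k)}_{n_k}$ and $(k+1)\rho^{(k+1)}_{n_{k+1}} \leq k \rho^{(k)}_{n_k}$; the second condition is achievable because $\rho^{(k+1)} \in c_o$, and the first leaves infinitely many admissible large indices. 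Setting $a_k := k\rho^{(k)}_{n_k}$ and $\xi_j := a_k$ for $n_{k-1} < j \leq n_k$, the second condition gives $a_{k+1} \leq a_k$, and $a_k \leq (\eta_a)_{n_k}/k^2 \to 0$ ensures $\xi \in c_o^*$. Moreover $\xi_{n_k}/\rho^{(m)}_{n_k} \geq k$ whenever $k \geq m$, so $\xi \neq O(\rho^{(m)})$ for every $m$, and hence $\xi \notin \Sigma(I)$.

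The principal step — and main obstacle — is to show $\xi_a = O(\eta_a)$, for then am-closedness of $J$ together with $\eta \in \Sigma(J)$ forces a scalar multiple of $\xi$ (hence $\xi$ itself) into $\Sigma(J) \subset \Sigma(I)$, contradicting $\xi \notin \Sigma(I)$. For $n_{k-1} < j \leq n_k$ I decompose $\sum_{i=1}^j \xi_i$ into block contributions $a_l(n_l - n_{l-1}) \leq a_l n_l$ for $l < k$ plus $a_k(j - n_{k-1}) \leq a_k j$. Using $a_l \leq (\eta_a)_{n_l}/l^2$ and the identity $n_l (\eta_a)_{n_l} = \sum_{i=1}^{n_l} \eta_i \leq \sum_{i=1}^j \eta_i = j(\eta_a)_j$ (valid since $n_l \leq n_{k-1} < j$ for $l \leq k-1$), together with $a_k \leq (\eta_a)_{n_k}/k^2 \leq (\eta_a)_j$, one obtains $(\xi_a)_j \leq \bigl(1 + \sum_{l \geq 1} l^{-2}\bigr)(\eta_a)_j$. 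The delicacy is calibrating $a_k$: the $k^{-2}$ slack built into $a_k \leq (\eta_a)_{n_k}/k^2$ is what lets the block sums be absorbed by the partial sums of $\eta$, while the factor $k$ in $a_k = k\rho^{(k)}_{n_k}$ is exactly what drives $\xi$ past every generator $\rho^{(m)}$.
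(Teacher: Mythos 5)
Your proof is correct, and it takes a genuinely different route from the paper's. Both arguments ultimately aim to show that if $(\eta)^- \subset I$ then $\eta_a \in \Sigma(I)$, by constructing (for a contradiction) a $\xi \notin \Sigma(I)$ with $\xi_a = O(\eta_a)$. The paper chooses $\xi_j := (\eta_a)_{n_k}$ on the block $(n_{k-1}, n_k]$; this forces a case split (when $\eta \in \ell_1^*$ the paper falls back on Lemma~\ref{lem:2.8} since $\eta_a \asymp \omega$) and, in the non-summable case, an inductive verification of $\xi_a \leq 2\eta_a$ that leans on the auxiliary non-summability condition $\frac{1}{n_k - n_{k-1}}\sum_{n_{k-1}+1}^{n_k}\eta_i \geq \tfrac12(\eta_a)_{n_k}$ when choosing the $n_k$. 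You instead set $\xi_j := k\rho^{(k)}_{n_k}$, which by design carries a $k^{-2}$ slack below $(\eta_a)_{n_k}$; this lets the partial sums be absorbed blockwise by $\sum l^{-2}$, eliminating both the induction and the summability case distinction at the modest cost of the extra monotonicity constraint $(k+1)\rho^{(k+1)}_{n_{k+1}} \leq k\rho^{(k)}_{n_k}$ (which the paper's choice gets for free since $(\eta_a)_{n_k}$ is automatically nonincreasing). The framings also differ cosmetically: you route through Lemma~\ref{lem:2.7}(iii) while the paper takes $\eta \in \Sigma(I_-)$ directly, but these are the same hypothesis. One small point worth making explicit: as written, $a_k = k\rho^{(k)}_{n_k}$ could vanish if $\rho^{(k)}$ has finite support; this is harmless since for $I \supsetneq F$ one can always take the generators $\rho^{(k)}$ strictly positive, and the cases $I = \{0\}$, $I = F$ are trivial.
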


\begin{proof}
Let $\eta \in  \Sigma(I_-)$. Then $(\eta)^- \subset  I_- \subset I$.
We claim that $\eta_a \in  \Sigma(I)$, i.e., $I_- \subset
\text{$_a$}I$ and hence equality holds from the maximality of $I_-$.
If $0 \neq \eta \in  \ell_1^*$, then $(\eta)^- = \mathcal{L}_1$,
hence by Lemma~\ref{lem:2.8}, $(\omega)\subset I$ and thus $\eta_a
\asymp\omega  \in  \Sigma(I)$. If $\eta \notin \ell_1^*$, assume by
contradiction that $\eta_a \notin \Sigma(I)$. As in the proof of
Lemma~\ref{lem:2.8}, choose a sequence of generators
$\rho^{(k)}$ for $\Sigma(I)$ with $\rho^{(k)} \leq \rho^{(k+1)}$ and
such that for every $\xi  \in \Sigma(I)$ there is an $m \in
\mathbb{N}$ for which $\xi  = O(\rho^{(m)})$. Then there is an
increasing sequence of indices $n_k$ such that
$(\frac{\eta_a}{\rho^{(k)}})_{n_k}\geq k$ for every $k$. Exploiting
the non-summability of $\eta$ we can further require that
$\frac{1}{n_k-n_{k-1}}\sum_{i=n_{k-1}+1}^{n_k}\eta_i\geq \frac12
(\eta_a)_{n_k}$ for every $k$. Set $n_o := 0$ and define $\xi_ j =
(\eta _a)_{n_k}$ for $n_{k-1} < j \leq n_k$. We prove by induction
that $(\xi_a)_j \leq (2\eta_a)_j$. The inequality holds trivially
for $j \leq n_1$ and assume it holds also for all $j \leq n_{k-1}$.
If $n_{k-1} < j \leq n_k$, it follows that
\begin{align*}
\sum^j_{i=1}\xi_ i&= n_{k-1}(\xi_a)n_{k-1} + (j - n_{k-1})
(\eta_a)_{n_k}
\\
&\leq 2 n_{k-1}(\eta_a)_{n_{k-1}} + (j - n_{k-1})(\eta_a)_{n_k}
\\
&\leq 2 \sum^{n_{k-1}}_{i=1}\eta_ i + 2
\frac{j-n_{k-1}}{n_k-n_{k-1}}\sum^{n_k}_{i=n_{k-1}+1}\eta_ i \\
&\leq 2 \sum^{n_{k-1}}_{i=1}\eta_ i + 2 \sum^{j}_{i=n_{k-1}+1}\eta_i 
=2j(\eta_a)_j
\end{align*}
where the last inequality follows because
$\frac{1}{j-n}\sum_{i=n+1}^j\eta_i$ is monotone nonincreasing in $j$
for $j > n$. Thus $\xi  \in  \Sigma((\eta)^-) \subset  \Sigma(I)$.
On the other hand, for every $m \in  \mathbb{N}$ and $k \geq m$, 
$(\frac{\xi}{\rho^{(m)} })_{n_k} \geq (\frac{\xi}{\rho^{(k)} })_{n_k} = (\frac{\eta_a}{\rho^{(k)}})_{n_k}\geq k$ and thus $\xi  \notin \Sigma(I)$, a contradiction.
\end{proof}

By Theorem~\ref{thm:2.5}, $I_- + J_-$ is am-closed for any pair of
ideal $I$ and $J$ and it is contained in $I+J$. Hence $I_- + J_-
\subset  (I+J)_-$ and this inclusion can be proper by
Theorem~\ref{thm:2.9} and Example~\ref{ex:2.4}(ii).

\begin{corollary} \label{cor:2.10}
If $I$ is a countably generated ideal, then $I_a$ is the smallest countably generated ideal containing $I^-$.
\end{corollary}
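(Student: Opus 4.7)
The plan is to verify two things: first, that $I_a$ is itself countably generated and contains $I^-$; second, that every countably generated ideal $J$ with $I^- \subset J$ must contain $I_a$. The inclusion $I^- \subset I_a$ is part of the 5-chain listed in Section~\ref{sec:2}. Countable generation of $I_a$ is straightforward: if $\{\rho^{(k)}\}$ generates $\Sigma(I)$, then a direct computation from the definition of $\Sigma(I_a)$ shows that $\{(\rho^{(k)})_a\}$, together with its ampliations, generates $\Sigma(I_a)$.

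For the minimality, let $J$ be a countably generated ideal with $I^- \subset J$. The key observation is that $I^- = {}_a(I_a)$ is of the form ${}_aK$ and hence am-closed. By Corollary~\ref{cor:2.6}, $J_-$ is the largest am-closed ideal contained in $J$, so $I^- \subset J_-$. At this point I would invoke Theorem~\ref{thm:2.9}: since $J$ is countably generated, $J_- = {}_aJ$, and therefore
\[
I^- \subset {}_aJ.
\]

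Applying the arithmetic mean (which is monotone) to both sides gives $(I^-)_a \subset ({}_aJ)_a$. The left-hand side simplifies to $I_a$ by the identity $({}_a(I_a))_a = I_a$ recorded in Section~\ref{sec:2}, and the right-hand side is $J^o$ by definition of the am-interior, so
\[
I_a = (I^-)_a \subset ({}_aJ)_a = J^o \subset J,
\]
which is the required containment.

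The proof is essentially a short diagram chase, so I do not expect a serious obstacle; the substantive work sits entirely in the already-proved Theorem~\ref{thm:2.9}. The only place one must be careful is in recognizing that $I^-$ is am-closed (so that the maximality description of $J_-$ applies to it) and in remembering the identity $({}_a(I_a))_a = I_a$ that collapses $(I^-)_a$ back to $I_a$.
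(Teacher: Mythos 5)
Your proof is correct and follows essentially the same route as the paper's: both reduce the minimality claim to Theorem~\ref{thm:2.9} via the chain $I^- \subset J_- = {}_aJ$ (using that $I^-$ is am-closed) and then apply the arithmetic mean together with the identity $({}_a(I_a))_a = I_a$ to conclude $I_a \subset J^o \subset J$. You also spell out the easy fact that $I_a$ is countably generated, which the paper leaves implicit.
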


\begin{proof}
By the five chain inclusion, $I^- \subset  I_a$ and if $I^- \subset
J$ for some countably generated ideal $J$, then $I^- \subset  J_- =
\text{$_a$}J$ and hence $I_a = (I^-)_a \subset  J^o\subset  J$.
\end{proof}

As a consequence of Theorem~\ref{thm:2.9} we obtain also an
elementary proof of the following, which was obtained for the
principal ideal case by \cite[Theorem 3.11]{2}.
\pagebreak
\begin{theorem} \label{thm:2.11}
A countably generated ideal is am-closed if and only if it is
am-stable.
\end{theorem}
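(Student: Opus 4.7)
The plan is to derive this from the previously established material very quickly, with Theorem~\ref{thm:2.9} doing all the heavy lifting.

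First I would dispense with the easy direction, which requires no countable generation hypothesis. If $I$ is am-stable, i.e., $I = I_a$, then the 5-chain inclusion ${}_aI \subset I^o \subset I \subset I^- \subset I_a$ forces every inclusion to be an equality; in particular $I = I^-$, so $I$ is am-closed.

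For the nontrivial direction, suppose $I$ is countably generated and am-closed. Since $I$ is am-closed, $I$ itself appears in the directed family of am-closed subideals of $I$ whose union is $I_-$ (Corollary~\ref{cor:2.6}), so $I_- = I$. But Theorem~\ref{thm:2.9} identifies $I_- = {}_aI$ for countably generated $I$. Combining the two equalities gives $I = {}_aI$, which is am-stability.

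There is essentially no obstacle at this stage: the substantive work lies entirely in Theorem~\ref{thm:2.9}, which already packages the countable-generation hypothesis into the identification $I_- = {}_aI$. Without that identification one would only have $I_- \supset {}_aI$, which is insufficient (as noted in the discussion following Corollary~\ref{cor:2.6}, the inclusion can be strict in general, e.g.\ for $\mathcal{L}_1$). Thus the proof reduces to a two-line assembly of prior results.
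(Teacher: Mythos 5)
Your proposal is correct and is essentially identical to the paper's own proof: for the forward direction the paper likewise notes $I = I_-$ and invokes Theorem~\ref{thm:2.9} to get $I = {}_aI$, and the reverse direction is dispatched via the five-chain inclusion in both.
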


\begin{proof}
If $I$ is a countably generated am-closed ideal, then $I = I_-$ and
hence $I = \text{$_a$}I$ by Theorem~\ref{thm:2.9}, i.e., $I$ is
am-stable. On the other hand, every am-stable ideal is am-closed by
the five chain inclusion.
\end{proof}

\noindent $\mathcal{L}_1$ is an example of a non countably generated ideal which is am-closed
(and also am-$\infty$  closed) but is neither am-stable nor am-$\infty$  stable. \vspace{12pt}

Now we pass to the second main result of this section, namely that
the intersection of am-open ideals is am-open
(Theorem~\ref{thm:2.17}). To prove it and to provide tools for our
study in Section~\ref{sec:6} of the commutation relations between
the se and sc operations and the am-interior operation, we need the
characterization of the am-interior $I^o$ of an ideal $I$ given in
Corollary~\ref{cor:2.16} below. This in turn will lead naturally to
a characterization of the smallest am-open ideal $I^{oo}$ containing
$I$ (Definition~\ref{def:2.18} and Proposition~\ref{prop:2.21}).
Both characterizations depend on the principal ideal case.

As recalled earlier, an ideal $I$ is am-open if $I = J_a$ for some ideal $J$
(e.g., $J = I^- = \text{$_a$}(I_a))$. 
In terms of sequences, $I$ is am-open if and only if for every $\xi  \in \Sigma(I)$, one has $\xi \leq \eta_a \in  \Sigma(I)$ for some $\eta \in  c_o^*$. 
Remark~\ref{rem:2.15}(iii) show that there is a minimal $\eta_a \geq \xi$. 
First we note  when a sequence is equal to the arithmetic mean of a $c_o^*$-sequence. 
The proof is elementary and is left to the reader.

\begin{lemma} \label{lem:2.12}
A sequence $\xi$  is the arithmetic mean $\eta_a$ of some sequence
$\eta \in  c_o^*$ if and only if $0 \leq \xi  \to 0$ and
$\frac{\xi}{\omega}$  is monotone nondecreasing and concave, i.e.,
$(\frac{\xi}{\omega})_{n+1} \geq \frac12 ((\frac{\xi}{\omega})_n +
(\frac{\xi}{\omega})_{n+2})$ for all $n \in \mathbb{N}$ and $\xi_ 1
= (\frac{\xi}{\omega})_1 \geq \frac12 (\frac{\xi}{\omega})_2$.
\end{lemma}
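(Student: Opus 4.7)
The plan is to exploit the bijection between a sequence $\xi$ and the auxiliary sequence $\sigma := \xi/\omega$ (so $\sigma_n = n\xi_n$). Under this identification, the relation $\xi = \eta_a$ becomes simply $\sigma_n = \sum_{i=1}^n \eta_i$; that is, $\sigma$ is the partial-sum sequence of $\eta$. Under this bijection, nonnegativity of $\eta$ corresponds to monotonicity of $\sigma$, monotonicity of $\eta$ corresponds to concavity of $\sigma$, and $\eta \to 0$ corresponds to $\xi \to 0$. Once one sees this, both directions reduce to routine bookkeeping.

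For necessity, assume $\xi = \eta_a$ with $\eta \in c_o^*$. Then $\sigma_n = n\xi_n = \sum_{i=1}^n \eta_i$, so $\sigma_{n+1}-\sigma_n = \eta_{n+1} \ge 0$, giving monotonicity of $\sigma = \xi/\omega$. Since $\eta$ is nonincreasing, $\sigma_{n+1}-\sigma_n \ge \sigma_{n+2}-\sigma_{n+1}$, which is the stated concavity inequality. The boundary condition reads $\eta_1 \ge \eta_2$, equivalently $\sigma_1 = \xi_1 \ge \sigma_2 - \xi_1$, i.e.\ $\xi_1 \ge \tfrac12 \sigma_2$. Finally, $\xi \to 0$ is immediate from $\eta \in c_o$ (or from $\xi \le \eta_1$ combined with a standard Cesaro argument).

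For sufficiency, define $\eta_1 := \sigma_1 = \xi_1$ and $\eta_{n+1} := \sigma_{n+1}-\sigma_n$ for $n\ge 1$. By construction, $\sum_{i=1}^n \eta_i = \sigma_n = n\xi_n$, so $\eta_a = \xi$. Monotonicity of $\sigma$ (coming from $\xi/\omega$ nondecreasing) gives $\eta_n \ge 0$; concavity of $\sigma$ for $n \ge 1$ gives $\eta_{n+1} \ge \eta_{n+2}$; and the boundary hypothesis $\xi_1 \ge \tfrac12 (\xi/\omega)_2$ is precisely $\eta_1 \ge \eta_2$. Thus $\eta$ is nonnegative and nonincreasing, so it has a limit $L \ge 0$; if $L > 0$ then $\xi_n = \frac{1}{n}\sum_{i=1}^n \eta_i \ge L$ contradicts $\xi \to 0$, so $\eta \in c_o^*$.

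The proof has no real obstacle; the only thing to notice is that the technical-looking hypotheses (monotonicity of $\xi/\omega$, the concavity inequality, and the asymmetric boundary condition $\xi_1 \ge \tfrac12(\xi/\omega)_2$) are exactly the three conditions needed for the discrete derivative of $\xi/\omega$ (with the convention $\sigma_0 = 0$) to produce a sequence in $c_o^*$. The mild subtlety is remembering to treat $n=1$ separately in the concavity check, which is what the boundary condition handles.
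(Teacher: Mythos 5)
Your proof is correct. The paper leaves this lemma's proof to the reader (stating it is elementary), and the partial-sum/discrete-derivative correspondence you use — identifying $\sigma_n = n\xi_n$ with $\sum_{i=1}^n \eta_i$ so that nonnegativity, monotonicity, and decay of $\eta$ translate into monotonicity, concavity, and $\xi\to 0$ respectively — is exactly the intended elementary argument, including the observation that the asymmetric boundary condition is the $n=0$ concavity inequality with the convention $\sigma_0 = 0$. The only small blemish is the parenthetical in the necessity direction: $\xi \le \eta_1$ alone gives boundedness, not decay; the decay comes from the Ces\`aro theorem applied to the null sequence $\eta$, which you do also cite.
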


It is elementary to see that for every $\eta \in c_o^*$, 
$(\eta)_a = (\eta_ a)$ and that $\eta_a$ satisfies the 
$\Delta_{1/2}$-condition because $\eta_a \leq D_m\eta_a \leq m\eta_a$ for every $m \in \mathbb{N}$.
In particular,
all the generators of the principal ideal $(\eta_ a)$
are equivalent.

\begin{lemma} \label{lem:2.13}
If $I$ is a principal ideal, then the following are equivalent.
\item[(i)]
$I$ is am-open

\item[(ii)]
$I = (\eta_ a)$ for some $\eta \in  c_o^*$

\item[(iii)]
$I = (\xi )$ for some $\xi  \in  c_o^*$ for which
$\frac{\xi}{\omega}$  is monotone nondecreasing.
\end{lemma}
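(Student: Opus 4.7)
The plan is to cycle through (i) $\Rightarrow$ (ii) $\Rightarrow$ (iii) $\Rightarrow$ (ii) $\Rightarrow$ (i). Three of these arrows are immediate. For (ii) $\Rightarrow$ (i), $(\eta_a) = ((\eta))_a$ is am-open by the very definition of $(\cdot)_a$. For (ii) $\Rightarrow$ (iii), take the generator $\xi := \eta_a$ itself: then $(\xi/\omega)_n = n\,\eta_a(n) = \sum_{i=1}^n \eta_i$ is manifestly nondecreasing. For (i) $\Rightarrow$ (ii), write $(\xi) = J_a$; then $\xi \in \Sigma(J_a)$ gives $\xi \le c\,\eta_a$ for some $\eta \in \Sigma(J)$ and $c > 0$, so replacing $\eta$ by $c\eta$ we may assume $\xi \le \eta_a$; simultaneously $\eta_a \in \Sigma(J_a) = \Sigma((\xi))$ yields $\eta_a = O(\xi)$ (the $\Delta_{1/2}$-condition is automatic for $\eta_a$), and these two relations together give $(\xi) = (\eta_a)$.

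The substance of the lemma lies in (iii) $\Rightarrow$ (ii). The strategy is to replace $\xi$ by an equivalent $\tilde\xi \in c_o^*$ whose ratio $\tilde\xi/\omega$ is not only nondecreasing but also concave; then Lemma~\ref{lem:2.12} furnishes $\eta \in c_o^*$ with $\eta_a = \tilde\xi$, and $(\xi) = (\tilde\xi) = (\eta_a)$. The structural fact that enables the whole argument is the doubling estimate $c_{2n} \le 2 c_n$ for $c_n := n\xi_n$, which comes for free from $\xi$ being nonincreasing: $c_{2n}/c_n = 2\xi_{2n}/\xi_n \le 2$.

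With the convention $c_0 := 0$, let $\tilde c_n$ be the pointwise infimum at $n$ over all nondecreasing affine functions $\ell$ satisfying $\ell(k) \ge c_k$ for every $k \ge 0$. Then $\tilde c$ is automatically nondecreasing and concave on $\{0,1,2,\ldots\}$ and majorizes $c$; the quantitative bound $\tilde c_n \le 2 c_n$ follows from the explicit admissible line $\ell_n(k) := \xi_n k + A_n$ with $A_n := \max_{0 \le k \le n}(c_k - k\xi_n)$, for which $A_n \le c_n$ (each term $c_k - k\xi_n \le c_k \le c_n$) and $\ell_n(n) = c_n + A_n \le 2c_n$. Setting $\tilde\xi_n := \tilde c_n/n$, the estimate $\xi \le \tilde\xi \le 2\xi$ yields $(\xi) = (\tilde\xi)$, while the concavity of $\tilde c$ on $\{0,1,2,\ldots\}$ combined with $\tilde c_0 \ge 0$ simultaneously forces $\tilde\xi$ to be nonincreasing (via $\tilde c_n - \tilde c_{n-1} \le (\tilde c_{n-1} - \tilde c_0)/(n-1) \le \tilde c_{n-1}/(n-1)$, since the average of the nonincreasing differences dominates each one) and supplies the boundary inequality $\tilde c_2 \le 2\tilde c_1$, thereby verifying all of the hypotheses of Lemma~\ref{lem:2.12}.

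The main obstacle will be precisely this uniform upper bound $\tilde c_n = O(c_n)$ for the concave majorant. Without the doubling property $c_{2n} \le 2c_n$, the least concave majorant of a nondecreasing sequence can inflate it by unbounded amounts across long intervals between its corner points; the explicit tangent-line witness $\ell_n$ above is what converts that doubling property (encoded in the nonincreasingness of $\xi$) into the universal constant $2$.
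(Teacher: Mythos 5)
Your proof is correct and follows essentially the same route as the paper's: (i)$\Leftrightarrow$(ii) via the $\Delta_{1/2}$ property of $\eta_a$, (ii)$\Rightarrow$(iii) trivially, and (iii)$\Rightarrow$(ii) by constructing a concave majorant of $\xi/\omega$ within a factor of $2$ and invoking Lemma~\ref{lem:2.12}. The only difference is that where the paper cites an adaptation of \cite[Chapter~2, Proposition~5.10]{4} for the estimate $\psi \leq 2\,\xi/\omega$, you supply a self-contained tangent-line argument with the explicit witness $\ell_n(k) = \xi_n k + A_n$, which is a nice elementary substitute for that reference.
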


\begin{proof}
(i) $\Leftrightarrow$ (ii). Assume that $I = (\xi )$ for some $\xi
\in  c_o^*$ and that $I$ is am-open and hence $I = J_a$ for some
ideal $J$. Then $\xi  \leq \eta_a$ for some $\eta_a \in \Sigma(I)$
and hence $\eta_a \leq MD_m\xi$  for some $M > 0$ and $m \in
\mathbb{N}$. Since $\eta_a \asymp\ D_m\eta_a$, it
follows that $\xi\asymp \eta_a$ and hence (ii)
holds. The converse holds since $(\eta_a) = (\eta)_a$.

(ii) $\Rightarrow$ (iii) is obvious.

(iii) $\Rightarrow$ (ii). $\frac{\xi}{\omega}$  is quasiconcave,
i.e., $\frac{\xi}{\omega}$  is monotone nondecreasing and $\omega \frac{\xi}{\omega}$ is monotone nonincreasing. 
Adapting to sequences the proof of Proposition~5.10 in Chapter~2 of \cite{4}
(see also \cite[Section 2.18]{7}) shows that if $\psi$ is the
smallest concave sequence that majorizes $\frac{\xi}{\omega}$, then
$\frac{\xi}{\omega} \leq \psi \leq 2 \frac{\xi}{\omega}$ and hence
$\psi  \asymp \frac{\xi}{\omega}$. Moreover, $\psi_1 =
\xi_ 1 = (\frac{\xi}{\omega})_1$ since otherwise we could lower
$\psi_1$ and still maintain the concavity of $\psi$. And since the
sequence  $\frac{\xi_1}{\omega}$  is concave and 
$\frac{\xi_1}{\omega}\geq  \frac{\xi}{\omega}$ , it follows by the minimality of $\psi$ that $\psi \leq
\frac{\xi_1}{\omega}$ and so, in particular, $\psi_1= \xi_1 \geq
\frac12 \psi_2$. Since $\psi$ is concave and nonnegative, it follows
that it is monotone nondecreasing.
But then, by Lemma~\ref{lem:2.12}
applied to $\omega\psi$, one has $\omega\psi = \eta_a$ for some
sequence $\eta \in  c_o^*$ and thus $(\xi ) = (\omega\psi) = (\eta_a)$.
\end{proof}

We need now the following notations from \cite[Section 2.3]{7}. 
The upper and lower monotone nondecreasing and monotone nonincreasing envelopes of a real-valued sequence $\phi$ are:
\[
\und  \phi :=\! \Big\langle\max_{ i\leq n} \phi_i\Big\rangle, \quad
\lnd \phi :=\! \Big\langle \inf_{i\geq n} \phi_i\Big\rangle, \quad
\uni \phi :=\! \Big\langle \sup_{i\geq n} \phi_i\Big\rangle, \quad
\lni \phi :=\! \Big\langle \min_{i\leq n} \phi_i\Big\rangle.
\]

\begin{lemma} \label{lem:2.14}
For every $\xi  \in  c_o^*$:
\item[(i)]
$(\xi )^o = (\omega \lnd \xo)$

\item[(ii)]
$(\omega  \und \xo )$ is the smallest am-open ideal containing $(\xi )$.
\end{lemma}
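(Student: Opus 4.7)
The plan is to combine Lemma~\ref{lem:2.13} (which detects am-open principal ideals by the ratio with $\omega$) with the defining extremal properties of the envelopes $\und$ and $\lnd$.

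I would start with (ii), the cleaner half. Since $\und \xo$ is monotone nondecreasing by definition, Lemma~\ref{lem:2.13} makes $(\omega \und \xo)$ am-open, and $\xo \leq \und \xo$ pointwise gives $\xi \leq \omega \und \xo$ and hence $(\xi) \subset (\omega \und \xo)$. For minimality, take any am-open $J \supset (\xi)$. From $\xi \in \Sigma(J) = \Sigma(J^o)$ I get some $\eta \in c_o^*$ with $\xi \leq \eta_a \in \Sigma(J)$. A direct computation shows $(\eta_a/\omega)(n) = \sum_{i=1}^n \eta_i$, which is monotone nondecreasing, so the defining property of $\und$ as the smallest monotone nondecreasing majorant of $\xo$ forces $\und \xo \leq \eta_a/\omega$, i.e., $\omega \und \xo \leq \eta_a$. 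Solidity of $\Sigma(J)$ then yields $\omega \und \xo \in \Sigma(J)$, hence $(\omega \und \xo) \subset J$.

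For (i), the easy inclusion is analogous: $\lnd \xo$ is monotone nondecreasing by definition, so $(\omega \lnd \xo)$ is am-open by Lemma~\ref{lem:2.13}, and $\omega \lnd \xo \leq \omega \cdot \xo = \xi$ gives $(\omega \lnd \xo) \subset (\xi)$, whence $(\omega \lnd \xo) \subset (\xi)^o$ by maximality of $(\xi)^o$ as am-open subideal. The main obstacle is the reverse inclusion $(\xi)^o \subset (\omega \lnd \xo)$. I would take $\zeta \in \Sigma((\xi)^o)$ and use the description of $\Sigma(I^o)$ from the preliminaries to obtain $\eta \in c_o^*$ with $\zeta \leq \eta_a$ and $\eta_a \in \Sigma((\xi))$, so $\eta_a \leq M D_m \xi$ for some $M, m$. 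Dividing by $\omega$ and using that $\eta_a/\omega$ is monotone nondecreasing to pass to the lower envelope on the right yields $\eta_a/\omega \leq M \lnd(D_m \xi/\omega)$. The crux is then a short ampliation estimate, via the inequalities $1 \leq k/\lceil k/m\rceil \leq m$, which allow one to interchange $\lnd$ with $D_m$ and $\omega$ with $D_m$ each at a cost of a factor $m$, eventually giving $\eta_a \leq Mm \, D_m(\omega \lnd \xo)$. Hence $\eta_a$, and so $\zeta$, lies in $\Sigma((\omega \lnd \xo))$.

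The hardest part is this juggling of $\lnd$ with the ampliation $D_m$ and with multiplication by $\omega$ in the reverse inclusion of (i); the rest is an essentially immediate consequence of Lemma~\ref{lem:2.13} and the extremal properties of the envelopes.
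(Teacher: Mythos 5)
Your proof is essentially correct, and the route for part (ii) and for the easy inclusion in part (i) coincides with the paper's. The reverse inclusion in (i) you handle by a slightly different path: you divide by $\omega$, take $\lnd$ immediately, and then commute $\lnd$ with the ampliation $D_m$ and $\omega$ with $D_m$; the paper instead first uses $D_m\zeta_a\leq m\zeta_a$ (the $\Delta_{1/2}$-property of arithmetic means) to get $\frac{\zeta_a}{\omega}\leq mM\frac{\xi}{\omega}$ outright and only then passes to $\lnd$, which avoids the ampliation bookkeeping entirely. Both yield the same conclusion, with the paper's version a bit shorter. (Incidentally, in your version only a single factor of $m$ is lost, not two, since $\lnd$ and $D_m$ commute exactly via $\lnd(D_m\phi)=D_m(\lnd\phi)$; but the constant does not affect the ideal inclusion.)

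The one genuine gap is that you never verify that $\omega\lnd\frac{\xi}{\omega}$ and $\omega\und\frac{\xi}{\omega}$ actually lie in $c_o^*$, which you need before invoking Lemma~\ref{lem:2.13}: that lemma presupposes a generator $\xi\in c_o^*$. Convergence to $0$ is easy (for $\omega\lnd\frac{\xi}{\omega}\leq\xi$ directly, and for $\omega\und\frac{\xi}{\omega}$ by comparison with $\xi_a$), but monotone nonincreasingness is not automatic from the two factors alone since $\omega$ decreases and $\lnd\frac{\xi}{\omega}$, $\und\frac{\xi}{\omega}$ increase. The paper gives the short case split: if the envelope is constant from $n$ to $n+1$, the $\omega$-factor makes the product drop; if it changes value, it must equal $\frac{\xi}{\omega}$ at the relevant index, whence the product is squeezed between $\xi_{n+1}$ and $\xi_n$. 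You should include this check.
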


\begin{proof}
(i) We first prove that $\omega\lnd\xo$  is monotone nonincreasing.
Indeed, in the case when $(\lnd \xo)_n = (\lnd\xo)_{n+1}$, then $(\omega
\lnd\xo)_{n+1} \leq (\omega  \lnd\xo)_n$, but if on the other hand
$(\lnd\xo)_n \neq (\lnd\xo)_{n+1}$, then $(\lnd\xo)_n = (\xo)_n$ and
hence also
\[
(\omega  \lnd\xo)_{n+1} \leq \xi_{n+1} \leq \xi_n =
(\omega  \lnd\xo)_n.
\]
Moreover, $\omega \lnd\xo \to 0$ since $\omega
\lnd\xo\leq \xi$. Thus $(\omega  \lnd\xo)\subset (\xi )$. By
Lemma~\ref{lem:2.13}(i) and (iii), $(\omega  \lnd\xo)$ is am-open
and hence $(\omega  \lnd\xo)\subset  (\xi )^o$. For the reverse
inclusion, if $\mu  \in  \Sigma((\xi )^o)$, then $\mu  \leq \zeta_a$
for some $\zeta_a\in  \Sigma(\xi )$, i.e., $\zeta_a\leq MD_m\xi$ for
some $M > 0$ and $m \in \mathbb{N}$. Then $D_m \zeta_a\leq m
\zeta_a\leq mMD_m \xi$, whence $\frac{\zeta_a}{\omega} \leq mM\xo$.
As $\frac{\zeta_a}{\omega}$ is monotone nondecreasing, also
$\frac{\zeta_a}{\omega}\leq mM \lnd\xo$ so that $\mu \leq mM\omega
\lnd\xo$. Thus $(\xi )^o \subset  (\omega  \lnd\xo)$.

(ii) A similar proof as in (i) shows that $\omega  \und \xo\in
c_o^*$. Since by definition \linebreak $\xi  \leq \omega  \und \xo$, we have
that $(\xi ) \subset  (\omega  \und \xo)$, and the latter ideal is
am-open by Lemma~\ref{lem:2.13}. If $I$ is any am-open ideal
containing $(\xi)$, then $\xi  \leq \zeta_a$ for some $\zeta_a\in
\Sigma(I)$ and again, since $\frac{\zeta_a}{\omega}$ is monotone
nondecreasing, $\omega  \und \xo \leq\zeta_a$, hence $(\omega  \und
\xo) \subset  I$.
\end{proof}

\begin{remark} \label{rem:2.15}

(i) Lemma~\ref{lem:2.14}(i) shows that the am-interior $(\xi )^o$
of a principal ideal $(\xi)$ is always principal and its generator
$\omega \lnd\xo$ is unique up to equivalence by Lemma~\ref{lem:2.13}
and preceding remarks. Notice that $(\omega)$ being the smallest
nonzero am-open ideal, $(\xi )^o = \{0\}$ if and only if $(\omega)
\not\subset (\xi )$. In terms of sequences, this corresponds to the
fact that that $\lnd\xo = 0$ if and only if $(\omega) \not\subset
(\xi )$.

(ii) While $(\omega  \lnd\xo)$ is the largest am-open ideal
contained in $(\xi )$ by
Lemma \ref{lem:2.14}(i), it is easy to see
that there is no (pointwise) nonzero largest arithmetic mean
sequence majorized by $\xi$  unless $\xi$  is itself an arithmetic
mean. However, there is an arithmetic mean sequence $\eta_a$
majorized by $\xi$  which is the largest in the O-sense (actually up
to a factor of 2). Indeed, let $\psi$ be the smallest concave
sequence that majorizes the quasiconcave sequence $\frac12\lnd\xo$.
Then, as in the proof of Lemma~\ref{lem:2.13}(iii) $\Rightarrow$
(ii), $\psi =\frac{\eta_a}{\omega}$ for some $\eta \in  c_o^*$ and
$\psi \leq \lnd\xo$ and hence $\eta_a \leq \xi$. Moreover, for every
$\rho \in c_o^*$ with $\rho_a \leq \xi$, since
$\frac{\rho_a}{\omega}$  is monotone nondecreasing, it follows that
$\frac{\rho_a}{\omega} \leq \lnd\xo \leq 2 \frac{\eta_a}{\omega}$
and hence $\rho_a \leq 2\eta_a$.

(iii) Lemma~\ref{lem:2.14}(ii) shows that $(\omega  \und \xo)$ is
the smallest am-open ideal containing $(\xi )$, and moreover, from the proof
of Lemma~\ref{lem:2.13}(iii) we see that $(\omega  \und \xo) =
(\eta_a)$ where $\frac{\eta_a}{\omega}$  is the smallest concave
sequence that majorizes the quasiconcave sequence $\und \xo$. In
contrast to (ii), $\eta_a$ is also the (pointwise) smallest
arithmetic mean that majorizes $\xi$. Indeed, if $\rho_a \geq \xi$
then  $\frac{\rho_a}{\omega}\geq \und \xo$ because
$\frac{\rho_a}{\omega}$ is monotone nondecreasing and moreover
$\frac{\rho_a}{\omega}\geq \frac{\eta_a}{\omega}$ because
$\frac{\rho_a}{\omega}$ is concave.

(iv) By \cite[Section 2.33]{7}, $\omega  \lnd\xo$ is the reciprocal
of the fundamental sequence of the Marcinkiewicz norm for $_a(\xi
)$.
\end{remark}

\begin{corollary} \label{cor:2.16}

For every ideal $I$:

\item[(i)]
$\Sigma(I^o) = \{\xi  \in  c_o^* \mid \omega  \und \xo \in
\Sigma(I)\} = \{\xi  \in  c_o^* \mid \xi  \leq \omega  \lnd
\frac{\eta}{\omega}$ for some $\eta \in  \Sigma(I)\}$.

\item[(ii)]
If $I$ is an am-open ideal, then $\xi  \in  \Sigma(I)$ if and only
if $\omega  \und \xo \in  \Sigma(I)$.

\end{corollary}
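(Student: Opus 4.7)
My plan is to deduce Corollary~\ref{cor:2.16} as a short consequence of Lemma~\ref{lem:2.14}, reducing the statement for a general ideal $I$ to the principal-ideal case already settled there. The underlying strategy is: the condition $\xi\in\Sigma(I^o)$ is a statement about the principal ideal $(\xi)$ sitting inside $I^o$, so I expect to combine (a) Lemma~\ref{lem:2.14}(ii), which supplies the smallest am-open ideal containing $(\xi)$, namely $(\omega\,\und\xo)$, with (b) Lemma~\ref{lem:2.14}(i), which supplies the largest am-open subideal of $(\eta)$, namely $(\omega\,\lnd\tfrac{\eta}{\omega})$.

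For the first equality in (i), I would run the argument in both directions. If $\xi\in\Sigma(I^o)$, then $(\xi)\subset I^o$, and since $I^o$ is am-open it contains the smallest am-open ideal containing $(\xi)$, which by Lemma~\ref{lem:2.14}(ii) is $(\omega\,\und\xo)$; hence $\omega\,\und\xo\in\Sigma(I^o)\subset\Sigma(I)$. Conversely, if $\omega\,\und\xo\in\Sigma(I)$, then $(\omega\,\und\xo)\subset I$ is am-open (again Lemma~\ref{lem:2.14}(ii)), so it lies in the largest am-open subideal $I^o$; since $\xi\leq\omega\,\und\xo$ by the very definition of $\und$, solidity of $\Sigma(I^o)$ gives $\xi\in\Sigma(I^o)$.

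For the second equality, the $\supset$ direction follows directly from Lemma~\ref{lem:2.14}(i): if $\eta\in\Sigma(I)$, then $\omega\,\lnd\tfrac{\eta}{\omega}$ generates the am-open ideal $(\eta)^o\subset I^o$, so it belongs to $\Sigma(I^o)$, and solidity extends this to any $\xi$ below it. For $\subset$, I would use the sequence-level description $\Sigma(I^o)=\{\xi\in c_o^*\mid\xi\leq\zeta_a\text{ for some }\zeta\in c_o^*\text{ with }\zeta_a\in\Sigma(I)\}$ recorded at the start of Section~\ref{sec:2}: given such $\zeta$, set $\eta:=\zeta_a\in\Sigma(I)$. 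The point is that $\frac{\zeta_a}{\omega}$ is monotone nondecreasing (since $(\zeta_a)_n/\omega_n=\sum_{i=1}^n\zeta_i$), so $\lnd\tfrac{\eta}{\omega}=\tfrac{\eta}{\omega}$ and hence $\omega\,\lnd\tfrac{\eta}{\omega}=\zeta_a\geq\xi$, as required.

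Part (ii) is then immediate, since $I$ am-open means $I=I^o$, and the biconditional in (i) collapses to a characterization of membership in $\Sigma(I)$ itself. I do not foresee any real obstacle here; the closest thing to a subtlety is the small identity $\omega\,\lnd\tfrac{\zeta_a}{\omega}=\zeta_a$, which rests on the elementary monotonicity of $\tfrac{\zeta_a}{\omega}$ that is already used in the proof of Lemma~\ref{lem:2.14} and in Remark~\ref{rem:2.15}.
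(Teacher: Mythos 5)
Your argument is correct and rests on the same key ingredient as the paper's proof, namely both halves of Lemma~\ref{lem:2.14} together with the solidity and am-openness of $I^o$. The paper organizes the three-set equality as a single cyclic chain of inclusions (using the identity $\omega\und\xo = \omega\lnd\bigl(\tfrac{\omega\und\xo}{\omega}\bigr)$ to pass from the second set to the third), while you prove the two equalities separately via the sequence-level description of $\Sigma(I^o)$ and the monotonicity of $\tfrac{\zeta_a}{\omega}$; this is a routine reorganization, not a genuinely different route.
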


\begin{proof}
If $\xi  \in  \Sigma(I^o)$, then $(\xi ) \subset  (\omega  \und \xo)
\subset  I^o$ by Lemma~\ref{lem:2.14}(ii), whence $\omega  \und \xo
\in  \Sigma(I)$. If $\omega  \und \xo \in  \Sigma(I)$, then $\xi
\leq \omega \und \frac{\xi}{\omega} = \omega\lnd(\frac{\omega\und
\xo}{\omega})$. Finally, if $\xi  \leq \omega  \lnd
\frac{\eta}{\omega}$ for some $\eta \in  \Sigma(I)$, then $\omega
\lnd\frac{\eta}{\omega}  \in \Sigma((\eta)^o)\subset \Sigma(I^o)$ by
Lemma~\ref{lem:2.14}(i) and hence $\xi  \in  \Sigma(I^o)$. Thus all
three sets are equal. This proves (i) and (ii) is a particular case.
\end{proof}

An immediate consequence of Corollary~\ref{cor:2.16}(ii) is the
following result.

\begin{theorem}\label{thm:2.17} Intersections of am-open ideals are
am-open.
\end{theorem}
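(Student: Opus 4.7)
The plan is to show that for any collection $\{I_\gamma\}_{\gamma\in\Gamma}$ of am-open ideals, the intersection $I := \bigcap_\gamma I_\gamma$ satisfies $I \subset I^o$ (the reverse inclusion $I^o \subset I$ is automatic), and then conclude $I = I^o$, i.e., $I$ is am-open. The engine driving this argument will be Corollary~\ref{cor:2.16}, which translates am-openness of an ideal and membership in the am-interior into the single sequence-level condition $\omega\und\xo \in \Sigma(\cdot)$.

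First I would pick an arbitrary $\xi \in \Sigma(I)$, so that $\xi \in \Sigma(I_\gamma)$ for every $\gamma$. Since each $I_\gamma$ is am-open, Corollary~\ref{cor:2.16}(ii) gives $\omega\und\xo \in \Sigma(I_\gamma)$ for every $\gamma$. Taking the intersection over $\gamma$ (using Lemma~\ref{lem:2.2}(i) or simply the definition of $\Sigma$ of an intersection of ideals), this means $\omega\und\xo \in \bigcap_\gamma \Sigma(I_\gamma) = \Sigma(I)$.

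Next, I would invoke the first characterization in Corollary~\ref{cor:2.16}(i), namely $\Sigma(I^o) = \{\xi \in c_o^* \mid \omega\und\xo \in \Sigma(I)\}$. Since we have just shown $\omega\und\xo \in \Sigma(I)$, this yields $\xi \in \Sigma(I^o)$. Hence $\Sigma(I) \subset \Sigma(I^o)$, i.e., $I \subset I^o$, which combined with $I^o \subset I$ gives $I = I^o$. This completes the proof and also retroactively justifies the equality asserted in Lemma~\ref{lem:2.2}(iii).

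There is really no serious obstacle here once Corollary~\ref{cor:2.16} is in hand: all the hard work was done in Lemmas~\ref{lem:2.13} and \ref{lem:2.14} in constructing the canonical am-open ``envelope'' $(\omega\und\xo)$ of a principal ideal $(\xi)$ and in showing it is the smallest am-open ideal containing $(\xi)$. The only mild point to be careful about is that the characterizing condition $\omega\und\xo \in \Sigma(I)$ is a condition on $\xi$ alone (not requiring any $\eta$ depending on $\gamma$), so it passes cleanly through the intersection; this uniformity is exactly what Corollary~\ref{cor:2.16}(ii) buys us, and is what would fail if we had only the weaker description $\xi \leq \eta_a$ for some $\eta \in \Sigma(I)$.
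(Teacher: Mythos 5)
Your proof is correct and follows exactly the paper's approach: the paper states that Theorem~\ref{thm:2.17} is an immediate consequence of Corollary~\ref{cor:2.16}(ii), and your argument simply spells out the details — the uniform, $\gamma$-independent condition $\omega\und\xo \in \Sigma(\cdot)$ passes through the intersection, yielding $\xi \in \Sigma(I^o)$ for every $\xi \in \Sigma(I)$.
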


Since $I\subset  I_a$, the collection of all am-open ideals
containing $I$ is always nonempty. By Theorem~\ref{thm:2.17} its
intersection is am-open, hence it is the smallest am-open ideal
containing $I$.

\begin{deff}\label{def:2.18} For each ideal $I$, denote $I^{oo}:=
\bigcap\{J \mid J \supset I \text{ and $J$ is am-open} \}$.
\end{deff}

\begin{remark} \label{rem:2.19}
Lemma~\ref{lem:2.14} affirms that if $I$ is principal, so are $I^o$
and $I^{oo}$.
\end{remark}

Notice that $I^o \subset I \subset  I^{oo}$ and $I$ is am-open if
and only if one of the inclusions and hence both of them are
equalities. Since $I \subset  I_a$ and $I_a$ is am-open, $I^{oo}
\subset  I_a$. The inclusion can be proper even for principal
ideals. Indeed if $\xi  \in  c_o^*$ and $\xi_a$ is irregular, i.e.,
$\xi_{a^ 2} \neq O(\xi_a)$, then $I = (\xi_a)$ is am-open and hence $I
= I^{oo}$, but $I_a = (\xi_{a^ 2} ) \neq (\xi_a ) = I^{oo}$. Of course,
if $I$ is am-stable then $I = I^{oo} = I_a$, and if $\{0\} \neq I
\subset  \mathcal{L}_1$ then $(\omega) = I^{oo} = I_a$, but as the
following example shows, the equality $I^{oo} = I_a$ can hold also
in other cases.

\begin{example} \label{ex:2.20}

Let $\xi_ j = \frac{1}{k!}$  for $((k-1)!)^2 < j \leq (k!)^2$. Then
direct computations show that $\xi$  is irregular, indeed does not
even satisfy the $\Delta_{1/2}$-condition, is not summable, but
$\xi_a = O(\omega  \und \xo)$ and hence by Lemma~\ref{lem:2.14}
(ii), $(\xi )^{oo} = (\xi )_a$.
\end{example}

The characterization of $I^{oo} = (\omega  \und
\frac{\eta}{\omega})$ provided by Lemma~\ref{lem:2.14}(ii) for
principal ideals $I = (\xi )$ extends to general ideals.

\begin{proposition} \label{prop:2.21}

For every ideal $I$, the characteristic set of $I^{oo}$ is given by
\[
\Sigma(I^{oo}) = \bigg\{\xi  \in  c_o^* \mid \xi  \leq \omega  \und \frac{\eta}{\omega} \text{ for some  } \eta \in  \Sigma(I)\bigg\}.
\]
\end{proposition}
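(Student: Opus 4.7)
The plan is to identify $I^{oo}$ with the ideal
\[
J := \bigcup \{(\omega \und \frac{\eta}{\omega}) \mid \eta \in \Sigma(I)\},
\]
the union over all generators of $I$ of the corresponding smallest am-open extensions furnished by Lemma~\ref{lem:2.14}(ii), and then to read the claimed description of $\Sigma(I^{oo})$ directly off of $\Sigma(J)$.

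First I will show that this collection is directed by inclusion. Given $\eta_1, \eta_2 \in \Sigma(I)$, their sum $\eta_3 := \eta_1 + \eta_2$ again lies in $\Sigma(I)$ (characteristic sets being closed under addition since the corresponding diagonal operators sum inside the ideal), and since $\eta_i \leq \eta_3$ pointwise, monotonicity of $\und$ yields $\omega\und\frac{\eta_i}{\omega} \leq \omega\und\frac{\eta_3}{\omega}$ pointwise, so $(\omega\und\frac{\eta_i}{\omega}) \subseteq (\omega\und\frac{\eta_3}{\omega})$ for $i=1,2$. With directedness established, Lemma~\ref{lem:2.1}(v) renders $J$ am-open. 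The inclusion $I \subseteq J$ is immediate from $(\eta) \subseteq (\omega\und\frac{\eta}{\omega}) \subseteq J$ for every $\eta \in \Sigma(I)$. Conversely, $I^{oo}$ is itself an am-open ideal containing each $(\eta)$, so by the minimality in Lemma~\ref{lem:2.14}(ii) it contains each $(\omega\und\frac{\eta}{\omega})$, giving $J \subseteq I^{oo}$. Thus $J = I^{oo}$.

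It remains to compute $\Sigma(J)$ and match it with the set $S$ on the right-hand side. As a directed union, $\Sigma(J) = \bigcup_{\eta \in \Sigma(I)} \Sigma((\omega\und\frac{\eta}{\omega}))$. The technical ingredient I will invoke is that the generator $\omega\und\frac{\eta}{\omega}$ is equivalent (up to a factor of $2$) to an arithmetic mean $\zeta_a$; this can be extracted from the smallest-concave-majorant construction in the proof of Lemma~\ref{lem:2.13}(iii)$\Rightarrow$(ii) and is recorded in Remark~\ref{rem:2.15}(iii). In particular $\omega\und\frac{\eta}{\omega}$ satisfies the $\Delta_{1/2}$-condition, hence $\xi \in \Sigma((\omega\und\frac{\eta}{\omega}))$ if and only if $\xi \leq M\,\omega\und\frac{\eta}{\omega} = \omega\und\frac{M\eta}{\omega}$ for some $M > 0$. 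Absorbing $M$ into $\eta$ (using that $\Sigma(I)$ is a cone), the union over $\eta$ yields exactly $S$.

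The main obstacle is the $\Delta_{1/2}$-property of $\omega\und\frac{\eta}{\omega}$, which is what converts the a priori description of a principal ideal in terms of ampliations into the sharper pointwise bound $\xi \leq M\omega\und\frac{\eta}{\omega}$ that actually appears in $S$; without it one would only get the weaker bound $\xi \leq M\,D_m\omega\und\frac{\eta}{\omega}$, so the stated description of $\Sigma(I^{oo})$ would fail as written. Everything else is a short bookkeeping exercise given Lemmas~\ref{lem:2.1}(v) and~\ref{lem:2.14}(ii).
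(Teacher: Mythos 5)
Your proposal is correct, and it takes a genuinely different route from the paper's proof. The paper works directly with the set on the right-hand side: it verifies by hand that it is a characteristic set (closure under addition, scalar multiplication, ampliation, heredity), then shows the resulting ideal $J$ is am-open via Corollary~\ref{cor:2.16} and equals $I^{oo}$. You instead build $I^{oo}$ as the directed union of the principal am-open ideals $(\omega\und\frac{\eta}{\omega})$, $\eta\in\Sigma(I)$, so that am-openness is automatic from Lemma~\ref{lem:2.1}(v), and then recover the pointwise description of $\Sigma(I^{oo})$ by observing that $\omega\und\frac{\eta}{\omega}$ satisfies the $\Delta_{1/2}$-condition, hence $\Sigma\big((\omega\und\frac{\eta}{\omega})\big)=\{\xi: \xi\leq M\,\omega\und\frac{\eta}{\omega}\ \text{for some}\ M>0\}$. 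This is a clean shortcut: directedness plus Lemma~\ref{lem:2.1}(v) replaces the paper's verification of the cone/ampliation axioms, and the $\Delta_{1/2}$ observation replaces the paper's explicit ampliation estimate $D_m\xi\leq D_m\omega\,\und D_m\frac{\eta}{\omega}\leq m\omega\und\frac{D_m\eta}{\omega}$. Both hinge on essentially the same quantitative fact about $\omega\und\frac{\eta}{\omega}$, but you invoke it once to conclude $\Delta_{1/2}$ (or, more concretely, one checks $D_2(\omega\und\frac{\eta}{\omega})\leq 2\,\omega\und\frac{\eta}{\omega}$ directly, using $D_2\omega\leq 2\omega$ and $D_2\und\phi=\und D_2\phi\leq\und\phi$ for monotone $\und\phi$), while the paper re-proves the estimate inline. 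Your appeal to Lemma~\ref{lem:2.14}(ii) for the minimality direction $J\subseteq I^{oo}$ matches the paper exactly. The only caution worth flagging is that you lean on Remark~\ref{rem:2.15}(iii) for the equivalence to an arithmetic mean; a direct verification of the $\Delta_{1/2}$-condition for $\omega\und\frac{\eta}{\omega}$ as above is shorter and avoids routing through the smallest-concave-majorant construction.
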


\begin{proof}
Let $\Sigma = \{\xi  \in  c_o^* \mid \xi  \leq \omega  \und \frac{\eta}{\omega}$ for some $\eta \in  \Sigma(I)\}$. 
First we show that $\Sigma$ is a characteristic set. Let $\xi, \rho \in  \Sigma$,
i.e., $\xi  \leq \omega  \und \frac{\eta}{\omega}$ and $\rho \leq \omega  \und \frac{\mu}{\omega}$ for some $\eta, \mu  \in \Sigma(I)$. 
Since $\omega  \und \frac{\eta}{\omega}+ \omega  \und \frac{\mu}{\omega}\leq 2\omega\frac{\eta+\mu}{\omega}$ and $\eta + \mu  \in  \Sigma(I)$, 
it follows that $\xi  + \rho \in  \Sigma$.
Moreover, if $\xi  \leq \omega  \und \frac{\eta}{\omega}$, then for all $m$,
\[
D_m\xi \leq D_m\omega D_m \und \frac{\eta}{\omega} = D_m\omega \und
D_m \frac{\eta}{\omega} \leq m\omega \und  \frac{D_m\eta}{\omega}
\]
and hence $D_m\xi  \in  \Sigma$,  i.e., $\Sigma$ is closed under
ampliations. Clearly, $\Sigma$ is also closed under multiplication
by positive scalars and it is hereditary. Thus $\Sigma$ is a
characteristic set and hence $\Sigma = \Sigma(J)$ for some ideal
$J$. Then $J \supset I$ follows from the inequality $\xi \leq \omega
\und \xo$. If $\eta \in  \Sigma(J)$, i.e., $\eta\leq \omega  \und
\xo$ for some $\xi  \in  \Sigma(I)$, then also $\omega  \und
\frac{\eta}{\omega} \leq \omega  \und \xo$ and hence $\omega  \und
\frac{\eta}{\omega} \in  \Sigma(J)$. By Corollary~\ref{cor:2.16},
this implies that $J$ is am-open and hence  $J \supset I^{oo} $. For
the reverse inclusion, if $\eta \in  \Sigma(J)$,
i.e., $\eta \leq
\omega  \und \xo$ for some $\xi  \in  \Sigma(I)$, then  $\omega \und
\xo \in  \Sigma((\xi )^{oo}) \subset  \Sigma(I^{oo})$ by
Lemma~\ref{lem:2.14}(ii). Thus $\eta \in  \Sigma(I^{oo})$, hence $J
\subset  I^{oo}$, and we have equality.
\end{proof}

\noindent As a consequence of this proposition and by the subadditivity of
``$\und$'', we see that $(I + J)^{oo} = I^{oo} + J^{oo}$ for any two
ideals $I$ and $J$.

For completeness' sake we collect in the following lemma the
distributivity properties of the $I^{oo}$ and $I_-$ operations.

\begin{lemma} \label{lem:2.22}
For all ideals $I$, $J$:

\item[(i)]
$I^{oo} + J^{oo} = (I + J)^{oo}$ (paragraph after
Proposition~\ref{prop:2.21})

\item[(ii)]
$I_-+J_- \subset  (I+J)_-$ and the inclusion can be proper (remarks after Theorem~\ref{thm:2.9}).

Let $\{I_\gamma , \gamma \in  \Gamma\}$ be a collection of ideals. Then

\item[(iii)]
$(\bigcap_\gamma I_\gamma)^{oo} \subset \bigcap_\gamma (I_\gamma)^{oo}$
(the inclusion can be proper by Example~\ref{ex:2.23}(i))

\item[(iv)]
$(\bigcap_\gamma I_\gamma)_- = \bigcap_\gamma (I_\gamma)_-$
(by Lemma~\ref{lem:2.2}(v))

If $\{I_\gamma , \gamma \in  \Gamma\}$ is directed by inclusion, then

\item[(v)]
$(\bigcup_\gamma I_\gamma)^{oo} = \bigcup_\gamma (I_\gamma)^{oo}$
(by Lemma~\ref{lem:2.1}(v))

\item[(vi)]
$(\bigcup_\gamma I_\gamma)_- \supset \bigcup_\gamma (I_\gamma)_-$
(the inclusion can be proper by Example~\ref{ex:2.23}(ii))
\end{lemma}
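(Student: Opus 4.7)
The plan is to invoke the extremality characterizations of $I^{oo}$ and $I_-$ (the smallest am-open ideal containing $I$, the largest am-closed ideal contained in $I$) together with the preservation principles already in hand: am-openness is preserved under finite sums (Lemma~\ref{lem:2.3}(v)) and directed unions (Lemma~\ref{lem:2.1}(v)); am-closedness is preserved under finite sums (Theorem~\ref{thm:2.5}), directed unions (Lemma~\ref{lem:2.1}(v)), and arbitrary intersections (Lemma~\ref{lem:2.2}(v)). A key auxiliary fact I would isolate first is the monotonicity of both operations: if $A \subset B$, then $A^{oo} \subset B^{oo}$ (since $B^{oo}$ is am-open and contains $A$), and $A_- \subset B_-$ (since $A_-$ is am-closed and contained in $B$). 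With these in place, every part of the lemma is essentially a one-line application of ``extremal object contained in / containing a preserved construction.''

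For (i), monotonicity yields $I^{oo}, J^{oo} \subset (I+J)^{oo}$, hence $I^{oo}+J^{oo} \subset (I+J)^{oo}$; conversely, $I^{oo}+J^{oo}$ is am-open by Lemma~\ref{lem:2.3}(v) and contains $I+J$, so contains $(I+J)^{oo}$. For (ii), $I_-+J_-$ is am-closed by Theorem~\ref{thm:2.5} and contained in $I+J$, so sits in $(I+J)_-$; properness comes from the sequences $\xi,\eta$ of Example~\ref{ex:2.4}(ii), for which Theorem~\ref{thm:2.9} gives $(\xi)_- = {}_a(\xi) = \{0\}$ and $(\eta)_- = {}_a(\eta) = \{0\}$, while $((\xi)+(\eta))_- = (\omega)_- \supset {}_a(\omega) = \mathcal{L}_1$.

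The intersection and union formulas (iii)--(vi) then follow in parallel. In (iii), monotonicity gives $(\bigcap_\gamma I_\gamma)^{oo} \subset (I_\gamma)^{oo}$ for every $\gamma$. In (iv), $\bigcap_\gamma (I_\gamma)_-$ is am-closed by Lemma~\ref{lem:2.2}(v) and contained in $\bigcap_\gamma I_\gamma$, hence in $(\bigcap_\gamma I_\gamma)_-$; conversely, $(\bigcap_\gamma I_\gamma)_- \subset I_\gamma$ is am-closed, so maximality of $(I_\gamma)_-$ forces $(\bigcap_\gamma I_\gamma)_- \subset (I_\gamma)_-$ for each $\gamma$, and we intersect. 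For (v) and (vi), the monotonicity observation ensures that the families $\{(I_\gamma)^{oo}\}$ and $\{(I_\gamma)_-\}$ inherit directedness from $\{I_\gamma\}$, so by Lemma~\ref{lem:2.1}(v) their unions are am-open (resp., am-closed); then $\bigcup_\gamma (I_\gamma)^{oo}$ is am-open and contains $\bigcup_\gamma I_\gamma$, giving $(\bigcup_\gamma I_\gamma)^{oo} \subset \bigcup_\gamma (I_\gamma)^{oo}$ with the reverse by monotonicity, while $\bigcup_\gamma (I_\gamma)_-$ is am-closed and contained in $\bigcup_\gamma I_\gamma$, giving $\bigcup_\gamma (I_\gamma)_- \subset (\bigcup_\gamma I_\gamma)_-$. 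I do not expect any real obstacle: the only non-trivial input is the preservation theorems already proved, and the only subtle point is verifying the directedness transfer, which is immediate from monotonicity.
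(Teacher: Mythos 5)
Your proposal is correct, and it reconstructs essentially the argument the paper intends: the lemma is stated in the paper as a catalogue of consequences, with each part pointing back to the preservation results (Theorem~\ref{thm:2.5}, Lemma~\ref{lem:2.1}(v), Lemma~\ref{lem:2.2}(v), Lemma~\ref{lem:2.3}(v)) and to Examples~\ref{ex:2.4} and \ref{ex:2.23}, exactly as you do. The one place your route genuinely diverges from the paper is part (i): the paper derives $I^{oo}+J^{oo}=(I+J)^{oo}$ from the explicit characteristic-set description of $I^{oo}$ in Proposition~\ref{prop:2.21} together with the subadditivity of ``$\und$'', whereas you bypass the sequence-level computation entirely and argue extremally (monotonicity in one direction, am-openness of $I^{oo}+J^{oo}$ via Lemma~\ref{lem:2.3}(v) in the other); both are short, but yours is slightly more uniform with the way you handle (ii)--(vi) and avoids any reference to $\und$. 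Your explicit observation that monotonicity of $(\cdot)^{oo}$ and $(\cdot)_-$ transfers directedness to $\{(I_\gamma)^{oo}\}$ and $\{(I_\gamma)_-\}$ is a point the paper leaves implicit, and it is the right thing to check before invoking Lemma~\ref{lem:2.1}(v) in (v) and (vi).
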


\begin{example} \label{ex:2.23}

\item[(i)]
The inclusion in (iii) can be proper even if $\Gamma$ is finite.
Indeed for the same construction as in Example~\ref{ex:2.4}(i),
$((\xi ) \cap (\eta))^{oo} = (\min(\xi , \eta))^{oo} = (\omega)$
since $\min (\xi ,\eta)$ is summable, while $\omega = o(\omega  \und
\xo)$ and $\omega = o(\omega  \und \frac{\eta}{\omega})$ since $\xi$
and $\eta$  are not summable. Thus $\omega = o(\min(\omega  \und \xo
,\omega\und \frac{\eta}{\omega}))$ and hence
\[
(\omega) \not\subset  \bigg(\omega  \und \xo\bigg) \cap \bigg(\omega
\und \frac{\eta}{\omega}\bigg)
=
(\xi )^{oo} \cap (\eta)^{oo}
\]

\item[(ii)]
The inclusion in (vi) can be proper. $\mathcal{L}_1$ as every ideal with the
exception of $\{0\}$ and $F$, is the directed union of distinct ideals
$I_\gamma$. Since $\mathcal{L}_1$ is the smallest am-closed ideal, $(I_\gamma )_- = \{0\}$ for
every $\gamma$. Thus $\mathcal{L}_1 = (\bigcup_\gamma   I_\gamma )_-$ while
$\bigcup_\gamma  (I_\gamma )_-= \{0\}$.
\end{example}

\section{Arithmetic Mean Ideals at Infinity}\label{sec:3}

The arithmetic mean is not adequate for distinguishing between
nonzero ideals contained in the trace-class since they all have the
same arithmetic mean $(\omega)$ and the same pre-arithmetic mean
$\{0\}$. The ``right''  tool for  ideals in the trace-class is
the arithmetic mean at infinity which was employed for sequences in
\cite{1, 7, 13, 21} among others.  For every summable sequence $\eta$,
\[
\eta_{a_\infty} := \langle \frac1n \sum^\infty_{n+1}\eta_j\rangle.
\]
Many of the properties of the arithmetic mean and of the am-ideals
have a dual form for the arithmetic mean at infinity but there are
also substantial differences often linked to the fact that contrary
to the am-case, the arithmetic mean at infinity $\xi_{a_\infty}$ of
a sequence $\xi  \in  \ell_1^*$  may fail to satisfy the
$\Delta_{1/2}$ condition and also may fail to majorize $\xi$  (in
fact, $\xi_{a_\infty}$ satisfies the $\Delta_{1/2}$ condition if and
only if $\xi  = O(\xi_{a_\infty})$, see \cite[Corollary~4.4]{10}).
Consequently the results and proofs tend to be harder.

In \cite{10} we defined for every ideal $I \neq \{0\}$ the
am-$\infty$ ideals $_{a_\infty} I$ (pre-arithmetic mean at infinity)
and $I_{a_\infty}$ (arithmetic mean at infinity) with characteristic
sets:
\begin{align*}
\Sigma(_{a_\infty} I) &= \{\xi  \in  \ell_1^*  \mid \xi_{a_\infty}
\in  \Sigma(I)\}
\\
\Sigma(I_{a_\infty}) &= \{\xi  \in  c_o^* \mid \xi  = O(\eta_{a_\infty})
\text{ for some } \eta \in  \Sigma(I \cap \mathcal{L}_1)\}
\end{align*}
Notice that $\xi_{a_\infty}= o(\omega)$ for all  $\xi  \in  \ell_1^*$.
Let $\se (\omega)$ denote the ideal with characteristic set $\{\xi
\in  c_o^* \mid \xi  = o(\omega)\}$ (see Definition~\ref{def:4.1}
for the soft-interior $\se I$ of a general ideal $I$). Thus
\[
_{a_\infty} I = \,_{a_\infty} (I \cap \se (\omega)) \subset
\mathcal{L}_1 \qquad \text{and}\qquad I_{a_\infty} = (I \cap
\mathcal{L}_1)_{a_\infty} \subset  \se (\omega).
\]
In \cite[Corollary~4.10]{10} we defined an ideal $I$ to be am-$\infty$
stable if $I = \text{$_{a_\infty}$}I$
(or, equivalently, if $I\subset
\mathcal{L}_1$ and $I = I_{a_\infty})$. There is a largest
am-$\infty$  stable ideal, namely the lower stabilizer at infinity of 
$\mathcal{L}_1$, $st_{a_\infty} (\mathcal{L}_1)
=\bigcap^\infty_{n=0}{} _{a_\infty^n}(\mathcal{L}_1)$, which
together with the smallest nonzero am-stable ideal
$st^a(\mathcal{L}_1)$ defined earlier plays an important role in
\cite{10}.

Natural analogs to the am-interior and am-closure are the
am-$\infty$  interior  of an ideal $I$
\[
I^{o\infty} := (_{a_\infty} I)_{a_\infty}= (I
\cap \se (\omega))^{o\infty}
\]
  and the am-$\infty$  closure  of an ideal $I$
\[
I^{-\infty} := \,_{a_\infty}(I_{a_\infty}) = (I \cap
\mathcal{L}_1)^{-\infty}.
\]
 We call an ideal $I$
am-$\infty$  open (resp., am-$\infty$  closed) if $I = I^{o\infty}$
(resp., $I = I^{-\infty}$).

In \cite[Proposition~4.8]{10} we proved the analogs of the 5-chain
of inclusions for am-ideals (see Section~\ref{sec:2} paragraph 5 and
\cite[Section 2]{10}):
\[
_{a_\infty}I \subset  I^{o\infty}  \subset I \cap \se (\omega)
\]
and
\[
I \cap \mathcal{L}_1 \subset  I^{-\infty}  \subset  I_{a_\infty}\cap
\mathcal{L}_1
\]
and the idempotence of the maps $I\to  I^{o\infty}$  and $I\to
I^{-\infty}$, a consequence of the more general identities
\[
_{a_\infty} I = \,_{a_\infty}((_{a_\infty}I)_{a_\infty})\qquad
\text{and}\qquad I_{a_\infty}=
(_{a_\infty}(I_{a_\infty}))_{a_\infty}.
\]
Thus, like in the am-case, an ideal $I$ is am-open (resp.,
am-$\infty$ closed) if and only if there is an ideal $J$ such that
$I = J_{a_\infty}$ (resp., $I = \text{$_{a_\infty}$}J$). As
$(\mathcal{L}_1)_{a_\infty}  = \se (\omega)$ and $\,_{a_\infty}  \se
(\omega) = \mathcal{L}_1$ (see \cite[Lemma~4.7, Corollary~4.9]{10}), $\se
(\omega)$ and $\mathcal{L}_1$ are, respectively, the largest
am-$\infty$  open and the largest am-$\infty$  closed ideals. The
finite rank ideal $F$ is am-$\infty$  stable and hence it is the smallest nonzero am-$\infty$  open ideal
and the smallest nonzero am-$\infty$  closed ideal. Moreover, every
nonzero ideal with the exception of $F$ contains a nonzero principal
am-$\infty$  stable ideal (hence both am-$\infty$  open and
am-$\infty$  closed) distinct from $F$ \cite{12}. Contrasting these
properties for the am-$\infty$  case with the properties for the am
case, $(\omega)$ is the smallest nonzero am-open ideal, while $\mathcal{L}_1$ is
the smallest nonzero am-closed ideal, and every principal ideal is
contained in an am-stable principal ideal (hence both am-open and
am-closed) and so there are no proper largest am-closed or am-open
ideals.

We leave to the reader to verify that the exact analogs of Lemmas \ref{lem:2.1}, \ref{lem:2.2} and \ref{lem:2.3} hold for the am-$\infty$  case. 
Here Theorem \ref{thm:3.2} plays the role of Theorem \ref{thm:2.5} for the equality in Lemma \ref{lem:2.3}(iv)
and Theorem \ref{thm:3.11} plays the role of Theorem \ref{thm:2.17} for the equality in Lemma \ref{lem:2.2}(iii). 
The same counterexample to equality in Lemma \ref{lem:2.2}. 
(ii) given in Example \ref{ex:2.4}(i) provides a counterexample to the equality in the analog am-$\infty$ case: by \cite[Lemma~4.7]{10}, 
$((\xi ) \cap (\eta))_{a_\infty} = (\min(\xi , \eta))_{a_\infty} = ((\min(\xi , \eta))_{a_\infty})$ 
while $(\xi )_{a_\infty} = (\eta)_{a_\infty} = \se(\omega)$. 
The counterexample to the equality in Lemma \ref{lem:2.3}(iii) and hence (ii) given in
Example \ref{ex:2.4}(ii) provides also a counterexample to the same
equalities in the am-$\infty$  analogs, but we postpone verifying
that until after Lemma \ref{lem:3.9}.

The distributivity of the am-$\infty$  closure over finite sums,
i.e., the am-$\infty$ 
analog of Theorem~\ref{thm:2.5}, also holds,
but for its proof we no longer can depend on the theory of
substochastic matrices. Instead we will use the following finite
dimensional lemma and then we will extend it to the infinite
dimensional case via the $w^*$ compactness of the unit ball of
$\ell_1$.

\begin{lemma} \label{lem:3.1}
Let  $\xi, \eta$, and $\mu  \in  [0, \infty)^n$ for some $n \in
\mathbb{N}$.  If for all $1 \leq k \leq n$, $\sum_{j=1}^k \eta_j + \sum_{j=1}^k \mu_j \leq
\sum_{j=1}^k \xi_j$,  then there exist
$\tilde{\eta}$ and $\tilde{\mu} \in  [0, \infty)^n$ for which $\xi
= \tilde{\eta} + \tilde{\mu}$, $\sum_{j=1}^k \eta_j \leq \sum_{j=1}^k
\tilde{\eta}_j $, and $\sum_{j=1}^k \mu_j \leq \sum_{j=1}^k
\tilde{\mu}_j$ for all $1\leq k \leq n$.
\end{lemma}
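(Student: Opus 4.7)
My plan is to avoid induction and instead give an explicit closed-form construction of $\tilde\eta$ and $\tilde\mu$. Abbreviate $S_k := \sum_{j=1}^k \xi_j$, $E_k := \sum_{j=1}^k \eta_j$, $M_k := \sum_{j=1}^k \mu_j$, with $S_0 = E_0 = M_0 = 0$; the hypothesis then reads $E_k + M_k \le S_k$ for $1 \le k \le n$.

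I will define the prospective partial sums of $\tilde\mu$ by the backward minimum
\[
\tilde M_k \;:=\; \min_{k \le j \le n}(S_j - E_j) \quad (1 \le k \le n), \qquad \tilde M_0 := 0,
\]
and then set $\tilde\mu_k := \tilde M_k - \tilde M_{k-1}$ and $\tilde\eta_k := \xi_k - \tilde\mu_k$. The identity $\tilde\eta_k + \tilde\mu_k = \xi_k$ is then built in.

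The four items to check are routine. First, $\tilde M_k \ge M_k$: for every $j \ge k$ the hypothesis gives $S_j - E_j \ge M_j \ge M_k$, so the minimum defining $\tilde M_k$ is at least $M_k$, i.e., $\sum_{j=1}^k \tilde\mu_j \ge M_k$. Second, $\sum_{j=1}^k \tilde\eta_j = S_k - \tilde M_k \ge E_k$, obtained by using the $j=k$ term in the minimum. Third, $\tilde\mu_k \ge 0$, which is monotonicity of $\tilde M_k$; this holds because the minimum is taken over a shrinking index set. Fourth, $\tilde\eta_k \ge 0$ amounts to $\tilde M_k - \tilde M_{k-1} \le \xi_k$; this is the only computation of substance, handled by splitting into the trivial case $\tilde M_{k-1} = \tilde M_k$ (difference zero) and the case $\tilde M_{k-1} = S_{k-1} - E_{k-1}$, in which
\[
\tilde M_k - \tilde M_{k-1} \;\le\; (S_k - E_k) - (S_{k-1} - E_{k-1}) \;=\; \xi_k - \eta_k \;\le\; \xi_k.
\]

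The hard part is not the verifications but arriving at the right definition of $\tilde M_k$. The guiding intuition is that $\tilde M_k$ should be the largest non-decreasing sequence with $\tilde M_k \le S_k - E_k$ at every $k$ (so that the complementary sequence $\tilde E_k := S_k - \tilde M_k$ automatically dominates $E_k$); an easy check shows that this optimum is exactly the backward minimum above. With that definition in hand, the standing hypothesis $E_k + M_k \le S_k$ enters only in the first item, where it is used to guarantee that $\tilde M_k$ also dominates $M_k$.
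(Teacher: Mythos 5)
Your proof is correct, and it takes a genuinely different route from the paper. The paper argues by induction on $n$: it scales $\xi$ down by the factor $\gamma = \max_k (E_k+M_k)/S_k$, observes that the maximum is attained at some index $k$ where the scaled inequality becomes an equality, splits the sequences at that index, applies the inductive hypothesis to each block, concatenates, and rescales by $1/\gamma$. Your proof instead gives a closed-form construction: you define the partial sums of $\tilde\mu$ directly as the backward running minimum $\tilde M_k = \min_{k\le j\le n}(S_j - E_j)$, and then every required inequality falls out of a one-line computation. I re-checked all four verifications, including the boundary case $k=1$ (where $\tilde\mu_1 = \tilde M_1 \le S_1 - E_1 = \xi_1 - \eta_1 \le \xi_1$, and $\tilde M_1 \ge M_1 \ge 0$), and they hold. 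The trade-off is roughly this: the paper's induction is shorter to state if one is comfortable trusting the recursion but hides the structure of the final object; your construction is explicit, makes the extremal role of $\tilde M$ transparent (it is the largest nondecreasing sequence lying under $S-E$, as you note), and is arguably easier to audit. Both use the hypothesis $E_k + M_k \le S_k$ in exactly one spot — the paper to show $\gamma\le 1$, you to show $\tilde M_k \ge M_k$.
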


\begin{proof}
The proof is by induction on $n$. The case $n = 1$ is trivial, so
assume the property is true for all integers less than equal to $n
-1$. Assume without loss of generality that $\sum_{j=1}^k \xi_j> 0$
for all $1 \leq k \leq n$ and let
\[
\gamma   = \max_{1\leq k\leq n}
\frac{\sum_{j=1}^k \eta_j + \sum_{j=1}^k \mu_j}{\sum_{j=1}^k \xi_j},
\]
which maximum $\gamma \leq 1$ is achieved for some $k$. Then
\[
\sum_{j=1}^m \eta_j + \sum_{j=1}^m \mu_j \leq \gamma \sum_{j=1}^m
\xi_j \quad\text{for all } 1 \leq m \leq k,
\]
with equality holding for $m = k$, so also 
\[
 \sum_{j=k+1}^m \eta_j +\sum_{j=k+1}^m \mu_j\leq \gamma
\sum_{j=k+1}^m \xi_j \quad \text{for all } k +1 \leq m \leq
n.
\]
Thus if we apply the induction hypothesis
separately to the truncated sequences $\gamma  \xi \chi_{[1, k]}$,
$\eta \chi_{[1, k]}$ and $\mu \chi_{[1, k]}$ and to $\gamma  \xi
\chi_{[k+1, n]}$, $\eta \chi_{[k+1, n]}$, and $\mu \chi_{[k+1, n]}$
we obtain that $\gamma  \xi \chi_{[1, k]} = \rho + \sigma$ for two sequences
$\rho, \sigma \in  [0, \infty)^k$ for which $\sum_{j=1}^m \eta_j \leq
\sum_{j=1}^m \rho_j$ and $\sum_{j=1}^m \mu_j \leq \sum_{j=1}^m
\sigma_j$  for all $1\leq m \leq k$. Similarly $\gamma  \xi
\chi_{[k+1, n]} = \rho' + \sigma' $ for two sequence $ \rho' , \sigma' \in  [0, \infty)^{n-k}$ and
$\sum_{j=k+1}^m \eta_j \leq \sum_{j=k+1}^m \rho'_j$, $\sum_{j=k+1}^m
\mu_j \leq \sum_{j=k+1}^m \sigma'_j$ for all $k+1\leq m \leq n$. But
then it is enough to define $\tilde{\eta} = \frac{1}{\gamma} \langle
\rho, \rho'\rangle$ and $\tilde{\mu} = \frac{1}{\gamma} \langle
\rho, \rho'\rangle$ and verify that it satisfies the required
condition.
\end{proof}

\begin{theorem} \label{thm:3.2}
$(I + J)^{-\infty}  = I^{-\infty} + J^{-\infty}$ for all ideals $I$, $J$. 
 
In particular, the sum of two am-$\infty$ closed ideals is am-$\infty$ closed.
\end{theorem}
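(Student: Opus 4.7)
The plan is to mirror the structure of the proof of Theorem~\ref{thm:2.5}, using Lemma~\ref{lem:3.1} at the finite level in place of Markus's lemma and passing to the infinite case via the $w^*$-compactness of the unit ball of $\ell_1 = c_0^*$. The inclusion $I^{-\infty} + J^{-\infty} \subset (I+J)^{-\infty}$ is the am-$\infty$ analog of Lemma~\ref{lem:2.3}(iv) and is elementary. For the reverse inclusion, take $\xi \in \Sigma((I+J)^{-\infty})$. Unwinding the definitions (at the level of Calkin sequence spaces) one extracts $\rho \in \Sigma(I) \cap \ell_1^*$ and $\eta \in \Sigma(J) \cap \ell_1^*$ such that, after absorbing a multiplicative constant, $\sum_{j>k}\xi_j \leq \sum_{j>k}(\rho+\eta)_j$ for every $k \geq 0$. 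The aim is then to produce a pointwise decomposition $\xi = \tilde\rho + \tilde\eta$ with $\tilde\rho, \tilde\eta \geq 0$ and $\sum_{j>k}\tilde\rho_j \leq \sum_{j>k}\rho_j$, $\sum_{j>k}\tilde\eta_j \leq \sum_{j>k}\eta_j$ for every $k$, for then $(\tilde\rho^*)_{a_\infty} \leq \rho_{a_\infty}$ and $(\tilde\eta^*)_{a_\infty} \leq \eta_{a_\infty}$ (monotone rearrangement does not increase tail sums), placing $\tilde\rho \in S(I^{-\infty})$ and $\tilde\eta \in S(J^{-\infty})$ and giving $\xi \in S(I^{-\infty} + J^{-\infty}) = \Sigma(I^{-\infty} + J^{-\infty})$.

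The key difficulty is that Lemma~\ref{lem:3.1} is phrased in terms of head sums while our hypothesis is on tail sums, and a naive truncation to $[1,N]$ destroys the inequality, since mass of $\rho+\eta$ past position $N$ can be essential in dominating the tail of $\xi$ inside $[1,N]$. The identity $\sum_{j=1}^k \alpha_j = \|\alpha\|_1 - \sum_{j>k}\alpha_j$ shows that when total sums are equalized, a tail-sum inequality in one direction is equivalent to a head-sum inequality in the opposite direction. With $X := \|\rho\|_1 + \|\eta\|_1 - \|\xi\|_1 \geq 0$, for each $N$ form sequences of length $N+1$ by prepending the mass $X$ to $\xi$ and $0$ to each of $\rho,\eta$, and by lumping the tails past position $N-1$ into a single final coordinate. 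The resulting triple $(\xi^{(N)},\rho^{(N)},\eta^{(N)})$ has matching total sums, and a direct computation yields $\sum_{j=1}^k(\rho^{(N)}+\eta^{(N)})_j \leq \sum_{j=1}^k \xi^{(N)}_j$ for every $k$. Lemma~\ref{lem:3.1} then supplies $\xi^{(N)} = \tilde\rho^{(N)} + \tilde\eta^{(N)}$ with head-sum lower bounds for each piece; equality of total sums forces $\|\tilde\rho^{(N)}\|_1 = \|\rho^{(N)}\|_1$ and $\|\tilde\eta^{(N)}\|_1 = \|\eta^{(N)}\|_1$, which converts those head-sum lower bounds into the required tail-sum upper bounds.

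Re-embedding into $\mathbb{N}$, with the lumped mass distributed proportionally to $\xi_j$ over $j \geq N$ so that pointwise additivity is preserved, yields nonnegative $\tilde\rho^{(N)},\tilde\eta^{(N)} \in \ell_1$ with $\tilde\rho^{(N)}_j + \tilde\eta^{(N)}_j = \xi_j$ and $\sum_{j>k}\tilde\rho^{(N)}_j \leq \sum_{j>k}\rho_j$, $\sum_{j>k}\tilde\eta^{(N)}_j \leq \sum_{j>k}\eta_j$ for all $k \leq N-1$. Since $\|\tilde\rho^{(N)}\|_1 \leq \|\rho\|_1$ and $\|\tilde\eta^{(N)}\|_1 \leq \|\eta\|_1$ uniformly, by $w^*$-compactness of the unit ball of $\ell_1 = c_0^*$ we pass to a subsequence along which $\tilde\rho^{(N_\ell)} \to \tilde\rho$ and $\tilde\eta^{(N_\ell)} \to \tilde\eta$ coordinatewise; then $\tilde\rho,\tilde\eta \geq 0$, $\tilde\rho + \tilde\eta = \xi$ pointwise, and Fatou's lemma passes the tail-sum inequalities to the limit, completing the argument. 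The principal obstacle is precisely the prepend-and-lump construction that bridges Lemma~\ref{lem:3.1} with the tail-sum form of our hypothesis; once it is in place the remaining compactness argument is routine.
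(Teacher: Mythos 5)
Your proposal reproduces the paper's strategy — Lemma~\ref{lem:3.1} at the finite level, a total-sum normalization to turn the tail-sum hypothesis into the head-sum form suited to Lemma~\ref{lem:3.1}, and $w^*$-compactness of the $\ell_1$ ball to pass to the limit — but with an extra layer of machinery and one gap that needs filling. The gap: you assert $X := \|\rho\|_1 + \|\eta\|_1 - \|\xi\|_1 \ge 0$, but this does not follow from $\xi_{a_\infty} \le (\rho+\eta)_{a_\infty}$, because the arithmetic mean at infinity gives no control over $\xi_1$ against $\rho_1 + \eta_1$. For instance $\xi = \langle 2,1,0,\ldots\rangle$, $\rho = \langle 1,1,0,\ldots\rangle$, $\eta = 0$ has $\xi_{a_\infty} = \rho_{a_\infty}$ but $\|\xi\|_1 = 3 > 2 = \|\rho\|_1$, so $X = -1 < 0$ and your prepend step would insert negative mass. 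To fix this, the scaling constant you ``absorb'' into $\rho, \eta$ must also be chosen large enough that $\|\xi\|_1 \le \|\rho\|_1 + \|\eta\|_1$, which is possible since all masses are finite and $\rho + \eta = 0$ is the trivial finite-rank case; but that is a strictly stronger requirement than merely converting the defining $O(\cdot)$ into $\le$, and it should be stated.

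Granting that fix, your prepend-and-lump device does work, but it is heavier than necessary. The paper's normalization is cheaper: increase $\xi_1$ or $\eta_1$ (whichever side is short) to equalize the total masses outright. Since $a_\infty$ does not see the first entry, this preserves the hypothesis $\xi_{a_\infty} \le \eta_{a_\infty} + \mu_{a_\infty}$, and since adding a constant to the first entry of a monotone sequence keeps it inside a hereditary characteristic set, the summands stay in $\Sigma(I\cap\mathcal{L}_1)$ and $\Sigma(J\cap\mathcal{L}_1)$. After this modification one has $\eta_a + \mu_a \le \xi_a$ on the nose and can apply Lemma~\ref{lem:3.1} directly to the plain truncations $\xi\chi_{[1,n]}$, $\eta\chi_{[1,n]}$, $\mu\chi_{[1,n]}$. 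Your artificial prepended coordinate, lumped final coordinate, re-embedding, and proportional redistribution of the lumped mass are thereby rendered unnecessary.
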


\begin{proof}
Let $\xi  \in  \Sigma((I + J)^{-\infty})$, i.e., $\xi_{a_\infty} \leq (\eta  + \mu )_{a_\infty}  = \eta_{a_\infty}  + \mu_{a_\infty}$
for some $\eta \in  \Sigma(I \cap \mathcal{L}_1)$ and $\mu  \in \Sigma(J \cap \mathcal{L}_1)$. 
By increasing if necessary the values of $\xi_1$ or $\eta_1$, 
we can assume that
$\sum_{j=1}^\infty \xi_j= \sum_{j=1}^\infty \eta_j+ \sum_{j=1}^\infty \mu_j$ and hence $\eta_a + \mu_a \leq \xi_a$. 
By applying Lemma~\ref{lem:3.1} to the truncated sequences $\xi \chi_{[1, n]}$, $\eta \chi_{[1, n]}$, and $\mu \chi_{[1, n]}$,
we obtain two sequences 
\[
\eta^{(n)}:=\langle \eta^{(n)}_1, \eta^{(n)}_2,
\ldots, \eta^{(n)}_n, 0,0, \ldots\rangle \quad \text{and} \quad  \mu^{(n)}:=\langle
\mu^{(n)}_1, \mu^{(n)}_2, \ldots, \mu^{(n)}_n, 0,0, \ldots\rangle
\]
for which $\xi_j = \eta_j^{(n)} + \mu_j^{(n)}$ for all $1 \leq j
\leq n$ and 
\[
\sum_{j=1}^m \eta_j \leq \sum_{j=1}^m \eta_j^{(n)} \quad \text{and} \quad
\sum_{j=1}^m \mu_j \leq \sum_{j=1}^m \mu_j^{(n)} \quad \text{ for all } m \leq
n.
\] 
Since $0 \leq \eta^{(n)}$ and $\mu^{(n)}\leq \xi$, by the
sequential compacteness of the unit ball of $\ell_1$ in the
$w^*$-topology (as dual of $c_o$), we can find converging
subsequences $\eta^{(n_k)}\underset{w^*}{\to} \tilde{\eta}$,
$\mu^{(n_k)}\underset{w^*}{\to}\tilde{\mu}$. It is now easy to
verify that $\xi  = \tilde{\eta} + \tilde{\mu}$, that
$\tilde{\eta}\geq 0$, $\tilde{\mu}\geq 0$, and that $\sum_{j=1}^n
\eta_j \leq \sum_{j=1}^n \tilde{\eta}_j$ and $\sum_{j=1}^n \mu_j
\leq \sum_{j=1}^n \tilde{\mu}_j$ for all $n$. 
It follows from
$\sum_{j=1}^\infty \xi_j= \sum_{j=1}^\infty \eta_j + \sum_{j=1}^\infty \mu_j$ 
 that  
$\sum_{j=1}^\infty
\tilde{\eta}_j = \sum_{j=1}^\infty \eta_j$ and $\sum_{j=1}^\infty
\tilde{\mu}_j = \sum_{j=1}^\infty \mu_j$, and hence 
$\sum_{j=n}^\infty \tilde{\eta}_j \leq \sum_{j=n}^\infty \eta_j$ and 
$\sum_{j=n}^\infty \tilde{\mu}_j \leq \sum_{j=n}^\infty \mu_j$ for all $n$. 
Let $\tilde{\eta}^*$, $\tilde{\mu}^*$ be the decreasing rearrangement of $\tilde{\eta}$ and $\tilde{\mu}$. 
Since $\sum_{j=n}^\infty \tilde{\eta}_j^* \leq \sum_{j=n}^\infty \tilde{\eta}_j$ for every $n$,
it follows that $(\tilde{\eta}^*)_{a_\infty} \leq \eta_{a_\infty}$, i.e., $\tilde{\eta}^* \in \Sigma(I^{-\infty})$. 
Thus $\tilde{\eta} \in  S(I^{-\infty})$.
Similarly, $\tilde{\mu} \in  S(J^{-\infty})$. 
But then $\xi  \in S(I^{-\infty}) + S(J^{-\infty}) = S(I^{-\infty} + J^{-\infty})$,
which proves that $\xi \in \Sigma (I^{-\infty} + J^{-\infty})$ and hence 
$(I + J)^{-\infty}  \subset  I^{-\infty}  + J^{-\infty}$. 
Since the am-$\infty$ closure
operation preserves inclusions, 
$I^{-\infty} + J^{-\infty} \subset (I + J)^{-\infty}$, 
concluding the proof.
\end{proof}

As a consequence, as in the am-case the collection of all the
am-$\infty$  closed ideals contained in an ideal $I$ is directed and
hence its union is an am-$\infty$  closed ideal by the am-$\infty$
analog of Lemma~\ref{lem:2.1}(v).

\begin{corollary} \label{cor:3.3} For every ideal $I$,
$
I_{-\infty}  := \bigcup\,\{J \mid J \subset I \text{ and } J \text{ is am-$\infty$ closed}\}
$
is the largest am-closed ideal contained in $I$.
\end{corollary}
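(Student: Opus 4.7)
The plan is to mimic exactly the argument used in the am-case to obtain Corollary~\ref{cor:2.6} from Theorem~\ref{thm:2.5}, which is essentially the roadmap already sketched in the sentence preceding the statement. Let $\mathcal{F} := \{J \mid J \subset I \text{ and } J \text{ is am-}\infty \text{ closed}\}$; this family is nonempty since $\{0\} \in \mathcal{F}$. I first verify that $\mathcal{F}$ is upward directed by inclusion: given $J_1, J_2 \in \mathcal{F}$, the sum $J_1 + J_2$ is again an ideal contained in $I$, and by Theorem~\ref{thm:3.2} (applied to $J_1, J_2$) we have $(J_1 + J_2)^{-\infty} = J_1^{-\infty} + J_2^{-\infty} = J_1 + J_2$, so $J_1 + J_2 \in \mathcal{F}$ and bounds both.

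Next I invoke the am-$\infty$ analog of Lemma~\ref{lem:2.1}(v), whose proof is identical to the am-case: if $\{J_\gamma\}$ is a family of am-$\infty$ closed ideals directed by inclusion, then
\[
\bigg(\bigcup_\gamma J_\gamma\bigg)^{-\infty} = \bigcup_\gamma J_\gamma^{-\infty} = \bigcup_\gamma J_\gamma,
\]
where the first equality follows by a routine characteristic-set computation (any $\xi \in \Sigma((\bigcup_\gamma J_\gamma)^{-\infty})$ satisfies $\xi_{a_\infty} \leq \eta_{a_\infty}$ for some $\eta$ in the characteristic set of a single $J_\gamma$, using that $\eta \in \Sigma(J_\gamma \cap \mathcal{L}_1)$ for some $\gamma$ in the directed family). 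Applying this to $\mathcal{F}$ shows that $I_{-\infty} = \bigcup_{J \in \mathcal{F}} J$ is itself am-$\infty$ closed.

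Finally, $I_{-\infty} \subset I$ is clear from the definition, and $I_{-\infty}$ contains every member of $\mathcal{F}$ by construction, so it is the largest am-$\infty$ closed ideal contained in $I$. No real obstacle is expected here: the entire content of the corollary rests on Theorem~\ref{thm:3.2} (which underwrites directedness) and on the routine distributivity of am-$\infty$ closure over directed unions, both already available.
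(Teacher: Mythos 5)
Your proof is correct and follows exactly the same route as the paper: directedness of the family of am-$\infty$ closed ideals contained in $I$ via Theorem~\ref{thm:3.2}, then stability of am-$\infty$ closedness under directed unions (the am-$\infty$ analog of Lemma~\ref{lem:2.1}(v)), and the maximality is immediate. The paper itself only gives this one-sentence sketch before the corollary, and you have merely unpacked it faithfully.
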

\noindent Notice that $I_{-\infty}  \subset I \cap \mathcal{L}_1 \subset
I^{-\infty}$  and $I$ is am-$\infty$  closed if and only if
$I_{-\infty}  = I$ if and only if $I = I^{-\infty}$. Moreover,
$_{a_\infty} I$ is am-$\infty$  closed, so $_{a_\infty} I \subset
I_{-\infty}$. The inclusion can be proper: consider any ideal $I$
that is am-$\infty$  closed but not am-$\infty$ stable, e.g.,
$\mathcal{L}_1$. Analogously to the am-case, we can identify
$I_{-\infty}$ for $I$ countably generated.

\begin{theorem} \label{thm:3.4}
If $I$ is a countably generated ideal, then $I_{-\infty} = \text{$_{a_\infty}$}I$.
\end{theorem}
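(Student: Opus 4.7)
The plan is to adapt the proof of Theorem~\ref{thm:2.9} to the am-$\infty$ setting. The inclusion ${}_{a_\infty}I \subset I_{-\infty}$ is immediate from Corollary~\ref{cor:3.3}: ${}_{a_\infty}I$ is contained in $I$ and am-$\infty$ closed (being of the form ${}_{a_\infty}J$), so it sits inside the maximal such ideal. The content lies in the reverse inclusion.

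For $I_{-\infty} \subset {}_{a_\infty}I$, let $\eta \in \Sigma(I_{-\infty})$. Since $I_{-\infty} \subset I \cap \mathcal{L}_1$, automatically $\eta \in \ell_1^*$, so no summable/non-summable dichotomy is needed in contrast to Theorem~\ref{thm:2.9}. The goal is to show $\eta_{a_\infty} \in \Sigma(I)$, which I prove by contradiction. Assuming $\eta_{a_\infty} \notin \Sigma(I)$, pass as in Lemma~\ref{lem:2.8} to a sequence of generators $\rho^{(k)}$ for $\Sigma(I)$ with $\rho^{(k)} \leq \rho^{(k+1)}$, closed under ampliations, so that $\xi \in \Sigma(I)$ iff $\xi = O(\rho^{(m)})$ for some $m$. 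Writing $t_n := \sum_{j > n}\eta_j$, I select increasing indices $n_k$ (with $n_0 := 0$) subject to the two conditions $(\eta_{a_\infty}/\rho^{(k)})_{n_k} \geq k$ and $t_{n_k} \leq \frac12 t_{n_{k-1}}$. These are simultaneously realizable because the first condition defines an infinite set (since $\eta_{a_\infty} \neq O(\rho^{(k)})$ for each $k$) while the second defines a terminal segment of integers.

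Define $\xi_j := (\eta_{a_\infty})_{n_k}$ for $n_{k-1} < j \leq n_k$. Then $\xi \in c_o^*$ by monotonicity of $\eta_{a_\infty}$, and
\[
\sum_j \xi_j \leq \sum_k n_k (\eta_{a_\infty})_{n_k} = \sum_k t_{n_k} \leq 2\, t_{n_1} < \infty
\]
by the geometric tail decay, so $\xi \in \ell_1^*$. For $n \in (n_{k-1}, n_k]$ the estimate
\[
\sum_{j > n}\xi_j \leq (n_k - n)(\eta_{a_\infty})_{n_k} + \sum_{l > k}t_{n_l} \leq t_{n_k} + t_{n_k} = 2t_{n_k} \leq 2t_n
\]
yields $\xi_{a_\infty} \leq 2\eta_{a_\infty}$, so $\xi \in \Sigma((\eta)^{-\infty}) \subset \Sigma(I_{-\infty}) \subset \Sigma(I)$. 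On the other hand, $\xi_{n_k} = (\eta_{a_\infty})_{n_k} \geq k\rho^{(k)}_{n_k}$ gives $\xi_{n_k}/\rho^{(m)}_{n_k} \geq k$ for every fixed $m$ and all $k \geq m$, so $\xi \neq O(\rho^{(m)})$ and thus $\xi \notin \Sigma(I)$, the desired contradiction.

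The main subtlety, and the only structural difference from the am-case, is arranging the tail-halving condition $t_{n_k} \leq \frac12 t_{n_{k-1}}$ jointly with the non-domination condition; this condition plays here the role of the block-average condition $\frac{1}{n_k - n_{k-1}}\sum_{i=n_{k-1}+1}^{n_k}\eta_i \geq \frac12 (\eta_a)_{n_k}$ used in Theorem~\ref{thm:2.9}, and ensures the geometric decay of the tails that controls both the summability of $\xi$ and the comparison $\xi_{a_\infty} \leq 2\eta_{a_\infty}$.
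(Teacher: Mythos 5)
Your proof is correct and follows essentially the same route as the paper's: the same contradiction setup, the same choice of increasing generators $\rho^{(k)}$, the same block-constant test sequence $\xi_j = (\eta_{a_\infty})_{n_k}$ on $(n_{k-1},n_k]$, and the same pair of conditions on the indices $n_k$ (your tail-halving condition $t_{n_k}\leq\frac12 t_{n_{k-1}}$ is algebraically identical to the paper's $\sum_{j=n_{k-1}+1}^{n_k}\eta_j\geq\frac12\sum_{j>n_{k-1}}\eta_j$). The only cosmetic difference is the bookkeeping in the final tail estimate, where you obtain $\xi_{a_\infty}\leq 2\eta_{a_\infty}$ versus the paper's factor of $3$; both constants suffice.
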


\begin{proof}
Let $\eta \in  \Sigma(I_{-\infty})$. Since $I_{-\infty} \subset \mathcal{L}_1$, the largest am-$\infty$ closed ideal, $\eta \in \ell_1^*$. 
We claim that $\eta_{a_\infty}  \in  \Sigma(I)$, i.e., $\eta \in  \Sigma(_{a_\infty} I)$. 
This will prove that $I_{-\infty} \subset  \text{$_{a_\infty}$}I$ and hence the equality. 
Assume by contradiction that $\eta_{a_\infty}  \notin \Sigma(I)$ and as in the proof of Lemma~\ref{lem:2.8}, 
choose a sequence of generators $\rho^{(k)}$ for $\Sigma(I)$ with $\rho^{(k)} \leq \rho^{(k+1)}$ and so that for every $\xi  \in  \Sigma(I)$, 
$\xi  = O(\rho^{(m)})$ for some $m \in \mathbb{N}$. 
Then there is an increasing sequence of indices $n_k$ such that $(\frac{\eta_{a_\infty}}{\rho^{(k)}})_{n_k}\geq k$ for every $k \in \mathbb{N}$. 
By the summability of $\eta$, we can further request that $\sum^{n_k}_{j=n_{k-1}+1}\eta_j \geq \frac12 \sum^{\infty}_{j=n_{k-1}+1}\eta_j$. 
Set $n_o := 0$ and define $\xi_j = (\eta _{a_\infty} )_{n_k}$ for $n_{k-1} < j \leq n_k$. 
Then
\begin{align*}
\sum_{i=j+1}^{\infty}\xi_i&=(n_k - j ) \xi_{n_k} +
\sum_{i=k+1}^{\infty} (n_i - n_{i-1})\xi_{n_i}\\
&= \frac{n_k-j}{n_k}\sum_{i=n_k+1}^{\infty} \eta_i +
\sum_{i=k+1}^{\infty} \frac{n_i - n_{i-1}}{n_i}
\sum_{m=n_i+1}^{\infty} \eta_m\\
&\leq \sum_{i=n_k+1}^{\infty} \eta_i + 2 \sum_{i=k+1}^{\infty}
\sum_{m=n_i+1}^{n_{i+1}}\eta_m\\
&\leq 3 \sum_{i=n_k+1}^{\infty} \eta_i
\leq 3 \sum_{i=j+1}^{\infty} \eta_i.
\end{align*}
Thus $\xi  \in  \Sigma((\eta)^{-\infty}) \subset \Sigma(I_{-\infty}
) \subset \Sigma(I)$. On the other hand, for every $m \in
\mathbb{N}$ and for every $k \geq m$,
$(\frac{\xi}{\rho^{(m)}})_{n_k} \geq (\frac{\xi}{\rho^{(k)}})_{n_k}
= (\frac{\eta_{a_\infty}}{\rho^{(k)}})_{n_k} \geq k$, whence $\xi
\not\in \Sigma(I)$, 
a contradiction.

\end{proof}

Precisely as for the am-case we have:

\begin{theorem}\label{thm:3.5}
A countably generated ideal is am-$\infty$ closed if and only if it
is am-$\infty$ stable.
\end{theorem}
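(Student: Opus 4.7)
The plan is to mirror the proof of Theorem~\ref{thm:2.11} almost verbatim, with Theorem~\ref{thm:3.4} playing the role of Theorem~\ref{thm:2.9} and the am-$\infty$ 5-chain playing the role of the am 5-chain. For the forward direction, I would assume $I$ is countably generated and am-$\infty$ closed, so $I = I^{-\infty}$. Since $\mathcal{L}_1$ is the largest am-$\infty$ closed ideal (as recalled in Section~\ref{sec:3}), this already forces $I \subset \mathcal{L}_1$. Moreover, $I$ is trivially the largest am-$\infty$ closed ideal contained in itself, so Corollary~\ref{cor:3.3} gives $I_{-\infty} = I$. Applying Theorem~\ref{thm:3.4} to identify $I_{-\infty}$ with ${}_{a_\infty} I$ then yields $I = {}_{a_\infty} I$, i.e., $I$ is am-$\infty$ stable. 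Countable generation is used only at this single step.

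For the converse I would start from $I = {}_{a_\infty} I$. By the equivalent reformulation of am-$\infty$ stability recalled in the excerpt, this already forces $I \subset \mathcal{L}_1$ and $I = I_{a_\infty}$. Plugging these into the am-$\infty$ 5-chain
\[
I \cap \mathcal{L}_1 \;\subset\; I^{-\infty} \;\subset\; I_{a_\infty} \cap \mathcal{L}_1
\]
collapses it to $I \subset I^{-\infty} \subset I$, so $I = I^{-\infty}$ and $I$ is am-$\infty$ closed. Countable generation is unnecessary in this direction.

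There is no real obstacle to overcome: the heavy lifting has been packaged into Theorem~\ref{thm:3.4}, and what remains is pure bookkeeping with the 5-chain and with the maximality characterization of $I_{-\infty}$. The one minor point to watch is that the am-$\infty$ 5-chain, unlike its am counterpart, has $\cap\,\mathcal{L}_1$ factors; but am-$\infty$ stability automatically places $I$ inside $\mathcal{L}_1$, so these intersections are harmless.
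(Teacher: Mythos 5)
Your proposal is correct and follows exactly the route the paper intends: the paper gives no proof for Theorem~\ref{thm:3.5}, prefacing it with ``Precisely as for the am-case,'' i.e., transplant the proof of Theorem~\ref{thm:2.11} with Theorem~\ref{thm:3.4} in place of Theorem~\ref{thm:2.9} and the am-$\infty$ 5-chain inclusions in place of the am 5-chain. Your handling of the $\cap\,\mathcal{L}_1$ factors (noting that am-$\infty$ stability forces $I \subset \mathcal{L}_1$, so the intersections collapse) is precisely the small bookkeeping adjustment that makes the analogy literal.
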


Now we investigate the operations $I \rightarrow I^{o\infty}$  and
$I \rightarrow I^{oo\infty}$, where $I^{oo\infty}$ is the
am-$\infty$ analog of $I^{oo}$ and will be defined in
Definition~\ref{def:3.12}. While the statements are analogous to the
statements in Section~\ref{sec:2}, the proofs are sometimes
substantially different. The analog of Lemma~\ref{lem:2.12} is given
by:

\begin{lemma}\label{lem:3.6}
A sequence $\xi$ is the arithmetic mean at infinity
$\eta_{a_\infty}$ of some sequence $\eta \in  \ell_1^*$  if and only
if $\frac{\xi}{\omega} \in c_o^*$ and is convex, i.e.,
$(\frac{\xi}{\omega})_{n+1} \leq \frac12
((\frac{\xi}{\omega})_n+(\frac{\xi}{\omega})_{n+2})$ for all $n$.
\end{lemma}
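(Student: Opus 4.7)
The key observation is that if $\xi = \eta_{a_\infty}$, then $(\xo)_n = n\xi_n = \sum_{j=n+1}^{\infty}\eta_j$, so $\xo$ is exactly the tail-sum sequence of $\eta$. Both implications reduce to telescoping computations based on this identity.

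For necessity, suppose $\xi = \eta_{a_\infty}$ with $\eta \in \ell_1^*$. The tail-sum sequence $(\xo)_n = \sum_{j > n}\eta_j$ is nonnegative; its successive differences $(\xo)_n - (\xo)_{n+1} = \eta_{n+1}$ are nonnegative, so $\xo$ is monotone nonincreasing; and it tends to $0$ by summability of $\eta$. Hence $\xo \in c_o^*$. Convexity then follows from the second-difference identity
\[
(\xo)_n - 2(\xo)_{n+1} + (\xo)_{n+2} = \eta_{n+1} - \eta_{n+2} \geq 0,
\]
where the inequality uses the monotonicity of $\eta \in c_o^*$.

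For sufficiency, set $a_n := (\xo)_n$ and define $\eta_m := a_{m-1} - a_m$ for $m \geq 2$, together with $\eta_1 := a_1 - a_2$. Then $\eta_m \geq 0$ because $a$ is monotone nonincreasing; the inequality $\eta_m \geq \eta_{m+1}$ unwinds to $a_{m-1} - 2a_m + a_{m+1} \geq 0$, which is precisely the convexity hypothesis; and $\eta \to 0$ since $a_m \to 0$. Summability follows from the telescoping identity $\sum_{m=2}^{\infty}\eta_m = a_1 < \infty$. Hence $\eta \in \ell_1^*$. Finally, a direct telescoping computation gives
\[
n(\eta_{a_\infty})_n = \sum_{j=n+1}^{\infty}(a_{j-1} - a_j) = a_n = n\xi_n,
\]
so $\eta_{a_\infty} = \xi$.

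There is no serious obstacle here; the proof is essentially pure telescoping. One structural difference worth flagging relative to the am-analog Lemma~\ref{lem:2.12}: the coordinate $\eta_1$ never enters the formula $\sum_{j>n}\eta_j$ defining $\eta_{a_\infty}$, so $\eta_1$ is free (any value $\geq a_1 - a_2$ works), and consequently no boundary condition on $\xi$ (analogous to $\xi_1 \geq \frac12(\xo)_2$ in the am-case) needs to be imposed.
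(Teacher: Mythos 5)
Your proof is correct. The paper leaves Lemma~\ref{lem:3.6} without an explicit proof (it is presented as the am-$\infty$ analog of Lemma~\ref{lem:2.12}, whose proof is also ``elementary and left to the reader''), and the telescoping argument you give via the identity $(\xo)_n = n\xi_n = \sum_{j>n}\eta_j$ is precisely the standard one the authors evidently have in mind; your closing remark about the absence of a boundary condition (contrasting with the $\xi_1 \geq \tfrac12(\xo)_2$ requirement in Lemma~\ref{lem:2.12}, which arises because $\eta_1$ does appear in $\eta_a$ but not in $\eta_{a_\infty}$) is an accurate and worthwhile observation about the structural asymmetry between the two lemmas.
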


The analog of Lemma~\ref{lem:2.13} is given by:

\begin{lemma}\label{lem:3.7} For every principal ideal $I$, the following are equivalent.
\item[(i)] $I$ is am-$\infty$ open.

\item[(ii)] $I = (\eta _{a_\infty})$ for some $\eta \in \ell_1^*$.

\item[(iii)] $I = (\xi )$ for some $\xi$ for which $\xo  \in c_o^*$.
\end{lemma}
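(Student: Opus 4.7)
The plan is to prove the equivalences by establishing (i)$\,\Leftrightarrow\,$(ii) and (ii)$\,\Leftrightarrow\,$(iii), paralleling Lemma~\ref{lem:2.13} with convex sequences and tail sums replacing concave ones and partial sums, and invoking Lemma~\ref{lem:3.6} in place of Lemma~\ref{lem:2.12}.

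For (i)$\,\Leftrightarrow\,$(ii): If $I = (\xi)$ is am-$\infty$ open, then $I = J_{a_\infty}$ for some ideal $J$ (take $J = {}_{a_\infty}I$). The membership $\xi \in \Sigma(J_{a_\infty})$ supplies $\eta \in \Sigma(J \cap \mathcal{L}_1) \subset \ell_1^*$ with $\xi = O(\eta_{a_\infty})$, and $\eta_{a_\infty} \in \Sigma(J_{a_\infty}) = \Sigma((\xi))$ gives $\eta_{a_\infty} = O(D_k\xi)$ for some $k$.  Together these identify $(\xi)$ and $(\eta_{a_\infty})$ as the same principal ideal.  The converse uses the identity $(\eta)_{a_\infty} = (\eta_{a_\infty})$, the am-$\infty$ analog of $(\eta)_a = (\eta_a)$ noted just before Lemma~\ref{lem:2.13}, and verified by the same kind of direct calculation using $(D_m\eta)_{a_\infty} \asymp D_m\eta_{a_\infty}$.

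(ii)$\,\Rightarrow\,$(iii) is immediate: if $\xi = \eta_{a_\infty}$, then $(\xi/\omega)_n = \sum_{j>n}\eta_j$, a tail sum of a summable nonnegative nonincreasing sequence, hence in $c_o^*$.  For (iii)$\,\Rightarrow\,$(ii), given $f := \xi/\omega \in c_o^*$, the aim is to produce $\eta \in \ell_1^*$ with $\eta_{a_\infty}$ equivalent to $\xi$ in the principal-ideal sense.  A direct construction is to fix a halving subsequence $0 = n_0 < n_1 < n_2 < \cdots$ by $n_k := \min\{n > n_{k-1} : f(n) \leq f(n_{k-1}+1)/2\}$, set $c_k := f(n_{k-1}+1)$, and define $\eta_j := c_{k-1}/(n_k - n_{k-1})$ for $j \in (n_{k-1}, n_k]$.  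A geometric-series telescoping on the block contributions yields $\sum_{j>n}\eta_j \asymp f(n)$ uniformly in $n$, so $\eta_{a_\infty} \asymp \xi$ pointwise and $(\eta_{a_\infty}) = (\xi) = I$.  (Note that $\eta_{a_\infty}/\omega$ is automatically convex in $c_o^*$, in line with Lemma~\ref{lem:3.6}.)

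The main obstacle is ensuring that $\eta \in \ell_1^*$, i.e., that the block values $c_{k-1}/(n_k - n_{k-1})$ are monotone nonincreasing in $k$.  This requires $c_{k-1}(n_{k+1}-n_k) \geq c_k(n_k - n_{k-1})$, which can fail when $f$ executes a multi-scale drop — a fall by a factor much larger than $2$ in a single step immediately following a long plateau — producing a very short block of large value right after a long block of smaller value.  The remedy is to merge such ``fast'' blocks with the subsequent longer block (replacing a pair of intervals by their union with the averaged $\eta$-value), which preserves the telescoping estimate up to a bounded constant factor and suffices to secure the principal-ideal equivalence $(\eta_{a_\infty}) = (\xi)$, even though it relaxes pointwise $\asymp$ to equivalence up to ampliations.
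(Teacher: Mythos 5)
Your treatment of (i)\,$\Leftrightarrow$\,(ii) and (ii)\,$\Rightarrow$\,(iii) matches the paper's. The genuine issue is in (iii)\,$\Rightarrow$\,(ii), which is the heart of the lemma, and your argument there has two real gaps.

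First, the claim that the naive halving construction gives $\sum_{j>n}\eta_j \asymp f(n)$ \emph{pointwise} is false. Take $f$ constant equal to $c_k$ on a long block $(n_{k-1},n_k-1]$ and then dropping by a large factor (say $1000$) at $n_k$; then for $n=n_k-1$ the tail $\sum_{j>n}\eta_j$ is roughly $\tfrac{c_k}{n_k-n_{k-1}}+\sum_{m>k}c_m$, which can be a tiny fraction of $f(n)=c_k$ when the block is long and the $c_m$ drop super-geometrically. You do implicitly concede this later by retreating to ``equivalence up to ampliations,'' but that retreat contradicts the earlier ``pointwise'' assertion, and the whole estimate needs to be stated in the ampliation sense from the start since that is the correct notion for equality of principal ideals.

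Second, and more seriously, the merging step is not a proof: merging two adjacent blocks can destroy monotonicity against the \emph{preceding} block (the merged $\eta$-value can exceed the previous one), so the repair can cascade backwards, and you neither show that this process terminates nor control the constants under iterated merging. What you are implicitly constructing, if the merging were carried to completion, is essentially the lower convex envelope of $f=\xi/\omega$ --- but then nothing is gained over taking that envelope directly. The paper does exactly this: let $\psi$ be the largest convex sequence pointwise majorized by $\xi/\omega$ (it exists, is positive, hence decreasing, hence in $c_o^*$, and $\omega\psi=\eta_{a_\infty}$ by Lemma~\ref{lem:3.6}), giving $(\eta_{a_\infty})\subset(\xi)$ at once. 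For the reverse inclusion the paper exploits the corner structure of $\psi$: at each corner $k_p$ one has $\psi_{k_p}=(\xi/\omega)_{k_p}$ by maximality, and linearity of $\psi$ between corners yields the clean two-sided estimate $(D_{1/2}\tfrac{\xi}{\omega})_j<2\psi_j$, from which $\xi=O(D_3\eta_{a_\infty})$ and hence $(\xi)\subset(\eta_{a_\infty})$. That corner argument is the single idea your proposal is missing; without it, the merged construction has no usable quantitative handle.
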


\begin{proof}
(i) $\Leftrightarrow$ (ii). Assume $I$ is am-$\infty$ open and that
$\xi  \in  c_o^*$ is a generator of $I$. Then $I = J_{a_\infty}$ for
some ideal $J$, i.e., $\xi  \leq \eta_{a_\infty}$ for some $\eta \in
\ell_1^*$ such that $\eta_{a_\infty} \in  \Sigma(I)$ and thus $(\xi
) = (\eta_{a_\infty})$. The other implication is a direct
consequence of the equality $(\eta_{a_\infty} ) = (\eta)_{a_\infty}$
obtained in \cite[Lemma~4.7]{10}.

(ii) $\Rightarrow$ (iii). Obvious as $\frac{\eta_{a_\infty}}{\omega}
\downarrow 0$.

(iii) $\Rightarrow$ (ii). Since $F = (\langle 1, 1, 0, 0,
\ldots\rangle)_{a_\infty}$, we can assume without loss of generality
that $\xi_j > 0$ for all $j$. Let $\psi$ be the largest (pointwise)
convex sequence majorized by $\xo$. It is easy to see that such a
sequence $\psi$ exists, that $\psi > 0$, and that being convex, 
$\psi$ is decreasing, hence $\psi \in  c_o^*$ and by Lemma \ref {lem:3.6} ,
 $\omega\psi = \eta_{a_\infty}$ for some $\eta
\in \ell_1^*$. By definition, $\xi \geq \eta_{a_\infty}$ and hence
$(\eta _{a_\infty} ) \subset  (\xi ) = I$. To prove the reverse
inclusion, first notice that the graph of $\psi$ (viewed as the
polygonal curve through the points $\{(n,\psi_n) \mid n \in
\mathbb{N}\}$) must have infinitely many corners since $\psi_n
> 0$ for all $n$. Let $\{k_p\}$ be the strictly increasing sequence of all
the integers where the corners occur, starting with $k_1 = 1$, i.e.,
for all $p > 1$, $\psi_{k_p-1} - \psi_{k_p}
> \psi_{k_p}
- \psi_{k_p+1}$.
By the pointwise maximality of the convex sequence
$\psi$, $\psi_{k_p} = (\xo )_{k_p}$ for every $p \in \mathbb{N}$
(including $p =1$) since otherwise we could contradict maximality by
increasing $\psi_{k_p}$ and still maintain convexity and
majorization by $\xo$. Denote by $D_{\frac12}$ the operator
$(D_{\frac12} \zeta)_j = \zeta_{2j}$ for $\zeta \in c_o^*$. We claim
that for every $j$, $(D_{\frac12} \xo )_j < 2\psi_j$. Assume
otherwise that there is a $j \geq 1$ such that $(\xo )_{2j} \geq
2\psi_j$ and let $p$ be the integer for which $k_p \leq j <
k_{p+1}$. Then $k_p < 2j$ and also $2j < k_{p+1}$ because otherwise
we would have the contradiction $2\psi_j \leq (\xo )_{2j} \leq (\xo
)_{k_{p+1}} = \psi_{k_{p+1}} \leq \psi_j$.
Moreover, since $k_p$ and $k_{p+1}$ are consecutive corners, between
them $\psi$ is linear, i.e.,
\[
\psi_j = \psi_{k_p} +
\frac{\psi_{k_{p+1}}-\psi_{k_p}}{k_{p+1}-k_p}(j-k_p) =
\frac{k_{p+1}-j}{k_{p+1}-k_p}\psi_{k_p}+
\frac{j-k_p}{k_{p+1}-k_p}\psi_{k_{p+1}}
\]
and hence
\[
\psi_j \geq \frac{k_{p+1}-j}{k_{p+1}-k_p} \psi_{k_p}>
\bigg(1-\frac{j}{k_{p+1}}\bigg)\psi_{k_p} > \frac12 \bigg(\xo\bigg)_{\negmedspace k_p} \geq
\frac12 \bigg(\xo\bigg)_{\negmedspace 2j} \geq \psi_j.
\]

This contradiction proves that $D_{\frac12} \xo < 2\psi$. It is now
easy to verify that  for $j > 1$, $(\xo )_j \leq (D_3D_{\frac12} \xo
)_j < 2(D_3\psi)_j$ and hence $I = (\xi ) \subset (\eta_{a_\infty})$ because
\[
\xi_j < 2\omega_j(D_3\psi)_j \leq
2(D_3\omega)_j(D_3\psi)_j = 2(D_3(\omega\psi))_j =
2D_3(\eta_{a_\infty})_j.
\]
\end{proof}

\begin{example}\label{ex:3.8}
In the proof of the implication (iii) $\Rightarrow$ (ii), one cannot
conclude that $\xi  = O(\eta _{a_\infty} )$. Indeed consider $\xi_j
= \frac{1}{jk!}$ for $k! \leq j < (k+1)!$ where it is elementary to
compute $\psi_j = \frac{1}{k!} (1 - \frac{j-k!}{(k+1)!})$ for $k!
\leq j < (k+1)!$. Also, this example shows that while in the am-case
the smallest concave sequence $\frac{\eta_a}{\omega}$ that majorizes
$\xo$ (when $\xo$ is monotone nondecreasing) provides also the
smallest arithmetic mean $\eta_a$ that majorizes $\xi$  (see
Remark~\ref{rem:2.15}(iii)), this is no longer true for the
am-$\infty$ case.
\end{example}

We have seen in Lemma~\ref{lem:2.14} that the am-interior of a
nonzero principal ideal is always principal and it is nonzero if and
only if the ideal is large enough (that is, it contains $(\omega)$).
Furthermore, there is always a smallest am-open ideal containing it
and it too is principal. The next lemma shows that the am-$\infty$
interior of a nonzero principal ideal is principal if only if the
ideal is small enough (that is, it does not contain $(\omega)$).
Furthermore, if the principal ideal is contained in $\se(\omega)$,
which is the largest am-$\infty$  open ideal, then there is a
smallest am-$\infty$ open ideal containing it and it is principal.
\vspace{0.8cm}
\begin{lemma}\label{lem:3.9} For every $\xi \in  c_o^*$:
\item[(i)]
$(\xi)^{o\infty} = \begin{cases}(\omega\lni\xo) & \text{if }\omega\not\subset(\xi)\\
\se(\omega) & \text{if }\omega\subset(\xi)\end{cases}$

\item[(ii)] If $(\xi ) \subset  \se(\omega)$, then $(\omega \uni\xo )$ is the
smallest am-$\infty$ open ideal containing $(\xi )$.
\end{lemma}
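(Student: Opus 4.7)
My plan is to mirror the proof of Lemma \ref{lem:2.14}, with Lemma \ref{lem:3.7} replacing Lemma \ref{lem:2.13} and the monotone nonincreasing envelopes $\lni, \uni$ replacing $\lnd, \und$. This substitution is natural because, by Lemma \ref{lem:3.6}, $\frac{\eta_{a_\infty}}{\omega}$ is monotone nonincreasing rather than nondecreasing, so the relevant envelope operations are now those that produce nonincreasing sequences. Two elementary facts will be used throughout: (a) if $f$ is monotone nonincreasing and $f \leq Cg$, then $f \leq C\lni g$, and dually if $f \leq Cg$ with $g$ monotone nonincreasing, then $\uni f \leq Cg$; (b) $\lni$ and $D_m$ commute, and $\omega D_m\phi = O(D_m(\omega\phi))$ with a constant depending on $m$.

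For part (i) Case 1 ($\omega \not\subset (\xi)$, which is equivalent to $\lni\xo \to 0$ since $\omega \subset (\xi) \Leftrightarrow \omega = O(\xi) \Leftrightarrow \liminf \xo > 0$), one has $\omega\lni\xo \in c_o^*$ with $\frac{\omega\lni\xo}{\omega} = \lni\xo \in c_o^*$, so by Lemma \ref{lem:3.7}(iii) the ideal $(\omega\lni\xo)$ is am-$\infty$ open; the containment $(\omega\lni\xo) \subset (\xi)$ is immediate from $\omega\lni\xo \leq \omega\xo = \xi$. For maximality, let $\mu \in \Sigma((\xi)^{o\infty})$, so $\mu = O(\zeta_{a_\infty})$ for some $\zeta \in \ell_1^*$ with $\zeta_{a_\infty} = O(D_m\xi)$. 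Since by Lemma \ref{lem:3.6} the sequence $\frac{\zeta_{a_\infty}}{\omega}$ is monotone nonincreasing and is $O(D_m\xo)$, observation (a) yields $\frac{\zeta_{a_\infty}}{\omega} = O(\lni D_m\xo) = O(D_m \lni\xo)$, and then (b) converts this into $\zeta_{a_\infty} = O(D_m(\omega\lni\xo))$, whence $\mu \in \Sigma((\omega\lni\xo))$. Case 2 ($\omega \subset (\xi)$) follows from $\text{$_{a_\infty}$}(\omega) = \mathcal{L}_1$ (because every $\xi_{a_\infty}$ with $\xi \in \ell_1^*$ is $o(\omega) = O(\omega)$) together with $(\mathcal{L}_1)_{a_\infty} = \se(\omega)$ recalled from \cite{10}: the inclusion $(\omega) \subset (\xi)$ gives $\mathcal{L}_1 \subset \text{$_{a_\infty}$}(\xi)$ and hence $\se(\omega) \subset (\xi)^{o\infty}$, while the reverse inclusion is automatic since $\se(\omega)$ is the largest am-$\infty$ open ideal.

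For part (ii), $(\xi) \subset \se(\omega)$ forces $\xo \to 0$, so $\uni\xo \in c_o^*$; Lemma \ref{lem:3.7}(iii) then makes $(\omega\uni\xo)$ am-$\infty$ open, and $\xi = \omega\xo \leq \omega\uni\xo$ shows $(\xi) \subset (\omega\uni\xo)$. For minimality, let $I$ be any am-$\infty$ open ideal with $(\xi) \subset I$. Then $\xi = O(\eta_{a_\infty})$ for some $\eta \in \ell_1^*$ with $\eta_{a_\infty} \in \Sigma(I)$, and since $\frac{\eta_{a_\infty}}{\omega}$ is monotone nonincreasing by Lemma \ref{lem:3.6}, the dual half of (a) applied to $\xo = O(\frac{\eta_{a_\infty}}{\omega})$ yields $\uni\xo = O(\frac{\eta_{a_\infty}}{\omega})$, i.e., $\omega\uni\xo = O(\eta_{a_\infty})$; hence $\omega\uni\xo \in \Sigma(I)$ and $(\omega\uni\xo) \subset I$.

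The main technical obstacle is the bookkeeping of the ampliation $D_m$ in the maximality step of (i). In the am-case the estimate $D_m\eta_a \leq m\eta_a$ lets one absorb $D_m$ directly, but $\eta_{a_\infty}$ need not satisfy the $\Delta_{1/2}$-condition (as pointed out in the first paragraph of Section \ref{sec:3}), so $D_m$ must be carried through the estimate and disposed of only at the final step via the commutation with $\lni$ and the inequality $\omega D_m\phi = O(D_m(\omega\phi))$.
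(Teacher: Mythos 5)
Your proof is correct and follows essentially the same route as the paper's: both parts invoke Lemma~\ref{lem:3.7}(iii) together with the monotonicity of $\frac{\eta_{a_\infty}}{\omega}$ from Lemma~\ref{lem:3.6}, and the maximality step in (i) is handled by the same ampliation estimate (pushing $\lni$ past $D_m$ and using $\omega \leq D_m\omega$), which your observations (a) and (b) just package explicitly. The only cosmetic difference is that the paper treats $(\xi)=F$ as a separate preliminary case while you absorb it into Case 1, which is harmless since all the formulas degenerate consistently there.
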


\begin{proof}
(i) If $(\xi)=F$, then also $(\omega\lni\xo) = (\omega \uni\xo )=F$, so assume that $\xi\not\in \Sigma (F)$.
If $(\omega  ) \subset  (\xi )$, then $\se (\omega) = (\xi)^{o\infty}$ 
because $\se (\omega)$ is the largest am-$\infty$ open ideal.
If $(\omega)\not\subset(\xi)$, in particular $\omega \neq
O(\xi )$ and hence $\lni\xo \in  c_o^*$. But then by
Lemma~\ref{lem:3.7}, $(\omega \lni\xo ) = (\eta _{a_\infty} )$ for
some $\eta \in  \ell_1^*$, and since $(\eta _{a_\infty} ) =
(\eta)_{a_\infty}$ by \cite[Lemma~4.7]{10},
it follows that $(\omega
\lni\xo )$ is am-$\infty$  open. Since $\omega \lni\xo \leq \xi$ and
hence $(\omega \lni\xo ) \subset  (\xi )$, it follows that $(\omega
\lni\xo ) \subset (\xi )^{o\infty}$ . For the reverse inclusion, if
$\zeta \in \Sigma((\xi )^{o\infty})$, then $\zeta \leq
\rho_{a_\infty} \leq MD_m\xi$ for some $\rho \in \ell_1^*$, $M > 0$
and $m \in \mathbb{N}$. But then  $\frac{\rho_{a_\infty}}{\omega} \leq \lni
M\frac{D_m\xi}{\omega}$ because $\frac{\rho_{a_\infty}}{\omega}$ is
monotone nonincreasing,  and from this and $\omega
\leq D_m\omega \leq m\omega$,
it follows that
\begin{align*}
\rho_{a_\infty} &\leq M\omega
\lni\frac{D_m\xi}{\omega} \leq mM\omega \lni D_m \bigg(\xo\bigg) =
mM\omega D_m \lni\xo\\
&\leq mM (D_m\omega)\bigg(D_m\lni\xo \bigg) = mMD_m\bigg(\omega
\lni\xo \bigg)
\end{align*}
where the first equality follows by an elementary computation.
Thus $\zeta \in  \Sigma (\omega \lni \xo )$, i.e., $(\xi )^{o\infty}
\subset (\omega \lni\xo )$ and the equality of these ideals is
established.

(ii) If $(\xi ) \subset \se(\omega)$, then $\uni\xo \in c_o^*$,
hence $(\omega \uni\xo )$ is am-$\infty$  open by
Lemma~\ref{lem:3.7}. Clearly, $(\xi ) \subset  (\omega  \uni\xo   )$
and if $(\xi ) \subset I$ for an am-$\infty$  open ideal $I$, then
$\xi \leq \rho_{a_\infty}$  for some $\rho_{a_\infty} \in
\Sigma(I)$. Since $\frac{\rho_{a_\infty}}{\omega}$ is monotone
nonincreasing, by the minimality of ``uni'', $\omega\uni\xo\leq
\rho_{a_\infty}$  and hence $(\omega \uni\xo) \subset I$.
\end{proof}

As a consequence of this lemma we see that if $(\omega) = (\xi ) + (\eta)$ but $(\omega) \not\subset
(\xi )$ and $(\omega) \not\subset (\eta)$ as in Example \ref{ex:2.4}(ii), then $(\omega)^{o\infty} = \se(\omega)$ is
not principal but
\[
(\xi )^{o\infty}  + (\eta)^{o\infty} = \bigg(\omega \lni\xo \bigg) +
\bigg(\omega \lni\frac{\eta}{\omega}\bigg) = \bigg(\omega \lni\xo +
\omega \lni\frac{\eta}{\omega} \bigg)
\]
which is principal. By the same token, $\text{$_{a_\infty}$}(\xi ) +
\text{$_{a_\infty}$}(\eta) \neq \text{$_{a_\infty}$}((\xi ) +
(\eta))$ and in view of Theorem~\ref{thm:3.4}, $(\xi )_{-\infty}  +
(\eta)_{-\infty} \neq ((\xi ) + (\eta))_{-\infty}$.

>From this lemma we obtain an analog of Corollary~\ref{cor:2.16}.

\begin{corollary}\label{cor:3.10} Let
 $I$ be an ideal. Then
\item[(i)] \begin{align*}
\Sigma(I^{o\infty}) &= \{\xi  \in  \Sigma(\se (\omega))
\mid \omega \uni\xo \in \Sigma(I)\} \\
&= \{\xi  \in  c_o^* \mid \xi \leq \omega\lni\frac{\eta}{\omega} \text{ for some } 
\eta \in \Sigma(I \cap \se (\omega))\}.
\end{align*}

If $I$ is am-$\infty$  open and $\xi \in c_o^*$, then
\item[(ii)] $\xi  \in \Sigma(I)$ if and only if $\omega \uni\xo \in \Sigma(I)$.
\end{corollary}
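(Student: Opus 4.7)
My plan is to mirror the proof of Corollary~\ref{cor:2.16} using Lemma~\ref{lem:3.9} in place of Lemma~\ref{lem:2.14}, keeping careful track of the restriction to $\se(\omega)$ that the am-$\infty$ context forces. The three sets in (i) will be shown equal via a cycle of inclusions, and (ii) will drop out as the special case $I=I^{o\infty}$.

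First I would establish that $\Sigma(I^{o\infty})$ is contained in the first candidate set. Given $\xi\in\Sigma(I^{o\infty})$, since $\se(\omega)$ is the largest am-$\infty$ open ideal we have $I^{o\infty}\subset\se(\omega)$ and hence $(\xi)\subset\se(\omega)$. Then Lemma~\ref{lem:3.9}(ii) says that $(\omega\uni\xo)$ is the smallest am-$\infty$ open ideal containing $(\xi)$, so $(\omega\uni\xo)\subset I^{o\infty}\subset I$, which gives $\omega\uni\xo\in\Sigma(I)$.

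Next I would pass from the first set to the second. Assuming $\xi\in\Sigma(\se(\omega))$ and $\omega\uni\xo\in\Sigma(I)$, I note that since $\xo\to 0$ also $\uni\xo\to 0$, so $\omega\uni\xo\in\Sigma(\se(\omega))$ and therefore $\omega\uni\xo\in\Sigma(I\cap\se(\omega))$. Setting $\eta:=\omega\uni\xo$ and using that $\uni\xo$ is already monotone nonincreasing, one has $\lni\frac{\eta}{\omega}=\lni\uni\xo=\uni\xo$, whence $\xi\leq\omega\uni\xo=\omega\lni\frac{\eta}{\omega}$, placing $\xi$ in the second set.

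To close the loop I would take $\xi\leq\omega\lni\frac{\eta}{\omega}$ with $\eta\in\Sigma(I\cap\se(\omega))$. Then $(\eta)\subset\se(\omega)$, so $\omega\not\subset(\eta)$ and the first case of Lemma~\ref{lem:3.9}(i) gives $\omega\lni\frac{\eta}{\omega}\in\Sigma((\eta)^{o\infty})\subset\Sigma(I^{o\infty})$; heredity of $\Sigma(I^{o\infty})$ then yields $\xi\in\Sigma(I^{o\infty})$, proving (i). For (ii), apply (i) with $I=I^{o\infty}$: the constraint $\xi\in\Sigma(\se(\omega))$ is automatic because $I\subset\se(\omega)$, and one direction is trivial from $\xi\leq\omega\uni\xo$. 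The only real subtlety is the bookkeeping around $\se(\omega)$, which dictates the case split in Lemma~\ref{lem:3.9}(i); everything else amounts to substituting $\uni$ and $\lni$ for their am-analogs $\und$ and $\lnd$.
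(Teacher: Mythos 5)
Your proof is correct and follows essentially the same route as the paper's: the cycle of three inclusions is established using Lemma~\ref{lem:3.9}(ii) for the first step (minimality of $(\omega\uni\xo)$ among am-$\infty$ open ideals containing $(\xi)$), a direct computation with $\uni$ and $\lni$ for the second, and Lemma~\ref{lem:3.9}(i) plus heredity for the third, with (ii) obtained by specializing to $I=I^{o\infty}$. Your careful notes on why $(\omega)\not\subset(\eta)$ and why $\xi\in\Sigma(\se(\omega))$ is automatic in (ii) are exactly the bookkeeping the paper leaves implicit.
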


\begin{proof} (i) If $\xi  \in \Sigma(I^{o\infty})$, then $\xi  \in  \Sigma(\se
(\omega))$ and hence $\omega\uni\xo \in
\Sigma(I^{o\infty}) \subset \Sigma(I)$ by Lemma~\ref{lem:3.9}(ii). If $\xi  \in  \Sigma(\se
(\omega))$ and $\omega \uni\xo   \in \Sigma(I)$, then $\omega \uni\xo
\in \Sigma(I \cap \se (\omega))$ and $\xi \leq \omega  \uni\xo =
\omega \lni \frac{\omega\uni\xo}{\omega}$. Thus 

\begin{align*}
\Sigma(I^{o\infty})
&\subset \{\xi \in \Sigma(\se (\omega)) \mid \omega\uni\xo \in \Sigma(I)\}\\
&\subset \{\xi  \in c_o^* \mid \xi \leq \omega
\lni\frac{\eta}{\omega} \text{ for some }\eta \in \Sigma(I \cap \se
(\omega))\}.
\end{align*}

 Finally, let $\xi  \in c_o^*$, $\xi \leq \omega
\lni\frac{\eta}{\omega}$ for some $\eta \in \Sigma(I \cap \se
(\omega))$. From the inequality $\xi  \leq \omega\uni\xo \leq \omega
\lni\frac{\eta}{\omega}$, it follows by  by Lemma~\ref{lem:3.9}(i) that  $\xi \in
\Sigma((\eta)^{o\infty} ) \subset \Sigma(I^{o\infty} )$, which
concludes the proof.

(ii) Just notice that $\xi \leq \omega\uni\xo   \in \Sigma(I)
\subset \Sigma(\se (\omega))$.
\end{proof}

Now Theorem~\ref{thm:2.17}, Definition~\ref{def:2.18} and
Proposition~\ref{prop:2.21} extend to the am-$\infty$  case with
proofs similar to the am-case.

\begin{theorem}\label{thm:3.11}
The intersection of am-$\infty$  open ideals is am-$\infty$  open.
\end{theorem}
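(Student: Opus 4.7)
The plan is to mimic the proof of Theorem~\ref{thm:2.17} almost verbatim, substituting Corollary~\ref{cor:3.10} for Corollary~\ref{cor:2.16}. Let $\{I_\gamma\}_{\gamma \in \Gamma}$ be a collection of am-$\infty$ open ideals and set $J := \bigcap_\gamma I_\gamma$. Since $\se(\omega)$ is the largest am-$\infty$ open ideal (equivalently, by the am-$\infty$ five-chain $I_\gamma^{o\infty} \subset I_\gamma \cap \se(\omega)$ combined with $I_\gamma = I_\gamma^{o\infty}$), each $I_\gamma \subset \se(\omega)$, and hence $J \subset \se(\omega)$ as well. This containment is the point where the am-$\infty$ argument diverges slightly from the am-case, where no such restriction is needed, and it is essential because Corollary~\ref{cor:3.10}(i) only characterizes $\Sigma(I^{o\infty})$ among sequences in $\Sigma(\se(\omega))$.

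Next, fix an arbitrary $\xi \in \Sigma(J)$. Then $\xi \in \Sigma(I_\gamma)$ for every $\gamma$, and since each $I_\gamma$ is am-$\infty$ open, Corollary~\ref{cor:3.10}(ii) gives $\omega \uni\xo \in \Sigma(I_\gamma)$ for every $\gamma$. Taking the intersection over $\gamma$ yields $\omega \uni\xo \in \Sigma(J)$. Combined with $\xi \in \Sigma(\se(\omega))$ established above, the first equality of Corollary~\ref{cor:3.10}(i) (applied with $I = J$) yields $\xi \in \Sigma(J^{o\infty})$. Thus $J \subset J^{o\infty}$, and the reverse inclusion $J^{o\infty} \subset J$ holds automatically from the am-$\infty$ five-chain $J^{o\infty} \subset J \cap \se(\omega) \subset J$. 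Hence $J = J^{o\infty}$, i.e., $J$ is am-$\infty$ open.

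I do not anticipate any real obstacle: the substantive work was already done in establishing Lemma~\ref{lem:3.9} and Corollary~\ref{cor:3.10}, where the dichotomy between principal ideals contained in $\se(\omega)$ and those containing $(\omega)$ was handled and the pointwise operator $\xi \mapsto \omega \uni\xo$ was identified as the am-$\infty$ analog of $\xi \mapsto \omega \und \xo$. Once those tools are in hand, the theorem reduces to the one-paragraph set-theoretic verification above. The only small subtlety worth flagging is the preliminary observation $J \subset \se(\omega)$, which ensures the characterization in Corollary~\ref{cor:3.10}(i) is actually applicable.
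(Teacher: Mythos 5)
Your proof is correct and follows exactly the route the paper intends: the paper explicitly states that Theorem~\ref{thm:2.17} extends to the am-$\infty$ case ``with proofs similar to the am-case,'' and your argument is precisely that transcription, substituting Corollary~\ref{cor:3.10}(ii) for Corollary~\ref{cor:2.16}(ii). Your flag that the preliminary containment $J \subset \se(\omega)$ is needed to invoke Corollary~\ref{cor:3.10}(i) is a correct and worthwhile observation about the one place the am-$\infty$ argument requires an extra sentence beyond the am-case.
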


\begin{deff}\label{def:3.12}
For every ideal $I$, define 
\[
I^{oo\infty} := \bigcap\{J \mid   I \cap \se (\omega) \subset J  ~\text{ and $J$
is  am-$\infty$ open}\}
\].
\end{deff}

\begin{remark}\label{rem:3.13}
Lemma~\ref{lem:3.9} affirms that if $I$ is principal then
$I^{o\infty}$  is principal if and only if
$(\omega)\not\subset(\xi)$ and $I^{oo\infty}$ is principal if and
only if $(\xi ) \subset \se(\omega)$.
\end{remark}

The next proposition generalizes to general ideals the
characterization of $I^{oo\infty}$ given by Lemma~\ref{lem:3.9} in
the case of principal ideals.

\begin{proposition}\label{prop:3.14}
For every ideal $I$, the characteristic set of $I^{oo\infty}$ is given by:
\[
\Sigma(I^{oo\infty}) = \bigg\{\xi  \in c_o^* \mid \eta \leq \omega
\uni\frac{\eta}{\omega} \text{ for some }\eta \in \Sigma(I \cap \se
(\omega))\bigg\}.
\]
\end{proposition}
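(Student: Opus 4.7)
The plan is to mirror the proof of Proposition~\ref{prop:2.21}, replacing ``und'' by ``uni'', restricting all auxiliary sequences to $\Sigma(I\cap\se(\omega))$, and invoking the am-$\infty$ principal case (Lemma~\ref{lem:3.9}(ii)) and characterization (Corollary~\ref{cor:3.10}) in place of their am counterparts. Reading the displayed ``$\eta\leq$'' as ``$\xi\leq$'' (the former being vacuous), I would set
\[
\Sigma := \bigg\{\xi \in c_o^* \ \Big|\ \xi \leq \omega\uni\frac{\eta}{\omega} \text{ for some } \eta \in \Sigma(I\cap\se(\omega))\bigg\}
\]
and aim to prove $\Sigma = \Sigma(I^{oo\infty})$.

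First I would check that $\Sigma$ is a characteristic set. Hereditariness and closure under positive scalar multiples are immediate. Closure under addition follows from the pointwise bound $\uni\frac{\eta}{\omega}+\uni\frac{\mu}{\omega}\leq 2\uni\frac{\eta+\mu}{\omega}$ (each summand on the left is dominated by $\uni\frac{\eta+\mu}{\omega}$ because $\eta,\mu\geq 0$), together with the fact that $\eta+\mu\in\Sigma(I\cap\se(\omega))$. For ampliation invariance, using the identity $D_m\uni=\uni D_m$ together with $\omega\leq D_m\omega\leq m\omega$ gives
\[
D_m\xi \leq D_m\omega\cdot\uni\frac{D_m\eta}{D_m\omega} \leq m\omega\cdot\uni\frac{D_m\eta}{\omega},
\]
and $D_m\eta\in\Sigma(I\cap\se(\omega))$. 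Thus $\Sigma=\Sigma(J)$ for some ideal $J$.

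The inclusion $I\cap\se(\omega)\subset J$ is immediate from the pointwise inequality $\eta\leq\omega\uni\frac{\eta}{\omega}$. To prove that $J$ is am-$\infty$ open I would verify the criterion of Corollary~\ref{cor:3.10}(i): if $\xi\in\Sigma(J)$ and $\xi\leq\omega\uni\frac{\eta}{\omega}$ with $\eta\in\Sigma(I\cap\se(\omega))$, then $\eta=o(\omega)$ yields $\uni\frac{\eta}{\omega}\to 0$ and hence $\xi\in\Sigma(\se(\omega))$; moreover, the monotonicity and idempotence of ``uni'' give $\uni\frac{\xi}{\omega}\leq \uni\uni\frac{\eta}{\omega}=\uni\frac{\eta}{\omega}$, so $\omega\uni\frac{\xi}{\omega}\leq\omega\uni\frac{\eta}{\omega}\in\Sigma(J)$ and heredity places $\omega\uni\frac{\xi}{\omega}$ in $\Sigma(J)$. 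Thus $\Sigma(J)\subset\Sigma(J^{o\infty})$, i.e., $J$ is am-$\infty$ open, and the definition of $I^{oo\infty}$ then yields $J\supset I^{oo\infty}$.

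For the reverse inclusion $J\subset I^{oo\infty}$, Lemma~\ref{lem:3.9}(ii) is the key tool. If $\xi\in\Sigma(J)$ with $\xi\leq\omega\uni\frac{\eta}{\omega}$ and $\eta\in\Sigma(I\cap\se(\omega))$, then $(\eta)\subset I\cap\se(\omega)\subset I^{oo\infty}$, and since $(\omega\uni\frac{\eta}{\omega})$ is the smallest am-$\infty$ open ideal containing $(\eta)$ while $I^{oo\infty}$ is itself am-$\infty$ open, we conclude $(\omega\uni\frac{\eta}{\omega})\subset I^{oo\infty}$, whence heredity of $\Sigma(I^{oo\infty})$ gives $\xi\in\Sigma(I^{oo\infty})$. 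I expect the main technical nuisance to be the ampliation step, where one must keep straight the commutation of ``uni'' with $D_m$ against the bounds $\omega\leq D_m\omega\leq m\omega$; no genuinely new idea beyond Proposition~\ref{prop:2.21} seems to be required.
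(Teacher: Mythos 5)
Your proof is correct and is exactly the argument the paper intends: the paper gives no explicit proof of Proposition~\ref{prop:3.14} but states before Theorem~\ref{thm:3.11} that Proposition~\ref{prop:2.21} ``extends to the am-$\infty$ case with proofs similar to the am-case,'' and your write-up is precisely that translation (subadditivity and idempotence of ``uni'' plus $D_m\uni=\uni D_m$ for the characteristic-set check, Corollary~\ref{cor:3.10}(i) to show $J$ is am-$\infty$ open, and Lemma~\ref{lem:3.9}(ii) for $J\subset I^{oo\infty}$). You also correctly read the displayed ``$\eta\leq$'' as the typo it is, for ``$\xi\leq$.''
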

\noindent Notice that $I^{o\infty} \subset I \cap \se (\omega) \subset I^{oo\infty}$ and
$I$ is am-$\infty$ open if and only if one of the inclusions and
hence both of them are equalities. Also, $I\cap \se (\omega) \subset
I_{a_\infty}$  and $I_{a_\infty}$  is am-$\infty$ open so
$I^{oo\infty} \subset I_{a_\infty}$. As for the am-case, we see by
considering an am-$\infty$  open principal ideal that is not
am-$\infty$  stable that the inclusion may be proper, and by
considering am-$\infty$  stable ideals that it may become an
equality.

\begin{example}\label{ex:3.15}
Let $\xi_j = \frac{1}{2^kk!}$  for $(k-1)! < j \leq k!$ for $k
> 1$. Then a direct computation shows that $(\uni\xo )_j = \frac{1}{2^k}$ for
$(k-1)! < j \leq k!$  and that $\uni\xo \asymp
\frac{\xi_{a_\infty}}{\omega}$. Thus by Lemma~\ref{lem:3.9}, $(\xi
)^{oo\infty}  = (\xi )_{a_\infty}$. On the other hand,
$(\frac{\omega\uni\xo}{\xi})_{(k-1)!} = k$ and hence $\xi_{a_\infty}
\neq O(\xi )$. By \cite[Theorem 4.12]{10}, $\xi$  is
$\infty$-irregular, i.e., $(\xi ) \neq (\xi )_{a_\infty}$.
\end{example}

A consequence of Proposition~\ref{prop:3.14} and the subadditivity
of ``uni''  is that for any two ideals $I$ and $J$, $I^{oo\infty}+ J^{oo\infty}  = (I +
J)^{oo\infty}$.

Proposition~\ref{prop:3.14} also permits us to determine simple
sufficient conditions on $I$ under which $I^{-\infty}$  (resp.,
$I^{oo\infty}$) is the largest am-$\infty$ closed ideal
$\mathcal{L}_1$ (resp.,
the largest am-$\infty$  open ideal $\se
(\omega)$).

\begin{lemma}\label{lem:3.16}
Let $I$ be an ideal.

\item[(i)] If $I \not\subset \mathcal{L}_1$, then $I^{-\infty}  =
\mathcal{L}_1$.

\item[(ii)] If $I \not\subset  \se (\omega)$, then $I^{oo\infty}  =
\se (\omega)$.

\end{lemma}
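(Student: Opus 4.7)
My plan is to reduce both parts to principal ideals and then construct explicit witness sequences. For part (i), I would observe that $I^{-\infty} \subset \mathcal{L}_1$ is automatic since $\mathcal{L}_1$ is the largest am-$\infty$ closed ideal, so only the reverse inclusion needs proof. Using ${}_{a_\infty}\se(\omega) = \mathcal{L}_1$ and $I^{-\infty} = {}_{a_\infty}(I_{a_\infty})$ recalled from the preceding discussion, the inclusion $\mathcal{L}_1 \subset I^{-\infty}$ is equivalent to $\se(\omega) \subset I_{a_\infty}$; picking $\zeta \in \Sigma(I) \setminus \ell_1^*$, the containment $(\zeta) \subset I$ reduces the task further to $\se(\omega) \subset (\zeta)_{a_\infty}$. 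Thus the heart of the proof is, for each $\xi \in \Sigma(\se(\omega))$, to construct $\eta \in \Sigma((\zeta) \cap \mathcal{L}_1)$ with $\xi \leq C\,\eta_{a_\infty}$.

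The construction proceeds as follows. Let $\tilde\psi(n) := \sup_{m \geq n} m\xi_m$, which is monotone nonincreasing with $\tilde\psi(n) \to 0$ since $\xi \in \se(\omega)$. Inductively pick $0 = n_0 < n_1 < n_2 < \cdots$ so that $\tilde\psi(n_k) \leq \tilde\psi(n_{k-1})/2$ and $\sum_{j=n_{k-1}+1}^{n_k} \zeta_j$ is large enough to support the mass $b_k := \tilde\psi(n_{k-1}) - \tilde\psi(n_k)$; both conditions are achievable precisely because $\zeta$ is non-summable. Then select $d_k > 0$ with $\sum_{j=n_{k-1}+1}^{n_k} \min(\zeta_j, d_k) = b_k$, enlarging $n_k$ further if necessary to force $d_k \leq d_{k-1}$, and define $\eta_j := \min(\zeta_j, d_k)$ on $(n_{k-1}, n_k]$. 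Then $\eta \leq \zeta$ and $\eta \in c_o^*$ (since $d_k \downarrow 0$), $\sum_j \eta_j = \sum_k b_k = \tilde\psi(0) < \infty$, and for $n \in (n_{k-1}, n_k]$ the tail satisfies $\sum_{j>n}\eta_j \geq \tilde\psi(n_k) \geq \tilde\psi(n)/2 \geq n\xi_n/2$, yielding $\xi \leq 2\eta_{a_\infty}$ as required.

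Part (ii) admits a parallel but simpler construction. Since $\se(\omega)$ is the largest am-$\infty$ open ideal, $I^{oo\infty} \subset \se(\omega)$ is automatic, and by Proposition~\ref{prop:3.14} the reverse inclusion reduces to exhibiting, for each $\xi \in \Sigma(\se(\omega))$, an $\eta \in \Sigma(I \cap \se(\omega))$ with $\xi \leq \omega\,\uni(\eta/\omega)$. Choosing $\zeta \in \Sigma(I) \setminus \Sigma(\se(\omega))$, select $n_k \uparrow \infty$ with $n_k\zeta_{n_k} \geq \varepsilon$ for some $\varepsilon > 0$; set $\alpha_n := \sup_{m \geq n} m\xi_m$ and $\eta_j := \alpha_{n_{k-1}+1}/n_k$ on $(n_{k-1}, n_k]$ (with $n_0 := 0$). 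Then $\eta$ is monotone nonincreasing, $\eta \leq (\alpha_1/\varepsilon)\zeta$ (using $\zeta_j \geq \varepsilon/n_k$ on the block), and $j\eta_j \leq \alpha_{n_{k-1}+1} \to 0$, so $\eta$ lies in the right ideal; moreover $\sup_{j \geq n} j\eta_j \geq n_k\eta_{n_k} = \alpha_{n_{k-1}+1} \geq \alpha_n \geq n\xi_n$ gives the desired majorization. The principal obstacle is in (i): ensuring $\eta$ is simultaneously monotone nonincreasing across blocks (forcing $d_k$ decreasing), summable, dominated by $\zeta$, and with tails dominating $n\xi_n/2$ is possible only through repeated enlargement of $n_k$, which hinges essentially on $\sum\zeta_j = \infty$.
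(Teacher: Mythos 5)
Your treatment of part (ii) is essentially the paper's own argument: the paper also fixes $\eta \in \Sigma(\se(\omega))$, sets $\alpha := \uni(\eta/\omega)$, picks $\xi \in \Sigma(I) \setminus \Sigma(\se(\omega))$ and indices $n_k$ along which $\xi_{n_k} \geq \varepsilon\omega_{n_k}$, and builds a blockwise‑constant $\rho$ with $\rho_j = \alpha_{n_{k-1}}/n_k$ on $(n_{k-1},n_k]$. Your $\eta_j = \alpha_{n_{k-1}+1}/n_k$ is the same construction up to a harmless index shift, and you verify the same three properties (monotonicity, $O(\zeta)$, membership in $\se(\omega)$) and the same uni‑majorization. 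So (ii) is fine and matches the paper.

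Part (i) departs from the paper's route and has a genuine gap. The paper simply picks $\xi \in \Sigma(I)\setminus\ell_1^*$ and cites \cite[Lemma 4.7]{10}, which already gives $(\xi)_{a_\infty} = \se(\omega)$, so that $\se(\omega) \subset I_{a_\infty}$ and hence $I^{-\infty} = {}_{a_\infty}\se(\omega) = \mathcal{L}_1$; nothing has to be constructed. You instead try to re‑derive $(\zeta)_{a_\infty} \supset \se(\omega)$ from scratch, and the chain $\sum_{j>n}\eta_j \geq \tilde\psi(n_k) \geq \tilde\psi(n)/2$ fails at the middle step. You impose $\tilde\psi(n_k) \leq \tilde\psi(n_{k-1})/2$, which controls $\tilde\psi(n_k)$ from \emph{above} relative to $\tilde\psi(n_{k-1})$, but the step you need is a lower bound $2\tilde\psi(n_k) \geq \tilde\psi(n)$ for all $n$ in $(n_{k-1},n_k]$. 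These are unrelated, and since $\tilde\psi(n) = \max(n\xi_n, \tilde\psi(n+1))$ can drop sharply at a single index, one cannot guarantee $\tilde\psi(n_k) \geq \tilde\psi(n_{k-1}+1)/2$ no matter how $n_k$ is chosen. (Requiring the opposite inequality $\tilde\psi(n_k) \geq \tilde\psi(n_{k-1})/2$ instead would repair the chain, but may be unsatisfiable if $\tilde\psi$ already halves between $n_{k-1}$ and $n_{k-1}+1$, and it also conflicts with simultaneously enlarging $n_k$ to control $d_k$ and accommodate the $\zeta$‑mass.) To make this self‑contained construction work you must use the within‑block contribution $\sum_{j=n+1}^{n_k}\eta_j$ as well, controlling $d_k$ so that not too much of the block mass $b_k$ is concentrated at the top of the block; as written, dropping that term and appealing only to the trans‑block tail $\tilde\psi(n_k)$ is not enough. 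Alternatively, and more in the spirit of the paper, just invoke the already‑established fact that $(\zeta)_{a_\infty} = \se(\omega)$ for nonsummable $\zeta$ and avoid the construction entirely.
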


\begin{proof}
(i) Let $\xi  \in  \Sigma(I) \setminus \ell^*_1$. Then
$\se (\omega) = (\xi )_{a_\infty} \subset I_{a_\infty}$ by
\cite[Lemma 4.7]{10}. Since $I_{a_\infty} \subset \se (\omega)$
holds generally, $I_{a_\infty} = \se (\omega)$ and thus $I^{-\infty}
=\, _{a_\infty} \se (\omega) = \mathcal{L}_1$.

(ii) Let $\eta \in \Sigma(\se(\omega))$, set $\alpha :=
\uni\frac{\eta}{\omega}$, $\alpha_o = \alpha_1$, and 
choose an arbitrary $\xi \in  \Sigma(I) \setminus
\Sigma(\se (\omega))$. Then there is an increasing sequence of
integers $n_k$ with $n_0 = 0$ and an $\varepsilon > 0$ such that
$\xi_{n_k} \geq \varepsilon\omega_{n_k}$ for all $k \geq 1$. Set
$\mu_j = \frac{1}{n_k}$ and $\rho_j = \frac{\alpha_{n_{k-1}}}{n_k}$
for $n_{k-1} < j \leq n_k$ and $k \geq 1$. Then $\mu, \rho \in
c_o^*$, $\mu \leq \frac{1}{\varepsilon}\xi$, hence $\mu  \in
\Sigma(I)$ and $\rho = o(\omega)$, $\rho \leq \alpha_1\mu$, hence
$\rho \in \Sigma(I \cap \se (\omega))$. Moreover, $\max\{(
\frac{\rho}{\omega})_j \mid n_{k-1} < j \leq n_k\} =
\alpha_{n_{k-1}}$ and thus $(\uni \frac{\rho}{\omega})_j =
\alpha_{n_{k-1}}$ for $n_{k-1} < j \leq n_k$. But then, $\alpha \leq
\uni \frac{\rho}{\omega}$ and hence $\eta \leq \alpha\omega \leq
\omega \uni \frac{\rho}{\omega}$. By Proposition~\ref{prop:3.14},
$\eta \in \Sigma(I^{oo\infty})$, which proves the claim.
\end{proof}

Finally, it is easy to see that the exact analog of
Lemma~\ref{lem:2.22} holds.

\section{Soft Ideals}\label{sec:4}

It is well-known that the product $IJ=JI$ of two ideals $I$ and $J$
is the ideal with characteristic set
\[
\Sigma(IJ) = \{\xi \in c_o^* \mid \xi \leq \eta \rho \text{ for some
}\eta \in  \Sigma(I) \text{ and }\rho\in \Sigma(J)\}
\]
and that for all $p > 0$, the ideal $I^p$ is the ideal with
characteristic set 
\[
\Sigma(I^p) = \{\xi \in  c_o^* \mid \xi^{1/p} \in
\Sigma(I)\}
\]
 (see \cite[Section 2.8]{7} as but one convenient
reference). Recall also from \cite[Sections 2.8 and 4.3]{7} that the
quotient $\Sigma(I) : X$ of a characteristic set $\Sigma(I)$ by a
nonempty subset $X\subset [0,\infty)^{\mathbb N}$ is defined to be the
characteristic set
\[
\big\{\xi  \in  c_o^* \mid \big((D_m\xi)x\big)^* \in \Sigma(I)
\text{ for all }x \in  X \text{ and }m \in \mathbb{N}\big\}.
\]

Whenever $X = \Sigma(J)$, denote the associated ideal by $I : J$. 
A special important case is the K\"{o}the dual
$X^\times$ of a set $X$, which is the ideal with characteristic
set $\ell_1^* : X$.

In \cite{9} and \cite{10} we introduced the following definitions of
soft ideals.

\begin{deff}\label{def:4.1}
The soft interior of an ideal $I$ is the product $\se I: = IK(H)$,\linebreak
i.e., the ideal with characteristic set
\[
\Sigma(\se I) = \{\xi  \in  c_o^* \mid \xi  \leq \alpha \eta
\text{ for some } \alpha \in  c_o^*,\ \eta \in  \Sigma(I)\}.
\]
The soft cover of an ideal $I$ is the quotient $\scop I:= I : K(H)$,
i.e., the ideal with characteristic set
\[
\Sigma(\scop I) = \{\xi  \in  c_o^* \mid \alpha\xi  \in  \Sigma(I)
\text{ for all } \alpha \in  c_o^*\}.
\]
An ideal is called soft-edged if $\se I = I$ and soft-complemented
if $\scop I = I$.

\noindent A pair of ideals $I \subset J$ is called a soft pair if $\se J = I$
and $\scop I = J$.
\end{deff}

This terminology is motivated by the fact that $I$ is soft-edged if
and only if, for every $\xi  \in  \Sigma(I)$, one has $\xi  =
o(\eta)$ for some $\eta \in  \Sigma(I)$. Analogously, an ideal $I$
is soft-complemented if and only if, for every $\xi  \in  c_o^*
\setminus \Sigma(I)$, one has $\eta = o(\xi)$ for some $\eta \in
c_o^* \setminus \Sigma(I)$.

Below are some simple properties of the soft interior and soft cover
operations that we shall use frequently throughout this paper.

\begin{lemma}\label{lem:4.2}
For all ideals $I$, $J$:

\item[(i)]
$\se$ and $\scop$ are inclusion preserving, i.e., 
$\se I \subset \se J$ and $\scop I \subset  \scop J$ whenever  $I \subset J$.
\item[(ii)]
$\se$ and $\scop$ are idempotent, i.e., $\se (\se I) = \se I$ and
$\scop (\scop I) = \scop I$ and so $\se I$ and $\scop I$ are,
respectively, soft-edged and soft-complemented.
\item[(iii)]
$\se I \subset I \subset  \scop I$
\item[(iv)]
$\se (\scop I) = \se I$ and $\scop (\se I) = \scop I$
\item[(v)]
$\se I$ and $\scop I$ form a soft pair.
\item[(vi)]
If $I \subset J$ form a soft pair and $L$ is an intermediate ideal,
$I \subset  L \subset  J$, then $I = \se L$ and $J = \scop L$.
\item[(vii)]
If $I \subset J$, $I = \se I$, and $J = \scop J$, then $I$ and $J$
form a soft pair if and only if $\scop I = J$ if and only if $\se J = I$. 
\end{lemma}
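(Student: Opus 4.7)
The plan is to verify the seven items essentially from the definitions, exploiting one recurring trick: any $\alpha \in c_o^*$ can be factored as $\alpha = \beta\gamma$ with $\beta,\gamma \in c_o^*$ (e.g.\ take $\beta=\gamma=\alpha^{1/2}$, which is in $c_o^*$ since $\alpha$ is). This ``square-root'' factorization is what drives the idempotence in (ii) and the mixed identities in (iv); everything else is unpacking of the definitions of $\Sigma(\se I)$ and $\Sigma(\scop I)$.

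First, (i) is immediate: the defining inequality $\xi \leq \alpha\eta$ (for $\se$) and the universal condition $\alpha\xi \in \Sigma(I)$ for all $\alpha \in c_o^*$ (for $\scop$) both manifestly grow with $\Sigma(I)$. Item (iii) is equally immediate: since $\alpha \in c_o^*$ is bounded by $\alpha_1$, for $\xi \in \Sigma(\se I)$ one has $\xi \leq \alpha\eta \leq \alpha_1\eta \in \Sigma(I)$ by heredity, while for $\xi \in \Sigma(I)$ and any $\alpha \in c_o^*$, $\alpha\xi \leq \alpha_1\xi \in \Sigma(I)$, so $\xi \in \Sigma(\scop I)$.

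For (ii) and (iv) I invoke the factorization. To prove $\se I \subset \se(\se I)$: given $\xi \leq \alpha\eta$ with $\eta \in \Sigma(I)$, write $\alpha=\beta\gamma$, and observe $\xi \leq \beta(\gamma\eta)$ with $\gamma\eta \in \Sigma(\se I)$. The reverse inclusion is (iii) applied to $\se I$. To prove $\scop(\scop I) \subset \scop I$, take $\xi \in \Sigma(\scop(\scop I))$; then for each $\alpha \in c_o^*$, factoring $\alpha=\beta\gamma$ yields $\beta(\gamma\xi)\in \Sigma(I)$, i.e.\ $\alpha\xi \in \Sigma(I)$, so $\xi \in \Sigma(\scop I)$. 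For (iv), the inclusion $\se I \subset \se(\scop I)$ follows from (i) and (iii); for the reverse, if $\xi \leq \alpha\eta$ with $\eta \in \Sigma(\scop I)$ and $\alpha=\beta\gamma$, then $\gamma\eta \in \Sigma(I)$ by definition of $\scop I$ and $\xi \leq \beta(\gamma\eta) \in \Sigma(\se I)$. The identity $\scop(\se I) = \scop I$ is analogous: $\scop I \supset \scop(\se I)$ comes from (i) and (iii), and the reverse from the same factorization applied to $\alpha\xi = \beta(\gamma\xi)$.

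Parts (v)--(vii) are then pure bookkeeping. For (v), (ii) gives that $\se I$ is soft-edged and $\scop I$ is soft-complemented, (iii) gives the inclusion $\se I \subset \scop I$, and (iv) supplies the two soft-pair identities. For (vi), monotonicity (i) sandwiches $\se L$ between $\se I = I$ (using that $I=\se J$ is soft-edged) and $\se J = I$; dually $\scop L$ is squeezed between $\scop I = J$ and $\scop J = J$. For (vii), being a soft pair is by definition the conjunction of $\se J = I$ and $\scop I = J$, so the only content is the equivalence of (b) and (c) under the standing hypotheses $\se I = I$ and $\scop J = J$: assuming $\scop I = J$, (iv) gives $\se J = \se(\scop I) = \se I = I$; assuming $\se J = I$, (iv) gives $\scop I = \scop(\se J) = \scop J = J$. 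There is no genuine obstacle in this lemma; the only non-routine move is the $c_o^*$-factorization $\alpha = \beta\gamma$, which handles every non-trivial inclusion uniformly.
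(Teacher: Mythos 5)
Your proof is correct and follows essentially the same route as the paper: the square-root factorization $\alpha = \beta\gamma$ with $\beta,\gamma \in c_o^*$ is precisely the sequence-level content of the paper's invocation of $K(H) = K(H)^2$, which drives (ii) and (iv) there as well, and parts (v)--(vii) are the same bookkeeping in both.
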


\begin{proof}
(i) and (iii) follow easily from the definitions. From $K(H) =
K(H)^2$ follows the idempotence of $\se$ in the first part of (ii)
and the inclusion $\scop (\scop I) \subset  \scop I$, while the
equality here follows from (iii) and (i). That $\se (\scop I)
\subset I \subset  \scop (\se I)$ is immediate by
Definition~\ref{def:4.1}. Applying $\se$ to the first inclusion, by
(i)--(iii) follows the first equality in (iv) and the second
equality follows similarly. (v), (vi) and (vii) are now immediate.
\end{proof}

An easy consequence of this proposition and of
Definition~\ref{def:4.1} is:

\begin{corollary} \label{cor:4.3}
For every ideal $I$,

\item[(i)]
$\se I$ is the largest soft-edged ideal contained in $I$ and it is
the smallest ideal whose soft cover contains $I$

\item[(ii)]
$\scop I$ is the smallest soft-complemented ideal containing $I$ and
it is the largest ideal whose soft interior is contained in $I$.

\end{corollary}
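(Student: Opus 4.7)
The plan is to derive both parts of the corollary directly from the monotonicity and idempotence properties already gathered in Lemma~\ref{lem:4.2}, without returning to characteristic sets. The overall structure is a formal extremality argument: for each of the four claims, I would first verify that the candidate ideal ($\se I$ or $\scop I$) has the asserted property, and then use inclusion-preservation of $\se$ and $\scop$ together with the commutation identities $\se(\scop I)=\se I$ and $\scop(\se I)=\scop I$ in Lemma~\ref{lem:4.2}(iv) to show that the candidate is extremal among all ideals with that property.

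For (i), I would first observe that $\se I$ is soft-edged by Lemma~\ref{lem:4.2}(ii) and contained in $I$ by (iii). If $J\subset I$ is any soft-edged ideal, then monotonicity in (i) gives $J=\se J\subset\se I$, establishing maximality. For the second clause, (iv) gives $\scop(\se I)=\scop I$, which contains $I$ by (iii), so $\se I$ itself satisfies the stated condition. Conversely, if $J$ is any ideal with $\scop J\supset I$, applying $\se$, using monotonicity, and invoking the commutation identity yields $\se I\subset \se(\scop J)=\se J\subset J$, which gives the minimality.

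Part (ii) is the formal dual and follows the same template. The ideal $\scop I$ contains $I$ by (iii) and is soft-complemented by (ii); if $J\supset I$ is soft-complemented, monotonicity gives $\scop I\subset\scop J=J$. For the last clause, (iv) gives $\se(\scop I)=\se I\subset I$. Conversely, if $J$ is any ideal with $\se J\subset I$, applying $\scop$ and again invoking (iv) yields $J\subset\scop J=\scop(\se J)\subset\scop I$.

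There is essentially no genuine obstacle here beyond clerical care: all four extremality statements collapse in one line each into Lemma~\ref{lem:4.2}. The only point that requires attention is selecting the correct half of (iv) in each direction, namely $\scop(\se I)=\scop I$ when identifying $\se I$ as the smallest ideal whose soft cover contains $I$, and $\se(\scop I)=\se I$ when identifying $\scop I$ as the largest ideal whose soft interior is contained in $I$.
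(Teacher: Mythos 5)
Your proof is correct and takes essentially the approach the paper intends: the paper states the corollary is "an easy consequence" of Lemma~\ref{lem:4.2} and Definition~\ref{def:4.1}, and your extremality argument uses exactly those ingredients — monotonicity, idempotence, the inclusion chain, and the commutation identities of Lemma~\ref{lem:4.2}(i)--(iv) — in the expected way.
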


The rest of this section is devoted to showing that many ideals in the
literature are soft-edged or soft-complemented (or both) and
that soft pairs occur naturally. Rather than proving directly
soft-complementedness, it is sometimes easier to prove a stronger
property:

\begin{deff}\label{def:4.4}
An ideal $I$ is said to be strongly soft-complemented (ssc for short) if
for every countable collection of sequences $\{\eta^{(k)}\} \subset
c_o^* \setminus \Sigma(I)$ there is a sequence of indices $n_k \in
\mathbb{N}$ such that $\xi  \not\in \Sigma(I)$ whenever $\xi  \in  c_o^*$ and
$\xi_i \geq \eta_i^{(k)}$ for all $k$ and for all $1 \leq i \leq n_k$.
\end{deff}

\begin{proposition}\label{prop:4.5}
Strongly soft-complemented ideals are soft-complemented.
\end{proposition}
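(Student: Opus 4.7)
The plan is to prove directly that $\scop I \subset I$; together with the generic inclusion $I \subset \scop I$ from Lemma~\ref{lem:4.2}(iii), this yields $\scop I = I$. Equivalently, I will show that every $\xi \in c_o^* \setminus \Sigma(I)$ fails to lie in $\Sigma(\scop I)$, by producing some $\alpha \in c_o^*$ for which $\alpha\xi \notin \Sigma(I)$.

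The natural input to the ssc hypothesis is the countable family $\eta^{(k)} := \tfrac{1}{k}\xi$. Since $\Sigma(I)$ is a cone invariant under positive scalar multiplication and $\xi \notin \Sigma(I)$, each $\eta^{(k)}$ lies in $c_o^* \setminus \Sigma(I)$. Applying Definition~\ref{def:4.4} yields indices $n_k \in \mathbb{N}$ such that any $\mu \in c_o^*$ with $\mu_i \geq \tfrac{1}{k}\xi_i$ for all $k$ and all $1 \leq i \leq n_k$ lies outside $\Sigma(I)$. After replacing $n_k$ by $\max\{n_1,\ldots,n_k,k\}$ if necessary (which only strengthens the pointwise-domination hypothesis and therefore continues to force $\mu \notin \Sigma(I)$), I may assume $n_1 < n_2 < \cdots$.

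Setting $n_0 := 0$, define the step sequence $\alpha \in c_o^*$ by $\alpha_j := 1/k$ for $n_{k-1} < j \leq n_k$. This is nonnegative, nonincreasing, and tends to $0$. For any $k$ and any $1 \leq i \leq n_k$, the unique $m$ with $n_{m-1} < i \leq n_m$ satisfies $m \leq k$, so $\alpha_i = 1/m \geq 1/k$; therefore
\[
(\alpha\xi)_i \;\geq\; \tfrac{1}{k}\xi_i \;=\; \eta^{(k)}_i \qquad \text{for all } k \text{ and all } 1 \leq i \leq n_k.
\]
Since $\alpha\xi \in c_o^*$, the ssc property forces $\alpha\xi \notin \Sigma(I)$. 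By Definition~\ref{def:4.1} this means $\xi \notin \Sigma(\scop I)$, completing the contrapositive argument.

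There is no real obstacle: the only genuine idea is the choice of the countable family $\{\tfrac{1}{k}\xi\}$ together with the dovetailed step cofactor $\alpha$, whose values $1/k$ are exactly calibrated to out-dominate the scalar factors $1/k$ prescribed by ssc on the initial segments $[1, n_k]$. The monotonicity of $\alpha$ (needed for $\alpha \in c_o^*$) is automatic from the strict increase of the $n_k$, and all other verifications are bookkeeping.
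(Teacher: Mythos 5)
Your proof is correct and is essentially the same as the paper's: take $\eta^{(k)} = \frac{1}{k}\xi$, apply the ssc definition to get indices $n_k$, build the step cofactor $\alpha_j = 1/k$ on $(n_{k-1}, n_k]$, and conclude $\alpha\xi \notin \Sigma(I)$. (One micro-nitpick: $\max\{n_1,\dots,n_k,k\}$ only makes the indices nondecreasing and unbounded, not strictly increasing — but that already suffices for the rest of the argument, or use $k + \max\{n_1,\dots,n_k\}$ instead.)
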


\begin{proof}
Let $I$ be an ssc ideal, let $\eta \not\in \Sigma(I)$, and for each
$k \in \mathbb{N}$, set $\eta^{(k)}:= \frac{1}{k}\eta$.
Since
$\{\eta^{(k)}\} \subset  c_o^* \setminus \Sigma(I)$, there is an
associated sequence of indices $n_k$ which, without loss of
generality, can be taken to be strictly increasing. Set $n_o = 0$
and define $\alpha_i := \frac{1}{k}$ for $n_{k-1} < i \leq n_k$.
Then $\alpha \in  c_o^*$ and $(\alpha\eta)_i \geq \eta_i^{(k)}$ for
all $1 \leq i \leq n_k$ and all $k$. Therefore $\alpha\eta \not\in
\Sigma(I)$ and hence, by the remark following
Definition~\ref{def:4.1}, $I$ is soft-complemented.
\end{proof}

Example~\ref{ex:4.15}
and Proposition~\ref{prop:5.3} provide soft-complemented ideals that
are not strongly soft-complemented.

\begin{proposition}\label{prop:4.6} {\rm (i)} Countably generated
ideals are strongly soft-complemented and hence soft-complemented.

{\rm (ii)} If $I$ is a countably generated ideal and if
$\{\rho^{(k)}\}$ is a sequence of generators for its characteristic
set $\Sigma(I)$, then $I$ is soft-edged if and only if for every $k
\in \mathbb{N}$ there are $m, k' \in \mathbb{N}$ for which
$\rho^{(k)} = o(D_m\rho^{(k')})$. In particular, a principal ideal
$(\rho)$ is soft-edged if and only if $\rho = o(D_m\rho)$ for some
$m \in \mathbb{N}$. If a principal ideal
$(\rho)$ is soft-edged, then $(\rho) \subset \mathcal{L}_1$.
\end{proposition}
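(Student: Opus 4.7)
For part (i), I would begin by normalizing the generators exactly as in the proof of Lemma~\ref{lem:2.8}: after adjoining all ampliations $D_m\rho^{(k)}$ and passing to the partial sums $\rho^{(1)}+\cdots+\rho^{(k)}$, I may assume $\rho^{(k)}\leq\rho^{(k+1)}$ and that $\xi\in\Sigma(I)$ if and only if $\xi=O(\rho^{(m)})$ for some $m$. Given a countable collection $\{\eta^{(k)}\}\subset c_o^*\setminus\Sigma(I)$, the failure $\eta^{(k)}\neq O(\rho^{(k)})$ yields indices $n_k$ with $\eta^{(k)}_{n_k}\geq k\,\rho^{(k)}_{n_k}$, which I take to be strictly increasing. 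I would then argue that if $\xi\in c_o^*$ satisfies $\xi_i\geq\eta^{(k)}_i$ for all $k$ and $1\leq i\leq n_k$, then $\xi_{n_k}\geq k\,\rho^{(k)}_{n_k}\geq k\,\rho^{(m)}_{n_k}$ for every $k\geq m$, so $\xi\neq O(\rho^{(m)})$ for any $m$, i.e.\ $\xi\notin\Sigma(I)$; Proposition~\ref{prop:4.5} then gives soft-complementedness.

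For part (ii), my plan is as follows. Necessity follows by applying the definition of $\se I$ to each generator: $\rho^{(k)}\in\Sigma(\se I)$ gives $\rho^{(k)}\leq\alpha\eta$ with $\alpha\in c_o^*$ and $\eta\in\Sigma(I)$, and combining with $\eta\leq C\,D_m\rho^{(k')}$ yields $\rho^{(k)}\leq C\alpha\,D_m\rho^{(k')}=o(D_m\rho^{(k')})$ since $\alpha\to 0$. For sufficiency I would take any $\xi\in\Sigma(I)$ with $\xi\leq C\,D_{m_0}\rho^{(k)}$ and lift the assumed $\rho^{(k)}=o(D_m\rho^{(k')})$ through the reindexing $D_{m_0}$ to obtain $D_{m_0}\rho^{(k)}=o(D_{m_0 m}\rho^{(k')})$; then $\xi=o(D_{m_0 m}\rho^{(k')})$ with $D_{m_0 m}\rho^{(k')}\in\Sigma(I)$, and by the characterization in Definition~\ref{def:4.1} this gives $\xi\in\Sigma(\se I)$. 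The principal-ideal criterion is the one-generator special case.

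For the last assertion, I would use $\rho=o(D_m\rho)$, i.e.\ $\rho_n/\rho_{\lceil n/m\rceil}\to 0$, to pick $N$ with $\rho_n<\frac{1}{2m}\rho_{\lceil n/m\rceil}$ for all $n\geq N$ and $k_0$ with $m^{k_0}\geq N$. Substituting $n=m^{k+1}$ for $k\geq k_0$ gives $m^{k+1}\rho_{m^{k+1}}\leq\tfrac{1}{2}\,m^k\rho_{m^k}$, so the sequence $m^k\rho_{m^k}$ decays geometrically. Since $\rho$ is monotone, $\sum_{n=m^k+1}^{m^{k+1}}\rho_n\leq (m-1)\,m^k\rho_{m^k}$; summing over $k\geq k_0$ and adding the finitely many initial terms yields $\rho\in\ell_1^*$, i.e.\ $(\rho)\subset\mathcal{L}_1$.

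The hard step is the diagonal choice in (i): the countable-generation hypothesis only becomes usable once the generators have been normalized via Lemma~\ref{lem:2.8}, so that a single index $n_k$ can simultaneously witness the failure of $O(\rho^{(m)})$ for every $m\leq k$. Once that normalization is in place, the remaining parts reduce to routine reindexing identities for ampliations and the geometric-decay estimate above.
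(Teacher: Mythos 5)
Your proposal reproduces the paper's argument in all essentials: the normalization of generators via Lemma~\ref{lem:2.8}, the diagonal selection of indices $n_k$ from $\eta^{(k)}\neq O(\rho^{(k)})$ in part (i), the ampliation-reindexing $D_{m_0}D_m = D_{m_0 m}$ to push the little-$o$ through in part (ii), and the geometric-decay estimate at the subsequence $\{m^k\}$ for the summability conclusion. The only differences are cosmetic (e.g.\ your decay constant $1/(2m)$ versus the paper's $1/m^2$, and phrasing the soft-edged hypothesis as $\rho^{(k)}\leq\alpha\eta$ rather than $\rho^{(k)}=o(\eta)$), so the proof is correct and takes the same route.
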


\begin{proof}
(i) As in the proof of Lemma~\ref{lem:2.8}, choose a 
sequence of generators $\rho^{(k)}$ for $\Sigma(I)$ with $\rho^{(k)}
\leq \rho^{(k+1)}$ and such that $\xi  \in  \Sigma(I)$ if and only
if $\xi  = O(\rho^{(m)})$ for some $m \in  \mathbb{N}$. Let
$\{\eta^{(k)}\} \subset  c_o^* \setminus \Sigma(I)$. Then, in
particular, $\eta^{(k)} \neq O(\rho^{(k)})$ for every $k$. Thus
there is a strictly increasing sequence of indices $n_k \in
\mathbb{N}$ such that $\eta_{n_k}^{(k)} \geq k\rho_{n_k}^{(k)}$ for
all $k$. If $\xi  \in  c_o^*$ and for each $k$, $\xi_i \geq
\eta_i^{(k)}$ for all $1 \leq i \leq n_k$, then for all $k \geq m$,
$\xi_{n_k} \geq \eta_{n_k}^{(k)} \geq k\rho_{n_k}^{(k)} \geq
k\rho_{n_k}^{(m)}$. Hence $\xi  \neq O(\rho^{(m)})$ for each $m$ and
thus $\xi  \not\in \Sigma(I)$, establishing that $I$ is ssc.

(ii) Assume that $I$ is soft-edged and let $k \in  \mathbb{N}$. 
By the remarks following Definition~\ref{def:4.1}, $\rho^{(k)} = o(\xi)$
for some $\xi \in \Sigma(I)$. But also $\xi  = O(D_m\rho^{(k')})$
for some $m$ and $k'$ and hence $\rho^{(k)} = o(D_m\rho^{(k')})$.
Conversely, assume that the condition holds and let $\xi \in
\Sigma(I)$. Then $\xi = O(D_m\rho^{(k)})$ for some $m$ and $k$ and
$\rho^{(k)} = o(D_p\rho^{(k')})$ for some $p$ and $k'$. Since $D_m
D_p = D_{mp}$, one has
\[
\lim_n \frac{\big(D_m\rho^{(k)}\big)_n}{\big(D_{mp}\rho^{(k')}\big)_n}=
\lim_n \bigg(D_m\bigg(\frac{\rho^{(k)}}{D_p\rho^{(k')}}\bigg)\bigg)_n=
\lim_j \bigg(\frac{\rho^{(k)}}{D_p\rho^{(k')}}\bigg)_j=
0,
\]
i.e., $D_m\rho^{(k)} = o(D_{mp}\rho^{(k')})$, whence $\xi  \in
\Sigma(\se I)$ and $I$ is soft-edged. Thus, if $I$ is a soft-edged
principal ideal with a generator $\rho$, then $\rho = o(D_m\rho)$
for some $m \in \mathbb{N}$. As a consequence, $\rho_{m^k} \leq
\frac{1}{m^2}\rho_{m^{k-1}}$ for $k$ large enough, from which it
follows that $\rho$ is summable.
\end{proof}

Next we consider \textbf{Banach ideals}. These are ideals that are complete
with respect to a symmetric norm (see for instance \cite[Section 4.5]{7})
and were called uniform-cross-norm ideals by Schatten \cite{18},
symmetrically normed ideals by Gohberg and Krein  \cite{8}, and
symmetric norm ideals by other authors. Recall that the norm of
$I$ induces on the finite rank ideal $F$ (or, more precisely, on $S(F)$,
the associated space of sequences of $c_o$ with finite support) a
symmetric norming function $\phi$, and the latter permits one to
construct the so-called minimal and maximal Banach ideals
$\mathfrak{S}_\phi^{(o)} = \cl(F)$ contained in $I$ (the closure taken
in the norm of $I$) and $\mathfrak{S}_\phi$ containing $I$ where
\begin{align*}
\Sigma\big(\mathfrak{S}_\phi\big) &= \big\{\xi  \in  c_o^* \mid \phi(\xi):=\sup
\phi\big( \langle \xi_1, \xi_2, \dots , \xi_n, 0, 0, \ldots \rangle \big) <
\infty \big\}\\
\Sigma\big(\mathfrak{S}_\phi^{(o)}\big) &= \big\{\xi  \in
\Sigma\big(\mathfrak{S}_\phi\big) \mid \phi\big( \langle \xi_n,
\xi_{n+1}, \ldots \rangle \big) \longrightarrow 0\big\}.
\end{align*}

As the following proposition implies, the ideals
$\mathfrak{S}_\phi^{(o)}$ and $\mathfrak{S}_\phi$ can be obtained
from $I$ through a ``soft'' operation, i.e.,
$\mathfrak{S}_\phi^{(o)} = \se I$ and $\mathfrak{S}_\phi = \scop I$,
and the embedding $\mathfrak{S}_\phi^{(o)} \subset
\mathfrak{S}_\phi$ is a natural example of a soft pair. In
particular, if $I$ is a Banach ideal, then so also are $\se I$ and $\scop I$.

\begin{proposition}\label{prop:4.7}
For every symmetric norming function $\phi$, $\mathfrak{S}_\phi^{(o)}$
is soft-edged,
$\mathfrak{S}_\phi$ is ssc, and $\mathfrak{S}_\phi^{(o)} \subset
\mathfrak{S}_\phi$ is a soft pair.
\end{proposition}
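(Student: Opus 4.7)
The plan addresses the three assertions in order---strong soft-complementedness of $\mathfrak{S}_\phi$, soft-edgedness of $\mathfrak{S}_\phi^{(o)}$, and the soft-pair property---with the soft-edgedness being the substantive step.

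For $\mathfrak{S}_\phi$ strongly soft-complemented, I would use the defining identity $\phi(\eta)=\sup_n\phi(\langle\eta_1,\ldots,\eta_n,0,\ldots\rangle)$. Given $\{\eta^{(k)}\}\subset c_o^*\setminus\Sigma(\mathfrak{S}_\phi)$, for each $k$ pick $n_k$ so that $\phi(\langle\eta_1^{(k)},\ldots,\eta_{n_k}^{(k)},0,\ldots\rangle)>k$. Any $\xi\in c_o^*$ with $\xi_i\geq\eta^{(k)}_i$ on $[1,n_k]$ for every $k$ then has truncated $\phi$-norm at least $k$ by solidity, so $\phi(\xi)=\infty$ and Definition~\ref{def:4.4} is verified.

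The main technical step is the soft-edgedness of $\mathfrak{S}_\phi^{(o)}$, for which I would use a block construction. Given $\xi\in\Sigma(\mathfrak{S}_\phi^{(o)})$, inductively pick $0=n_0<n_1<\cdots$ with $\phi(\xi|_{[n_k+1,\infty)})<2^{-k-1}$, and set
\[
\eta\;:=\;\sum_{k=0}^\infty\xi|_{[n_k+1,\infty)},
\]
so $\eta_j=K\xi_j$ on the block $(n_{K-1},n_K]$. Setting $\alpha_j:=1/K(j)$ with $K(j)$ the block index of $j$ produces $\alpha\in c_o^*$ satisfying $\xi=\alpha\eta$ pointwise. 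A telescoping decomposition of $\eta|_{[m,\infty)}$ combined with the triangle inequality for $\phi$ gives
\[
\phi(\eta|_{[m,\infty)})\;\leq\;L\,\phi(\xi|_{[m,\infty)})+\sum_{k\geq L}\phi(\xi|_{[n_k+1,\infty)})\;\leq\;(L+1)\,2^{-L}\;\to\;0
\]
whenever $n_{L-1}<m\leq n_L$, and similarly $\phi(\eta)\leq\phi(\xi)+\tfrac{1}{2}<\infty$. The principal obstacle is that $\eta$ may fail to be monotone nonincreasing at a transition $n_K\to n_K+1$, since this would require $\xi_{n_K+1}/\xi_{n_K}\leq K/(K+1)$. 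I plan to handle this either (a) by enforcing the ratio condition as an additional inductive clause on $n_k$, feasible whenever $\xi$ is strictly positive (since $\xi\to 0$), the eventually-vanishing case reducing to finite rank and being trivial, or (b) by arguing with the decreasing rearrangement $\eta^*$: the partial sums $\eta^{(K)}:=\sum_{k\leq K}\xi|_{[n_k+1,\infty)}$ satisfy $\eta^{(K)}\leq(K+1)\xi$ pointwise, so $(\eta^{(K)})^*\leq(K+1)\xi$ lies in $\Sigma(\mathfrak{S}_\phi^{(o)})$ by solidity; and since $\phi(\eta-\eta^{(K)})\leq 2^{-K-1}\to 0$ and $\mathfrak{S}_\phi^{(o)}$ is $\phi$-closed, this forces $\eta^*\in\Sigma(\mathfrak{S}_\phi^{(o)})$, from which the block structure yields a factorization $\xi\leq\alpha\eta^*$ with $\alpha\in c_o^*$.

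For the soft-pair property, Lemma~\ref{lem:4.2}(vii) applied with $I=\mathfrak{S}_\phi^{(o)}$ and $J=\mathfrak{S}_\phi$ reduces the assertion to the single equality $\scop\mathfrak{S}_\phi^{(o)}=\mathfrak{S}_\phi$. The inclusion $\mathfrak{S}_\phi\subset\scop\mathfrak{S}_\phi^{(o)}$ is immediate from $\phi((\alpha\xi)|_{[n,\infty)})\leq\alpha_n\phi(\xi)\to 0$ for $\xi\in\Sigma(\mathfrak{S}_\phi)$ and $\alpha\in c_o^*$. Conversely, given $\xi\notin\Sigma(\mathfrak{S}_\phi)$, pick $n_k$ with the $n_k$-truncation $\phi$-norm of $\xi$ exceeding $k^2$, and set $\alpha_j:=1/k$ on $(n_{k-1},n_k]$; then $\alpha\in c_o^*$ while the $n_k$-truncation $\phi$-norm of $\alpha\xi$ exceeds $k$ by solidity, so $\alpha\xi\notin\Sigma(\mathfrak{S}_\phi^{(o)})$, giving $\xi\notin\Sigma(\scop\mathfrak{S}_\phi^{(o)})$.
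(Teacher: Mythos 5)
Your arguments for the strong soft-complementedness of $\mathfrak{S}_\phi$ and for the soft-pair property are correct and coincide essentially with the paper's. (For the soft pair you prove $\scop\mathfrak{S}_\phi^{(o)} = \mathfrak{S}_\phi$, while the paper proves $\se\mathfrak{S}_\phi = \mathfrak{S}_\phi^{(o)}$; by Lemma~\ref{lem:4.2}(vii) these are equivalent, and both directions are elementary. In fact you could save work: since $\mathfrak{S}_\phi^{(o)} \subset \mathfrak{S}_\phi$ and you have already shown $\mathfrak{S}_\phi$ is soft-complemented, $\scop\mathfrak{S}_\phi^{(o)} \subset \scop\mathfrak{S}_\phi = \mathfrak{S}_\phi$ is automatic.)

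The soft-edgedness argument has a genuine gap. Both you and the paper use the block weight $\beta_j := K$ on $(n_{K-1}, n_K]$; your sum-of-tails $\eta$ is exactly $\beta\xi$. The paper, however, works with $\eta := \lni(\beta\xi)$, which is automatically monotone nonincreasing, and imposes the additional constraint $k\xi_{n_k} \downarrow 0$ on the choice of the $n_k$; a direct computation then gives $\eta_n = \min\{(k-1)\xi_{n_{k-1}}, k\xi_n\} \geq (k-1)\xi_n$ on block $k$, hence $\xi = o(\eta)$, and the tail estimate for $\phi(\langle\eta_{n_k+1},\ldots\rangle)$ is read off directly. You instead keep $\eta = \beta\xi$, which is not monotone, and propose two fixes, neither of which closes the gap as stated. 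Plan (a) is simply wrong: the ratio condition $\xi_{n_K+1}/\xi_{n_K} \leq K/(K+1)$ is a condition on \emph{consecutive} entries of $\xi$, and $\xi \in c_o^*$ does not produce arbitrarily late indices at which $\xi$ drops by the required factor in one step. For example, with $\phi = \ell^1$ and $\xi_j = 1/j^2 \in \Sigma(\mathcal{L}_1) = \Sigma(\mathfrak{S}_\phi^{(o)})$, one has $\xi_{j+1}/\xi_j = (j/(j+1))^2$, so the ratio condition forces $n_K \lesssim 2K$, while the tail condition $\phi(\xi|_{[n_K+1,\infty)}) < 2^{-K-1}$ (here $\approx 1/n_K$) forces $n_K > 2^{K+1}$; these are incompatible for large $K$. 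Plan (b) is salvageable but is missing the key step. Your argument that $\eta^* \in \Sigma(\mathfrak{S}_\phi^{(o)})$ (either via the telescoping tail estimate and the identification $\mathfrak{S}_\phi^{(o)} = \cl(F)$, or via the $\eta^{(K)}$ approximation) is fine. But the closing clause ``the block structure yields a factorization $\xi \leq \alpha\eta^*$'' does not follow: rearrangement destroys the block structure, so the pointwise identity $\xi_j = \eta_j/K(j)$ says nothing about $\eta^*$. What is needed is the observation that for any nonnegative sequence $\rho$ one has $\rho^*_n \geq \min_{i\leq n}\rho_i = (\lni\rho)_n$ (since the first $n$ entries of $\rho$ all exceed $\min_{i\leq n}\rho_i$), so $\eta^* \geq \lni(\beta\xi)$, after which one must still verify $\lni(\beta\xi) \geq c\,\xi$ with $c \to \infty$ --- which is precisely the computation the paper carries out under the extra constraint $k\xi_{n_k}\downarrow 0$. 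At that point the rearrangement detour buys nothing over working with $\lni(\beta\xi)$ from the outset, as the paper does.
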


\begin{proof} We first prove that $\mathfrak{S}_\phi^{(o)}$ is
soft-edged. For every $\xi  \in \Sigma(\mathfrak{S}_\phi^{(o)})$, that is,\linebreak 
$\phi( \langle \xi_n, \xi_{n+1}, \ldots \rangle ) \rightarrow 0$, choose 
a strictly increasing sequence of indices $n_k$ with $n_o = 0$ for which
$\phi( \langle \xi_{n_k+1}, \xi_{n_k+2}, \ldots \rangle ) \leq
2^{-k}$ and $k\xi_{n_k}\downarrow 0$. Set  $\beta_i :=
k$ for all $n_{k-1} < i \leq n_k$ and $\eta  := \lni \beta\xi$. Then
$\eta \in c_o^*$ since $\eta_{n_k} \leq \beta_{n_k} \xi_{n_k} =
k\xi_{n_k} \rightarrow 0$ and $\xi  = o(\eta)$ because for every $k$
and $n_{k-1} < n \leq n_k$,
\begin{align*}
\eta_n&=\min \{\beta_i\xi_i \mid i \leq n\}\\
&=\min \!\Big\{\big\{\!\min \big\{j\xi_i \!\mid\! n_{j-1} < i \leq n_j\big\} \mid
1 \leq j \leq k-1\big\},\; \min \big\{k\xi_i \mid n_{k-1} < i \leq n\big\}\!\Big\}\\
&=\min
\big\{\big\{j\xi_{n_j} \mid 1 \leq j \leq k-1\big\},\ k\xi_n\big\}\\
&=\min \big\{(k-1)\xi_{n_{k-1}},\ k\xi_n\big\}\\
&\geq (k-1)\xi_n.
\end{align*}

Furthermore, $\eta \in  \Sigma(\mathfrak{S}_\phi^{(o)})$ which
establishes that $\mathfrak{S}_\phi^{(o)}$ is soft-edged. Indeed, for
all $h > k > 1$,
\begin{align*}
\phi\big( \langle \eta_{n_k+1}, \dots , \eta_{n_h}, 0, 0, \ldots \rangle \big)
&\leq \sum_{j=k}^{h-1}
\phi\big( \langle \eta_{n_j+1}, \dots , \eta_{n_{j+1}}, 0, 0, \ldots \rangle \big)\\
&\leq \sum_{j=k}^{h-1} (j+1)
\phi\big( \langle \xi_{n_j+1}, \dots , \xi_{n_{j+1}}, 0, 0, \ldots \rangle \big)\\
&\leq \sum_{j=k}^{h-1} (j+1)
\phi\big( \langle \xi_{n_j+1}, \xi_{n_j+2}, \ldots \rangle \big)\\
&\leq \sum_{j=k}^{h-1} \frac{j+1}{2^j}.
\end{align*}

Thus
\begin{align*}
\phi\big(\langle \eta_{n_k+1}, \eta_{n_k+2}, \ldots\rangle
\big)&= \sup_n\phi\big(\langle \eta_{n_k+1}, \dots,\eta_n, 0,
0, \ldots \rangle \big)\\
&\leq \sum_{j=k}^{\infty} \frac{j+1}{2^j}
\longrightarrow 0 \quad \text{as } k \longrightarrow \infty.
\end{align*}
from which it follows that $\phi( \langle \eta_n, \eta_{n+1}, \ldots
\rangle ) \rightarrow 0$ as $n \rightarrow \infty$.

Next we prove that $\mathfrak{S}_\phi$ is ssc. For every $\{\eta^{(k)}\}
\subset c_o^* \setminus \Sigma(\mathfrak{S}_\phi)$, that is,\linebreak $\sup_n
\phi( \langle \eta_1^{(k)},\eta_2^{(k)}, \dots ,\eta_n^{(k)}, 0, 0,
\ldots \rangle ) = \infty$ for each $k$, choose a strictly
increasing sequence of indices $n_k \in \mathbb{N}$ for which $\phi(
\langle \eta_1^{(k)}, \dots ,\eta_{n_k}^{(k)}, 0, 0, \ldots \rangle
) \geq k$. Thus, if $\xi  \in c_o^*$ and for each $k, \xi_i \geq
\eta_i^{(k)}$ for all $1 \leq i \leq n_k$, then $\phi( \langle
\xi_1, \xi_2, \dots , \xi_{n_k}, 0, 0, \ldots \rangle ) \geq k$ and
hence $\xi \not\in \Sigma(\mathfrak{S}_\phi)$, which shows that
$\mathfrak{S}_\phi$ is ssc.

Finally, to prove that $\mathfrak{S}_\phi^{(o)} \subset
\mathfrak{S}_\phi$ form a soft pair, in view of Lemma~\ref{lem:4.2}(vii),
Corollary~\ref{cor:4.3}(i) and the first two results in this proposition, it
suffices to show that $\se(\mathfrak{S}_\phi) \subset
\mathfrak{S}_\phi^{(o)}$. Let $\xi  \in  \Sigma(\se
(\mathfrak{S}_\phi))$, i.e., $\xi  \leq \alpha\eta$ for some $\alpha
\in c_o^*$ and $\eta  \in \Sigma(\mathfrak{S}_\phi)$. 
Then $\phi(\langle\xi_n, \xi_{n+1}, \ldots \rangle ) \leq \alpha_n\phi(\langle\eta_n, \eta_{n+1}, \ldots \rangle ) \leq \alpha_n\phi(\eta) \rightarrow 0$, 
i.e., $\xi  \in  \Sigma(\mathfrak{S}_\phi^{(o)})$.
\end{proof}

\begin{remark}\label{rem:4.8}
\item[(i)] In the notations of \cite{7} and of this paper, Gohberg and
Krein \cite{8} showed that the symmetric norming function
$\phi(\eta) := \sup \frac{\eta_a}{\xi_a}$ induces a complete norm on
the am-closure $(\xi )^-$ of the principal ideal $(\xi)$ and for this
norm
\[
\cl(F) = \mathfrak{S}_\phi^{(o)} \subset  \cl (\xi ) \subset
\mathfrak{S}_\phi = (\xi)^-.
\]
\item[(ii)] The fact that $\mathfrak{S}_\phi$ is soft-complemented was
obtained in \cite[Theorem 3.8]{17}, but Salinas proved only that
(in our notations) $\se \mathfrak{S}_\phi \subset \mathfrak{S}_\phi^{(o)}$
\cite[Remark 3.9]{17}. Varga reached the same conclusion in the case
of the am-closure of a principal ideal with a non-trace class
generator \cite[Remark 3]{19}.
\item[(iii)] By Lemma~\ref{lem:4.2}(vi), if $I$ is a Banach ideal such that
$\mathfrak{S}_\psi^{(o)} \subset I \subset \mathfrak{S}_\psi$ for
some symmetric norming function $\Psi$ and if $\phi$ is the symmetric
norming function induced by the norm of $I$ on $\Sigma(F)$, then
$\mathfrak{S}_\phi^{(o)} = \mathfrak{S}_\psi^{(o)}$ and
$\mathfrak{S}_\phi = \mathfrak{S}_\psi$ and hence $\phi$ and $\psi$ are
equivalent (cf. \cite[Chapter 3, Theorem 2.1]{8}).
\item[(iv)] The fact that $\mathfrak{S}_\phi^{(o)} \subset
\mathfrak{S}_\phi$ is always a soft pair yields immediately the
equivalence of parts (a)--(c) in \cite[Theorem 2.3]{17} without the need
to consider norms and hence establish (d) and (e). \vspace{12pt}

That $\mathfrak{S}_\phi^{(o)} \subset \mathfrak{S}_\phi$ is a soft
pair can help simplify the classical analysis of principal ideals.
In \cite[Theorem 3.23]{2} Allen and Shen used Salinas' results \cite{17} on
(second) K\"{o}the duals to prove that $(\xi ) = \cl (\xi )$ if and only if
$\xi$  is regular (i.e., $\xi \asymp \xi_a$, or in terms of ideals, if and
only if $(\xi)$ is am-stable). In \cite[Theorem 3]{19} Varga gave an
independent proof of the same result. This result is also a special
case of \cite[Theorem 2.36]{7}, obtained for countably generated ideals by
yet independent methods. A still different and perhaps simpler proof
of the same result follows immediately from Theorem~\ref{thm:2.11} and
the fact that $\mathfrak{S}_\phi^{(o)} \subset \mathfrak{S}_\phi$
form a soft pair.
\end{remark}

\begin{proposition}\label{prop:4.9}
$(\xi ) = \cl (\xi )$ if
and only if $\xi$  is regular.
\end{proposition}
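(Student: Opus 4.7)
The plan is to leverage the Gohberg--Krein sandwich from Remark~\ref{rem:4.8}(i), namely
\[
\mathfrak{S}_\phi^{(o)} \subset \cl(\xi) \subset \mathfrak{S}_\phi = (\xi)^-,
\]
combined with the soft pair structure of $\mathfrak{S}_\phi^{(o)} \subset \mathfrak{S}_\phi$ (Proposition~\ref{prop:4.7}) and Theorem~\ref{thm:2.11}, to reduce the question ``$(\xi) = \cl(\xi)$?'' to the question ``$(\xi) = (\xi)^-$?'', which is precisely am-stability (hence regularity) for a principal ideal.

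For the forward direction, assume $(\xi) = \cl(\xi)$. Inserting this into the Gohberg--Krein chain shows that $(\xi)$ is an intermediate ideal between the soft pair $\mathfrak{S}_\phi^{(o)} \subset \mathfrak{S}_\phi$. Applying Lemma~\ref{lem:4.2}(vi) gives $\scop(\xi) = \mathfrak{S}_\phi = (\xi)^-$. But a principal ideal is ssc and hence soft-complemented by Proposition~\ref{prop:4.6}(i), so $\scop(\xi) = (\xi)$. Therefore $(\xi) = (\xi)^-$, i.e., $(\xi)$ is am-closed. Since $(\xi)$ is countably generated, Theorem~\ref{thm:2.11} yields am-stability, $(\xi) = (\xi)_a = (\xi_a)$, so $\xi \asymp \xi_a$, which is precisely regularity.

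For the converse, if $\xi$ is regular then $(\xi) = (\xi_a) = (\xi)_a$, so $(\xi)$ is am-stable and hence am-closed by the 5-chain inclusion $(\xi) \subset (\xi)^- \subset (\xi)_a$. Then the Gohberg--Krein sandwich forces $(\xi) \subset \cl(\xi) \subset (\xi)^- = (\xi)$, giving equality.

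The only genuinely delicate step is the extraction of $\scop(\xi) = (\xi)^-$ in the forward direction; once this is in hand, everything is bookkeeping. The subtlety is that we do \emph{not} know a priori that $\cl(\xi)$ equals either end of the sandwich, so we cannot directly conclude $(\xi) = (\xi)^-$ from $(\xi) = \cl(\xi)$; it is exactly the soft-pair characterization in Lemma~\ref{lem:4.2}(vi), combined with the soft-complementedness of principal ideals, that bridges the gap. The rest is a clean application of Theorem~\ref{thm:2.11}, which is the feature of this proof that is ``perhaps simpler'' than the older arguments of Allen--Shen \cite{2} and Varga \cite{19}.
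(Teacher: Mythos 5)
Your proof is correct and follows essentially the same route as the paper: sandwich $(\xi)=\cl(\xi)$ between the soft pair $\mathfrak{S}_\phi^{(o)} \subset \mathfrak{S}_\phi = (\xi)^-$, use soft-complementedness of principal ideals to extract $(\xi)=(\xi)^-$, and then apply Theorem~\ref{thm:2.11}. The only cosmetic difference is that you invoke Lemma~\ref{lem:4.2}(vi) directly while the paper applies $\scop$ to the full inclusion chain and uses the soft-complementedness of $\mathfrak{S}_\phi$ and $(\xi)$; these are the same argument packaged differently.
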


\begin{proof}
The inclusion $\mathfrak{S}_\phi^{(o)} \subset  (\xi ) = \cl (\xi )
\subset \mathfrak{S}_\phi = (\xi )^-$ and the fact that $(\xi )$ is
soft complemented by Proposition~\ref{prop:4.6}(i), $\mathfrak{S}_\phi$ is
soft complemented by
Proposition~\ref{prop:4.7}, and $\mathfrak{S}_\phi^{(o)} \subset
\mathfrak{S}_\phi$ is a soft pair (ibid), proves by applying the
$\scop$ operation to the above inclusion that $(\xi ) = (\xi )^-$. The
conclusion now follows from Theorem~\ref{thm:2.11}.
\end{proof}

\begin{remark}\label{rem:4.10}
If $(\xi )^-$ is countably generated, so in particular if it is
principal, by Theorem~\ref{thm:2.11} it is am-stable and hence $(\xi
)^- = ((\xi )^-)_a = (\xi )_a = (\xi_a)$, so that $\xi_a$ is
regular. This implies that $\xi$  itself is regular, as was proven in
\cite[Theorem 3.10]{7} and as is implicit in \cite[Theorem IRR]{19}. 
This conclusion fails for general ideals: 
we construct in \cite {12} a non am-stable ideal with an am-closure that is countably
generated and hence am-stable by Theorem~\ref{thm:2.11}.
\end{remark}

Next we consider \textbf{Orlicz ideals} which provide another
natural example of soft pairs. Recall from \cite[Sections 2.37 and
4.7]{7} that if $M$ is a monotone nondecreasing function on $[0,
\infty)$ with $M(0) = 0$, then the small Orlicz ideal
$\mathcal{L}_M^{(o)}$ is the ideal with characteristic set $\{\xi
\in  c_o^* \mid \sum_n M(t\xi_n) < \infty \text{ for all } t > 0\}$ and
the Orlicz ideal $\mathcal{L}_M$ is the ideal with characteristic
set $\{\xi  \in  c_o^* \mid \sum_n M(t\xi_n) < \infty \text{ for some }
t > 0\}$.
If the function $M$ is convex, then $\mathcal{L}_M^{(o)}$
and $\mathcal{L}_M$ are respectively the ideals
$\mathfrak{S}_\phi^{(o)}$ and $\mathfrak{S}_\phi$ for the symmetric
norming function defined by
\[
\phi\big( \langle \xi_1, \xi_2, \dots , \xi_n, 0, 0, \ldots \rangle
\big) := \inf_{t >0} \bigg\{\frac{1}{t} \mid \sum_{i=1}^{n} M(t\xi_i) \leq 1\bigg\}.
\]
Thus, when $M$ is convex, $\mathcal{L}_M^{(o)} \subset \mathcal{L}_M$
form a soft pair by Proposition~\ref{prop:4.7}. In fact, the same
can be proven directly without assuming convexity for $M$.

\begin{proposition}\label{prop:4.11}
Let $M$ be a monotone nondecreasing function on $[0, \infty)$ with
$M(0) = 0$. Then $\mathcal{L}_M^{(o)}$ is soft-edged, $\mathcal{L}_M$ is
ssc, and $\mathcal{L}_M^{(o)} \subset \mathcal{L}_M$ is a soft
pair.
\end{proposition}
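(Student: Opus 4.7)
The plan is to mirror the structure of the proof of Proposition~\ref{prop:4.7}, since the three claims here parallel the three claims there, and then deduce the soft pair property via Lemma~\ref{lem:4.2}(vii).

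For soft-edgedness of $\mathcal{L}_M^{(o)}$, given $\xi \in \Sigma(\mathcal{L}_M^{(o)})$ I will manufacture an $\eta \in \Sigma(\mathcal{L}_M^{(o)})$ with $\xi = o(\eta)$ by the same ``lni-trick'' used in Proposition~\ref{prop:4.7}. Concretely, using that $\sum_n M(k^2 \xi_n) < \infty$ for each $k$, I will choose a strictly increasing sequence $n_k$ with $n_0 = 0$ so that $\sum_{n > n_k} M(k^2 \xi_n) \leq 2^{-k}$ and $k\xi_{n_k}\downarrow 0$, then set $\beta_i := k$ for $n_{k-1} < i \leq n_k$ and $\eta := \lni \beta\xi$. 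The computation in Proposition~\ref{prop:4.7} applies verbatim to give $\eta \in c_o^*$ and $\eta_n \geq (k-1)\xi_n$ on $(n_{k-1},n_k]$, hence $\xi = o(\eta)$. To check $\eta \in \Sigma(\mathcal{L}_M^{(o)})$, fix $t>0$ and split the sum $\sum_n M(t\eta_n) \leq \sum_k \sum_{n_{k-1} < n \leq n_k} M(tk\xi_n)$: for $k$ large enough that $tk \leq (k-1)^2$, monotonicity of $M$ bounds the inner sum by $2^{-(k-1)}$, while the finitely many remaining blocks are each summable because $\xi \in \Sigma(\mathcal{L}_M^{(o)})$.

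For strong soft-complementedness of $\mathcal{L}_M$, given $\{\eta^{(k)}\} \subset c_o^* \setminus \Sigma(\mathcal{L}_M)$, the hypothesis says $\sum_n M(\tfrac{1}{k}\eta_n^{(k)}) = \infty$ for every $k$. I choose $n_k$ so that the partial sum $\sum_{n=1}^{n_k} M(\tfrac1k \eta_n^{(k)}) \geq k$. Then for any $\xi \in c_o^*$ with $\xi_i \geq \eta_i^{(k)}$ on $1 \leq i \leq n_k$ for every $k$, and any $t > 0$, taking $k > 1/t$ yields $\sum_{n=1}^{n_k} M(t\xi_n) \geq \sum_{n=1}^{n_k} M(\tfrac{1}{k}\eta_n^{(k)}) \geq k$, so $\sum_n M(t\xi_n) = \infty$ for every $t>0$, hence $\xi \notin \Sigma(\mathcal{L}_M)$.

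Finally, for the soft pair conclusion I invoke Lemma~\ref{lem:4.2}(vii): since $\mathcal{L}_M^{(o)}$ is soft-edged, $\mathcal{L}_M$ is soft-complemented (by Proposition~\ref{prop:4.5}), and $\mathcal{L}_M^{(o)} \subset \mathcal{L}_M$, it suffices to verify $\se \mathcal{L}_M = \mathcal{L}_M^{(o)}$. The inclusion $\mathcal{L}_M^{(o)} \subset \se \mathcal{L}_M$ follows because $\mathcal{L}_M^{(o)} = \se \mathcal{L}_M^{(o)} \subset \se \mathcal{L}_M$. For the reverse, if $\xi \leq \alpha\eta$ with $\alpha \in c_o^*$ and $\eta \in \Sigma(\mathcal{L}_M)$, pick $s>0$ with $\sum_n M(s\eta_n) < \infty$; for any $t>0$, eventually $t\alpha_n \leq s$, so monotonicity of $M$ gives $\sum_n M(t\xi_n) < \infty$, i.e., $\xi \in \Sigma(\mathcal{L}_M^{(o)})$.

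The only real obstacle is the bookkeeping in the soft-edged step: one must choose the growth rate of $\beta$ (the doubling condition $\sum_{n>n_k} M(k^2\xi_n) \leq 2^{-k}$) carefully enough that the double sum $\sum_k \sum_{n_{k-1}<n\leq n_k} M(tk\xi_n)$ converges for \emph{every} $t > 0$ simultaneously, not just for $t=1$; the choice of the quadratic margin $k^2$ against the linear multiplier $tk$ is what makes this work for any fixed $t$ once $k$ is large.
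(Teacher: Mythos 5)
Your proof is correct and matches the paper's argument step for step: the $\lni\beta\xi$ construction for soft-edgedness (with the same quadratic-vs-linear margin $k^2$ vs.\ $tk$), the partial-sum lower bounds for ssc, and the invocation of Lemma~\ref{lem:4.2}(vii) after checking $\se\mathcal{L}_M \subset \mathcal{L}_M^{(o)}$ directly. The only cosmetic difference is your indexing of the tail condition (starting at $n_k$ rather than $n_{k-1}$), which does not affect the argument.
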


\begin{proof}
Take $\xi  \in  \Sigma(\mathcal{L}_M^{(o)})$ and choose a strictly
increasing sequence of indices $n_k \in  \mathbb{N}$ such that
$\sum_{i=n_{k-1}+1}^{\infty} M(k^2\xi_i) \leq 2^{-k}$ and
$k\xi_{n_k} \downarrow 0$. As in the proof of
Proposition~\ref{prop:4.7}, set $n_0 = 0$ and $\beta_i := k$ for all
$n_{k-1} < i \leq n_k$ and $\eta  := \lni \beta\xi$. Then $\eta \in
c_o^*$ and $\xi  = o(\eta)$. Let $t > 0$ be arbitrary and fix an
integer $k \geq t$. Then since $\eta \leq \beta\xi$ and $M$ is
monotone nondecreasing, it follows that
\begin{align*}
\sum_{i=n_k+1}^{\infty} M(t\eta_i) &\leq \sum_{i=n_k+1}^{\infty}
M(k\beta_i\xi_i) = \sum_{j=k+1}^{\infty} \sum_{i=n_{j-1}+1}^{n_j}
M(kj\xi_i)\\&\leq \sum_{j=k+1}^{\infty} \sum_{i=n_{j-1}+1}^{\infty}
M(j^2\xi_i) \leq \sum_{j=k+1}^{\infty} 2^{-j} < \infty.
\end{align*}
Therefore $\eta \in \Sigma(\mathcal{L}_M^{(o)})$, which proves that
$\mathcal{L}_M^{(o)}$ is soft-edged.

Next we prove that $\mathcal{L}_M$ is ssc. For every countable collection of
sequences $\eta^{(k)}\in  c_o^* \setminus \Sigma( \mathcal{L}_M)$,
since $\sum_i M(\frac{1}{k}\eta_i^{(k)}) = \infty$  for all $k$, we
can choose a strictly increasing sequence of indices $n_k \in
\mathbb{N}$ such that $\sum_{i=1}^{n_k} M(\frac{1}{k}\eta_i^{(k)})
\geq k$. If $\xi  \in  c_o^*$ and $\xi_i \geq \eta_i^{(k)}$ for all
$1 \leq i \leq n_k$, then for all $m$ and all $k \geq m$ it follows
that 
\[
\sum_{i=1}^{n_k} M(\frac{1}{m} \xi_i) \geq \sum_{i=1}^{n_k}
M(\frac{1}{k} \xi_i) \geq \sum_{i=1}^{n_k} M(\frac{1}{k}
\eta_i^{(k)}) \geq k
\]
 and hence $\sum_i M(t\xi_i) = \infty$  for all
$t > 0$. Thus $\xi  \not\in \Sigma(\mathcal{L}_M)$, which proves
that $\mathcal{L}_M$ is ssc.

To prove that $\mathcal{L}_M^{(o)} \subset  \mathcal{L}_M$ is a soft
pair, since $\mathcal{L}_M^{(o)}$ is soft-edged and $\mathcal{L}_M$
is soft-complemented, by Lemma~\ref{lem:4.2}(vii) it suffices to
prove that $\se \mathcal{L}_M \subset  \mathcal{L}_M^{(o)}$. Let
$\xi  \in \Sigma(\mathcal{L}_M)$, let $t_o > 0$ be such that $\sum_n
M(t_o\xi_n) < \infty$, and let $\alpha \in c_o^*$. For each $t > 0$
choose $N$ so that $t\alpha_n \leq t_o$ for $n \geq N$. By the
monotonicity of $M$, $\sum_{n=N}^{\infty} M(t\alpha_n\xi_n) <
\infty$ and hence $\alpha\xi \in \Sigma(\mathcal{L}_M^{(o)})$.
\end{proof}

The fact that $\mathcal{L}_M^{(o)} \subset  \mathcal{L}_M$ forms a
soft pair can simplify proofs of some properties of Orlicz ideals.
Indeed, together with \cite[Proposition 3.4]{10} that states that
for an ideal $I$, $\se I$ is am-stable if and only if $\scop I$ is
am-stable if and only if $I_a \subset  \scop I$, and combined with
Lemma~\ref{lem:4.16} below it yields an immediate proof of the
following results in \cite{7}: the equivalence of (a), (b), (c) in
Theorem~4.21 and hence the equivalence of (a), (b), (c) in
Theorem~6.25, the equivalence of (b), (c), and (d) in
Corollary~2.39, the equivalence of (b) and (c) in Corollary~2.40,
and the equivalence of (a), (b), and (c) in Theorem~3.21.

Next we consider \textbf{Lorentz ideals}. If $\phi$ is a monotone
nondecreasing nonnegative sequence satisfying the
$\Delta_2$-condition, i.e., $\sup\frac{\phi_{2n}}{\phi_n} <
\infty$, then in the notations of \cite[Sections 2.25 and 4.7]{7} the
Lorentz ideal $\mathcal{L}(\phi)$ corresponding to the sequence space
$\ell(\phi)$ is the ideal with characteristic set
\[
\Sigma(\mathcal{L}(\phi)) := \bigg\{\xi  \in  c_o^* \mid \|\xi
\|_{\ell(\phi)} := \sum_n \xi_n(\phi_{n+1} - \phi_n) <
\infty\bigg\}.
\]

A special case of Lorentz ideal is the trace class $\mathcal{L}_1$
which corresponds to the sequence $\phi = \langle n \rangle$ and the
sequence space $\ell(\phi) = \ell_1$.

Notice that $\mathcal{L}(\phi)$ is also the K\"{o}the dual
$\{\langle \phi_{n+1} - \phi_n \rangle\}^\times
= \ell_1^* : \{\langle  \phi_{n+1} - \phi_n \rangle\}$ of the
singleton set consisting of the sequence  $\langle\phi_{n+1} -
\phi_n \rangle$ (cf. \cite[Section 2.8(iv)]{7}). $\mathcal{L}(\phi)$
is a Banach ideal with norm induced by the cone norm $\| \cdot
\|_{\ell(\phi)}$ on $\ell(\phi)^*$ if and only if the sequence
$\phi$ is concave (cf. \cite[Lemma 2.29 and Section 4.7]{7}), and it
is easy to verify that in this case $\ell(\phi)^* =
\mathfrak{S}_{\psi}^{(o)} = \mathfrak{S}_{\psi}$ where $\psi$ is the
restriction of $\| \cdot \|_{\ell(\phi)}$ to $\Sigma(F)$. Thus by
Proposition~\ref{prop:4.7}, $\mathcal{L}(\phi)$ is both strongly
soft-complemented and soft-edged. In fact, the same holds without
the concavity assumption for $\phi$ as we see in the next
proposition.

\begin{proposition}\label{prop:4.12}
If $\phi$ be a monotone nondecreasing nonnegative sequence satisfying
the $\Delta_2$-condition, then $\mathcal{L}(\phi)$ is both soft-edged and strongly
soft-complemented.
\end{proposition}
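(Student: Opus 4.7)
The plan is to adapt the block-and-\lni construction used in Propositions~\ref{prop:4.7} and~\ref{prop:4.11}, exploiting the crucial fact that the Lorentz defining functional $\|\xi\|_{\ell(\phi)} = \sum_n \xi_n(\phi_{n+1}-\phi_n)$ is linear in $\xi$ (unlike the Orlicz functional). This linearity means that, after choosing thresholds $n_k$ with the right tail-decay rate, the verification of membership in $\Sigma(\mathcal{L}(\phi))$ reduces to a one-line block estimate rather than a uniform-in-$t$ argument.

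For soft-edgedness, given $\xi \in \Sigma(\mathcal{L}(\phi))$, I would choose a strictly increasing sequence $n_0 = 0 < n_1 < n_2 < \cdots$ so that
\[
\sum_{i=n_{k-1}+1}^{\infty} \xi_i(\phi_{i+1}-\phi_i) \leq \frac{2^{-k}}{k} \qquad \text{and} \qquad k\xi_{n_k} \downarrow 0,
\]
and then set $\beta_i := k$ on $n_{k-1} < i \leq n_k$ and $\eta := \lni \beta\xi$. The identical computation in Proposition~\ref{prop:4.7} already delivers $\eta \in c_o^*$ and $\xi = o(\eta)$. Since $\eta_i \leq \beta_i\xi_i = k\xi_i$ on the $k$-th block,
\[
\sum_i \eta_i(\phi_{i+1}-\phi_i) \leq \sum_{k=1}^{\infty} k \sum_{i=n_{k-1}+1}^{n_k} \xi_i(\phi_{i+1}-\phi_i) \leq \sum_{k=1}^{\infty} 2^{-k} < \infty,
\]
so $\eta \in \Sigma(\mathcal{L}(\phi))$, which establishes soft-edgedness.

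For strong soft-complementedness, given $\{\eta^{(k)}\} \subset c_o^* \setminus \Sigma(\mathcal{L}(\phi))$, the partial sums $\sum_{i=1}^N \eta_i^{(k)}(\phi_{i+1}-\phi_i)$ diverge, so I would pick a strictly increasing $n_k$ with $\sum_{i=1}^{n_k} \eta_i^{(k)}(\phi_{i+1}-\phi_i) \geq k$. Any $\xi \in c_o^*$ with $\xi_i \geq \eta_i^{(k)}$ for $1 \leq i \leq n_k$ and every $k$ then satisfies $\sum_{i=1}^{n_k}\xi_i(\phi_{i+1}-\phi_i) \geq k$ by nonnegativity of the weights $\phi_{i+1}-\phi_i$, forcing $\|\xi\|_{\ell(\phi)} = \infty$ and hence $\xi \notin \Sigma(\mathcal{L}(\phi))$.

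I do not anticipate any real obstacle: both halves are essentially direct transcriptions of the Banach-ideal and Orlicz-ideal arguments, with the Lorentz linearity simplifying the tail bookkeeping. The $\Delta_2$-hypothesis on $\phi$ is not used in the softness argument itself; it enters only upstream, in guaranteeing that $\mathcal{L}(\phi)$ is closed under the ampliations $D_m$ and hence is a genuine operator ideal.
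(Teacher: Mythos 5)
Your proposal is correct and follows essentially the same approach as the paper: the same $\lni\beta\xi$ construction with a piecewise-constant weight $\beta$ for soft-edgedness, and the same direct partial-sum divergence argument for strong soft-complementedness; the only difference is a cosmetic change in the choice of tail decay rate ($2^{-k}/k$ versus the paper's $2^{-k}$), which leads to a marginally cleaner final bound $\sum 2^{-k}$ in place of the paper's $\sum k2^{-k+1}$. Your observation that the $\Delta_2$-condition plays no role in the softness argument itself is also correct and matches the paper, where it is likewise not invoked in this proof.
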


\begin{proof} For $\xi  \in  \Sigma(\mathcal{L}(\phi))$, choose a strictly
increasing sequence of indices $n_k \in \mathbb{N}$ with
$k\xi_{n_k} \downarrow 0$ and $\sum_{i=n_k}^{\infty}
\xi_i(\phi_{i+1} - \phi_i) \leq 2^{-k}$. 
As in Proposition~\ref{prop:4.7}(proof), set $n_o = 0$, $\beta_i := k$ for all
$n_{k-1} < i \leq n_k$, hence  $\eta = \lni \beta\xi \in c_o^*$ and $\xi = o(\eta)$. Then
\[
\sum_{i}^{\infty}\eta_i(\phi_{i+1}-\phi_i) \!\leq\!
\sum_{i}^{\infty}\beta_i\xi_i(\phi_{i+1}-\phi_i)\!=\!
\sum_{k=1}^{\infty}\sum_{i=n_{k-1}+1}^{n_k}\!\!
k\xi_i(\phi_{i+1}-\phi_i) \!\leq\! \sum_{k=1}^{\infty}k2^{-k+1}
\!<\! \infty,
\]
whence $\eta \in \Sigma(\mathcal{L}(\phi))$. Thus $\xi  \in
\Sigma(\se \mathcal{L}(\phi))$ and hence $\mathcal{L}(\phi)$ is
soft-edged.

Finally, for every sequence of sequences $\{\eta^{(k)}\} \subset
c_o^* \setminus \Sigma(\mathcal{L}(\phi))$, choose a strictly
increasing sequence $n_k \in \mathbb{N}$ such that for all $k$, $
\sum_{i=1}^{n_k}\eta_i^{(k)}(\phi_{i+1} - \phi_i) \geq k$. Thus if
$\xi \in  c_o^*$ and $\xi_i \geq \eta_i^{(k)}$ for all $1 \leq i
\leq n_k$, then $\sum_{i=1}^{n_k} \xi_i(\phi_{i+1} - \phi_i) \geq k$
and hence $\xi \not\in \Sigma(\mathcal{L}(\phi))$, thus proving that
$\mathcal{L}(\phi)$ is ssc.
\end{proof}
\noindent In particular, we use frequently that $\mathcal{L}_1$ is both
soft-edged and soft-complemented.\vspace{12pt}

As the next proposition shows, any quotient with a soft-complemented
ideal as numerator is always soft-complemented (cf. first paragraph
of this section for the definition of quotient), but as
Example~\ref{ex:4.15} shows, even a K\"{o}the dual of a singleton
can fail to be strongly soft-complemented.

\begin{proposition}\label{prop:4.13}
Let $I$ be a soft-complemented ideal and let $X$ be a nonempty subset
of $[0,\!\infty)^{\mathbb N}$. Then the ideal with characteristic set
$\Sigma(I) : X$ is soft-complemented.
\end{proposition}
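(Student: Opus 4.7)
The plan is to show $\scop J \subset J$, where $J$ is the ideal with $\Sigma(J) = \Sigma(I):X$; the reverse containment is Lemma~\ref{lem:4.2}(iii). Taking $\xi \in \Sigma(\scop J)$ means $\alpha\xi \in \Sigma(J)$ for every $\alpha \in c_o^*$, which, unwinding the quotient definition and using the identity $D_m(\alpha\xi) = (D_m\alpha)(D_m\xi)$, translates into: $((D_m\alpha)(D_m\xi)x)^* \in \Sigma(I)$ for all $\alpha \in c_o^*$, $m \in \mathbb{N}$, $x \in X$. Fixing such $m$ and $x$ and setting $y := (D_m\xi)x$, the task reduces to showing $y^* \in \Sigma(I)$.

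Since $I$ is soft-complemented, this is equivalent to showing $\delta y^* \in \Sigma(I)$ for every $\delta \in c_o^*$. Given such $\delta$, I will produce an $\alpha \in c_o^*$ satisfying the pointwise bound $((D_m\alpha)y)^* \geq \delta y^*$; hereditariness of $\Sigma(I)$ together with the hypothesis $((D_m\alpha)y)^* \in \Sigma(I)$ then yields $\delta y^* \in \Sigma(I)$, as required. To construct $\alpha$, fix a permutation $\pi$ of $\mathbb{N}$ for which $y_{\pi(n)} = y^*_n$ and set
\[
\alpha_k := \sup\{\delta_i \mid \lceil\pi(i)/m\rceil \geq k\}.
\]
This sequence is manifestly monotone nonincreasing, and it tends to $0$ because for every $\varepsilon > 0$ only finitely many $i$'s satisfy $\delta_i \geq \varepsilon$, and their corresponding indices $\lceil\pi(i)/m\rceil$ form a finite set (as $\pi$ is a bijection). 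By construction, $\alpha_{\lceil\pi(i)/m\rceil} \geq \delta_i$ for every $i$.

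With this choice, the value of $(D_m\alpha)y$ at position $\pi(i)$ is $\alpha_{\lceil\pi(i)/m\rceil} y^*_i \geq \delta_i y^*_i$. Since $\delta$ and $y^*$ are both decreasing, $\delta_i y^*_i \geq \delta_n y^*_n$ for $i \leq n$, so at least $n$ entries of $(D_m\alpha)y$ meet or exceed $\delta_n y^*_n$; therefore $((D_m\alpha)y)^*_n \geq \delta_n y^*_n = (\delta y^*)_n$. Since $((D_m\alpha)y)^* = ((D_m(\alpha\xi))x)^* \in \Sigma(I)$ and $\Sigma(I)$ is solid, $\delta y^* \in \Sigma(I)$, completing the argument.

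The step I expect to be the main obstacle is the comparison between $((D_m\alpha)y)^*$ (the rearrangement of a product) and $\delta y^*$ (the product of two monotone factors), because no useful pointwise inequality between $(ab)^*$ and $a^*b^*$ is available in general. The trick is to sidestep this issue by tailoring $\alpha$, via the permutation $\pi$, to be sufficiently large at the positions that index the top entries of $y$, thereby forcing $(D_m\alpha)y$ to have many large values at precisely the right spots to dominate $\delta y^*$ after the decreasing rearrangement.
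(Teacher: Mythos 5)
Your proof is correct and, up to working from the direct inclusion $\scop J\subset J$ rather than by contrapositive, it is essentially the paper's argument: both fix the offending pair $(m,x)$, pass to a monotonizing rearrangement of $y=(D_m\xi)x$, and build an auxiliary $c_o^*$ sequence to transfer soft-complementedness of $I$ across the quotient (your $\alpha_k=\sup\{\delta_i\mid\lceil\pi(i)/m\rceil\geq k\}$ plays the role of the paper's $\uni\gamma$). One small fix: a \emph{permutation} $\pi$ with $y_{\pi(n)}=y^*_n$ need not exist (e.g.\ if $y$ has infinitely many zeros and infinitely many nonzero entries, $y^*$ has none); you only get a monotonizing \emph{injection}, which is all your argument actually uses --- including the finiteness step, which needs only injectivity of $\pi$, not surjectivity.
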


\begin{proof} Let $\xi  \in  c_o^* \setminus (\Sigma(I) : X)$, i.e.,
$((D_m\xi)x)^* \not\in \Sigma(I)$ for some $m \in \mathbb{N}$ and $x \in  X$.
As $I$ is soft-complemented, there exists $\alpha
\in  c_o^*$ such that $\alpha((D_m\xi )x)^* \not\in \Sigma(I)$. 
Let $\pi$ be an injection that monotonizes $(D_m\xi)x$, i.e.,
$(((D_m\xi)x)^*)_i = ((D_m\xi)x)_{\pi(i)}$ for all $i$. 
Define
\[
\gamma_j := \begin{cases}\alpha_{\pi^{-1}(j)} & \text{if
}j\in\pi(N)\\ 0 & \text{if }j\not\in \pi(N) \end{cases}. 
\]
Then
$\gamma \rightarrow 0$ and hence $\uni \gamma \in  c_o^*$. 
Thus for all $i$,
\begin{align*}
(\alpha((D_m\xi)x)^*)_i &= \gamma_{\pi(i)}(D_m\xi)_{\pi(i)}x_{\pi(i)}\\
&\leq (\uni \gamma)_{\pi(i)}(D_m\xi)_{\pi(i)}x_{\pi(i)}\\
&\leq
(D_m((\uni \gamma)\xi))_{\pi(i)}x_{\pi(i)}.
\end{align*}

\noindent From this inequality, and from the elementary fact that for two
sequence $\rho$ and $\mu$, $0 \leq \rho \leq \mu$  implies $\rho^*
\leq \mu^*$, it follows that $\alpha((D_m\xi)x)^* \leq ((D_m((\uni
\gamma)\xi))x)^*$. Thus $((D_m((\uni \gamma)\xi))x)^* \not\in \Sigma(I)$,
i.e., $(\uni \gamma)\xi  \not\in \Sigma(I) : X$, proving the claim.
\end{proof}

\begin{remark}\label{rem:4.14}
If $X$ is itself a characteristic set, the above result follows by
the simple identities for ideals $I$, $J$, $L$ analogous to the
numerical quotient operation ``$\div$'':
\[
(I : J) : L = I :(JL) = (I: L) : J
\]
 Indeed if in these identities we set $L = K(H)$ (the
ideal of compact operators), we obtain $\scop(I : J) = I : \se J =
\scop I : J$. Thus if $I$ is soft-complemented or $J$ is soft-edged it
follows that $I : J$ is soft-complemented. As an aside: 
\[
(I : J)J \subset I \subset (IJ : J) \subset I : J
\]
 and each of the embeddings can be proper (see also \cite{17}).
\end{remark}

\begin{example}\label{ex:4.15}
The K\"{o}the dual $I := \{\langle e^n \rangle\}^\times$ of the singleton 
$\{\langle e^n \rangle\}$  is soft-\\complemented by Proposition~\ref{prop:4.13} but
it is not strongly soft-complemented.  
Indeed, by definition, $\xi
\in \Sigma(I)$ if and only if $((D_m\xi)\langle e^n \rangle)^* \in
\ell_1^*$ (or, equivalently, $(D_m\xi)\langle e^n \rangle \in
\ell_1$) for every $m$, which in turns is equivalent to $\sum_n
\xi_n e^{mn} < \infty$ for every $m$. Choose $\eta \in  c_o^*$ such
that $\sum_n \eta_n e^n < \infty$ but $\sum_n \eta_n e^{2n} =
\infty$ and hence $\eta \not\in \Sigma(I)$, and set $\eta^{(k)} :=
D_{1/k}\eta$, i.e., $\eta_i^{(k)} = \eta_{ki}$ for all $i$. As
$(D_{2k}\eta^{(k)})_i \geq \eta_i$ for $i \geq k$, it follows that
for every $k$, $D_{2k}\eta^{(k)}$ and hence $\eta^{(k)}$ are not in
$\Sigma(I)$. 
Let $n_k \in \mathbb{N}$ be an arbitrary strictly
increasing sequence of indices, set $n_o = 0$ and define $\xi_i :=
\eta_i^{(k)}$ for $n_{k-1} < i \leq n_k$. As $\eta^{(k+1)} \leq
\eta^{(k)}$, it follows that $\xi$ is monotone nonincreasing and for
all $k$, $\xi_i \geq \eta_i^{(k)}$ for $1 \leq i \leq n_k$. On the
other hand, for all $m$ and for all $k \geq m$,
\[
\sum_{i=n_{k-1}+1}^{n_k} \xi_i e^{mi} \leq
\sum_{i=n_{k-1}+1}^{n_k} \eta_{ki} e^{ki} \leq
\sum_{i=kn_{k-1}+1}^{kn_k} \eta_i e^i
\]
and thus
\[
\sum_{i=n_{m-1}+1}^{\infty} \xi_i e^{mi} \leq
\sum_{i=n_{m-1}+1}^{\infty} \eta_i e^i < \infty,
\]
which proves that $\xi \in \Sigma(I)$ and hence that $I$ is not ssc.
\end{example}

Next we consider \textbf{idempotent ideals}, i.e., ideals for which
$I = I^2$. Notice that an ideal is idempotent if and only if $I = I^p$
for some $p \neq 0,1$, if and only if $I = I^p$ for all $p \neq 0$. The
following lemma is an immediate consequence of
Definition~\ref{def:4.1}, the remarks following it, and of
Definition~\ref{def:4.4}.

\begin{lemma}\label{lem:4.16} For every ideal $I$ and $p > 0$:
\item[(i)] $\se (I^p) = (\se I)^p$ and $\scop (I^p) = (\scop I)^p$

\noindent In particular, if $I$ is soft-edged or soft-complemented, then so respectively is $I^p$.

\item[(ii)] If $I \subset  J$ is a soft pair, then so is
$I^p \subset  J^p$.

\item[(iii)] If $I$ is ssc, then so is $I^p$.
\end{lemma}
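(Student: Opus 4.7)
The plan is to work entirely at the level of characteristic sets, using the identity $\Sigma(I^p) = \{\xi \in c_o^* \mid \xi^{1/p} \in \Sigma(I)\}$ from the opening of Section~4, together with the observation that the map $\alpha \mapsto \alpha^{1/p}$ is a bijection of $c_o^*$ onto itself (it preserves monotonicity and convergence to $0$) and that $(\alpha\xi)^{1/p} = \alpha^{1/p}\xi^{1/p}$ pointwise.

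For (i), I would unpack each equality into conditions on characteristic sets. For $\se(I^p) = (\se I)^p$: a sequence $\xi$ lies in $\Sigma(\se(I^p))$ iff $\xi \le \alpha\eta$ for some $\alpha \in c_o^*$ and $\eta^{1/p} \in \Sigma(I)$, while $\xi$ lies in $\Sigma((\se I)^p)$ iff $\xi^{1/p} \le \beta\zeta$ for some $\beta \in c_o^*$ and $\zeta \in \Sigma(I)$. The two are equivalent: given the first, raise to the $1/p$ power to obtain $\xi^{1/p} \le \alpha^{1/p}\eta^{1/p}$ with $\alpha^{1/p} \in c_o^*$ and $\eta^{1/p} \in \Sigma(I)$; given the second, raise to the $p$ power, setting $\alpha := \beta^p \in c_o^*$ and $\eta := \zeta^p$, whose $1/p$ power is $\zeta \in \Sigma(I)$, so $\eta \in \Sigma(I^p)$. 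For $\scop(I^p) = (\scop I)^p$: $\xi \in \Sigma(\scop(I^p))$ iff $(\alpha\xi)^{1/p} = \alpha^{1/p}\xi^{1/p} \in \Sigma(I)$ for every $\alpha \in c_o^*$, and $\xi \in \Sigma((\scop I)^p)$ iff $\beta\xi^{1/p} \in \Sigma(I)$ for every $\beta \in c_o^*$. The substitution $\beta = \alpha^{1/p}$ runs bijectively through $c_o^*$, giving the equivalence. The ``in particular'' clauses then follow at once since $I = \se I$ (resp.\ $I = \scop I$) forces $I^p = (\se I)^p = \se(I^p)$ (resp.\ $I^p = \scop(I^p)$).

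For (ii), given a soft pair $I \subset J$, one has $\se J = I$ and $\scop I = J$ by definition, so (i) yields $\se(J^p) = (\se J)^p = I^p$ and $\scop(I^p) = (\scop I)^p = J^p$, i.e.\ $I^p \subset J^p$ is a soft pair.

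For (iii), given $\{\eta^{(k)}\} \subset c_o^* \setminus \Sigma(I^p)$, note that each $(\eta^{(k)})^{1/p} \in c_o^* \setminus \Sigma(I)$, so the ssc property of $I$ produces indices $n_k$ such that any $\zeta \in c_o^*$ with $\zeta_i \ge (\eta^{(k)})^{1/p}_i$ for all $k$ and all $1 \le i \le n_k$ lies outside $\Sigma(I)$. Given $\xi \in c_o^*$ with $\xi_i \ge \eta^{(k)}_i$ for all $k$ and $1 \le i \le n_k$, apply the $1/p$ power (monotonicity) to get $\xi^{1/p} \in c_o^*$ satisfying the hypothesis, whence $\xi^{1/p} \notin \Sigma(I)$ and so $\xi \notin \Sigma(I^p)$. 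Thus $I^p$ is ssc. No serious obstacle arises; the whole lemma reduces to the single observation that the bijection $\alpha \mapsto \alpha^{1/p}$ of $c_o^*$ transports the three relevant characterizations (domination by $\alpha\eta$, multiplication by $\alpha$, pointwise lower bounds on a prefix) through the $p$-th power operation on ideals.
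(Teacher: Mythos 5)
Your proof is correct and follows exactly the route the paper intends: the paper declares this lemma an "immediate consequence" of Definition~4.1, the remarks after it, and Definition~4.4, and your argument simply spells out the details by pushing everything through the characteristic-set description $\Sigma(I^p)=\{\xi:\xi^{1/p}\in\Sigma(I)\}$ and the fact that $\alpha\mapsto\alpha^{1/p}$ is a bijection of $c_o^*$. Nothing to add.
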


\begin{proposition}\label{prop:4.17}

Idempotent ideals are both soft-edged
and soft-complemented.
\end{proposition}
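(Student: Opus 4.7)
The plan is to handle soft-edgedness and soft-complementedness separately, both in a few lines. First I would note that every ideal under consideration here is automatically contained in $K(H)$ (since its characteristic set lies inside $c_o^*$); idempotence then gives
\[
I \,=\, I \cdot I \,\subset\, I \cdot K(H) \,=\, \se I,
\]
which combined with the general inclusion $\se I \subset I$ from Lemma~\ref{lem:4.2}(iii) yields $I = \se I$.

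For soft-complementedness I would exploit the stronger fact recorded in the paragraph preceding the proposition, namely that idempotence of $I$ is equivalent not merely to $I = I^2$ but to $I = I^p$ for \emph{every} $p > 0$. Taking $p = 2/3$ translates, at the level of characteristic sets, into
\[
\Sigma(I) \,=\, \Sigma(I^{2/3}) \,=\, \{\xi \in c_o^* \mid \xi^{3/2} \in \Sigma(I)\}.
\]
Now given $\xi \in \Sigma(\scop I)$, the sequence $\xi^{1/2}$ lies in $c_o^*$, so by Definition~\ref{def:4.1} applied with $\alpha = \xi^{1/2}$,
\[
\xi^{3/2} \,=\, \alpha\,\xi \,\in\, \Sigma(I).
\]
The displayed identity then gives $\xi \in \Sigma(I)$, proving $\scop I \subset I$; the reverse inclusion from Lemma~\ref{lem:4.2}(iii) closes the argument.

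The only step where one could be misled is to try to push the soft-complementedness half through with just the bare hypothesis $I = I^2$; that by itself seems insufficient. The honest obstacle, such as it is, is merely to remember to upgrade to $I = I^p$ for all $p > 0$ (legitimate precisely because idempotence was noted to be equivalent to this stronger statement), after which the choice $\alpha = \xi^{1/2}$ makes the fractional-power bookkeeping go through automatically. No harder machinery — no soft pairs, no majorization, no am-operations — is needed.
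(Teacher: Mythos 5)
Your argument is correct, and the soft-complementedness half is a genuinely different route from the paper's. For soft-edgedness you give essentially the paper's proof verbatim: $I = I^2 \subset IK(H) = \se I \subset I$. For soft-complementedness the paper instead passes $\scop$ through the power, using Lemma~\ref{lem:4.16} to get $\scop I = \scop(I^2) = (\scop I)^2$, and then the soft-pair identity $\se(\scop I) = \se I$ from Lemma~\ref{lem:4.2}(iv) to close the chain $\scop I \subset \se I \subset I \subset \scop I$. Your argument bypasses both of those lemmas: you exploit $I = I^{2/3}$ (legitimate, since the paragraph preceding the proposition records that $I = I^2$ forces $I = I^p$ for all $p \neq 0$), feed $\alpha = \xi^{1/2} \in c_o^*$ into the defining property of $\scop I$ to get $\xi^{3/2} \in \Sigma(I)$, and read off $\xi \in \Sigma(I^{2/3}) = \Sigma(I)$ directly from the formula $\Sigma(I^p) = \{\mu \mid \mu^{1/p} \in \Sigma(I)\}$. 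This is a more elementary, self-contained computation at the level of characteristic sets; the paper's version is slightly more abstract but has the side benefit of exhibiting the chain $\scop I \subset \se I$, which makes it transparent that idempotent ideals satisfy $\se I = I = \scop I$ simultaneously (and hence form a one-element soft pair with themselves). Both are valid, and both correctly observe that bare $I = I^2$ alone doesn't hand you the inclusion you need without either the power lemma or the upgrade to general $p$.
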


\begin{proof}
Let $I$ be an idempotent ideal. That $I$ is soft-edged follows from
the inclusions $I = I^2\subset  K(H) I = \se I \subset  I$. That $I$
is soft-complemented follows from the inclusions
\[
\scop I = \scop (I^2) = (\scop I)^2 \subset  K(H) \scop I = \se
(\scop I) = \se I \subset I \subset  \scop I
\]
which follows from Lemmas \ref{lem:4.16} and \ref{lem:4.2}(iii),(iv).
\end{proof}

\noindent The remarks following Proposition~\ref{prop:5.3} show that idempotent ideals may fail to be strongly soft-complemented.
\vspace{12pt}

Finally, we consider the \textbf{Marcinkiewicz ideals} namely, the
pre-arithmetic means of principal ideals, and we consider also their
am-$\infty$  analogs. That these ideals are strongly
soft-complemented follows from the following proposition combined
with Proposition~\ref{prop:4.6}(i).

\begin{proposition} \label{prop:4.18}

The pre-arithmetic mean and the pre-arithmetic mean at infinity of a
strongly soft-complemented ideal is strongly soft-complemented. 

In particular, Marcinkiewicz ideals are strongly soft-complemented.
\end{proposition}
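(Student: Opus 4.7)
The plan is to treat the pre-arithmetic mean and the pre-arithmetic mean at infinity in turn; the first is straightforward, the second requires a tail-control trick. Once the main claim is established, the ``In particular'' statement follows at once, since principal ideals are ssc by Proposition~\ref{prop:4.6}(i), so their pre-arithmetic means---the Marcinkiewicz ideals $_a(\xi)$---are ssc, and similarly for $_{a_\infty}(\xi)$.

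For $_aI$, given $\{\eta^{(k)}\} \subset c_o^* \setminus \Sigma({}_aI)$ one has $\eta^{(k)}_a \notin \Sigma(I)$ for every $k$. The ssc property of $I$, applied to $\{\eta^{(k)}_a\}$, supplies indices $n_k$ such that any $\zeta \in c_o^*$ with $\zeta_i \geq (\eta^{(k)}_a)_i$ for $1 \leq i \leq n_k$ and every $k$ lies outside $\Sigma(I)$. These same $n_k$ witness ssc for ${}_aI$: if $\xi \in c_o^*$ and $\xi_i \geq \eta^{(k)}_i$ for $1 \leq i \leq n_k$ and every $k$, then
\[
(\xi_a)_i = \frac{1}{i}\sum_{j=1}^{i}\xi_j \geq \frac{1}{i}\sum_{j=1}^{i}\eta^{(k)}_j = (\eta^{(k)}_a)_i
\]
on the same range, so $\xi_a \notin \Sigma(I)$ and hence $\xi \notin \Sigma({}_aI)$.

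For $_{a_\infty}I$ the analogous move fails because $(\xi_{a_\infty})_i = \frac{1}{i}\sum_{j=i+1}^{\infty}\xi_j$ depends on the tail of $\xi$, while the hypothesis only controls $\xi$ on an initial segment. I would split $\{\eta^{(k)}\}$ into $K_1 := \{k : \eta^{(k)} \in \ell_1^*\}$ and $K_2 := \{k : \eta^{(k)} \notin \ell_1^*\}$. For $k \in K_2$, ssc of $\mathcal{L}_1$ (Proposition~\ref{prop:4.12}) supplies indices $n_k$ that force any $\xi \in c_o^*$ dominating $\eta^{(k)}$ on $[1,n_k]$ out of $\ell_1^* \supset \Sigma({}_{a_\infty}I)$. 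For $k \in K_1$, note that $\eta^{(k)}_{a_\infty} \notin \Sigma(I)$ and that $\eta^{(k)}$ must have infinite support (else $\eta^{(k)}_{a_\infty}$ would lie in $\Sigma(F) \subset \Sigma(I)$). Apply the ssc of $I$ to $\{\tfrac{1}{2}\eta^{(k)}_{a_\infty}\}_{k \in K_1} \subset c_o^* \setminus \Sigma(I)$ to obtain indices $m_k$, and then pick $n_k > m_k$ large enough that
\[
\sum_{j=n_k+1}^{\infty}\eta^{(k)}_j \;\leq\; \tfrac{1}{2}\sum_{j=m_k+1}^{\infty}\eta^{(k)}_j,
\]
which is possible by summability together with infinite support. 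Then for any summable $\xi \in c_o^*$ dominating every $\eta^{(k)}$ on $[1,n_k]$, using monotonicity of $\sum_{j > i}\eta^{(k)}_j$ in $i$,
\[
i(\xi_{a_\infty})_i \;\geq\; \sum_{j=i+1}^{n_k}\xi_j \;\geq\; \sum_{j=i+1}^{n_k}\eta^{(k)}_j \;=\; i(\eta^{(k)}_{a_\infty})_i - \sum_{j=n_k+1}^{\infty}\eta^{(k)}_j \;\geq\; \tfrac{i}{2}(\eta^{(k)}_{a_\infty})_i
\]
for every $k \in K_1$ and $1 \leq i \leq m_k$, so the ssc of $I$ applied to the rescaled family forces $\xi_{a_\infty} \notin \Sigma(I)$. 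Combined with the observation that the $K_2$-constraints, when nonempty, already force $\xi \notin \ell_1^*$, this yields $\xi \notin \Sigma({}_{a_\infty}I)$ in every case.

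The main obstacle is precisely this prefix/tail mismatch in the am-$\infty$ step: controlling $\xi$ on $[1,n_k]$ yields only a truncated lower bound for $(\xi_{a_\infty})_i$, so $n_k$ must be pushed far enough past $m_k$ that the truncation error is absorbed, at the cost of an innocuous factor $\tfrac{1}{2}$. Once that is arranged, the whole argument reduces to the ssc of $I$ applied to $\{\tfrac{1}{2}\eta^{(k)}_{a_\infty}\}_{k \in K_1}$, plus an auxiliary use of ssc of $\mathcal{L}_1$ to dispose of non-summable members of the original family.
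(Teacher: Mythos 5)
Your proof is correct and follows the paper's argument almost line for line: the $_aI$ step is identical (push the ssc indices for $\{\eta^{(k)}_a\}$ through, using $\xi_i\geq\eta^{(k)}_i$ on $[1,n_k]$ to get $(\xi_a)_i\geq(\eta^{(k)}_a)_i$ there), and the $_{a_\infty}I$ step uses the same two devices the paper does---ssc of $\mathcal{L}_1$ to dispose of non-summable $\eta^{(k)}$, and ssc of $I$ applied to the halved family $\{\tfrac12\eta^{(k)}_{a_\infty}\}$ with the indices enlarged so that the truncated partial sum $\sum_{j=i+1}^{n_k}$ still captures at least half of the full tail. The one small imprecision is that ``the $K_2$-constraints, when nonempty, already force $\xi\notin\ell_1^*$'' should read ``when $K_2$ is infinite'': a finite nonsummable subfamily dominated only on finite initial segments does not by itself force non-summability; the paper avoids this by reducing at the outset to the dichotomy ``infinitely many $\eta^{(k)}$ non-summable'' (ssc of $\mathcal{L}_1$ applies) versus ``WLOG all $\eta^{(k)}$ summable'' (drop the finitely many non-summable ones), which your $K_1/K_2$ split accomplishes in substance once one notes that at least one of $K_1,K_2$ must be infinite.
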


\begin{proof}

Let $I$ be an ssc ideal. We first prove that ${}_aI$ is ssc. Let
$\{\eta^{(k)}\} \subset  c_o^* \setminus \Sigma({}_aI)$, i.e.,
$\{\eta_a^{(k)}\} \subset c_o^* \setminus \Sigma(I)$, and let $n_k
\in \mathbb{N}$ be a strictly increasing sequence of indices for
which if $\zeta \in c_o^*$ and $\zeta_ i \geq {(\eta_a^{(k)})}_i$ for
all $1 \leq i \leq n_k$ and all $k$, then $\zeta  \not\in \Sigma(I)$.
Let $\xi \in c_o^*$ and $\xi_ i \geq {(\eta^{(k)})}_i$ for
all $1 \leq i \leq n_k$ and all $k$. But then ${(\xi_a)}_i \geq {(\eta_ a^{(k)})}_i$ for all
$1 \leq i \leq n_k$ and all $k$ and hence $\xi_a \not\in \Sigma(I)$,
i.e., $\xi \not\in \Sigma({}_aI)$.

We now prove that $_{a_\infty}I$ is ssc. Let $\{\eta^{(k)}\} \subset  c_o^*
\setminus \Sigma({}_{a_\infty} I)$. Assume first that infinitely
many of the sequences $\eta^{(k)}$ are not summable. Since the trace
class $\mathcal{L}_1$ is ssc by Proposition~\ref{prop:4.12}, there
is an associated increasing sequence of indices $n_k \in \mathbb{N}$
so that if $\xi \in c_o^*$ and $\xi_ i \geq \eta_i^{(k)}$ for all $1
\leq i \leq n_k$, then $\xi \not\in \Sigma(\mathcal{L}_1)$ and hence
$\xi \not\in \Sigma({}_{a_\infty} I)$ since ${}_{a_\infty} I \subset
\mathcal{L}_1$. Thus assume without loss of generality that all $\eta^{(k)}$ are summable and
hence $\eta_{a_\infty}^{(k)} \not\in \Sigma(I)$. Let $n_k \in
\mathbb{N}$ be a strictly increasing sequence of indices for which
$\zeta \not\in \Sigma(I)$ whenever $\zeta \in  c_o^*$ and $\zeta_i
\geq {(\eta_{ a_\infty}^{(k)})}_i$ for all $1 \leq i \leq n_k$ and
all $k$. For every $k$ and $n$ choose an integer $p(k,n) \geq n$ for
which $\sum^{p(k,n)}_{i=n} \eta_i^{(k)} \geq \frac 12
\sum^{\infty}_{i=n} \eta_i^{(k)}$. Set $N_k := \max\{p(k,n) \mid 1 \leq
n \leq n_k+1\}$. For any $\xi \in c_o^*$ such that $\xi _i \geq
\eta_i^{(k)}$ for all $1 \leq i \leq N_k$ consider two cases. If
$\xi$ is not summable then $\xi \not\in \Sigma({}_{a_\infty} I)$
trivially. If $\xi$ is summable, then for all $1 \leq n \leq n_k$
and for all $k$
\begin{align*}
{\big(\xi_{a_\infty} \big)}_n &=  \frac{1}{n} \sum_{i=n+1}^{\infty}
\xi_i \geq \frac 1n \sum_{i=n+1}^{N_k} \xi_i \geq \frac 1n
\sum_{i=n+1}^{N_k} \eta_i^{(k)} \\ &\geq \frac 1n
\sum_{i=n+1}^{p(k,n+1)} \eta_i^{(k)} \geq \frac 1{2n}
\sum_{i=n+1}^{\infty} \eta_i^{(k)}= \frac12
{\big(\eta_{a_\infty}^{(k)} \big)}_n
\end{align*}
and hence $\xi_{a_\infty} \not\in \Sigma(I)$, i.e., $\xi \not\in
\Sigma(_{a_\infty}I)$.
\end{proof}

That Marcinkiewicz ideals are ssc can be seen also by the following
consequence of Proposition~\ref{prop:4.7}. 
If $I$ is a Marcinkiewicz ideal, then $I = {}_a(\xi )=  {}_a((\xi )^o)$ for some $\xi \in c_o^*$. 
By Lemma \ref {lem:2.13},  $(\xi )^o = (\eta_a)= (\eta)_a$ for some $\eta\in c_o^*$. 
Thus  $I = {}_a((\eta )_a) = (\eta)^-$ and $(\eta)^-$ is ssc by Proposition~\ref{prop:4.7} and Remark~\ref{rem:4.8}(i).

Corollary~\ref{cor:6.7} and Proposition~\ref{prop:6.11} below show
that the pre-arithmetic mean (resp., the pre-arithmetic mean at
infinity) also preserve soft-complementedness. They also show that
the am-interior and the am-closure of a soft-edged ideal are
soft-edged, that the am-interior of a soft-complemented ideal is
soft-complemented by Proposition~\ref{prop:6.11}, and that the same holds for
the corresponding am-$\infty$ operations. However, as mentioned
prior to Proposition~\ref{prop:6.8}, (resp.,
Proposition~\ref{prop:6.11}) we do not know whether the am-closure
(resp., the am-$\infty$  closure) of a soft-complemented ideal is
soft-complemented. Likewise, we do not know whether the
am-closure (resp., am-$\infty$ closure) of an ssc ideal is ssc.

One non-trivial case in which we can prove directly that the
am-closure of an ssc ideal is scc is the following. If $I$ is countably
generated, then $I_a$ too is countably generated and hence, by
Propositions~\ref{prop:4.6}(i) and \ref{prop:4.18}(i), its
am-closure $I^-$ is also ssc, and then by Lemma~\ref{lem:4.16} so is
${(I^-)}^p$ for any $p > 0$. The latter ideal is in general not
countably generated (e.g., if $0 \neq \xi \in \ell_1^*$, then $( \xi
)^- = \mathcal{L}_1$ is not countably generated) but
Lemma~\ref{lem:4.19} below shows that nevertheless its am-closure is
ssc.

\begin{lemma}\label{lem:4.19}
For every ideal $I$, 
\[
{({(I^-)}^p)}^- = \begin{cases}{(I^-)}^p\quad&\text{for } 0 < p \leq
1\\  {(I^p)}^-&\text{for } p \geq 1.
\end{cases}
\]
\end{lemma}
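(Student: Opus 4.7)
The plan is to reduce both cases to the following key tool: if $\xi,\eta\in c_o^*$ satisfy $\xi_a\leq \eta_a$ and $\phi\colon [0,\infty)\to[0,\infty)$ is convex and nondecreasing with $\phi(0)=0$, then $\phi(\xi)_a\leq \phi(\eta)_a$ (with $\phi$ applied entrywise). I would prove this by invoking Markus' Lemma \cite[Lemma 3.1]{15}: choose a substochastic matrix $P$ with $\xi = P\eta$; convexity together with $\phi(0)=0$ and $\sum_j P_{ij}\leq 1$ yields $\phi(\xi_i)=\phi\bigl(\sum_j P_{ij}\eta_j\bigr)\leq \sum_j P_{ij}\phi(\eta_j)$, i.e., $\phi(\xi)\leq P\phi(\eta)$ pointwise. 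Since $\phi(\xi)\in c_o^*$ (as $\phi$ is continuous, nondecreasing, and $\phi(0)=0$) and pointwise domination by a nonnegative sequence is preserved by taking decreasing rearrangements, $\phi(\xi)\leq (P\phi(\eta))^*$. Writing $(P\phi(\eta))^*=\Pi P\phi(\eta)$ for a permutation $\Pi$ monotonizing $P\phi(\eta)$, the matrix $\Pi P$ is again substochastic, so a second application of Markus' Lemma gives $((P\phi(\eta))^*)_a\leq \phi(\eta)_a$, whence $\phi(\xi)_a\leq \phi(\eta)_a$.

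For $0<p\leq 1$ I apply this with $\phi(t)=t^{1/p}$ (convex and nondecreasing on $[0,\infty)$). If $\xi\in \Sigma(((I^-)^p)^-)$, pick $\eta\in \Sigma((I^-)^p)$ with $\xi_a\leq \eta_a$; then $\eta^{1/p}\in \Sigma(I^-)$, the tool gives $(\xi^{1/p})_a\leq (\eta^{1/p})_a$, and am-closedness of $I^-$ forces $\xi^{1/p}\in \Sigma(I^-)$, i.e., $\xi\in \Sigma((I^-)^p)$. This yields $((I^-)^p)^-\subset (I^-)^p$; the reverse inclusion $(I^-)^p\subset ((I^-)^p)^-$ is immediate from the 5-chain.

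For $p\geq 1$ I apply the tool with $\phi(t)=t^p$. The inclusion $(I^p)^-\subset ((I^-)^p)^-$ is automatic since $I\subset I^-$ and both $J\mapsto J^p$ and $J\mapsto J^-$ preserve inclusions. For the reverse, take $\xi\in \Sigma(((I^-)^p)^-)$ and $\eta\in \Sigma((I^-)^p)$ with $\xi_a\leq \eta_a$; then $\eta^{1/p}\in \Sigma(I^-)$ produces $\mu\in \Sigma(I)$ with $(\eta^{1/p})_a\leq \mu_a$, and the tool yields $\eta_a=((\eta^{1/p})^p)_a\leq (\mu^p)_a$. Since $\mu^p\in \Sigma(I^p)$, the chain $\xi_a\leq \eta_a\leq (\mu^p)_a$ shows $\xi\in \Sigma((I^p)^-)$. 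The main obstacle is the first paragraph's weak-majorization lemma: one must recognize Markus' Lemma as the correct infinite-dimensional substitute for the classical Hardy--Littlewood--P\'olya theorem and carry out the rearrangement bookkeeping, including the second appeal to Markus' Lemma to control $(P\phi(\eta))^*$.
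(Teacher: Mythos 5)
Your proof is correct, and its overall structure — the case split at $p=1$, the chain $\xi_a\leq\eta_a$ with $\eta^{1/p}\in\Sigma(I^-)$, and the two applications (convexifying with exponent $1/p\geq 1$ when $p\leq 1$, and with exponent $p\geq 1$ when $p\geq 1$) — coincides exactly with the paper's. The one genuine difference is in how the auxiliary majorization fact is handled: the paper simply cites Marshall--Olkin \cite[3.C.1.b]{16} for the statement that $\mu_a\leq\nu_a$ implies $(\mu^q)_a\leq(\nu^q)_a$ for monotone sequences and $q\geq 1$, whereas you re-derive a slightly more general version (any convex nondecreasing $\phi$ with $\phi(0)=0$) from Markus' Lemma \cite[Lemma 3.1]{15} via Jensen's inequality for substochastic matrices plus a second Markus application to $\Pi P$. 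That derivation is sound (the $\phi(0)=0$ hypothesis is precisely what makes Jensen work when the row sums of $P$ are strictly less than $1$, and the monotonization/rearrangement bookkeeping mirrors the paper's own proof of Theorem~\ref{thm:2.5}). What this buys you is self-containment and a reusable tool; what the paper's citation buys is brevity. Both are acceptable, and your version arguably fits better with the substochastic-matrix machinery already deployed in Section~\ref{sec:2}.
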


\begin{proof}
Let $\xi  \in \Sigma (((I^-)^p)^-)$. By definition, $\xi_a \leq
\eta_a$ for some $\eta \in  \Sigma ((I^-)^p)$, i.e., $\eta^{1/p}\in
\Sigma(I^-)$, which in turns holds if and only if $(\eta^{1/p})_a
\leq \rho_a$ for some $\rho \in \Sigma(I)$.\linebreak Recall from
\cite[3.C.1.b]{16} that if $\mu$ and $\nu$ are monotone sequences
and $\mu_a \leq \nu_a$, then $(\mu^q)_a \leq (\nu^q)_a$ for $q \geq
1$. Thus, if $p \leq 1$, $(\xi^{1/p})_a \leq (\eta^{1/p})_a \leq
\rho_a$ and consequently $\xi^{1/p} \in \Sigma(I^-)$, i.e., $\xi \in
\Sigma((I^-)^p)$. Thus $((I^-)^p)^- \subset (I^-)^p$, which then
implies equality since the reverse inclusion is automatic. If $p
> 1$, the inequality $(\eta^{1/p})_a \leq \rho_a$\linebreak  implies for the same reason
that $\eta_a \leq (\rho^p)_a$. Hence $\xi_a \leq (\rho^p)_a$,
i.e.,  $\xi \in \Sigma((I^p)^-)$. Thus $((I^-)^p)^- \subset
(I^p)^-$, which then implies equality since the reverse inclusion is
again automatic.
\end{proof}

\begin{proposition}\label{prop:4.20}
If $I$ is countably generated and $0 <p < \infty$, then 
$((I^-)^p)^-$ is strongly soft-complemented.
\end{proposition}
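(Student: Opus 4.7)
The plan is to split according to the two cases provided by Lemma~\ref{lem:4.19} and in each case invoke the chain of implications already laid out in the paragraph preceding that lemma. So the proof should begin by writing $((I^-)^p)^- = (I^-)^p$ when $0 < p \leq 1$ and $((I^-)^p)^- = (I^p)^-$ when $p \geq 1$, then treat the two ranges separately.

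For $0 < p \leq 1$, I would note that the argument recalled just before Lemma~\ref{lem:4.19} already shows $I^-$ is ssc whenever $I$ is countably generated: $I_a$ inherits countable generation from $I$ (its characteristic set is generated by the arithmetic means $\rho^{(k)}_a$ of any countable set $\{\rho^{(k)}\}$ of generators of $\Sigma(I)$, since $(\rho)_a = (\rho_a)$), so $I_a$ is ssc by Proposition~\ref{prop:4.6}(i), and then $I^- = {}_a(I_a)$ is ssc by Proposition~\ref{prop:4.18}. An application of Lemma~\ref{lem:4.16}(iii) then gives that $(I^-)^p$ is ssc, finishing this case.

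For $p \geq 1$, the step that deserves a brief verification is that $I^p$ is itself countably generated. If $\{\rho^{(k)}\}$ generates $\Sigma(I)$, then for any $\xi \in \Sigma(I^p)$ one has $\xi^{1/p} = O(D_m \rho^{(k)})$ for some $m,k$, hence $\xi = O\bigl((D_m \rho^{(k)})^p\bigr) = O\bigl(D_m (\rho^{(k)})^p\bigr)$, so $\{(\rho^{(k)})^p\}$ is a countable set of generators for $\Sigma(I^p)$. Having this, the same chain of implications used in the first case, applied now with $I^p$ in place of $I$, yields that $(I^p)^-$ is ssc.

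The only real subtlety is precisely what motivates appealing to Lemma~\ref{lem:4.19} in the first place: as the paragraph before it emphasizes, $(I^-)^p$ need not itself be countably generated (the explicit example being $I = (\xi)$ with $\xi \in \ell_1^*$, where $(I^-)^p = \mathcal{L}_1^p$ is not countably generated), so one cannot directly conclude ssc-ness of its am-closure from Propositions~\ref{prop:4.6}(i) and \ref{prop:4.18}. Lemma~\ref{lem:4.19} bypasses this obstruction by rewriting $((I^-)^p)^-$ either as a power of the ssc ideal $I^-$ (where Lemma~\ref{lem:4.16}(iii) suffices) or as the am-closure of the still-countably-generated ideal $I^p$ (where the original argument applies). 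Once that rewriting is in hand the proof is essentially a bookkeeping exercise.
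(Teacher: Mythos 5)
Your proposal is correct and follows essentially the same route the paper intends: the paper gives no explicit proof of Proposition~\ref{prop:4.20}, but the paragraph preceding Lemma~\ref{lem:4.19} lays out exactly the chain you use (countable generation of $I$ passes to $I_a$, hence $I_a$ is ssc by Proposition~\ref{prop:4.6}(i), hence $I^-={}_a(I_a)$ is ssc by Proposition~\ref{prop:4.18}, and $(I^-)^p$ is ssc by Lemma~\ref{lem:4.16}(iii)), with Lemma~\ref{lem:4.19} then splitting the two ranges of $p$ precisely as you do. Your explicit verifications that $I_a$ and $I^p$ inherit countable generation from $I$ are correct details the paper leaves implicit, and your closing remark correctly identifies why Lemma~\ref{lem:4.19} is needed at all.
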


\section{Operations on Soft Ideals}\label{sec:5}
In this section we investigate the soft interior and soft cover of
arbitrary intersections of ideals, unions of collections of ideals
directed by inclusion, and finite sums of ideals.

\begin{proposition}\label{prop:5.1}
For every collection of ideals $\{I_\gamma , \gamma \in \Gamma\}$:

\item[(i)]
 $\bigcap_\gamma  \se I_\gamma \supset \se (\bigcap_\gamma  I_\gamma)$

\item[(ii)]
 $\bigcap_\gamma  \scop I_\gamma = \scop (\bigcap_\gamma  I_\gamma)$ \\

In particular, the intersection of soft-complemented ideals is soft-complemented.
\end{proposition}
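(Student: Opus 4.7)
The plan is to read off both statements almost directly from the definitions, using only the inclusion-preserving property of $\se$ and $\scop$ recorded in Lemma~\ref{lem:4.2}(i) together with the fact that the Calkin correspondence is a lattice isomorphism, so that $\Sigma(\bigcap_\gamma I_\gamma) = \bigcap_\gamma \Sigma(I_\gamma)$.

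For (i), since $\bigcap_\gamma I_\gamma \subset I_{\gamma_0}$ for each fixed $\gamma_0 \in \Gamma$, applying Lemma~\ref{lem:4.2}(i) gives $\se(\bigcap_\gamma I_\gamma) \subset \se I_{\gamma_0}$; intersecting over $\gamma_0$ yields the claimed inclusion.

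For (ii), the same monotonicity argument applied to $\scop$ produces the inclusion $\scop(\bigcap_\gamma I_\gamma) \subset \bigcap_\gamma \scop I_\gamma$. For the reverse inclusion I would argue at the level of characteristic sets. Suppose $\xi \in \bigcap_\gamma \Sigma(\scop I_\gamma)$. By Definition~\ref{def:4.1}, this means that $\alpha \xi \in \Sigma(I_\gamma)$ for every $\alpha \in c_o^*$ and every $\gamma$. Hence for each fixed $\alpha \in c_o^*$, $\alpha \xi \in \bigcap_\gamma \Sigma(I_\gamma) = \Sigma(\bigcap_\gamma I_\gamma)$, which is precisely the condition $\xi \in \Sigma(\scop(\bigcap_\gamma I_\gamma))$. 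The ``In particular'' assertion is then immediate: if each $I_\gamma$ is soft-complemented, then $\scop I_\gamma = I_\gamma$, so (ii) gives $\scop(\bigcap_\gamma I_\gamma) = \bigcap_\gamma I_\gamma$.

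There is no real obstacle here; the content of the proposition is essentially an unwinding of Definition~\ref{def:4.1}. The conceptual point worth flagging is the asymmetry between the two operations: $\scop$ is defined by a universal quantifier over $c_o^*$ and so commutes freely with arbitrary intersections, whereas $\se$ is defined by an existential quantifier and only inclusion can be expected in (i). Examples showing the inclusion in (i) can be proper will presumably be discussed elsewhere in the section, but are not needed for the proof itself.
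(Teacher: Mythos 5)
Your proof is correct, but for the harder direction of (ii) it takes a genuinely different route from the paper. You argue directly from Definition~\ref{def:4.1} at the level of characteristic sets: membership of $\xi$ in $\Sigma(\scop I_\gamma)$ for every $\gamma$ amounts to $\alpha\xi \in \Sigma(I_\gamma)$ for every $\alpha \in c_o^*$ and every $\gamma$, and swapping the order of the two quantifiers gives exactly the condition for $\xi$ to lie in $\Sigma(\scop(\bigcap_\gamma I_\gamma))$. This is clean, and your observation about the universal quantifier being responsible for the full commutation (whereas the existential quantifier defining $\se$ only yields an inclusion in (i)) is precisely the right conceptual diagnosis. The paper instead proves the reverse inclusion in (ii) purely formally from the closure-operator properties of $\se$ and $\scop$ recorded in Lemma~\ref{lem:4.2}(i)--(iv) together with part (i): it builds the chain $\scop(\bigcap_\gamma I_\gamma) \supset \bigcap_\gamma I_\gamma \supset \bigcap_\gamma \se I_\gamma = \bigcap_\gamma \se(\scop I_\gamma) \supset \se(\bigcap_\gamma \scop I_\gamma)$ and then applies $\scop$ to both ends, using idempotence and $\scop(\se J) = \scop J$ to conclude. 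What your approach buys is concreteness and transparency; what the paper's approach buys is that the result is seen to be an abstract consequence of the interaction of $\se$ and $\scop$ as a Galois-type pair, without ever reopening Definition~\ref{def:4.1}. Both are valid and complete; you could also note that your quantifier argument dualizes in the obvious way to explain, rather than merely assert, why equality can fail in (i).
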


\begin{proof} (i) and the inclusion
$\bigcap_\gamma  \scop I_\gamma  \supset \scop (\bigcap_\gamma
I_\gamma)$ are immediate consequences of Lemma~\ref{lem:4.2}(i).
For the reverse inclusion in (ii), by (i) and Lemma~\ref{lem:4.2}
(i)--(iv) we have:
\[
\scop \left(\bigcap_\gamma  I_\gamma\right) \supset \bigcap_\gamma
I_\gamma \supset \bigcap_\gamma  \se I_\gamma  = \bigcap_\gamma  \se
(\scop I_\gamma ) \supset \se \left(\bigcap_\gamma \scop I_\gamma
\right)
\]
and hence
\[
\scop \left(\bigcap_\gamma  I_\gamma\right) \supset \scop
\left(\se\left(\bigcap_\gamma \scop I_\gamma\right)\right) = \scop
\left(\bigcap_\gamma \scop I_\gamma \right) \supset \bigcap_\gamma
\scop I_\gamma.
\]
\end{proof}

It follows directly from Definition~\ref{def:4.1} that if $\Gamma$
is finite, then equality holds in (i). In general, equality in (i)
does not hold, as seen in Example~\ref{ex:5.2} below, where the
intersection of soft-edged ideals fails to be soft-edged, thus
showing that the inclusion in (i) is proper.

\begin{example}\label{ex:5.2}
Let $\xi \in c_o^*$ be a sequence that satisfies the $\Delta_{1/2}$-condition, i.e.,
$\sup\frac{\xi_n}{\xi_{2n}} < \infty$, and let $\{I_\gamma\}_{ \gamma \in
\Gamma}$ be the collection of all soft-edged ideals containing the
principal ideal $(\xi)$. Then $I := \bigcap_\gamma  I_\gamma$ is not
soft-edged. Indeed, assume that it is and hence $\xi  = o(\eta)$ for
some $\eta \in \Sigma(I)$. By Lemma~\ref{lem:6.3} of the next
section, there is a sequence $\gamma  \uparrow\infty$  for which
$\gamma \leq \frac{\eta}{\xi}$ and $\mu:=\gamma \xi  \in  c_o^*$.
Then 
\[
(\xi ) \subset \se(\mu ) \subset  (\mu )
\subset (\eta) \subset I.
\]
Then $\se(\mu )\in \{I_\gamma\}_{ \gamma \in
\Gamma}$, hence $I \subset \se(\mu )$, and thus
$\se(\mu ) = (\mu )$. By Proposition~\ref{prop:4.6}(ii), this
implies that $\mu = o(D_m\mu)$ for some integer $m$. This is
impossible since \linebreak $\frac{\mu_n}{\mu_{2n}}=
\frac{\gamma_n}{\gamma_{2n}}\frac{\xi_n}{\xi_{2n}} \leq
\frac{\xi_n}{\xi_{2n}}$ which implies that $\mu$  too satisfies the
$\Delta_{1/2}$-condition and hence $D_m\mu = O(\mu )$, a
contradiction.
\end{example}

Notice that the conclusion that $\bigcap_\gamma I_\gamma$ is not
soft-edged follows likewise if $\{I_\gamma\}$ is a maximal chain of
soft-edged ideals that contain the principal ideal $(\xi)$.
Moreover, Example~\ref{ex:5.2} shows that in general there is no
smallest soft-edged cover of an ideal.

The next proposition shows that an intersection of strongly
soft-comple\-mented ideals, which is soft-complemented by
Proposition \ref{prop:5.1}(ii), can yet fail to be strongly
soft-complemented.

\begin{proposition}\label{prop:5.3}
The intersection of an infinite countable strictly decreasing chain
of principal ideals is never strongly soft-complemented.
\end{proposition}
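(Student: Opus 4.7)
The plan is to show that for any such chain $(\xi^{(1)})\supsetneq(\xi^{(2)})\supsetneq\cdots$ with $I:=\bigcap_k(\xi^{(k)})$, one can exhibit a countable family $\{\eta^{(k)}\}\subset c_o^*\setminus\Sigma(I)$ that violates Definition~\ref{def:4.4}: namely, for every strictly increasing sequence of indices $n_k$ there is some $\xi\in c_o^*\cap\Sigma(I)$ with $\xi_i\geq\eta^{(k)}_i$ for all $k$ and all $1\leq i\leq n_k$.

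First I would carry out a reduction, replacing the given generators by pointwise monotone ones: set $\eta^{(1)}:=\xi^{(1)}$ and, recursively, $\eta^{(k+1)}:=\min(\xi^{(k+1)},\eta^{(k)})$ coordinatewise. Then $\eta^{(k+1)}\leq\eta^{(k)}$, $\eta^{(k+1)}\in c_o^*$, and $\eta^{(k+1)}\in\Sigma((\xi^{(k+1)}))$ are immediate. The nontrivial point is $(\eta^{(k+1)})=(\xi^{(k+1)})$: using $\xi^{(k+1)}\leq C\,D_M\eta^{(k)}$ (which holds by the inductive identity $(\xi^{(k)})=(\eta^{(k)})$), a case split on whether $\xi^{(k+1)}_{\lceil i/M\rceil}\leq\eta^{(k)}_{\lceil i/M\rceil}$ or not yields $\xi^{(k+1)}_i\leq C(D_M\eta^{(k+1)})_i$ in either case, giving the reverse containment.

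Since the chain is strictly decreasing, each $\eta^{(k)}$ generates $(\xi^{(k)})\not\subset(\xi^{(k+1)})$, so $\eta^{(k)}\notin\Sigma((\xi^{(k+1)}))\supset\Sigma(I)$, and thus the $\eta^{(k)}$ are valid witnesses. For the construction step, given any strictly increasing $n_k$ (with $n_0:=0$), define $\xi$ block by block by $\xi_i:=\eta^{(k)}_i$ for $n_{k-1}<i\leq n_k$. The pointwise monotonicity $\eta^{(k+1)}\leq\eta^{(k)}$ makes $\xi$ nonincreasing across block boundaries, since $\xi_{n_k+1}=\eta^{(k+1)}_{n_k+1}\leq\eta^{(k)}_{n_k+1}\leq\eta^{(k)}_{n_k}=\xi_{n_k}$, and $\xi_i\leq\eta^{(1)}_i\to 0$, so $\xi\in c_o^*$. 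If $1\leq i\leq n_k$ and $i$ lies in block $j\leq k$, then $\xi_i=\eta^{(j)}_i\geq\eta^{(k)}_i$ by the same monotonicity, so the domination condition holds.

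Finally I verify $\xi\in\Sigma(I)=\bigcap_j\Sigma((\xi^{(j)}))$: on every block $k\geq j$, $\xi_i=\eta^{(k)}_i\leq\eta^{(j)}_i\leq C_j(D_{p_j}\xi^{(j)})_i$ because $\eta^{(j)}$ generates $(\xi^{(j)})$; the finitely many indices $i\leq n_{j-1}$ are absorbed by enlarging $C_j$ (using that $(D_{p_j}\xi^{(j)})_i$ is bounded below by a positive constant on this finite range). Hence $\xi\in\Sigma((\xi^{(j)}))$ for every $j$, so $\xi\in\Sigma(I)$, which falsifies strong soft-complementedness of $I$. The hard part is the reduction step: naive rescalings of the original $\xi^{(k)}$ do not in general yield pointwise-comparable generators (the ratios $\xi^{(k+1)}_i/\xi^{(k)}_i$ can be unbounded), but the pointwise-minimum construction together with the ampliation-based case analysis bypasses this obstacle cleanly.
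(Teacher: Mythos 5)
Your proof is correct and follows the same overall strategy as the paper's: first replace the given generators by a pointwise nonincreasing family of equivalent generators $\eta^{(k)} \geq \eta^{(k+1)}$, then for any proposed index sequence $n_k$ build $\xi$ block by block via $\xi_i := \eta^{(k)}_i$ on $n_{k-1}<i\leq n_k$, and observe that $\xi$ dominates each $\eta^{(k)}$ on $[1,n_k]$ yet lies in $\Sigma(I)$ because it is eventually $\leq \eta^{(j)}$ for each $j$. The one step where you diverge is the construction of the decreasing generators. The paper takes a generator $\xi$ of $I_{k+1}$ with $\xi \leq M D_m \eta^{(k)}$ and defines $\eta^{(k+1)} := \frac{1}{M}D_{1/m}\xi$ (a compression and rescale), checking $(\eta^{(k+1)}) = (\xi)$ via the inequality $\xi_i \leq (D_{2m}D_{1/m}\xi)_i$ for large $i$; you instead take the coordinatewise minimum $\eta^{(k+1)} := \min(\xi^{(k+1)},\eta^{(k)})$ and verify $(\eta^{(k+1)}) = (\xi^{(k+1)})$ by a case split on whether $\xi^{(k+1)}_{\lceil i/M\rceil}\leq\eta^{(k)}_{\lceil i/M\rceil}$. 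Both constructions achieve the same goal; the pointwise-minimum version has the cosmetic advantage that monotonicity $\eta^{(k+1)}\leq\eta^{(k)}$ is immediate by definition, at the cost of the small case analysis, whereas the paper's version yields the equivalence of ideals nearly for free but requires a short computation to see that the compressed sequence is pointwise below $\eta^{(k)}$. One minor point: in Case~B of your argument you should take the constant to be $\max(C,1)$ (or assume $C\geq 1$ WLOG), since Case~A gives the bound with constant $1$ only.
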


\begin{proof} Let $\{I_k\}$ be the chain of principal ideals with $I_k \supsetneqq
I_{k+1}$ and set $I = \bigcap_k I_k$. First we find generators
$\eta^{(k)} \in c_o^*$ for the ideals $I_k$ such that$\eta^{(k)}
\geq \eta^{(k+1)}$. Assuming the construction up to
$\eta^{(k)}$, if $\xi$ is a generator of $I_{k+1}$ then $\xi \leq
MD_m\eta^{(k)}$ for some $M > 0$ and $m \in \mathbb{N}$. Set
$\eta^{(k+1)} := \frac{1}{M} D_{1/m}\xi$, where $(D_{1/m}\xi )_i =
\xi_{mi}$. Then $\eta^{(k+1)} \in c_o^*$ and $\eta^{(k+1)} \leq
\eta^{(k)}$ since $D_{1/m}D_m = id$. Moreover, $\eta^{(k+1)}\leq
\frac1M \xi$ and by an elementary computation, $\xi_i \leq
(D_{2m}D_{1/m}\xi )_i$ for $i \geq m$ so that $(\xi)\subset (\eta^{(k+1)})$ and hence 
$I_{k+1} = (\xi ) =(\eta^{(k+1)})$. By assumption, $\eta^{(k)} \not\in \Sigma(I)$ for
all $k$. For any given strictly increasing sequence of indices $n_k
\in \mathbb{N}$, set $n_o = 0$ and $\xi_i := \eta_i^{(k)}$ for
$n_{k-1} < i \leq n_k$. Since $\eta^{(k)} \geq \eta^{(k+1)}$ for all
$k$, it follows that $\xi \in c_o^*$ and $\xi_i \geq \eta_i^{(k)}$
for $1 \leq i \leq n_k$. Yet, since $\xi_i \leq \eta_i^{(k)}$ for
all $i \geq n_k$, one has $\xi \in \Sigma(\eta^{(k)})$ for all $k$
and hence $\xi \in \Sigma(I)$. Thus $I$ is not strongly
soft-complemented.
\end{proof}

Notice that if in the above construction $\eta^{(k)} = \rho^k$ for
some $\rho \in c_o^*$ that satisfies the $\Delta_{1/2}$-condition,
then $I = \bigcap_k(\rho^k)$ is also idempotent. This shows that
while idempotent ideals are soft-complemented by
Proposition~\ref{prop:4.17}, they can fail to be strongly
soft-complemented.

\begin{proposition}\label{prop:5.4}
For $\{I_\gamma\}_{ \gamma \in  \Gamma}$ a collection of ideals directed by inclusion:

\item[(i)]
$\bigcup_\gamma  \se I_\gamma  = \se ( \bigcup_\gamma I_\gamma)$\\

In particular, the directed union of soft edged ideals is soft-edged.

\item[(ii)]
$\bigcup_\gamma \scop I_\gamma  \subset  \scop ( \bigcup_\gamma I_\gamma)$

\end{proposition}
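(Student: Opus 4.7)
My plan is to handle (i) and (ii) separately, with (ii) being essentially immediate from monotonicity while (i) requires using the directedness hypothesis to exhibit a common enclosing ideal.

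For (ii), I would simply note that for each $\gamma$, $I_\gamma \subset \bigcup_\delta I_\delta$, so by Lemma~\ref{lem:4.2}(i) (inclusion-preservation of $\scop$), $\scop I_\gamma \subset \scop(\bigcup_\delta I_\delta)$. Taking the union over $\gamma$ on the left yields the claimed inclusion. No directedness is required here.

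For (i), the forward inclusion $\bigcup_\gamma \se I_\gamma \subset \se(\bigcup_\gamma I_\gamma)$ follows by the same monotonicity argument. For the reverse, I would argue on characteristic sets. Since $\{I_\gamma\}$ is directed, the union $\bigcup_\gamma I_\gamma$ is an ideal whose characteristic set is precisely $\bigcup_\gamma \Sigma(I_\gamma)$: indeed, every element of $\bigcup_\gamma I_\gamma$ lies in some single $I_{\gamma_0}$, so its $s$-numbers lie in $\Sigma(I_{\gamma_0})$. Now let $\xi \in \Sigma(\se(\bigcup_\gamma I_\gamma))$. By Definition~\ref{def:4.1}, $\xi \leq \alpha \eta$ for some $\alpha \in c_o^*$ and $\eta \in \Sigma(\bigcup_\gamma I_\gamma) = \bigcup_\gamma \Sigma(I_\gamma)$. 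Choose $\gamma_0$ with $\eta \in \Sigma(I_{\gamma_0})$; then $\xi \in \Sigma(\se I_{\gamma_0}) \subset \Sigma(\bigcup_\gamma \se I_\gamma)$, completing the reverse inclusion.

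The ``in particular'' assertion in (i) then follows immediately: if each $I_\gamma$ is soft-edged, then $\se I_\gamma = I_\gamma$, and the established equality gives $\se(\bigcup_\gamma I_\gamma) = \bigcup_\gamma \se I_\gamma = \bigcup_\gamma I_\gamma$, so $\bigcup_\gamma I_\gamma$ is soft-edged.

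There is no real obstacle here; the only subtle point is the identification $\Sigma(\bigcup_\gamma I_\gamma) = \bigcup_\gamma \Sigma(I_\gamma)$ for a directed union, which is precisely what allows a single $\eta$ witnessing softness in the union to be located in one member of the directed family. Without directedness this identification fails, which is why one obtains only an inclusion in (ii) rather than equality (and also explains why the intersection case in Proposition~\ref{prop:5.1}(i) is likewise only an inclusion in general, cf.\ Example~\ref{ex:5.2}).
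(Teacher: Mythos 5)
Your proof is correct, and it takes a genuinely different route from the paper's. The paper proves the reverse inclusion in (i) purely formally, by stringing together part (ii) and Lemma~\ref{lem:4.2}(iii),(iv):
\[
\se \Big( \bigcup_\gamma I_\gamma\Big) \subset \se \Big( \bigcup_\gamma \scop(\se I_\gamma)\Big) \subset \se\Big(\scop\Big(\bigcup_\gamma \se I_\gamma\Big)\Big) = \se\Big(\bigcup_\gamma \se I_\gamma\Big) \subset \bigcup_\gamma \se I_\gamma,
\]
so the argument never descends to the level of sequences and uses directedness only implicitly (to guarantee the union is an ideal). You instead argue directly on characteristic sets, using the identity $\Sigma(\bigcup_\gamma I_\gamma) = \bigcup_\gamma \Sigma(I_\gamma)$ for directed unions to localize the witnessing sequence $\eta$ in a single $\Sigma(I_{\gamma_0})$. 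Your version is more elementary and has the pedagogical advantage of showing explicitly where directedness enters; the paper's version is shorter once the $\se/\scop$ calculus of Lemma~\ref{lem:4.2} is in hand and dualizes cleanly against Proposition~\ref{prop:5.1}. Both are valid.

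One small inaccuracy in your closing remark: the reason (ii) is only an inclusion is not that the identification $\Sigma(\bigcup_\gamma I_\gamma)=\bigcup_\gamma\Sigma(I_\gamma)$ fails -- it holds under the stated directedness hypothesis. Rather, the obstruction is that membership in $\Sigma(\scop J)$ requires $\alpha\xi\in\Sigma(J)$ for \emph{all} $\alpha\in c_o^*$, and even when each $\alpha\xi$ lies in some $\Sigma(I_{\gamma(\alpha)})$, there need be no single $\gamma_0$ working for all $\alpha$ simultaneously. The paper's remark following the proposition (every ideal is a directed union of principal, hence ssc, ideals) makes this failure concrete.
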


\begin{proof}
As in Proposition~\ref{prop:5.1}, (ii) and the inclusion
$\bigcup_\gamma \se I_\gamma  \subset  \se ( \bigcup_\gamma
I_\gamma)$ in (i) are immediate. For the reverse inclusion in (i),
from (ii) and Lemma~\ref{lem:4.2}(iii) and (iv) we have
\[
\se \left( \bigcup_ \gamma  I_\gamma\right) \subset  \se \left(
\bigcup_\gamma  \scop (\se I_\gamma)\right) \subset  \se \left(\scop
\left(\bigcup_\gamma  \se I_\gamma \right)\right) = \se \left(
\bigcup_\gamma  \se I_\gamma \right) \subset \bigcup_\gamma \se
I_\gamma.
\]
\end{proof}

It follows directly from Definition~\ref{def:4.1} that if $\Gamma$
is finite, then equality holds in (ii), but in general, it does not.
Indeed, any ideal $I$ is the union of the collection of all the
principal ideals contained in $I$ and this collection is directed by
inclusion since $(\eta) \subset I$ and $(\mu ) \subset I$ imply that
$(\eta), (\mu ) \subset  (\eta  + \mu ) \subset  I$. By
Proposition~\ref{prop:4.6}(i), principal ideals are ssc, hence
soft-complemented. Notice that by assuming the continuum hypothesis,
every ideal $I$ is the union of an increasing nest of countably
generated ideals \cite{3}, so then even nested unions of ssc ideals
can fail to be soft-complemented.

The smallest nonzero am-stable ideal $st^a(\mathcal{L}_1) =
\bigcup_{m=0}^{\infty}= (\omega)_{a^m}$ and the largest am-$\infty$
stable ideal $st_{a_\infty} (\mathcal{L}_1) = \bigcap_{m=0}^{\infty}
\text{$_{a_\infty^m}$}(\mathcal{L}_1)$ (see Section~\ref{sec:2})
play an important role in \cite{9,10}.

\begin{proposition}\label{prop:5.5}
The ideals $st^a(\mathcal{L}_1)$ and $st_{a_\infty} (\mathcal{L}_1)$
are both soft-edged and soft-complemented, $st^a(\mathcal{L}_1)$ is
ssc, but $st_{a_\infty}(\mathcal{L}_1)$ is not ssc.
\end{proposition}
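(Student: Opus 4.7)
The statement splits into two halves, one per stabilizer.

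\emph{The case $st^a(\mathcal{L}_1)$.} Since $st^a(\mathcal{L}_1)=\bigcup_{m\ge 0}(\omega\log^m)$ and $\omega\log^m = o(\omega\log^{m+1})$, any $\xi\in\Sigma(st^a(\mathcal{L}_1))$ satisfies $\xi=O(\omega\log^m)$ for some $m$, hence $\xi=o(\omega\log^{m+1})\in\Sigma(st^a(\mathcal{L}_1))$, giving soft-edgedness. For strong soft-complementedness, given $\{\eta^{(k)}\}\subset c_o^*\setminus\Sigma(st^a(\mathcal{L}_1))$, each $\eta^{(k)}\ne O(\omega\log^k)$, so I pick a strictly increasing $n_k$ with $\eta^{(k)}_{n_k}\ge k\,\omega_{n_k}(\log n_k)^k$. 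Any $\xi\in c_o^*$ with $\xi_i\ge\eta^{(k)}_i$ on $[1,n_k]$ then has $\xi_{n_k}/(\omega\log^m)_{n_k}\ge k(\log n_k)^{k-m}\to\infty$ for $k\ge m$, so $\xi\notin\Sigma((\omega\log^m))$ for any $m$, hence $\xi\notin\Sigma(st^a(\mathcal{L}_1))$.

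\emph{The case $st_{a_\infty}(\mathcal{L}_1)$, soft-complemented and not ssc.} For soft-complementedness: $\mathcal{L}_1$ is ssc by Proposition~\ref{prop:4.12}, iterating Proposition~\ref{prop:4.18} gives each ${}_{a_\infty^m}(\mathcal{L}_1)$ ssc, hence soft-complemented by Proposition~\ref{prop:4.5}, and Proposition~\ref{prop:5.1}(ii) says intersections preserve soft-complementedness. To refute ssc, exploit that ${}_{a_\infty^{m+1}}(\mathcal{L}_1)\subsetneq{}_{a_\infty^m}(\mathcal{L}_1)$ for every $m$: setting $\eta^{(k)}_n:=1/(n(\log(n+2))^{k+1})$, an elementary integral estimate on iterated $a_\infty$-means shows $\eta^{(k)}\in\Sigma({}_{a_\infty^{k-1}}(\mathcal{L}_1))\setminus\Sigma(st_{a_\infty}(\mathcal{L}_1))$ with $\eta^{(k+1)}\le\eta^{(k)}$ pointwise. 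Imitating Proposition~\ref{prop:5.3}, for any strictly increasing $\{n_k\}$ the splice $\xi_i:=\eta^{(k)}_i$ for $n_{k-1}<i\le n_k$ is in $c_o^*$, satisfies $\xi_i\ge\eta^{(k)}_i$ on $[1,n_k]$, and since $\xi\le\eta^{(k)}$ on $(n_{k-1},\infty)$ one has $\xi_{a_\infty^m,n}\le\eta^{(k)}_{a_\infty^m,n}$ for $n\ge n_{k-1}$; taking $k\ge m+1$ makes this tail summable, so $\xi\in\Sigma(st_{a_\infty}(\mathcal{L}_1))$, defeating the ssc condition.

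\emph{The main obstacle: $st_{a_\infty}(\mathcal{L}_1)$ is soft-edged.} My plan is a diagonal adaptation of the Proposition~\ref{prop:4.12} construction. Given $\xi\in\Sigma(st_{a_\infty}(\mathcal{L}_1))$, every $\xi_{a_\infty^m}$ is summable, and I would choose strictly increasing $n_k$ with $\sum_{j>n_k}\xi_{a_\infty^m,j}\le 2^{-k}$ for all $m\le k$, with $k\xi_{n_k}\downarrow 0$, and crucially with $\log n_k$ growing at most polynomially in $k$. With $\beta_i:=\sqrt{k}$ for $n_{k-1}<i\le n_k$ and $\eta:=\lni(\beta\xi)$, the argument of Proposition~\ref{prop:4.12} gives $\eta\in c_o^*$ with $\xi=o(\eta)$; and a Fubini estimate yields
\[
\sum_n\eta_{a_\infty^m,n}\;\le\;C_m\sum_k\sqrt{k}(\log n_k)^m\cdot 2^{-k+1},
\]
which converges for every $m$ provided $\log n_k$ grows polynomially in $k$, placing $\eta$ in $\Sigma(st_{a_\infty}(\mathcal{L}_1))$. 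The hard part is verifying that the tail conditions are simultaneously compatible with that polynomial-growth requirement: this requires a quantitative argument that for $\xi\in st_{a_\infty}(\mathcal{L}_1)$ the tails $\sum_{j>n}\xi_{a_\infty^m,j}$ decay faster than any polylogarithmic rate in $n$, since otherwise some finite iteration of $a_\infty$ would already fail summability, contradicting $\xi\in st_{a_\infty}(\mathcal{L}_1)$.
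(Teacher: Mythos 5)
The $st^a(\mathcal{L}_1)$ half is fine and in essence coincides with the paper's route: you work with the generators $\omega\log^m$, note $\omega\log^m = o(\omega\log^{m+1})$, and run a direct ssc argument which is just an unwound version of the paper's appeal to Proposition~\ref{prop:4.6}(i)--(ii). Likewise, your soft-complementedness argument for $st_{a_\infty}(\mathcal{L}_1)$ (iterating Proposition~\ref{prop:4.18} and then applying Proposition~\ref{prop:5.1}(ii)) is a correct, mildly different route from the paper's, which instead invokes Proposition~\ref{prop:4.12} for Lorentz ideals.

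Where your proposal genuinely falls short is in the rest of the $st_{a_\infty}(\mathcal{L}_1)$ half, and the reason is that you never invoke the single fact that makes everything collapse: by \cite[Proposition~4.17(ii)]{10}, $st_{a_\infty}(\mathcal{L}_1) = \bigcap_{m\geq 0}\mathcal{L}(\sigma(\log^m))$, i.e.\ its characteristic set is $\{\xi\in c_o^*\mid \xi\log^m\in\ell_1 \text{ for all }m\}$. With this, soft-edgedness is a one-line observation (if $\xi\log^m\in\ell_1$ for all $m$ then $(\xi\log)\log^m\in\ell_1$ for all $m$, so multiplying by $\log$ stays in the ideal and $\xi=o(\xi\log)$), and in the non-ssc splice the membership $\eta^{(k)}\in\Sigma(\mathcal{L}(\sigma(\log^{k-2})))$ is immediate because $\eta^{(k)}\log^{k-2}\asymp\omega\log^{-2}\in\ell_1$. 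You instead work directly with iterated $a_\infty$-means, which forces you to assert without proof an ``elementary integral estimate'' showing $\eta^{(k)}\in\Sigma({}_{a_\infty^{k-1}}(\mathcal{L}_1))\setminus\Sigma(st_{a_\infty}(\mathcal{L}_1))$; that claim is plausible but is not a triviality and is not supplied.

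The serious gap is soft-edgedness, which you yourself flag as ``the main obstacle'' and leave as a plan. Your scheme requires choosing $n_k$ satisfying, simultaneously, the tail bounds $\sum_{j>n_k}\xi_{a_\infty^m,j}\leq 2^{-k}$ for all $m\leq k$, the condition $k\xi_{n_k}\downarrow 0$, and the constraint that $\log n_k$ grow at most polynomially in $k$ so that $\sum_k\sqrt{k}(\log n_k)^m 2^{-k}$ converges. You acknowledge that reconciling these is ``the hard part'' and requires a ``quantitative argument'' about tail decay, which you do not give; moreover, the polynomial requirement is stronger than what the convergence of your Fubini estimate actually needs, and the uniformity in $m$ (you are imposing tail bounds for all $m\leq k$ at once, with $m$ growing with $k$) is not addressed. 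A concrete example such as $\xi_n = 1/(n(\log n)^{\log\log n})$, which lies in $st_{a_\infty}(\mathcal{L}_1)$, already forces $\log n_k$ to grow at least like $e^{c\sqrt{k}}$, which is not polynomial; so even if the overall strategy could be salvaged with a relaxed growth condition, your argument as written does not establish feasibility. In short, the proposal is incomplete precisely at the crux, and the missing idea is the Lorentz characterization of $st_{a_\infty}(\mathcal{L}_1)$ that reduces all three claims about it to arithmetic with $\log$ weights.
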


\begin{proof}
For every natural number $m$, $(\omega)_{a^m} = (\omega_{a^m}) =
(\omega \log^m)$ is principal, hence $\Sigma(st^a(\mathcal{L}_1))$ is generated by the collection$ \{\omega\log^m\}_m.$ Since $\omega\log^m =
o(\omega \log^{m+1})$ for all $m$,  by
Proposition~\ref{prop:4.6}(i) and (ii), $st^a(\mathcal{L}_1)$ is
both soft-edged and ssc. From \cite[Proposition 4.17 (ii)]{10},
$st_{a_\infty} (\mathcal{L}_1) = \bigcap_{m=0}^{\infty}
\mathcal{L}(\sigma(\log^m))$, where using the notations of
\cite[Sections 2.1, 2.25, 4.7]{7}, $\mathcal{L}(\sigma(\log^m))$ is
the Lorentz ideal with characteristic set $\{\xi \in c_o^* \mid \xi
(\log)^m \in \ell_1\}$. Thus if $\xi \in
\Sigma(\bigcap_{m=0}^{\infty} \mathcal{L}(\sigma(\log^m)))$, then
also $\xi \log \in \Sigma(\bigcap_{m=0}^{\infty}
\mathcal{L}(\sigma(\log^m)))$ and hence $st_{a_\infty}
(\mathcal{L}_1)$ is soft-edged. By Propositions \ref{prop:4.12} and
\ref{prop:5.1}(ii), $st_{a_\infty} (\mathcal{L}_1)$ is
soft-complemented. However, $st_{a_\infty} (\mathcal{L}_1)$ is not
ssc. Indeed, set $\eta^{(k)} := \omega(\log)^{-k}$. Then $\eta^{(k)}
\not\in \Sigma(st_{a_\infty} (\mathcal{L}_1))$ for all $k$, but
$\eta^{(k)} \in \Sigma(\mathcal{L}(\sigma(\log^{k-2})))$ for each $k
\geq 2$. For any arbitrary sequence of increasing indices $n_k$, set
$n_o = 0$ and $\xi_j := (\eta^{(k)})_j$ for $n_{k-1} < j \leq n_k$.
Then $\xi_j \geq (\eta^{(k)})_j$ for $1 \leq j \leq n_k$ but also
$\xi_j \leq (\eta^{(k)})_j$ for $j \geq n_k$. Thus $\xi \in
\Sigma(\mathcal{L}(\sigma(\log^{k-2})))$ for all $k \geq 2$, hence
$\xi \in \Sigma(st_{a_\infty} (\mathcal{L}_1))$ which shows that
$st_{a_\infty} (\mathcal{L}_1)$ is not ssc.
\end{proof}

Now consider finite sums of ideals. Clearly, $K(H) (I + J) = K(H) I
+ K(H) J$, i.e., $\se (I + J) = \se I + \se J$ and hence finite sums
of soft-edged ideals are soft-edged.

The situation is far less simple for the soft-cover of a finite sum
of ideals. 
The inclusion $\scop (I + J) \supset \scop I + \scop J$
is trivial, but so far we are unable to determine whether or not
equality holds in general or, equivalently, whether or not the sum
of two soft-complemented ideals is always soft-complemented. We also
do not know if the sum of two ssc ideals is always
soft-complemented. However, the following lemma permits us to settle
the latter question in the affirmative when one of the ideals is
countably generated. Recall that if $0 \leq \lambda \in  c_o$, then
$\lambda^*$ denotes the decreasing rearrangement of $\lambda$.

\begin{lemma}\label{lem:5.6}
For all ideals $I$, $J$ and sequences $\xi \in c_o^*$: 

$\xi  \in \Sigma(I + J)$ \quad if and only if \quad $(\max((\xi - \rho), 0))^* \in \Sigma(I)$ for some $\rho \in \Sigma(J)$.
\end{lemma}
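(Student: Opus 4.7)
The plan is to argue by pointwise manipulation in both directions, using the characterization that a sequence $\xi \in c_o^*$ lies in $\Sigma(I+J)$ if and only if $\xi \leq \eta + \mu$ for some $\eta \in \Sigma(I)$ and $\mu \in \Sigma(J)$. This characterization in turn follows from the fact that the smallest characteristic set containing $\Sigma(I)\cup\Sigma(J)$ is closed under sums and ampliations (ampliations being absorbed since each $\Sigma(I),\Sigma(J)$ is already ampliation-invariant), together with the identity $S(I+J) = S(I) + S(J)$ recalled in the paragraph preceding Theorem~\ref{thm:2.5}.

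For the forward direction, I would take $\xi \in \Sigma(I+J)$ and choose $\eta \in \Sigma(I)$, $\mu \in \Sigma(J)$ with $\xi \leq \eta + \mu$ pointwise. Setting $\rho := \mu \in \Sigma(J)$, I get $\max(\xi-\rho,0) \leq \eta$ pointwise, and then I invoke the elementary monotonicity of decreasing rearrangement for nonnegative sequences (namely $0 \leq a \leq b \Rightarrow a^* \leq b^*$, as already used in the proof of Theorem~\ref{thm:3.2}) to conclude $(\max(\xi-\rho,0))^* \leq \eta^* = \eta \in \Sigma(I)$. Heredity of $\Sigma(I)$ then gives $(\max(\xi-\rho,0))^* \in \Sigma(I)$.

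For the converse, assume $\rho \in \Sigma(J)$ and $\alpha := \max(\xi-\rho,0)$ satisfies $\alpha^* \in \Sigma(I)$. Since $\alpha \geq 0$ and $\alpha \in c_o$ (both $\xi$ and $\rho$ being in $c_o$), the condition $\alpha^* \in \Sigma(I)$ amounts to $\alpha \in S(I)$. The trivial pointwise inequality $\xi \leq \max(\xi-\rho,0) + \rho = \alpha + \rho$ then places $\xi$ below an element of $S(I) + S(J) = S(I+J)$; taking decreasing rearrangements and using $\xi = \xi^*$ together with heredity of $\Sigma(I+J)$ yields $\xi \in \Sigma(I+J)$.

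I do not expect any real obstacle: the result is essentially a bookkeeping lemma that encodes the splitting $\xi \leq \eta + \mu$ into the cleaner asymmetric form ``subtract off the $J$-part first, then rearrange.'' The only place to be slightly careful is verifying that $\Sigma(I+J)$ does admit the pointwise majorization description above, which is a short unwinding of the definition of the characteristic set generated by $\Sigma(I)\cup\Sigma(J)$.
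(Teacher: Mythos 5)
Your proof is correct, and your forward direction coincides with the paper's (same decomposition, same appeal to the monotonicity $0\le\nu\le\mu\Rightarrow\nu^*\le\mu^*$). The converse, however, takes a slightly different route. The paper bounds
\[
\xi=\xi^*\le\bigl(\max(\xi-\rho,0)+\rho\bigr)^*\le D_2\bigl(\max(\xi-\rho,0)\bigr)^*+D_2\rho
\]
via the Fan-type rearrangement inequality $(\rho+\mu)^*\le D_2\rho^*+D_2\mu^*$ and then notes the right-hand side lies in $\Sigma(I+J)$. You instead observe that $\max(\xi-\rho,0)+\rho$ lies in $S(I)+S(J)=S(I+J)$ and finish by monotonicity of the decreasing rearrangement and heredity of $\Sigma(I+J)$. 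Both arguments ultimately rest on the same underlying fact, namely that a sum of sequences from $S(I)$ and $S(J)$ belongs to $S(I+J)$: at the operator level this is immediate from $\diag(\alpha+\rho)=\diag\alpha+\diag\rho$, which is why the paper records the identity $S(I+J)=S(I)+S(J)$ as ``immediate'' just before Theorem~\ref{thm:2.5}. Your version is a little cleaner because it packages that fact behind the already-available identity and avoids citing Fan's theorem; the paper's version is more self-contained at the sequence level at the price of the explicit $D_2$-estimate.
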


\begin{proof}
If $\xi  \in  \Sigma(I + J)$, then $\xi  \leq \zeta + \rho$ for some
$\zeta \in \Sigma(I)$ and $\rho \in  \Sigma(J)$. (Actually, one can
choose $\zeta$ and $\rho$ so that $\xi = \zeta + \rho$ but equality
is not needed here.) Thus $\xi - \rho \leq \zeta$, and so $\max((\xi
- \rho), 0) \leq \zeta$. But then, by the elementary fact that if
for two sequence $0 \leq \nu \leq \mu$, then $\nu^* \leq \mu^*$, it
follows that $\max((\xi - \rho), 0)^* \leq \zeta^* = \zeta$ and
hence $(\max((\xi - \rho), 0))^* \in  \Sigma(I)$. Conversely, assume
that $(\max((\xi  - \rho), 0))^* \in \Sigma(I)$ for some $\rho \in
\Sigma(J)$. Since $0 \leq \xi \leq \max((\xi - \rho), 0) + \rho$,
\[
\xi = \xi^* \leq (\max((\xi  - \rho), 0) + \rho)^* \leq D_2(\max((\xi
- \rho), 0)^*) + D_2\rho \in \Sigma(I + J),
\]
 where the second
inequality, follows from the fact that $(\rho + \mu )^* \leq
D_2\rho^* + D_2\mu^*$ for any two non-negative sequences $\rho$ and
$\mu$, which fact is likely to be previously known but is also the
commutative case of a theorem of K. Fan \cite[II Corollary 2.2,
Equation (2.12)]{8}.
\end{proof}

\begin{theorem}\label{thm:5.7}
The sum $I + J$ of an ssc ideal $I$ and a countably generated ideal
$J$ is ssc and hence soft-complemented.
\end{theorem}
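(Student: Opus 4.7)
The plan is to reduce the problem to the ssc property of $I$ via Lemma \ref{lem:5.6}, using the countable generation of $J$ to replace the uncountable family $\Sigma(J)$ by a countable dominating sequence. As in Proposition \ref{prop:4.6}(i), I would first choose generators $\rho^{(p)}$ of $\Sigma(J)$ with $\rho^{(p)} \leq \rho^{(p+1)}$ such that every $\rho \in \Sigma(J)$ satisfies $\rho = O(\rho^{(p)})$ for some $p$, and set $\tau^{(q)} := q\rho^{(q)}$. This sequence is increasing in $q$, lies in $\Sigma(J)$, and pointwise dominates every element of $\Sigma(J)$ (for if $\rho \leq M\rho^{(p)}$ and $q \geq \max(M,p)$, then $\tau^{(q)} \geq q\rho^{(p)} \geq M\rho^{(p)} \geq \rho$). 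Given $\{\eta^{(k)}\} \subset c_o^* \setminus \Sigma(I+J)$, Lemma \ref{lem:5.6} yields $\mu^{(k,q)} := (\max(\eta^{(k)} - \tau^{(q)}, 0))^* \notin \Sigma(I)$ for every pair $(k,q)$.

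For each fixed $q$, I would apply the ssc property of $I$ to the countable subcollection $\{\mu^{(k,q)}\}_{k \geq q}$ to obtain indices $N^{(q)}_k$ (for $k \geq q$) such that any $\zeta \in c_o^*$ with $\zeta_i \geq \mu^{(k,q)}_i$ for $i \leq N^{(q)}_k$ and all $k \geq q$ must satisfy $\zeta \notin \Sigma(I)$. Since $\mu^{(k,q)} \notin \Sigma(I)$ forces infinite support, $\mu^{(k,q)}_{N^{(q)}_k} > 0$, so setting $T^{(q)}_k := \max\{j : \max(\eta^{(k)} - \tau^{(q)}, 0)_j \geq \mu^{(k,q)}_{N^{(q)}_k}\}$ defines a finite integer (because $\max(\eta^{(k)} - \tau^{(q)}, 0) \in c_o$). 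By construction, $(\max(\eta^{(k)} - \tau^{(q)}, 0)\chi_{[1, T^{(q)}_k]})^*_i = \mu^{(k,q)}_i$ for $i \leq N^{(q)}_k$. The desired witness is then $n_k := \max\{T^{(q)}_k : q \leq k\}$, a finite maximum over a finite set.

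To verify the ssc condition for $I + J$, let $\xi \in c_o^*$ with $\xi_i \geq \eta^{(k)}_i$ for $i \leq n_k$ and all $k$. By Lemma \ref{lem:5.6} and the heredity of $\Sigma(I)$, it suffices to show $(\max(\xi - \tau^{(q)}, 0))^* \notin \Sigma(I)$ for every $q$, since every $\rho \in \Sigma(J)$ is dominated by some $\tau^{(q)}$. Fixing $q$, for each $k \geq q$ we have $n_k \geq T^{(q)}_k$, so the pointwise inequality $\max(\xi - \tau^{(q)}, 0) \geq \max(\eta^{(k)} - \tau^{(q)}, 0)\chi_{[1, T^{(q)}_k]}$ yields, after decreasing rearrangement, $(\max(\xi - \tau^{(q)}, 0))^*_i \geq \mu^{(k,q)}_i$ for $i \leq N^{(q)}_k$. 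As this holds for every $k \geq q$, the ssc property of $I$ applied to $\{\mu^{(k,q)}\}_{k \geq q}$ delivers $(\max(\xi - \tau^{(q)}, 0))^* \notin \Sigma(I)$.

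The main obstacle is the mismatch between the pointwise inequality available on the initial segment $[1, n_k]$ (what the hypothesis on $\xi$ supplies) and the decreasing-rearrangement inequality required on $[1, N^{(q)}_k]$ (what ssc of $I$ consumes). I resolve this by taking $n_k$ large enough to contain every index at which $\max(\eta^{(k)} - \tau^{(q)}, 0)$ reaches the threshold $\mu^{(k,q)}_{N^{(q)}_k}$, for each $q \leq k$. The essential technical move is applying ssc of $I$ separately to the tail subcollection $\{\mu^{(k,q)}\}_{k \geq q}$ for each $q$, so that $n_k$ — which must depend only on $k$ — is a finite maximum over the finitely many $q \leq k$ rather than an unbounded supremum over all $q$.
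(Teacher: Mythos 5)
Your proof is correct and follows the same overall strategy as the paper: replace $\Sigma(J)$ by a countable pointwise-dominating increasing sequence, reduce to $\Sigma(I)$ via Lemma~\ref{lem:5.6}, and feed decreasing rearrangements of truncated differences into the ssc property of $I$. The one genuine structural difference is how you invoke ssc. The paper works with the single diagonal family $\mu^{(k)} := (\max(\eta^{(k)}-\rho^{(k)},0))^*$, applies ssc once to get one index sequence $n_k$, and then, for a fixed $m$, derives the pointwise domination $(\max(\xi-\rho^{(m)},0))^*_j \geq \mu^{(k)}_j$ only for $k\geq m$, $j\leq n_k$, before concluding $(\max(\xi-\rho^{(m)},0))^*\notin\Sigma(I)$. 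Strictly as written that last step is not forced by Definition~\ref{def:4.4}, which demands the domination for \emph{all} $k$; for $k<m$ the inequality $\rho^{(k)}\leq\rho^{(m)}$ goes the wrong way and one cannot compare $\xi-\rho^{(m)}$ with $\eta^{(k)}-\rho^{(k)}$. You avoid this entirely by forming the two-parameter family $\mu^{(k,q)}=(\max(\eta^{(k)}-\tau^{(q)},0))^*$, applying ssc separately to each tail $\{\mu^{(k,q)}\}_{k\geq q}$, and then defining $n_k$ as a finite maximum of $T^{(q)}_k$ over $q\leq k$. This makes the ssc hypothesis hold exactly where you need it (for the collection indexed by $k\geq q$, with target $\tau^{(q)}$), at the modest cost of bookkeeping one extra index. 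Your translation back from rearranged to original positions via the threshold index $T^{(q)}_k$ plays the same role as the paper's monotonizing injection $\pi_k$, and the verification that $(\max(\eta^{(k)}-\tau^{(q)},0)\chi_{[1,T^{(q)}_k]})^*$ agrees with $\mu^{(k,q)}$ on $[1,N^{(q)}_k]$ is sound. In short: same route, but your per-$q$ tail applications of ssc rigorously close a step that the paper's single diagonal application leaves implicit.
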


\begin{proof}
As in the proof of Lemma~\ref{lem:2.8} there is an increasing
sequence of generators $\rho^{(k)} \leq \rho^{(k+1)}$ for the
characteristic set $\Sigma(J)$ such that $\mu  \in  \Sigma(J)$ if
and only if \linebreak $\mu  = O(\rho^{(m)})$ for some integer $m$. By passing
if necessary to the sequences $k\rho^{(k)}$, we can further assume
that $\mu  \in  \Sigma(J)$ if and only if $\mu  \leq \rho^{(m)}$ for
some integer $m$. Let $\{\eta^{(k)}\} \subset  c_o^* \setminus
\Sigma(I + J)$. By Lemma~\ref{lem:5.6}, for each $k$,
$(\max((\eta^{(k)} - \rho^{(k)}), 0))^* \not\in \Sigma(I)$ so, in
particular, $\eta_i^{(k)} > \rho_i^{(k)}$ for infinitely many
indices $i$. Let $\pi_k : \mathbb{N} \rightarrow \mathbb{N}$ be a
monotonizing injection for $\max((\eta^{(k)} - \rho^{(k)}), 0)$,
i.e., for all $i \in  \mathbb{N}$,
\[
\Big(\max\Big(\Big(\eta^{(k)} - \rho^{(k)}\Big), 0\Big)\Big)_i^* \!=\!
\Big(\max\Big(\Big(\eta^{(k)} - \rho^{(k)}\Big),
0\Big)\Big)_{\pi_k(i)} \!=\! \Big(\eta^{(k)} -
\rho^{(k)}\Big)_{\pi_k(i)} \!>\! 0.
\]

Since $I$ is ssc, there is a strictly increasing sequence of indices
$n_k \in \mathbb{N}$ such that if $\zeta \in c_o^*$ and $\zeta_i
\geq (\max((\eta^{(k)} - \rho^{(k)}), 0))_i^*$ for all $1\leq i \leq
n_k$, then $\zeta \not\in \Sigma(I)$. Choose integers $N_k \geq \max
\{\pi_k(i) \mid 1 \leq i \leq n_k\}$ so that $N_k$ is increasing. We
claim that if $\xi \in c_o^*$ and $\xi_i \geq \eta_i^{(k)}$ for all
$1 \leq i \leq N_k$ and all $k$, then $\xi  \not\in \Sigma(I + J)$,
which would conclude the proof. Indeed, for any given $m \in
\mathbb{N}$ and for each $k \geq m$, $1 \leq j \leq n_k$ and $1 \leq
i \leq j$, it follows that $\pi_k(i) \leq N_k$ and hence
\begin{align*}
\Big(\xi  - \rho^{(m)}\Big)_{\pi_k(i)} &\geq \Big(\eta^{(k)} -
\rho^{(k)}\Big)_{\pi_k(i)} = \Big(\max\Big(\Big(\eta^{(k)} -
\rho^{(k)}\Big), 0\Big)\Big)_i^*\\ &
\geq \Big(\max\Big(\Big(\eta^{(k)}
- \rho^{(k)}\Big), 0\Big)\Big)_j^*.
\end{align*}

Thus there are at least $j$ values of $(\xi - \rho^{(m)})_n$ that
are greater than or equal to $(\max((\eta^{(k)} - \rho^{(k)}),
0))_j^*$ and hence $(\max((\xi  - \rho^{(m)}), 0))_j^* \geq
(\max((\eta^{(k)} - \rho^{(k)}), 0))_j^*$. By the defining property
of the sequence $\{n_k\}$, $(\max((\xi  - \rho^{(m)}), 0))^* \not\in
\Sigma(I)$ for every $m$. But then, for any $\mu \in \Sigma(J)$
there is an $m$ such that $\mu  \leq \rho^{(m)}$ so that $(\max((\xi
- \mu ), 0))^* \geq (\max((\xi - \rho^{(m)}), 0))^*$ and hence
$(\max((\xi - \mu ), 0))^* \not\in \Sigma(I)$. 
By Lemma~\ref{lem:5.6}, it follows that $\xi \not\in \Sigma(I + J)$,
which concludes the proof of the claim and thus of the theorem.
\end{proof}

\section{Arithmetic Means and Soft Ideals}\label{sec:6}

The proofs of the main results in \cite[Theorems 7.1 and 7.2]{10}
depend in a crucial way on some of the commutation relations between
the se and sc operations and the pre and post-arithmetic means and
pre and post arithmetic means at infinity operations. In this
section we shall investigate these relations. We start with the
arithmetic mean and for completeness, we list the relations already
obtained in \cite[Lemma~3.3]{10} as parts (i)--(ii$'$) of the next
theorem.

\begin{theorem}\label{thm:6.1}
Let $I$ be an ideal.

\item[(i)] $\scop \text{$_a$}I \subset \text{$_a$}(\scop I)$

\item[(i$'$)] $\scop \text{$_a$}I = \text{$_a$}(\scop I) \text{ if and only if }
\omega \not\in \Sigma(\scop I) \setminus \Sigma(I)$

\item[(ii)] $\se I_a \subset (\se I)_a$

\item[(ii$'$)] $\se I_a = (\se I)_a\text{ if and only if }I = \{0\} \text{ or }I
\not\subset \mathcal{L}_1$

\item[(iii)] $\scop I_a \supset (\scop I)_a$

\item[(iv)] $\se \text{$_a$}I \supset \text{$_a$}(\se I)$

\item[(iv$'$)] $\se \text{$_a$}I = \text{$_a$}(\se I) \text{ if and only if }
\omega \not\in \Sigma(I) \setminus \Sigma(\se I)$.

\end{theorem}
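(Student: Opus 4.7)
My plan is to observe first that parts (i)--(ii$'$) are cited as \cite[Lemma~3.3]{10}, so I only need to address (iii), (iv), and (iv$'$).

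For (iii), the whole proof rests on the pointwise inequality $\alpha\eta_a\le(\alpha\eta)_a$ valid for any $\alpha,\eta\in c_o^*$; this is immediate because $\alpha$ is nonincreasing and hence $\alpha_i\ge\alpha_n$ for $i\le n$, so $\sum_{i\le n}\alpha_i\eta_i\ge\alpha_n\sum_{i\le n}\eta_i$. Given $\xi\in\Sigma((\scop I)_a)$, write $\xi\le\eta_a$ with $\eta\in\Sigma(\scop I)$. For every $\alpha\in c_o^*$ we then have $\alpha\xi\le\alpha\eta_a\le(\alpha\eta)_a$, and since by definition $\alpha\eta\in\Sigma(I)$, we get $(\alpha\eta)_a\in\Sigma(I_a)$; hence $\alpha\xi\in\Sigma(I_a)$ for every $\alpha\in c_o^*$, which means $\xi\in\Sigma(\scop I_a)$.

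Part (iv) is the delicate one and is the main obstacle. Given $\xi\in\Sigma({}_a(\se I))$, the remark after Definition~\ref{def:4.1} lets me write $\xi_a=o(\eta)$ with $\eta\in\Sigma(I)$, say $\xi_a\le\alpha\eta$ with $\alpha\in c_o^*$. The goal is to manufacture $\mu\in c_o^*$ with $\mu_a\in\Sigma(I)$ (so $\mu\in\Sigma({}_aI)$) and $\xi=o(\mu)$. My plan is a blockwise construction: choose strictly increasing $m_k$ with $\alpha_{m_{k-1}+1}\le 4^{-(k-1)}$, define $\beta$ to be the step function equal to $2^{-k}$ on $(m_{k-1},m_k]$ (so $\beta\in c_o^*$ and $\alpha\le 4\beta^2$), and set $\mu:=\beta\eta$. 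Then $\mu\in c_o^*$, $\mu\le\eta\in\Sigma(I)$, and $\xi\le\xi_a\le 4\beta^2\eta=4\beta\mu$ gives $\xi=o(\mu)$. The hard part is arranging $\mu_a\in\Sigma(I)$: an unoptimized block choice only yields $\mu_a\le C\eta_a\in\Sigma(I_a)$, which is not enough. To beat this, I will refine the construction by selecting $m_k$ so that $\sum_{i\in(m_{k-1},m_k]}\eta_i\le C(m_k-m_{k-1})\eta_{m_k}$ on each block (possible because $\eta\in c_o^*$ — one can thin the partition until $\eta$ is comparable to its endpoint value on each block); this, combined with Lemma~\ref{lem:6.3} applied to $\xi_a=o(\eta)$ to pass to a slightly thinned $\gamma\xi_a\in c_o^*$ with $\gamma\xi_a\le\eta$, will give $\mu_a\le C\eta$. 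I anticipate that the bookkeeping of this block estimate is the technical bottleneck.

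For (iv$'$), I would combine (iv) with the reverse inclusion $\se({}_aI)\subset{}_a(\se I)$ and show that the reverse inclusion holds exactly under the stated condition. Take $\xi\in\Sigma(\se({}_aI))$, so $\xi\le\epsilon\mu$ with $\epsilon\in c_o^*$ and $\mu_a\in\Sigma(I)$; the monotonicity of $\mu_a$ combined with the bound $\xi_a(n)\le\frac1n\sum\epsilon_i\mu_i$ and a standard $\epsilon$-$N$ split yields $\xi_a(n)\le C_N/n+\delta\,\mu_a(n)$ for arbitrarily small $\delta$. Hence if $\mu\notin\ell_1$ (so $n\mu_a(n)\to\infty$) we get $\xi_a=o(\mu_a)$, placing $\xi_a\in\se I$. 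In the remaining case $\mu\in\ell_1$, one has $\mu_a\asymp\omega$, so $\omega\in\Sigma(I)$, and then $\xi_a=O(\omega)$; the conclusion $\xi_a\in\Sigma(\se I)$ holds iff $\omega\in\Sigma(\se I)$, i.e., precisely when $\omega\notin\Sigma(I)\setminus\Sigma(\se I)$. Conversely, if $\omega\in\Sigma(I)\setminus\Sigma(\se I)$, taking $\mu_n=1/n^2$ (so $\mu_a\asymp\omega\in\Sigma(I)$) and any $\xi=o(\mu)$ with $\xi_a\asymp\omega$ produces $\xi\in\Sigma(\se({}_aI))\setminus\Sigma({}_a(\se I))$, so the condition is necessary as well.
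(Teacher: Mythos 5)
Your proof of (iii) matches the paper's argument essentially verbatim, and your analysis of (iv$'$) is correct although it routes through a different case split (summability of the auxiliary sequence $\mu$, rather than the paper's three-way alternative in terms of whether $\omega$ lies in $\Sigma(I)$, $\Sigma(\se I)$, or neither, via Lemma~\ref{lem:6.2}); both approaches land in the same place.

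The real problem is (iv). Your blockwise construction of $\mu=\beta\eta$ imposes two competing constraints on the block endpoints $m_k$: the requirement $\alpha_{m_{k-1}+1}\le 4^{-(k-1)}$ forces the blocks to become long whenever $\alpha$ decays slowly, while the requirement $\sum_{(m_{k-1},m_k]}\eta_i\le C(m_k-m_{k-1})\eta_{m_k}$ requires $\eta$ to be comparable to $\eta_{m_k}$ across the whole block, which caps the block length whenever $\eta$ decays quickly. You cannot in general satisfy both simultaneously, and your parenthetical remark that one can ``thin the partition until $\eta$ is comparable to its endpoint value'' points in the wrong direction: thinning makes blocks \emph{shorter}, which helps the $\eta$ constraint but wrecks the $\alpha$ constraint. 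The patch you suggest --- applying Lemma~\ref{lem:6.3} to $\xi_a=o(\eta)$ --- is also misplaced: Lemma~\ref{lem:6.3} is applied in the paper to the sequence $\xi$ itself, not to $\xi_a$.

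The clean route the paper takes avoids constructing a new $\mu$ altogether. From $\xi_a\le\alpha\eta$ one has, by monotonicity of $\alpha$, that $(\tfrac{1}{\alpha}\xi)_a\le\tfrac{1}{\alpha}\xi_a\le\eta$. Apply Lemma~\ref{lem:6.3} to $\xi$ with $\beta:=1/\alpha\uparrow\infty$: this yields $\gamma\uparrow\infty$ with $\gamma\le 1/\alpha$ and $\gamma\xi$ monotone nonincreasing. Then $(\gamma\xi)_a\le(\tfrac{1}{\alpha}\xi)_a\le\eta\in\Sigma(I)$, so $\gamma\xi\in\Sigma({}_aI)$, and since $1/\gamma\in c_o^*$ the identity $\xi=\tfrac{1}{\gamma}(\gamma\xi)$ exhibits $\xi\in\Sigma(\se\,{}_aI)$ directly. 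In short: $\gamma\xi$ itself is the witness; there is no need to assemble a $\mu$ from blocks of $\eta$, and once you see that, the delicate block bookkeeping you flagged as the bottleneck simply evaporates.
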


The ``missing'' reverse inclusion of (iii) will be explored in
Proposition~\ref{prop:6.8}. The proof of parts (iii)--(iv$'$) of
Theorem~\ref{thm:6.1} depend on the following two lemmas.

\begin{lemma}\label{lem:6.2}\

\item[(i)] $F_a = (\mathcal{L}_1)_a = (\omega)$ and $\text{$_a$}(\omega) = \mathcal{L}_1$\\ 
Consequently $(\omega)$ and $\mathcal{L}_1$ are, respectively, the smallest nonzero am-open ideal and the smallest
nonzero am-closed ideal.

\item[(ii)] $\{0\} = \text{$_a$}I$ if and only if $\mathcal{L}_1 \not\subset
\text{$_a$}I$ if and only if $\omega \not\in \Sigma(I)$

\item[(iii)] $\mathcal{L}_1 = \text{$_a$}I$ if and only if $\omega  \in
\Sigma(I) \setminus \Sigma(\se I)$

\item[(iv)] $\mathcal{L}_1 \subsetneqq \text{$_a$}I$ if and only if $\omega \in
\Sigma(\se I)$

\end{lemma}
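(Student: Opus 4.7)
For (i) the plan is to compute the three arithmetic means directly and deduce the consequences. For any finite-rank $\eta$ supported in $\{1,\ldots,N\}$ with total mass $S$, one has $(\eta_a)_n = S\omega_n$ once $n \geq N$ and $(\eta_a)_n$ is bounded for $n < N$, so $\eta_a \asymp \omega$; and for $0 \neq \eta \in \ell_1^*$ of norm $S$ one has $\eta_1\omega_n \leq (\eta_a)_n \leq S\omega_n$, so again $\eta_a \asymp \omega$. This yields $F_a = (\mathcal{L}_1)_a = (\omega)$. The identity $_a(\omega) = \mathcal{L}_1$ is immediate since $\xi \in \Sigma(_a(\omega))$ iff $n(\xi_a)_n = \sum_{i\leq n}\xi_i$ is uniformly bounded iff $\xi \in \ell_1^*$. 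Minimality follows at once: any nonzero am-open ideal $I = J_a$ has $J \neq \{0\}$, so $J \supset F$ (the smallest nonzero ideal of $B(H)$), and hence $I \supset F_a = (\omega)$; any nonzero am-closed ideal $I = {_a}(I_a)$ contains some $\xi \neq 0$, whence $\xi_1\omega \leq \xi_a \in \Sigma(I_a)$, giving $(\omega) \subset I_a$ and $\mathcal{L}_1 = {_a}(\omega) \subset I$.

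For (ii), since $_aI$ is always am-closed (being a pre-arithmetic mean), (i) supplies the trichotomy $_aI = \{0\}$ or $_aI \supset \mathcal{L}_1$, which gives the first equivalence. The remaining equivalence reduces to $\omega \in \Sigma(I) \Leftrightarrow \mathcal{L}_1 \subset {_a}I$: the forward direction is monotonicity of $_a(\cdot)$ combined with $_a(\omega) = \mathcal{L}_1$, and the reverse direction follows by applying the containment to $\langle 1, 0, 0, \ldots\rangle \in \Sigma(\mathcal{L}_1)$, whose arithmetic mean is exactly $\omega$.

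I would then prove (iv); (iii) will follow immediately by combining it with (ii), since $_aI = \mathcal{L}_1$ is equivalent to $\mathcal{L}_1 \subset {_a}I$ without strict containment. One direction of (iv) is routine: for any $\xi \in \Sigma(_aI) \setminus \ell_1^*$ one has $n(\xi_a)_n \to \infty$, so $\omega = o(\xi_a) \in \Sigma(I)$ and hence $\omega \in \Sigma(\se I)$. The substantive step is the converse: given $\omega = o(\eta)$ for some $\eta \in \Sigma(I)$, I set $\mu := \omega\,\lnd(\eta/\omega)$. As in the proof of Lemma~\ref{lem:2.14}(i), $\mu \in c_o^*$ and $\mu \leq \eta \in \Sigma(I)$; since $\eta/\omega \to \infty$, the quotient $\mu/\omega = \lnd(\eta/\omega)$ is monotone nondecreasing with $\mu/\omega \to \infty$, so $\omega = o(\mu)$. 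Lemma~\ref{lem:2.13}(iii)$\Rightarrow$(ii) then produces $\nu \in c_o^*$ with $(\mu) = (\nu_a)$, so $\nu_a \asymp \mu$ and hence $\nu \in \Sigma(_aI)$. If $\nu$ were summable, the bound $\nu_1\omega \leq \nu_a \leq \|\nu\|_1\omega$ would force $\mu \asymp \omega$, contradicting $\omega = o(\mu)$. Thus $\nu \notin \ell_1^*$ and $\mathcal{L}_1 \subsetneqq {_a}I$. The main obstacle is precisely this last construction: turning softness of $I$ at $\omega$ into a genuinely non-summable element of $_aI$, which the monotonization $\omega\,\lnd(\eta/\omega)$ combined with Lemma~\ref{lem:2.13} accomplishes.
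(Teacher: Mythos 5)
Your proposal is correct and, for the substantive direction, follows the same mechanism as the paper: in the implication $\omega\in\Sigma(\se I)\Rightarrow\mathcal{L}_1\subsetneqq{}_aI$, both arguments form $\mu:=\omega\lnd(\eta/\omega)$, observe $\omega=o(\mu)$, and then extract a non-summable element of $\Sigma({}_aI)$ majorized (in the $O$-sense) by $\mu$. The paper routes this through Lemma~\ref{lem:2.14}(i) (recognizing $\mu$ as a generator of $(\eta)^o\subset I^o=({}_aI)_a$, so $\mu\le\rho_a$ for some $\rho\in\Sigma({}_aI)$), while you invoke Lemma~\ref{lem:2.13}(iii)$\Rightarrow$(ii) directly to produce $\nu$ with $(\mu)=(\nu_a)$, $\nu_a\asymp\mu$; these are essentially the same construction viewed slightly differently. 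Where you genuinely diverge from the paper is in the forward implication of (iv): the paper deduces $(\omega)\subset\se I$ by combining the soft-edgedness of $\mathcal{L}_1$ (Proposition~\ref{prop:4.12}) with Theorem~\ref{thm:6.1}(ii$'$), a commutation identity imported from \cite{10}, whereas you argue directly that any $\xi\in\Sigma({}_aI)\setminus\ell_1^*$ satisfies $\omega=o(\xi_a)$ with $\xi_a\in\Sigma(I)$, hence $\omega\in\Sigma(\se I)$. Your version is more elementary and avoids the external dependency, which is arguably preferable here since Lemma~\ref{lem:6.2} is itself used to prove the remaining parts of Theorem~\ref{thm:6.1}. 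Parts (i)--(iii) are handled the same way in both accounts (explicit verification that $\eta_a\asymp\omega$ for $0\neq\eta\in\ell_1^*$ versus $\omega=o(\eta_a)$ otherwise, minimality from $F_a=(\omega)$ and ${}_a(\omega)=\mathcal{L}_1$, and (iii) as a corollary of (ii) and (iv)).
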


\begin{proof} Notice that $\eta_a \asymp \omega$ for every $0
\neq \eta \in \ell_1^*$  and that $\omega = o(\eta_a)$ for every
$\eta \not\in \ell_1^*$. Thus (ii) and the equalities in (i) follow
directly from the definitions. Recall from the paragraphs preceding
Lemma~\ref{lem:2.1} that an ideal is am-open (resp., am-closed) if
and only if it is the arithmetic mean of an ideal, in which case if
it is nonzero, it contains $F_a = (\omega)$ (resp., if and only if
it is the prearithmetic mean of an ideal, in which case by (ii), it
contains $\mathcal{L}_1$). Thus the minimality of $(\omega)$ (resp.,
$\mathcal{L}_1$) are established. (iii) follows immediately from
(ii) and (iv).

(iv) Assume first that $\mathcal{L}_1 \subsetneqq \text{$_a$}I$.
Then $\mathcal{L}_1 \subset  \se \text{$_a$}I$ since $\mathcal{L}_1$
is soft-edged (Proposition~\ref{prop:4.12}) and hence by (i),
\[
(\omega) = (\mathcal{L}_1)_a \subset  (\se\text{$_a$}I)_a = \se
((_aI)_a) = \se I^o \subset  \se I
\]
 where the second equality
follows from Theorem~\ref{thm:6.1}(ii$'$) applied to
$\text{$_a$}I$ which is not contained in $\mathcal{L}_1$. Conversely, assume that $\omega \in  \Sigma(\se I)$,
i.e., $\omega = o(\eta)$ for some $\eta \in  \Sigma(I)$. Then
$\mathcal{L}_1 \subset \text{$_a$}I$ by (ii). It follows directly
from the definition of $\lnd$ (see paragraph preceding
Lemma~\ref{lem:2.14}) that $\omega = o(\omega
\lnd\frac{\eta}{\omega})$. By Lemma~\ref{lem:2.14}(i),
$\omega\lnd\frac{\eta}{\omega}  \in  \Sigma(I^o)$, i.e.,
$\omega\lnd\frac{\eta}{\omega} \leq \rho_a \in  \Sigma(I)$ for some
$\rho \in \Sigma(_aI)$. But $\rho\not\in \ell_1^*$  since $\omega =
o(\rho_a)$ and hence $\mathcal{L}_1 \neq \text{$_a$}I$.
\end{proof}

\begin{lemma}\label{lem:6.3}
For $\eta \in  c_o^*$ and $0 < \beta \rightarrow \infty$, 
there is a sequence $0 < \gamma   \leq \beta$ with $\gamma \uparrow \infty$ for which $\gamma\eta$ is monotone nonincreasing.
\end{lemma}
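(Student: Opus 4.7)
The plan is first to reduce to the case where $\beta$ is itself monotone nondecreasing. In the notation of Section~2, the envelope $\lnd \beta = \langle \inf_{m \geq n}\beta_m\rangle$ is monotone nondecreasing, tends to $\infty$ (since $\beta_m \to \infty$ forces all but finitely many $\beta_m$ to exceed any prescribed $L$), is everywhere positive (because only finitely many $\beta_m$ are $\leq 1$ and each $\beta_m > 0$), and satisfies $\lnd\beta \leq \beta$. Any $\gamma \leq \lnd\beta$ automatically satisfies $\gamma \leq \beta$, so I assume from the outset that $\beta$ is monotone nondecreasing.

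Assuming first that $\eta_n > 0$ for every $n$, I would define $\gamma$ inductively by $\gamma_1 := \beta_1$ and
\[
\gamma_n := \min\Big\{\frac{\gamma_{n-1}\eta_{n-1}}{\eta_n},\ \beta_n\Big\} \qquad (n \geq 2).
\]
The bound $\gamma_n \leq \beta_n$ is built in. Both quantities inside the minimum are at least $\gamma_{n-1}$: the first because $\eta_{n-1}/\eta_n \geq 1$, the second because $\beta$ is nondecreasing and $\gamma_{n-1} \leq \beta_{n-1} \leq \beta_n$ (the inequality $\gamma_{n-1} \leq \beta_{n-1}$ being preserved by the induction). Hence $\gamma$ is positive and monotone nondecreasing, while $\gamma_n\eta_n \leq (\gamma_{n-1}\eta_{n-1}/\eta_n)\eta_n = \gamma_{n-1}\eta_{n-1}$ shows $\gamma\eta$ to be monotone nonincreasing.

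The main obstacle is showing that $\gamma_n \to \infty$, which I would settle by a dichotomy. If $\gamma_n = \beta_n$ for infinitely many $n$, the conclusion follows from the monotonicity of $\gamma$ together with $\beta_n \to \infty$. Otherwise, from some index $N$ onward one has $\gamma_n = \gamma_{n-1}\eta_{n-1}/\eta_n$, so the quantity $\gamma_n\eta_n$ is constant for $n \geq N$ and equal to the positive value $c := \gamma_{N-1}\eta_{N-1}$; then $\gamma_n = c/\eta_n \to \infty$ because $\eta_n \to 0$.

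Finally, it remains to handle $\eta$ with only finitely many nonzero entries, say $\eta_n = 0$ for all $n \geq N$. I would carry out the inductive construction as above for $n < N$ (where every $\eta_n > 0$) and then set $\gamma_n := \beta_n$ for $n \geq N$. Monotonicity across $n = N$ holds because $\gamma_{N-1} \leq \beta_{N-1} \leq \beta_N = \gamma_N$, the product $\gamma\eta$ vanishes and is therefore trivially nonincreasing on $n \geq N$, and $\gamma_n = \beta_n \to \infty$ provides the remaining requirement.
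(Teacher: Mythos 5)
Your proof is correct and is essentially identical to the paper's: both replace $\beta$ by $\lnd\beta$ to assume it monotone nondecreasing, define $\gamma$ by the same recursion (the paper writes it as $\gamma_n := \tfrac{1}{\eta_n}\min(\gamma_{n-1}\eta_{n-1},\beta_n\eta_n)$, which is your $\min\{\gamma_{n-1}\eta_{n-1}/\eta_n,\ \beta_n\}$), and conclude via the same dichotomy on whether $\gamma_n=\beta_n$ infinitely often. You spell out the finite-support case a bit more explicitly than the paper, which simply calls it elementary.
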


\begin{proof}
The case where $\eta$ has finite support is elementary, so assume
that for all $i$, $\eta_i > 0$. By replacing if necessary $\beta$
with $\lnd \beta$ we can assume also that $\beta$ is monotone
nondecreasing. Starting with $\gamma_1 := \beta_1$, define
recursively 
\[
\gamma_n :=\frac{1}{\eta_n}\min(\gamma_{n-1}\eta_{n-1},\beta_n\eta_n).
\]
 It follows immediately that $\gamma \leq \beta$ and that $\gamma\eta$
is monotone nonincreasing. Moreover, $\gamma_n \geq \gamma_{n-1}$
for all $n$ since both $\beta_n \geq \beta_{n-1} \geq \gamma_{n-1}$
and $\gamma_{n-1}\frac{\eta_{n-1}}{\eta_n} \geq \gamma_{n-1}$. In
the case that $\gamma_n = \beta_n$ infinitely often, then
$\gamma\rightarrow \infty$. In the case that $\gamma_n \neq \beta_n$
for all $n > m$, then $\gamma_n\eta_n = \gamma_{n-1}\eta_{n-1}$ and
so also $\gamma_n = \frac{\eta_m}{\eta_n}\gamma_m \rightarrow
\infty$ since $\eta_n \rightarrow 0$ and $\eta_m\gamma_m \neq 0$.
\end{proof}

\begin{proof}[Proof of Theorem~\ref{thm:6.1}]

(i)--(ii$'$) See \cite[Lemma 3.3]{10}.

(iii) If $\xi  \in  \Sigma((\scop I)_a)$, then $\xi \leq \eta_a$ for
some $\eta \in \Sigma(\scop I)$. So for every $\alpha \in c_o^*$,
$\alpha\eta \in \Sigma(I)$ and $\alpha\xi \leq \alpha\eta_a \leq
(\alpha\eta)_a \in \Sigma(I_a)$, where the last inequality follows
from the monotonicity of $\alpha$. Thus $\xi \in \Sigma(\scop I_a)$.

(iv) Let $\xi \in \Sigma(_a(\se I))$, i.e., $\xi_a \leq \alpha\eta$
for some $\alpha \in c_o^*$ and $\eta \in \Sigma(I)$. Since
$(\frac{1}{\alpha}\xi)_a \leq \frac{1}{\alpha} \xi_a \leq \eta \in
c_o^*$ where the first inequality follows from the monotonicity of
$\alpha$, by Lemma~\ref{lem:6.3} there is a sequence $\gamma
\uparrow \infty$ such that $\gamma  \leq \frac{1}{\alpha}$ and
$\gamma \xi$ is monotone nonincreasing. Thus $(\gamma\xi )_a \leq
\eta \in \Sigma(I)$, i.e., $\gamma \xi \in \Sigma(_aI)$, and hence
$\xi \in \Sigma(\se \text{$_a$}I)$.

(iv$'$) There are three cases. If $\omega \not\in \Sigma(I)$, then
by Lemma~\ref{lem:6.2}(ii), both $\text{$_a$}I = \{0\}$ and
$\text{$_a$}(\se I) = \{0\}$ and hence the equality holds. If
$\omega \in \Sigma(I) \setminus \Sigma(\se I)$, then $\mathcal{L}_1
= \text{$_a$}I$ by Lemma~\ref{lem:6.2}(iii) and hence $\se
\text{$_a$}I = \mathcal{L}_1$ since $\mathcal{L}_1$ is soft-edged by
Proposition~\ref{prop:4.12}. But $\text{$_a$}(\se I) = \{0)$ by
Lemma~\ref{lem:6.2}(ii), so the inclusion in (iv) fails. For the
final case, if $\omega \in \Sigma(\se I)$, then by
Lemma~\ref{lem:6.2}(iv), $\mathcal{L}_1\subsetneqq \text{$_a$}I$.
Let $\xi \in \Sigma(\se \text{$_a$}I)$, i.e., $\xi = o(\eta)$ for
some $\eta \in \Sigma(_aI)$. By adding to $\eta$, if necessary, a
nonsummable sequence in $\Sigma(_aI)$, we can assume that $\eta$ is
itself not summable. But then it is easy to verify that $\xi_a =
o(\eta_a)$, i.e., $\xi_a \in \Sigma(\se I)$ and hence $\xi \in
\Sigma(_a(\se I))$.
\end{proof}

Now we examine how  the operations $\scop$
and $\se$ commute with the arithmetic mean operations of am-interior $I^o :=
(_aI)_a$ and am-closure $I^- := \text{$_a$}(I_a)$.

\begin{theorem}\label{thm:6.4}
Let $I$ be an ideal.

\item[(i)] $\scop I^- \supset (\scop I)^-$

\item[(ii)] $\se I^- = (\se I)^-$

\item[(iii)] $\scop I^o \subset (\scop I)^o$

\item[(iii$'$)] $\scop I^o = (\scop I)^o$ if and only if $\omega
\not\in \Sigma(\scop I) \setminus \Sigma(I)$

\item[(iv)] $\se I^o \supset (\se I)^o$

\item[(iv$'$)] $\se I^o = (\se I)^o$ if and only if $\omega \not\in \Sigma(I)
\setminus \Sigma(\se I)$

\end{theorem}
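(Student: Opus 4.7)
The overall plan unfolds $I^o = ({}_aI)_a$ and $I^- = {}_a(I_a)$, and combines the one-sided inclusions of Theorem~\ref{thm:6.1} with the two characterizations of $\Sigma(I^o)$ from Corollary~\ref{cor:2.16}, namely $\{\xi : \omega\und(\xi/\omega) \in \Sigma(I)\} = \{\xi : \xi \leq \omega\lnd(\eta/\omega) \text{ for some } \eta \in \Sigma(I)\}$; Lemma~\ref{lem:6.2} will then translate the $\omega$-dichotomies in (iii$'$) and (iv$'$). Parts (i) and (ii), where this template produces inclusions in the wrong direction, will need separate arguments (Abel summation and a case split, respectively).

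For (iii), expanding via Corollary~\ref{cor:2.16} gives $\xi \in \Sigma(\scop I^o)$ iff $\omega\und(\alpha\xi/\omega) \in \Sigma(I)$ for all $\alpha \in c_o^*$, and $\xi \in \Sigma((\scop I)^o)$ iff $\alpha\omega\und(\xi/\omega) \in \Sigma(I)$ for all $\alpha \in c_o^*$; since $\alpha$ is monotone decreasing, $\alpha_n\max_{i\leq n}(\xi/\omega)_i \leq \max_{i\leq n}\alpha_i(\xi/\omega)_i$, so $\alpha\omega\und(\xi/\omega) \leq \omega\und(\alpha\xi/\omega)$ and the inclusion follows. Dually for (iv), $\xi \in \Sigma((\se I)^o)$ means $\xi \leq \omega\lnd(\eta/\omega)$ for some $\eta \in \Sigma(\se I)$, say $\eta \leq \alpha\mu$ with $\alpha \in c_o^*$ and $\mu \in \Sigma(I)$; the estimate $\lnd(\alpha\mu/\omega)(n) = \inf_{i\geq n}\alpha_i(\mu/\omega)_i \leq \alpha_n\lnd(\mu/\omega)(n)$ (from $\alpha$ decreasing) gives $\omega\lnd(\eta/\omega) \leq \alpha\,\omega\lnd(\mu/\omega)$, and Lemma~\ref{lem:2.14} identifies $\omega\lnd(\mu/\omega)$ as a generator of $(\mu)^o \subset I^o$, so $\xi \in \Sigma(\se I^o)$.

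For (iii$'$), under the condition Theorem~\ref{thm:6.1}(i$'$) gives ${}_a\scop I = \scop\,{}_aI$, and Theorem~\ref{thm:6.1}(iii) applied to ${}_aI$ yields $(\scop I)^o = (\scop\,{}_aI)_a \subset \scop({}_aI)_a = \scop I^o$, which with (iii) forces equality. When the condition fails, the test element $\xi = \langle 1, 0, 0, \ldots\rangle$ has $\omega\und(\xi/\omega) = \omega$ and $\omega\und(\alpha\xi/\omega) = \alpha_1\omega$, so $\xi \in \Sigma((\scop I)^o)$ iff $\omega \in \Sigma(\scop I)$ while $\xi \in \Sigma(\scop I^o)$ iff $\omega \in \Sigma(I)$, producing strict inclusion. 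For (iv$'$), the requirement of Theorem~\ref{thm:6.1}(ii$'$) applied to ${}_aI$, namely ``${}_aI = \{0\}$ or ${}_aI \not\subset \mathcal{L}_1$,'' is by Lemma~\ref{lem:6.2}(ii) and (iv) equivalent to $\omega \not\in \Sigma(I)\setminus\Sigma(\se I)$, the same condition as Theorem~\ref{thm:6.1}(iv$'$) for $I$; under it, chaining both equalities gives $(\se I)^o = ({}_a\se I)_a = (\se\,{}_aI)_a = \se({}_aI)_a = \se I^o$. When the condition fails, Lemma~\ref{lem:6.2}(ii), (iii) force ${}_a\se I = \{0\}$ and ${}_aI = \mathcal{L}_1$, so $(\se I)^o = \{0\} \subsetneq \se(\omega) = \se I^o$.

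For (i), Abel summation $\sum_{i=1}^n \alpha_i\xi_i = \alpha_n\sum_{i=1}^n \xi_i + \sum_{k=1}^{n-1}(\alpha_k - \alpha_{k+1})\sum_{i=1}^k \xi_i$ (using $\alpha \in c_o^*$ decreasing) together with $\xi_a \leq \eta_a$ gives $(\alpha\xi)_a \leq (\alpha\eta)_a$; since $\eta \in \Sigma(\scop I)$ makes $\alpha\eta \in \Sigma(I)$, this places $\alpha\xi$ in $\Sigma(I^-)$ and so $\xi \in \Sigma(\scop I^-)$. For (ii), split by whether $I \subset \mathcal{L}_1$: $I = \{0\}$ is trivial; if $\{0\} \neq I \subset \mathcal{L}_1$, Lemma~\ref{lem:6.2}(i) forces both $I^- = \mathcal{L}_1$ and, since $\se I \subset \mathcal{L}_1$ is nonzero, $(\se I)^- = \mathcal{L}_1$, while soft-edgedness of $\mathcal{L}_1$ (Proposition~\ref{prop:4.12}) makes $\se I^- = \mathcal{L}_1$ as well; if $I \not\subset \mathcal{L}_1$, any nonsummable $\eta \in \Sigma(I)$ satisfies $\omega = o(\eta_a) \in \Sigma(\se I_a)$, so both Theorem~\ref{thm:6.1}(ii$'$) for $I$ and Theorem~\ref{thm:6.1}(iv$'$) for $I_a$ apply and $\se I^- = \se({}_a(I_a)) = {}_a(\se I_a) = {}_a((\se I)_a) = (\se I)^-$. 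The main obstacle is (ii): the unconditional equality cannot be obtained from Theorem~\ref{thm:6.1} alone, since its equality criteria fail in the ``$I \subset \mathcal{L}_1$'' case; one must separately invoke the minimality of $\mathcal{L}_1$ among nonzero am-closed ideals to collapse both sides there.
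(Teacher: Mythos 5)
Your proof is correct and follows the same overall architecture as the paper's — unfolding $I^o = ({}_aI)_a$, $I^- = {}_a(I_a)$ and chaining the one-sided relations of Theorem~\ref{thm:6.1} with the characterization in Corollary~\ref{cor:2.16} and the $\omega$-dichotomies of Lemma~\ref{lem:6.2}. Parts (ii), (iii), and the ``equality'' direction of (iii$'$) and (iv$'$) match the paper's proof essentially line for line. You depart from the paper in three places, all of which are valid and arguably more self-contained. For (i), you prove $(\alpha\xi)_a \leq (\alpha\eta)_a$ directly by Abel summation; the paper instead observes $\omega\in\Sigma(I_a)$ for $I\neq\{0\}$ and then chains Theorem~\ref{thm:6.1}(i$'$) with Theorem~\ref{thm:6.1}(iii), $\scop I^- = \scop\,{}_a(I_a) = {}_a(\scop I_a) \supset {}_a((\scop I)_a) = (\scop I)^-$. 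Your route is more elementary (it reproves the preservation of majorization under multiplication by a decreasing weight rather than citing (i$'$)), at the small cost of redoing something already packaged into Theorem~\ref{thm:6.1}. For (iv), you give a direct uniform argument: $\xi \leq \omega\lnd(\eta/\omega)$ with $\eta\leq\alpha\mu$ forces $\xi \leq \alpha\,\omega\lnd(\mu/\omega)$ via $\lnd(\alpha\mu/\omega)\leq\alpha\lnd(\mu/\omega)$ and Lemma~\ref{lem:2.14}(i); the paper obtains (iv) only as a byproduct of the three-case analysis for (iv$'$). Your version is cleaner in that it doesn't force (iv) through the case split. For the failure case of (iii$'$), you exhibit a test element $\xi = \langle 1,0,0,\ldots\rangle$ distinguishing $\scop I^o$ from $(\scop I)^o$, whereas the paper simply observes via Lemma~\ref{lem:6.2}(ii) that $\omega\in\Sigma(\scop I)\setminus\Sigma(I)$ makes ${}_a(\scop I)\neq\{0\}$ (so $(\scop I)^o\neq\{0\}$) while ${}_aI=\{0\}$ (so $\scop I^o=\{0\}$); the paper's observation is shorter but yours gives the same conclusion. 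All the steps you use (monotonicity of $\lnd$ and $\und$, the inequalities $\alpha\omega\und(\xi/\omega) \leq \omega\und(\alpha\xi/\omega)$ and $\lnd(\alpha\mu/\omega)\leq\alpha\lnd(\mu/\omega)$, Lemma~\ref{lem:2.14}(i), minimality of $\mathcal{L}_1$ among nonzero am-closed ideals) check out.
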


\begin{proof}
(i) The case $I = \{0\}$ is obvious. If $I \neq \{0\}$, then $\omega
\in \Sigma(I_a)$ and hence, by Theorem~\ref{thm:6.1}(i$'$) and (iii),
it follows that 
\[
\scop I^- = \scop \text{$_a$}(I_a) =
\text{$_a$}(\scop I_a) \supset \text{$_a$}((\scop I)_a) = (\scop
I)^-.
\]

(ii) There are three possible cases. The case when $I = \{0\}$ is again
obvious. In the second case when $\{0\} \neq I \subset \mathcal{L}_1$, then $I^- =
\mathcal{L}_1$ and $(\se I)^- = \mathcal{L}_1$ since $\mathcal{L}_1$
is the smallest nonzero am-closed ideal by Lemma~\ref{lem:6.2}(i).
Since $\mathcal{L}_1$ is soft-edged by Proposition~\ref{prop:4.12},
$\se I^- = \mathcal{L}_1$, so equality in (ii) holds. In the third case,
 $I \not\subset \mathcal{L}_1$. Then $\mathcal{L}_1
\subsetneqq I^-$ and $\omega \in \Sigma(\se I_a)$ by
Lemma~\ref{lem:6.2}(iv). Then 
\[
\se I^- = \se \text{$_a$}(I_a) =
\text{$_a$}(\se(I_a)) = \text{$_a$}((\se I)_a) = (\se I)^-
\]
 where
the second and third equalities follow from Theorem~\ref{thm:6.1}(iv$'$) and (ii$'$).

(iii) Let $\xi  \in \Sigma(\scop I^o)$ and let $\alpha \in c_o^*$.
By the definition of ``$\und$'' (see the paragraph preceding
Lemma~\ref{lem:2.14}) it follows easily that $\alpha\omega\und \xo
\leq \omega \und \frac{\alpha\xi}{\omega}$ and by
Corollary~\ref{cor:2.16}, that $\omega \und \frac{\alpha\xi}{\omega}
\in \Sigma(I)$ since $\alpha\xi  \in  \Sigma(I^o)$. Thus 
$\alpha\omega \und \xo \in \Sigma(I)$ and hence $\omega\und \xo \in
\Sigma(\scop I)$. But then, again by Corollary~\ref{cor:2.16},  $\xi \in \Sigma((\scop I)^o)$.

(iii$'$) If $\omega \not\in \Sigma(\scop I) \setminus \Sigma(I)$,
then 
\[
\scop I^o = \scop (_aI)_a \supset (\scop (_aI))_a = (_a(\scop
I))_a = (\se I)^o
\]
 by Theorem~\ref{thm:6.1}(iii) and (i$'$). If on
the other hand $\omega \in  \Sigma(\scop I) \setminus \Sigma(I)$,
then by Lemma~\ref{lem:6.2}(ii) $\text{$_a$}(\scop I) \neq \{0\}$
and hence $(\scop I)^o \neq \{0)$, while $\text{$_a$}(I) = \{0\}$
and hence $\scop (I)^o = \{0\}$.

(iv) and (iv$'$). There are three possible cases. If $\omega \not\in
\Sigma(I)$, then $I^o= \{0\}$ by Lemma~\ref{lem:6.2}(ii) and so
$\se I^o = \{0\}$ and $(\se I)^o = \{0\}$, i.e., (iv$'$) holds trivially. 
If $\omega \in \Sigma(I) \setminus \Sigma(\se I)$, then
$I^o \neq \{0\}$ and $(\se I)^o = \{0\}$ again by
Lemma~\ref{lem:6.2}(ii). But then $\se I^o \neq \{0\}$, so (iv)
holds but (iv$'$) does not. Finally, when $\omega \in \Sigma(\se I)$,
then $\mathcal{L}_1\subsetneqq \text{$_a$}I$ by Lemma~\ref{lem:6.2}
(iv) and hence 
\[
\se I^o = \se (_aI)_a = (\se (_aI))_a = (_a(\se
I))_a = (\se I)^o
\]
 by Theorem~\ref{thm:6.1}(ii$'$) and (iv$'$).
\end{proof}

We were unable to find natural conditions under which the reverse
inclusion of Theorem~\ref{thm:6.4}(i) holds (see also
Proposition~\ref{prop:6.8}), nor examples where it fails.

\begin{corollary}\label{cor:6.5} 
\item[(i)] If $I$ is an am-open ideal, then
$\scop I$ is am-open while $\se I$ is am-open if and only if $I\neq
(\omega)$.

\item[(ii)] If $I$ is an am-closed ideal, then $\scop I$ and $\se I$ are
am-closed.
\end{corollary}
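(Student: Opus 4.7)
\smallskip

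\noindent\textbf{Proof proposal.} Both statements are essentially bookkeeping on top of Theorem~\ref{thm:6.4}, using the five-chain inclusion $L^o \subset L \subset L^-$ that is available for any ideal $L$. For (ii), I would apply Theorem~\ref{thm:6.4}(ii) to get $(\se I)^- = \se I^- = \se I$ (since $I = I^-$), so $\se I$ is am-closed. Dually, Theorem~\ref{thm:6.4}(i) combined with the trivial $\scop I \subset (\scop I)^-$ gives the sandwich $(\scop I)^- \subset \scop I^- = \scop I \subset (\scop I)^-$, so $\scop I$ is am-closed.

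For the $\scop I$ half of (i), assuming $I = I^o$, the same pattern works using Theorem~\ref{thm:6.4}(iii) and the trivial $(\scop I)^o \subset \scop I$: the chain $\scop I = \scop I^o \subset (\scop I)^o \subset \scop I$ closes up, so $\scop I$ is am-open.

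The only nontrivial piece is the $\se I$ half of (i). Here I would invoke Theorem~\ref{thm:6.4}(iv$'$): since $I = I^o$, the equation $\se I^o = (\se I)^o$ is exactly the assertion that $\se I$ is am-open, and (iv$'$) tells us this is equivalent to $\omega \notin \Sigma(I) \setminus \Sigma(\se I)$. When $I = (\omega)$, every $\eta \in \Sigma((\omega))$ satisfies $\eta = O(\omega)$ because $\omega$ obeys the $\Delta_{1/2}$-condition (so $D_m\omega \asymp \omega$), which rules out $\omega = o(\eta)$; hence $\omega \in \Sigma(I) \setminus \Sigma(\se I)$ and $\se I$ is not am-open. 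When $I$ is am-open and $I \neq (\omega)$, either $I = \{0\}$ (in which case $\se I = \{0\}$ is trivially am-open), or $(\omega) \subsetneq I$ since $(\omega)$ is the smallest nonzero am-open ideal (Lemma~\ref{lem:6.2}(i)).

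In this last case I would pick $\xi \in \Sigma(I) \setminus \Sigma((\omega))$ and, using $I = J_a$ for some $J$, dominate $\xi \leq M D_m \eta_a$ with $\eta_a \in \Sigma(I)$; the $\Delta_{1/2}$-property of arithmetic means forces $\eta_a \neq O(\omega)$. The main (and only subtle) step is the observation that $n(\eta_a)_n = \sum_{i=1}^n \eta_i$ is \emph{monotone nondecreasing}, so $\eta_a \neq O(\omega)$ upgrades automatically to $n(\eta_a)_n \to \infty$, i.e. $\omega = o(\eta_a)$. Then $\alpha := \omega/\eta_a$ lies in $c_o^*$ (the monotonicity of $n(\eta_a)_n$ makes it decreasing), and $\omega = \alpha\,\eta_a$ shows $\omega \in \Sigma(\se I)$, so the criterion of (iv$'$) applies. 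This monotonicity upgrade is the crux: it is precisely what is special about generators of am-open ideals (they are arithmetic means of nonnegative sequences) and it is what fails for $I = (\omega)$ itself.
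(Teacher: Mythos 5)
Your proof is correct, and for parts (ii) and the $\scop I$ half of (i) it is essentially identical to the paper's (which simply cites Theorem~\ref{thm:6.4} plus the five-chain). The one place where you genuinely diverge is the $\se I$ half of (i). For the case $0 \neq I \neq (\omega)$, the paper goes through Lemma~\ref{lem:6.2}: from $(\omega)\subsetneqq I$ it first deduces $\mathcal{L}_1 = {}_a(\omega) \subset {}_aI$, then rules out equality by taking arithmetic means ($(\mathcal{L}_1)_a = (\omega) \neq I = ({}_aI)_a$), and finally invokes Lemma~\ref{lem:6.2}(iv) to conclude $\omega \in \Sigma(\se I)$. You instead prove $\omega \in \Sigma(\se I)$ by hand: pick $\xi \in \Sigma(I)\setminus\Sigma((\omega))$, bound it by some $\eta_a \in \Sigma(I)$, observe that $n(\eta_a)_n = \sum_{i\le n}\eta_i$ is monotone nondecreasing so its unboundedness forces $\omega = o(\eta_a)$, and then exhibit $\alpha := \omega/\eta_a \in c_o^*$ with $\omega = \alpha\eta_a$. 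This is a clean, elementary shortcut that avoids the $\mathcal{L}_1$/pre-arithmetic-mean machinery of Lemma~\ref{lem:6.2}(iv) and makes visible exactly why arithmetic-mean sequences behave well here (the monotonicity of $\eta_a/\omega$), whereas the paper's route is more structural and reuses a lemma that serves several other purposes. For the $I = (\omega)$ case the paper also gives a slightly slicker argument than yours, observing directly that $\se(\omega) \subsetneqq (\omega)$ is nonzero and smaller than the smallest nonzero am-open ideal, but your $\Delta_{1/2}$ argument via Theorem~\ref{thm:6.4}(iv$'$) is equally valid.
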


\begin{proof} (ii) and the first implication in (i) are
immediate from Theorem~\ref{thm:6.4}.
For the second implication of
(i), assume that $I$ is am-open and that $0 \neq I \neq (\omega)$.
Then by Lemma~\ref{lem:6.2}(i), $(\omega) \subsetneqq I$ and
$\mathcal{L}_1  = \text{$_a$}(\omega) \subset \text{$_a$}I$. But
$\mathcal{L}_1 \neq \text{$_a$}I$ follows from  $(\mathcal{L}_1)_a =
(\omega) \neq I = (_aI)_a$. Then $\omega \in \Sigma(\se I)$ by
Lemma~\ref{lem:6.2}(iv), hence $\se I = \se I^o = (\se I)^o$ by
Theorem~\ref{thm:6.4}(iv$'$) and thus $\se I$ is am-open. If $I =
\{0\}$, then $\se I = \{0\}$ too is am-open. If $I = (\omega)$, then
$\se I \subsetneqq (\omega)$ cannot be am-open, again by
Lemma~\ref{lem:6.2}(i).
\end{proof}

For completeness' sake we list also some $\se$ and $\scop$
commutation properties for the largest am-closed ideal $I_-$
contained in $I$ and the smallest am-open ideal $I^{oo}$ containing
$I$ (see Corollary~\ref{cor:2.6} and Definition~\ref{def:2.18}).

\begin{proposition}\label{prop:6.6}
For every ideal $I$:

\item[(i)] $\scop I_- = (\scop I)_-$

\item[(ii)] $\se I_- \subset (\se I)_-$

\item[(iii)] $\scop I^{oo} \supset (\scop I)^{oo}$

\item[(iv)] $\se I^{oo} \subset (\se I)^{oo}$

\item[(iv$'$)] $\se I^{oo} = (\se I)^{oo}$ if and only if either $I = \{0\}$ or $I
\not\subset (\omega)$

\end{proposition}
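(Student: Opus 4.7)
The plan is to treat the five assertions separately, leveraging Lemma~\ref{lem:4.2} (inclusion-preservation and idempotence of $\se$ and $\scop$, together with the identities $\se \scop = \se$ and $\scop \se = \scop$), Corollary~\ref{cor:6.5} (which says $\se$ and $\scop$ preserve am-closedness, and preserve am-openness except possibly when the ideal is $(\omega)$), the maximality of $I_-$ and minimality of $I^{oo}$, and the explicit description of $\Sigma(I^{oo})$ in Proposition~\ref{prop:2.21}.

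Parts (i)--(iii) are formal diagram chases. For (ii), $I_- \subset I$ gives $\se I_- \subset \se I$, and $\se I_-$ is am-closed by Corollary~\ref{cor:6.5}(ii), so maximality forces $\se I_- \subset (\se I)_-$. For (iii), $\scop I^{oo}$ is am-open by Corollary~\ref{cor:6.5}(i) and contains $\scop I$, so minimality yields $(\scop I)^{oo} \subset \scop I^{oo}$. Part (i) is two-sided: one direction is as in (ii), and for the reverse, starting from $(\scop I)_- \subset \scop I$ we obtain $\se(\scop I)_- \subset \se\scop I = \se I \subset I$, whence $\se(\scop I)_- \subset I_-$ by am-closedness and maximality; applying $\scop$ and using $\scop\se J = \scop J$ and $J \subset \scop J$ gives $(\scop I)_- \subset \scop(\scop I)_- = \scop\se(\scop I)_- \subset \scop I_-$.

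For (iv), I work at the sequence level via Proposition~\ref{prop:2.21}. If $\xi \in \Sigma(\se I^{oo})$ then $\xi \leq \alpha \rho$ for some $\alpha \in c_o^*$ and some $\rho \leq \omega \und \frac{\eta}{\omega}$ with $\eta \in \Sigma(I)$. The key technical point---and the main obstacle---is the pointwise inequality
\[
\alpha \cdot \omega \und \frac{\eta}{\omega} \leq \omega \und \frac{\alpha\eta}{\omega},
\]
which I would verify by noting that $\alpha \in c_o^*$ is monotone nonincreasing, so $\alpha_n \leq \alpha_i$ for $i \leq n$ and hence $\alpha_n \max_{i \leq n} \eta_i/\omega_i \leq \max_{i \leq n} \alpha_i\eta_i/\omega_i$. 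Since $\alpha\eta \in \Sigma(\se I)$, Proposition~\ref{prop:2.21} places $\omega \und \frac{\alpha\eta}{\omega}$ in $\Sigma((\se I)^{oo})$, so $\xi \in \Sigma((\se I)^{oo})$.

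For (iv$'$), the pivotal observation is that $I^{oo} = (\omega)$ precisely when $\{0\} \neq I \subset (\omega)$: if $I \subset (\omega)$ then $(\omega)$ is am-open containing $I$, so $I^{oo} \subset (\omega)$, and if $I \neq \{0\}$ then $I^{oo}$ is nonzero and must contain $(\omega)$ by the minimality of $(\omega)$ among nonzero am-open ideals (Lemma~\ref{lem:6.2}(i)). Thus if $I = \{0\}$ both sides are zero; if $I \not\subset (\omega)$ then $I^{oo} \neq (\omega)$, so Corollary~\ref{cor:6.5}(i) makes $\se I^{oo}$ am-open, and combined with $\se I \subset \se I^{oo}$ and part (iv) we obtain equality. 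Conversely, if $\{0\} \neq I \subset (\omega)$ then $\se I^{oo} = \se(\omega)$, a nonzero ideal properly contained in $(\omega)$ (note $\omega^2 \in \Sigma(\se(\omega))$ while $\omega \notin \Sigma(\se(\omega))$), whereas $\se I \subset (\omega)$ forces $(\se I)^{oo}$ to be either $\{0\}$ or $(\omega)$; the two cannot agree.
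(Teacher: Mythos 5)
Your proposal is correct and follows essentially the same route as the paper: parts (i)--(iii) use Corollary~\ref{cor:6.5} with maximality of $I_-$ and minimality of $I^{oo}$, part (iv) uses Proposition~\ref{prop:2.21} together with the pointwise inequality $\alpha\omega\und\frac{\eta}{\omega}\leq\omega\und\frac{\alpha\eta}{\omega}$, and part (iv$'$) runs the same three-case analysis. The only cosmetic difference is in the reverse inclusion of (i), where the paper funnels through part (ii) applied to $\scop I$ while you derive $\se(\scop I)_-\subset I_-$ directly from $\se(\scop I)_-\subset I$ and the maximality of $I_-$; both reach $\scop\se(\scop I)_-\subset\scop I_-$ and close the argument identically.
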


\begin{proof}
(i)--(iii) Corollary~\ref{cor:6.5} and the maximality (resp.,
minimality) of $I_-$ (resp., $I^{oo}$) yield the inclusions $\scop
I_- \subset  (\scop I)_-$, $\se I_- \subset (\se I)_-$, and $\scop
I^{oo} \supset (\scop I)^{oo}$. From the second inclusion it follows
that 
\[
\se ((\scop I)_-) \subset (\se (\scop I))_- = (\se I)_-\subset
I_-
\]
 and hence 
\[
(\scop I)_- \subset  \scop(\scop I)_- = \scop (\se
((\scop I)_-)) \subset \scop I_-
\]
 so that equality holds in (i).

(iv) If $\eta \in \Sigma((\se I^{oo})$, then by
Proposition~\ref{prop:2.21}, $\eta \leq \alpha\omega \und \xo$ for
some $\xi \in \Sigma(I)$ and $\alpha \in  c_o^*$. As remarked in the
proof of Theorem~\ref{thm:6.4}(iii), it follows that $\eta\leq
\omega \und \frac{\alpha\xi}{\omega}$ and hence $\eta \in
\Sigma((\se I)^{oo})$, again by Proposition~\ref{prop:2.21}.

(iv$'$) There are three cases. If $I = \{0\}$, (iv$'$) holds
trivially. If $\{0\} \neq I \subset (\omega)$, then by the
minimality of $(\omega)$ among nonzero am-open ideals, $I^{oo} =
(\omega)$ and $(\se I)^{oo} = (\omega)$, so the inclusion in (iv$'$)
fails. If $I \not\subset (\omega)$, then $I^{oo} \neq (\omega)$ and
hence by Corollary~\ref{cor:6.5}(i), $\se I^{oo}$ is am-open and by
minimality of $(\se I)^{oo}$, (iv$'$) holds.
\end{proof}

It is now an easy application of the above results to verify that
the following am-operations preserve softness.

\begin{corollary}\label{cor:6.7}

\item[(i)] If $I$ is soft-complemented, then so are $\text{$_a$}I$, $I^o$, and
$I_-$.

\item[(ii)] If $I$ is soft-edged, then so are $\text{$_a$}I$, $I^o$, and $I^-$.

\item[(iii)] If $I$ is soft-edged, then $I_a$ is soft-edged if and only if
either $I = \{0\}$ or $I \not\subset \mathcal{L}_1$.

\item[(iv)] If $I$ is soft-edged, then $I^{oo}$ is soft-edged if and only
if either $I = \{0\}$ or $I \not\subset (\omega)$.

\end{corollary}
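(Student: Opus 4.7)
The plan is to derive all four parts as direct applications of the commutation relations proved in Theorems~\ref{thm:6.1}, \ref{thm:6.4}, and Proposition~\ref{prop:6.6}, combined with Lemma~\ref{lem:4.2}(iii), which states $\se J \subset J \subset \scop J$ for every ideal $J$. That standing inclusion reduces verifying soft-complementedness of an ideal $J$ to proving $\scop J \subset J$, and soft-edgedness to proving $J \subset \se J$. The entire proof is then a matter of chaining already-established inclusions.

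For (i), assuming $\scop I = I$, I would apply: Theorem~\ref{thm:6.1}(i) to get $\scop({}_aI) \subset {}_a(\scop I) = {}_aI$; Theorem~\ref{thm:6.4}(iii) to get $\scop(I^o) \subset (\scop I)^o = I^o$; and Proposition~\ref{prop:6.6}(i), which gives the full equality $\scop(I_-) = (\scop I)_- = I_-$. For (ii), assuming $\se I = I$, the dual inclusions appear: Theorem~\ref{thm:6.1}(iv) gives ${}_aI = {}_a(\se I) \subset \se({}_aI)$; Theorem~\ref{thm:6.4}(iv) gives $I^o = (\se I)^o \subset \se(I^o)$; and Theorem~\ref{thm:6.4}(ii) gives $\se(I^-) = (\se I)^- = I^-$.

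For (iii), the forward direction follows at once from Theorem~\ref{thm:6.1}(ii$'$): if $I \neq \{0\}$ and $I \not\subset \mathcal{L}_1$, then $\se(I_a) = (\se I)_a = I_a$. For the converse, assuming $\{0\} \neq I \subset \mathcal{L}_1$, I would first note that $\{0\} \neq I_a \subset (\mathcal{L}_1)_a = (\omega)$ and invoke the minimality of $(\omega)$ among nonzero am-open ideals from Lemma~\ref{lem:6.2}(i) to force $I_a = (\omega)$; then I would rule out softness of $(\omega)$ using Proposition~\ref{prop:4.6}(ii), observing that $\omega_n/(D_m\omega)_n \to 1/m \neq 0$ so $\omega \neq o(D_m\omega)$ for any $m$. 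Part (iv) runs exactly in parallel: Proposition~\ref{prop:6.6}(iv$'$) handles the case $I \not\subset (\omega)$, and minimality of $(\omega)$ among nonzero am-open ideals forces $I^{oo} = (\omega)$ whenever $\{0\} \neq I \subset (\omega)$, at which point the same non-softness of $(\omega)$ closes the argument.

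The only real step of substance is the recognition in (iii) and (iv) that $(\omega)$ is the unique obstruction to soft-edgedness being inherited by $I_a$ and $I^{oo}$, together with the verification via Proposition~\ref{prop:4.6}(ii) that $(\omega)$ itself fails to be soft-edged. Everything else is a mechanical bookkeeping exercise in applying the commutation results, so I do not anticipate any hidden difficulty.
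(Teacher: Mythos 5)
Your argument is correct, and since the paper leaves this corollary to the reader as ``an easy application of the above results,'' it is exactly the intended route: chaining Theorem~\ref{thm:6.1}, Theorem~\ref{thm:6.4}, and Proposition~\ref{prop:6.6} against the standing inclusions $\se J \subset J \subset \scop J$. One small observation: in parts (iii) and (iv) the separate converse arguments via $(\omega)$ are redundant, since once you note that $\se I = I$ turns $(\se I)_a = I_a$ (resp.\ $(\se I)^{oo} = I^{oo}$), the ``if and only if'' in Theorem~\ref{thm:6.1}(ii$'$) (resp.\ Proposition~\ref{prop:6.6}(iv$'$)) already delivers both directions at once; the extra check that $(\omega)$ fails to be soft-edged is harmless but not needed.
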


Several of the ``missing'' statements that remain open are equivalent as shown in the next proposition.

\begin{proposition}\label{prop:6.8}
For every ideal $I$, the following conditions are equivalent.

\item[(i)] $\scop I_a \subset (\scop I)_a$

\item[(ii)] $(\scop I)_a$ is soft-complemented

\item[(iii)] $(\scop I)^-$ is soft-complemented

\item[(iv)] $\scop I^- \subset (\scop I)^-$

\end{proposition}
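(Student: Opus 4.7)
The plan is to close the cycle (i) $\Rightarrow$ (ii) $\Rightarrow$ (iii) $\Rightarrow$ (iv) $\Rightarrow$ (i), with essentially all of the substance concentrated in the last implication. First I would note that Theorem~\ref{thm:6.1}(iii) and Theorem~\ref{thm:6.4}(i) supply the reverse inclusions automatically, so (i) is really the equality $\scop I_a = (\scop I)_a$ and (iv) is really $\scop I^- = (\scop I)^-$. The equivalences (i) $\Leftrightarrow$ (ii) and (iii) $\Leftrightarrow$ (iv) are then both instances of a single pattern: every soft cover is soft-complemented by Lemma~\ref{lem:4.2}(ii), so (i) at once implies (ii); conversely, applying $\scop$ to the inclusion $I_a \subset (\scop I)_a$ (which comes from $I \subset \scop I$) gives $\scop I_a \subset \scop((\scop I)_a) = (\scop I)_a$ under assumption (ii). The same argument with $(\cdot)^-$ in place of $(\cdot)_a$ handles (iii) $\Leftrightarrow$ (iv). The implication (ii) $\Rightarrow$ (iii) then follows in one line from the identity $(\scop I)^- = {}_a((\scop I)_a)$ and Corollary~\ref{cor:6.7}(i) (pre-arithmetic mean preserves soft-complementedness).

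The decisive step is (iv) $\Rightarrow$ (i). My plan is to show that both $\scop I_a$ and $(\scop I)_a$ are am-open and then recover each of them from its common pre-arithmetic mean. After dispatching the trivial case $I = \{0\}$, the inclusion $F \subset I$ combined with Lemma~\ref{lem:6.2}(i) (which identifies $F_a = (\omega)$) forces $\omega \in \Sigma(I_a)$, so the side condition ``$\omega \notin \Sigma(\scop J) \setminus \Sigma(J)$'' in Theorem~\ref{thm:6.1}(i$'$) and Theorem~\ref{thm:6.4}(iii$'$) holds vacuously when $J = I_a$. Applying the first with $J = I_a$ yields $\scop I^- = \scop({}_a(I_a)) = {}_a(\scop I_a)$, and applying the second, together with the am-openness of $I_a$, yields $\scop I_a = (\scop I_a)^o$, so $\scop I_a$ is itself am-open. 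Under (iv) this gives ${}_a(\scop I_a) = {}_a((\scop I)_a)$; taking $(\cdot)_a$ of both sides and using the identity $J = ({}_aJ)_a$ for the am-open ideals $J = \scop I_a$ and $J = (\scop I)_a$ produces $\scop I_a = (\scop I)_a$, which is (i).

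The main obstacle is conceptual rather than computational: one must recognize that the correct ideal to feed into Theorems~\ref{thm:6.1}(i$'$) and \ref{thm:6.4}(iii$'$) is not $I$ itself but $I_a$, whose characteristic set automatically contains $\omega$ once $I \neq \{0\}$. With this substitution the two conditional identities become unconditional, and the am-openness of $\scop I_a$ --- the key new fact that allows the cycle to close --- drops out at no additional cost.
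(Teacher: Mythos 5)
Your proposal is correct and follows essentially the same route as the paper: the forward chain (i) $\Rightarrow$ (ii) $\Rightarrow$ (iii) $\Rightarrow$ (iv) via Lemma~\ref{lem:4.2}(ii) and Corollary~\ref{cor:6.7}(i), and then the decisive (iv) $\Rightarrow$ (i) by feeding $J = I_a$ into Theorem~\ref{thm:6.1}(i$'$), observing that $\omega \in \Sigma(I_a)$ makes the side condition vacuous, noting that $\scop I_a$ is am-open (your use of Theorem~\ref{thm:6.4}(iii$'$) is just a re-derivation of Corollary~\ref{cor:6.5}(i), which the paper invokes directly), and recovering $\scop I_a$ and $(\scop I)_a$ from the identity $J = ({}_a J)_a$ for am-open $J$. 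The extra two-way equivalences you establish are harmless redundancy.
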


\begin{proof}
Implications (i) $\Rightarrow$ (ii) $\Rightarrow$ (iii) $\Rightarrow$ (iv) are easy consequences of Theorem \ref{thm:6.1} and Corollary \ref{cor:6.7}. 
We prove that (iv) $\Rightarrow$ (i). The case $I = \{0\}$ being trivial, assume $I \neq \{0\}$. 
Then $\omega \in \Sigma(I_a)$, hence $\scop I^- \supset \text{$_a$}\scop (I_a)$ by Theorem~\ref{thm:6.1}(i$'$). 
Moreover, since $I_a$ is am-open, then so is $\scop I_a$ by Corollary~\ref{cor:6.5}, i.e., $\scop I_a = (\scop I_a)^o$. 
Then
\[
\scop I_a = (_a(\scop I_a))_a \subset  (\scop I^-)_a \subset ((\scop
I)^-)_a = (\scop I)_a,
\]
the latter equality following from the general identity $(_a(J_a))_a = J_a$.
\end{proof}

Now we investigate the relations between arithmetic means at
infinity and the $\se$ and $\scop$ operations and we list some
results already obtained in \cite[Lemma 4.19]{10} as parts (i) and
(ii) of the next theorem.

\begin{theorem}\label{thm:6.9}
For every ideal $I \neq \{0\}$:

\item[(i)] $\scop \text{$_{a_\infty}$}I= \text{$_{a_\infty}$}(\scop I)$

\item[(ii)] $\se I_{a_\infty} =(\se I)_{a_\infty}$

\item[(iii)] $\scop I_{a_\infty} \supset (\scop I)_{a_\infty}$

\item[(iv)] $\se \text{$_{a_\infty}$}I = \text{$_{a_\infty}$}(\se I)$

\end{theorem}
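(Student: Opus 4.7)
Parts (i) and (ii) are inherited from \cite[Lemma 4.19]{10}, so my plan focuses on (iii) and (iv). Both face the same structural obstruction: the monotonicity inequality $\alpha\rho_a\le(\alpha\rho)_a$ for $\alpha\in c_o^*$ that drove Theorem~\ref{thm:6.1}(iii)--(iv) reverses in the am-$\infty$ setting to $\alpha\rho_{a_\infty}\ge(\alpha\rho)_{a_\infty}$ (since for $i\ge n$ one has $\alpha_i\le\alpha_n$, so $(\alpha\rho)_{a_\infty,n}=\frac1n\sum_{i>n}\alpha_i\rho_i\le\alpha_n\rho_{a_\infty,n}$); consequently both inclusions require explicit constructions rather than direct estimates.

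For (iv), the inclusion $\se{}_{a_\infty}I\supset{}_{a_\infty}(\se I)$ is the easy direction and is in fact a consequence of the reversed inequality just noted: if $\xi=\beta\rho$ with $\beta\in c_o^*$ and $\rho\in\Sigma({}_{a_\infty}I)$, then $\xi_{a_\infty,n}\le\beta_n\rho_{a_\infty,n}$, placing $\xi_{a_\infty}$ in $\Sigma(\se I)$. For the substantive direction ${}_{a_\infty}(\se I)\subset\se{}_{a_\infty}I$, given $\xi\in\ell_1^*$ with $\xi_{a_\infty}\le\alpha\eta$ ($\alpha\in c_o^*$, $\eta\in\Sigma(I)$), I adapt the Lemma~\ref{lem:6.3} strategy from Theorem~\ref{thm:6.1}(iv): choose a strictly increasing sequence $n_k$ with $\sum_{i>n_k}\xi_i\le 2^{-k}$ (ensuring $\sum\beta_n\xi_n<\infty$ below) and $\alpha_{n_k}\le1/(k+1)$ (controlling $\gamma_n\alpha_n$ below); define $\beta_i:=k$ on $(n_{k-1},n_k]$; apply Lemma~\ref{lem:6.3} to $\xi\in c_o^*$ and this $\beta$ to produce $\gamma\le\beta$ with $\gamma\uparrow\infty$ and $\gamma\xi$ monotone nonincreasing, hence $\gamma\xi\in\ell_1^*$. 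The key tail estimate, valid for $n\in(n_{K-1},n_K]$, reads
\[
(\gamma\xi)_{a_\infty,n}\le\frac1n\sum_{i>n}\beta_i\xi_i\le\frac{C\,K\,2^{-K}}{n}\asymp C\gamma_n\xi_{a_\infty,n}\le C\gamma_n\alpha_n\eta_n\le C\eta_n,
\]
the final bound using $\gamma_n\alpha_n\le K\alpha_{n_{K-1}}\le 1$. Thus $\gamma\xi\in\Sigma({}_{a_\infty}I)$, and since $1/\gamma\in c_o^*$, the decomposition $\xi=(1/\gamma)(\gamma\xi)$ places $\xi$ in $\Sigma(\se{}_{a_\infty}I)$.

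For (iii), given $\xi\le MD_m\eta_{a_\infty}$ with $\eta\in\Sigma(\scop I\cap\mathcal{L}_1)$ and $\alpha\in c_o^*$, the plan is to exploit the absorption property $\gamma\eta\in\Sigma(I\cap\mathcal{L}_1)$ for every $\gamma\in c_o^*$ to choose $\gamma$ (depending on $\alpha$ and on the already fixed $\eta$) so that $\mu:=\gamma\eta$ satisfies $\alpha D_m\eta_{a_\infty}\le CD_{m'}\mu_{a_\infty}$ for appropriate $C$ and $m'$. The natural first guess $\gamma:=D_p\alpha$ yields $\gamma_i\ge\alpha_n$ on the window $n<i\le pn$, so a lower bound on the window contribution to $\mu_{a_\infty}$ is available; the difficulty is that $\eta_{a_\infty}$ need not satisfy a $\Delta_{1/2}$-condition, so the single window $(n,pn]$ may carry only a vanishing fraction of the full tail $\sum_{i>n}\eta_i$. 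The remedy I would pursue is to upgrade the uniform ampliation $D_p\alpha$ to a step sequence $\gamma\in c_o^*$ built blockwise from level sets of $\eta_{a_\infty}$ --- using the fact that $n\eta_{a_\infty,n}\to0$ guarantees, for each $n$, an index $p(n)$ with $\sum_{n<i\le p(n)n}\eta_i\ge\tfrac12\sum_{i>n}\eta_i$ --- and to absorb the nonuniformity of $p(n)$ into a single compensating ampliation $D_{m'}$. The main obstacle, and the reason this direction is considerably more delicate than its am-counterpart, is achieving a uniform constant $C$ across all $n$ in the resulting tail comparison while keeping $\mu$ in $\Sigma(I\cap\mathcal{L}_1)$.
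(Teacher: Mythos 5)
Your observation that the monotonicity inequality reverses at infinity, so that $(\alpha\rho)_{a_\infty}\le\alpha\rho_{a_\infty}$ rather than the other way around, is exactly the right structural insight, and your ``easy direction'' of (iv) (which you accidentally wrote as $\supset$ but actually proved as $\subset$, i.e.\ $\se{}_{a_\infty}I\subset{}_{a_\infty}(\se I)$) is correct and matches the paper's.

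However, there is a genuine gap in your tail estimate for the hard direction of (iv). The step
\[
\frac1n\sum_{i>n}\beta_i\xi_i\;\le\;\frac{C\,K\,2^{-K}}{n}\;\asymp\;C\gamma_n\xi_{a_\infty,n}
\]
is not justified: you have two \emph{upper} bounds, one on $\sum_{i>n}\beta_i\xi_i$ and one on $\gamma_n\xi_{a_\infty,n}$, but the chain requires a matching \emph{lower} bound $\gamma_n\xi_{a_\infty,n}\gtrsim K2^{-K}/n$. With your absolute-decay condition $\sum_{i>n_k}\xi_i\le2^{-k}$ there is no control from below on $\sum_{i>n}\xi_i$ for $n$ near $n_K$; it can collapse far below $2^{-K}$, making the $\asymp$ false. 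The paper avoids this by imposing the \emph{relative} decay condition $\sum_{i>n_{k+1}}\xi_i\le\tfrac14\sum_{i>n_k}\xi_i$, which produces the estimate $\sum_{n+1}^\infty\beta_i\xi_i\le 2^{k+2}\sum_{i>n}\xi_i$; the tail sum $\sum_{i>n}\xi_i$ then appears on both sides and cancels, so one only needs $2^{k+2}\le1/\alpha_n$, and no lower bound on the tail is required. If you replace your condition $\sum_{i>n_k}\xi_i\le2^{-k}$ by the relative-decay condition, your argument can be repaired along the paper's lines.

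For (iii) you stop short: you correctly reduce the problem to showing that for fixed $\eta\in\Sigma(\scop I\cap\mathcal{L}_1)$ and arbitrary $\alpha\in c_o^*$ one can find $\alpha'\in c_o^*$ with $\alpha\eta_{a_\infty}\le(\alpha'\eta)_{a_\infty}$, but you then declare the uniform constant an unresolved obstacle. The paper simply invokes this absorption fact as already proven in the companion paper (\cite[Lemma~4.19(i) proof]{10}), so there is nothing further to do. If you wanted to reprove it from scratch, the obstacle you flag is not real: choose $n_k$ so that $\sum_{i>n_k}\eta_i\le\tfrac12\sum_{i>n_{k-1}}\eta_i$ and set $\alpha'_i:=\alpha_{n_{k-1}}$ on $(n_{k-1},n_k]$; then for $n\in(n_{k-1},n_k]$ one has $\sum_{i>n}\alpha'_i\eta_i\ge\alpha_{n_{k-1}}\sum_{i=n+1}^{n_k}\eta_i\ge\tfrac12\alpha_n\sum_{i>n}\eta_i$, so $\alpha\eta_{a_\infty}\le 2(\alpha'\eta)_{a_\infty}$ and replacing $\alpha'$ by $2\alpha'$ finishes it. Also note that the ampliations $D_m$ you introduce in (iii) are unnecessary: by definition $\Sigma(I_{a_\infty})$ requires only $\xi=O(\eta_{a_\infty})$, and the multiplicative constant is absorbed by rescaling $\eta$.
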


\begin{proof} (i)--(ii) See \cite[Lemma 4.19]{10}.

(iii) If $\xi  \in \Sigma((\scop I)_{a_\infty} )$, $\xi  \leq \eta_{a_\infty}$ for some $\eta \in  \Sigma(\scop I \cap \mathcal{L}_1)$. 
In \cite[Lemma 4.19 (i)]{10}(proof) we showed
that for every $\alpha \in c_o^*$, $\alpha\eta_{a_\infty} \leq
(\alpha'\eta)_{a_\infty}$ for some $\alpha' \in  c_o^*$. But then
$\alpha'\eta \in \Sigma(I \cap \mathcal{L}_1)$ and so $\alpha\xi
\leq (\alpha'\eta)_{a_\infty} \in \Sigma(I_{a_\infty})$, i.e., $\xi
\in \Sigma(\scop I_{a_\infty})$.

(iv) Let $\xi \in \Sigma (\se \text{$_{a_\infty}$}I)$, then $\xi
\leq \alpha\eta$ for some $\alpha \in c_o^*$ and $\eta \in
\Sigma(_{a_\infty} I)$. But then by the monotonicity of $\alpha$,
$\xi_{a_\infty} \leq (\alpha\eta)_{a_\infty}\leq
\alpha\eta_{a_\infty} \in \Sigma(\se I)$. Thus $\xi \in
\Sigma(_{a_\infty} (\se I))$ which proves the inclusion $\se
\text{$_{a_\infty}$}I \subset \text{$_{a_\infty}$}(\se I)$.

Now let $\xi \in \Sigma(_{a_\infty}  (\se I))$, i.e.,
$\xi_{a_\infty} \leq \alpha\eta$ for some $\alpha \in c_o^*$ and
$\eta \in \Sigma(I)$. We construct a sequence $\gamma \uparrow
\infty$ such that $\gamma  \xi$  is monotone nonincreasing and
$(\gamma\xi )_{a_\infty} \leq \eta$. Without loss generality assume
that $\xi_n \neq 0$ and hence $\alpha_n \neq 0$ for all $n$. We
choose a strictly increasing sequence of indices $n_k$ (with $n_o =
0$) such that for $k \geq 1$, $\alpha_{n_k} \leq 2^{-k-2}$ and
$\sum_{n_{k+1}+1}^{\infty} \xi_i \leq \frac14 \sum_{n_k+1}^{\infty}
\xi_i$ for all $k$. Set $\beta_n = 2^k$ for $n_k < n \leq n_{k+1}$.
Then for all $k \geq 0$ and $n_k < n+1 \leq n_{k+1}$ we have
\begin{align*}
\sum_{n+1}^{\infty} \beta_i\xi_i & = 2^k \sum_{n+1}^{n_{k+1}} \xi_i
+ 2^{k+1} \sum_{n_{k+1}+1}^{n_{k+2}} \xi_i + 2^{k+2}
\sum_{n_{k+2}+1}^{n_{k+3}} \xi_i + \cdots\\
& \leq 2^{k} \sum_{n+1}^{n_{k+1}} \xi_i + 2^{k+1} \left(
\sum_{n_{k+1}+1}^{\infty} \xi_i + 2 \sum_{n_{k+2}+1}^{\infty} \xi_i
+ 2^2 \sum_{n_{k+3}+1}^{\infty} \xi_i + \cdots \right)\\
&\leq 2^{k} \sum_{n+1}^{n_{k+1}} \xi_i + 2^{k+2}
\sum_{n_{k+1}+1}^{\infty} \xi_i \leq 2^{k+2} \sum_{n+1}^{\infty} \xi_i  \\
& \leq \frac{1}{\alpha_{n_k}} \sum_{n+1}^{\infty} \xi_i \leq
\frac{1}{\alpha_n}\sum_{n+1}^{\infty} \xi_i = \frac{n}{\alpha_n}
(\xi_{a_{\infty}})_n \leq n\eta_n.
\end{align*}
This proves that $(\beta\xi)_{a_\infty} \leq \eta$. Now Lemma~\ref{lem:6.3} provides a sequence $\gamma \leq \beta$,
with $\gamma \uparrow \infty$ and $\gamma  \xi$ monotone
nonincreasing, and hence $(\gamma\xi )_{a_\infty} \leq
(\beta\xi)_{a_\infty} \leq \eta$. Thus $\gamma \xi  \in
\Sigma(_{a_\infty} I)$ and hence $\xi  = \frac{1}{\gamma}
(\gamma\xi) \in \Sigma(\se \text{$_{a_\infty}$}I)$.
\end{proof}

The reverse inclusion in Theorem~\ref{thm:6.9}(iii) does not hold
in general. Indeed, whenever $I_{a_\infty}  = \se (\omega)$ (which
condition by \cite[Corollary 4.9 (ii)]{10} is equivalent to\linebreak 
$I^{-\infty} = \text{$_{a_\infty}$}(I_{a_\infty}) = \mathcal{L}_1$
and in particular is satisfied by $I = \mathcal{L}_1$), it follows
that $\scop I_{a_\infty} = (\omega)$ while $(\scop I)_{a_\infty}
\subset \se (\omega)$. We do not know of any natural sufficient
condition for the reverse inclusion in Theorem~\ref{thm:6.9}(iii)
to hold.

Many of the other results obtained for the arithmetic mean case have
an analog for the am-$\infty$ case:

\begin{theorem}\label{thm:6.10}
For every ideal $I$:

\item[(i)] $\scop I^{-\infty} \supset (\scop I)^{-\infty}$

\item[(ii)] $\se I^{-\infty} =(\se I)^{-\infty}$

\item[(iii)] $\scop I^{o\infty} \supset (\scop I)^{o\infty}$

\item[(iii$'$)] $\scop I^{o\infty} =(\scop I)^{o\infty}$  if and only if
$\scop I^{o\infty} \subset  \se (\omega)$

\item[(iv)] $\se I^{o\infty} =(\se I)^{o\infty}$

\end{theorem}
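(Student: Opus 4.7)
Parts (i)--(iv) follow by algebraic composition of Theorem~\ref{thm:6.9} with the definitions $I^{-\infty} = \text{$_{a_\infty}$}(I_{a_\infty})$ and $I^{o\infty} = (_{a_\infty}I)_{a_\infty}$, in exact analogy with the passage from Theorem~\ref{thm:6.1} to Theorem~\ref{thm:6.4}. For (i) and (iii), combine Theorem~\ref{thm:6.9}(i) (equality) with Theorem~\ref{thm:6.9}(iii) (only $\supset$) and the monotonicity of the am-$\infty$ operations to obtain
\[
\scop I^{-\infty} = \text{$_{a_\infty}$}(\scop I_{a_\infty}) \supset \text{$_{a_\infty}$}((\scop I)_{a_\infty}) = (\scop I)^{-\infty}
\]
and
\[
\scop I^{o\infty} = \scop (_{a_\infty}I)_{a_\infty} \supset (\scop \text{$_{a_\infty}$}I)_{a_\infty} = (\text{$_{a_\infty}$}\scop I)_{a_\infty} = (\scop I)^{o\infty}.
\]
For (ii) and (iv), replace Theorem~\ref{thm:6.9}(i),(iii) by the equalities in Theorem~\ref{thm:6.9}(iv),(ii), so that the same chains become chains of equalities throughout.

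For (iii$'$), the forward direction is immediate: $(\scop I)^{o\infty}$ is am-$\infty$ open and hence contained in the largest such ideal $\se(\omega)$ (as recorded right after Proposition~4.8 of the excerpt), so $\scop I^{o\infty} = (\scop I)^{o\infty} \subset \se(\omega)$. For the reverse direction, set $L := \text{$_{a_\infty}$}I \subset \mathcal{L}_1$; by Theorem~\ref{thm:6.9}(i) we have $(\scop I)^{o\infty} = (\scop L)_{a_\infty}$, so the desired equality reduces to the inclusion $\scop L_{a_\infty} \subset (\scop L)_{a_\infty}$, i.e., to the reverse of Theorem~\ref{thm:6.9}(iii) for this particular $L$. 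Since $(\scop L)_{a_\infty}$ is the largest am-$\infty$ open ideal contained in $\scop L$, it suffices to prove that $\scop L_{a_\infty}$ is itself am-$\infty$ open under the hypothesis $\scop L_{a_\infty} \subset \se(\omega)$.

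The plan for this am-$\infty$ openness is a direct construction. Take $\xi \in \Sigma(\scop L_{a_\infty})$; then $\xi = o(\omega)$ by hypothesis, so $\uni(\xi/\omega) \in c_o^*$. Apply the convex-majorization argument from the proof of Lemma~\ref{lem:3.7}((iii)$\Rightarrow$(ii)) to $\uni(\xi/\omega)$ to produce $\eta \in \ell_1^*$ with $\eta_{a_\infty} = \omega\psi$, where $\psi$ is the largest convex decreasing sequence $\leq \uni(\xi/\omega)$, and with $\xi \leq 2 D_3 \eta_{a_\infty}$. It then suffices to verify $\eta \in \Sigma(\scop L)$, i.e., that $(\alpha\eta)_{a_\infty} \in \Sigma(I)$ for every $\alpha \in c_o^*$; the intended route is to use the monotone bound $(\alpha\eta)_{a_\infty} \leq \alpha\eta_{a_\infty} \leq \alpha\omega\uni(\xi/\omega)$ together with the control $\alpha\xi \in \Sigma(I^{o\infty}) \subset \Sigma(I)$ supplied by $\xi \in \Sigma(\scop I^{o\infty})$.

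The hard part will be this last verification. A direct pointwise comparison of $\alpha\omega\uni(\xi/\omega)$ with the sequences $\omega\uni(\beta\xi/\omega) \in \Sigma(I^{o\infty})$ obtained from Corollary~\ref{cor:3.10}(ii) applied to the am-$\infty$ open ideal $I^{o\infty}$ goes in the unfavorable direction, because matching the outer scalar $\alpha_n$ against $\beta_i$ for all $i \geq n$ forces $\beta$ to be bounded below by a positive constant, which is impossible in $c_o^*$. Circumventing this will require exploiting the ampliation-closure of $\Sigma(I)$ and a sharper application of Lemma~\ref{lem:6.3}---the same device that drove the $\supset$ direction in Theorem~\ref{thm:6.9}(iv)---to absorb the order-mismatch between $\alpha$ and $\beta$.
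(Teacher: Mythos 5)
Your algebraic derivations of (i)--(iv) from Theorem~\ref{thm:6.9} are correct and are exactly what the paper intends by "follow immediately from Theorem~\ref{thm:6.9}." Your forward direction of (iii$'$) also matches the paper. The reverse direction of (iii$'$), however, contains both a false intermediate claim and an explicitly unfinished argument.

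The false claim: you assert that "$(\scop L)_{a_\infty}$ is the largest am-$\infty$ open ideal contained in $\scop L$." That role is played by $(\scop L)^{o\infty} = ({}_{a_\infty}(\scop L))_{a_\infty}$, not by $(\scop L)_{a_\infty}$. In fact, since $L = {}_{a_\infty} I \subset \mathcal{L}_1$ and $\mathcal{L}_1$ is soft-complemented, $\scop L \subset \mathcal{L}_1$, and then the 5-chain for the am-$\infty$ case gives $\scop L = \scop L \cap \mathcal{L}_1 \subset (\scop L)^{-\infty} \subset (\scop L)_{a_\infty}$, so $(\scop L)_{a_\infty}$ contains $\scop L$ rather than sitting inside it. Consequently the reduction "it suffices to show $\scop L_{a_\infty}$ is am-$\infty$ open" is not justified: am-$\infty$ openness of $\scop L_{a_\infty}$ does not by itself yield the needed inclusion $\scop L_{a_\infty} \subset (\scop L)_{a_\infty}$.

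More seriously, you do not actually close the argument. You concede that the comparison you need---bounding $\alpha\omega\uni(\xi/\omega)$ by some $\omega\uni(\beta\xi/\omega)$ with $\beta\in c_o^*$---"goes in the unfavorable direction," and you defer the fix to an unspecified "sharper application of Lemma~\ref{lem:6.3}." This is precisely the nontrivial point, and it is left open. The paper resolves it by a different and more economical device: it skips the convex-majorization construction (Lemma~\ref{lem:3.7}) entirely and works directly with the $\uni$-characterization of Corollary~\ref{cor:3.10}. Given $\xi\in\Sigma(\scop I^{o\infty})\subset\Sigma(\se(\omega))$ and $\alpha\in c_o^*$, one picks an increasing sequence $n_k$ (with $n_0=0$) on whose blocks $(n_{k-1},n_k]$ the sequence $\uni(\xi/\omega)$ is constant, equal to $(\xi/\omega)_{n_k}$, and then defines $\tilde\alpha$ by shifting $\alpha$ one block: $\tilde\alpha_j=\alpha_{n_{k-1}}$ on $(n_{k-1},n_k]$ (with $\tilde\alpha = \alpha_1$ on the first block). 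Then $\tilde\alpha\in c_o^*$, and a short pointwise computation gives $\alpha\uni(\xi/\omega)\le\uni(\tilde\alpha\xi/\omega)$. Since $\tilde\alpha\xi\in\Sigma(I^{o\infty})$, Corollary~\ref{cor:3.10} gives $\omega\uni(\tilde\alpha\xi/\omega)\in\Sigma(I)$, hence $\alpha\omega\uni(\xi/\omega)\in\Sigma(I)$ for all $\alpha$, i.e., $\omega\uni(\xi/\omega)\in\Sigma(\scop I)$, and Corollary~\ref{cor:3.10} once more gives $\xi\in\Sigma((\scop I)^{o\infty})$. Note that Lemma~\ref{lem:6.3} is not the right tool here: that lemma produces a sequence $\gamma\uparrow\infty$, whereas the obstruction you identified calls for a block-reindexing of a $c_o^*$ sequence to match the constancy intervals of $\uni(\xi/\omega)$, which is what the $\tilde\alpha$ construction does.
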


\begin{proof} (i), (ii), (iii), and (iv) follow immediately from
Theorem~\ref{thm:6.9}.

(iii$'$) Since every am-$\infty$ open ideal is contained in $\se
(\omega)$, it follows that\linebreak $\scop I^{o\infty} = (\scop I)^{o\infty}
\subset  \se (\omega)$. Assume now that $\scop I^{o\infty} \subset
\se (\omega)$, let $\xi  \in \Sigma(\scop I^{o\infty})$, and let
$\alpha \in c_o^*$. Since $\xi  = o(\omega)$, there is an increasing
sequence of integers $n_k$ with $n_o = 0$ for which $(\uni\xo)_j =
(\xo)_{n_k}$ for $n_{k-1} < j \leq n_k$. Define $\tilde{\alpha}_j =
\alpha_1$ for $1 < j \leq n_1$ and $\tilde{\alpha}_j = \alpha_{n_k}$
for $n_k < j \leq n_{k+1}$ for $k \geq 1$. Then $\tilde{\alpha}\in
c_o^*$ and for all $k \geq 1$ and $n_{k-1} < j \leq n_k$
\[
\bigg(\!\alpha \uni\xo\!\bigg)_j \!=\! \alpha_j\bigg(\!\xo\!\bigg)_{n_k} \!\leq\!
\alpha_{n_{k-1}}\bigg(\!\xo\! \bigg)_{n_k} \!=\!
\bigg(\frac{\tilde{\alpha}\xi}{\omega}\bigg)_{n_k} \!\leq\! \bigg(\uni
\frac{\tilde{\alpha}\xi}{\omega}\bigg)_{n_k} \!\leq\!
\bigg(\uni\frac{\tilde{\alpha}\xi}{\omega}\bigg)_j.
\]
Since $\tilde{\alpha}\xi \in \Sigma(I^{o\infty})$ by hypothesis, it
follows that $\omega\uni\frac{\tilde{\alpha}\xi}{\omega} \in
\Sigma(I)$ by Corollary~\ref{cor:3.10}. But then $\alpha\omega
\uni\xo \in \Sigma(I)$ for all $\alpha \in  c_o^*$, i.e., $\omega
\uni\xo \in \Sigma(\scop I)$. Hence, again by
Corollary~\ref{cor:3.10}, $\xi \in \Sigma((\scop I)^{o\infty})$ and
hence $\scop I^{o\infty}  \subset  (\scop I)^{o\infty}$. By (iii) we
have equality.
\end{proof}

The necessary and sufficient condition in Theorem~\ref{thm:6.10}
(iii$'$) is satisfied in the case of most interest, namely when $I
\subset \mathcal{L}_1$. As in the am-case, we know of no natural
conditions under which the reverse inclusion of (i) holds nor examples where it fails (see also
Proposition~\ref{prop:6.8}). In the
following proposition we collect the am-$\infty$  analogs of
Corollary~\ref{cor:6.5}, Proposition~\ref{prop:6.6}, and
Corollary~\ref{cor:6.7}. Recall by Lemma~\ref{lem:3.16} that
$I^{oo\infty} = \se (\omega)$ for any ideal $I \not\subset \se
(\omega)$.

\begin{proposition}\label{prop:6.11}
Let $I\neq \{0\}$ be an ideal.

\item[(i)] If $I$ is am-$\infty$  open, then so is $\se I$.

\item[(i$'$)] If $I$ is am-$\infty$ open, then $\scop I$ is am-open if and
only if $\scop I \subset \se (\omega)$.

\item[(ii)] If $I$ is am-$\infty$ closed, then so are $\se I$ and $\scop I$.

\item[(iii)] $\se I^{oo\infty}  = (\se I)^{oo\infty}$

\item[(iv)] $\scop I^{oo\infty} \supset (\scop I)^{oo\infty}$

\item[(v)] $\se I_{-\infty} \subset (\se I)_{-\infty}$

\item[(vi)] $\scop I_{-\infty} = (\scop I)_{-\infty}$

\item[(vii)] If $I$ is soft-edged, then so are $\text{$_{a_\infty}$}I$,
$I_{a_\infty}$, $I^{-\infty}$, $I^{o\infty}$, and $I^{oo\infty}$.

\item[(viii)] If $I$ is soft-complemented, then so is $\text{$_{a_\infty}$}I$
and $I_{-\infty}$.

\item[(viii$'$)] If $I$ is soft-complemented, then $I^{o\infty}$ is soft-complemented if and only if \[\scop I^{o\infty} \subset  \se(\omega).\]

\end{proposition}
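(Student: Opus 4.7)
Parts (i), (i$'$), and (ii) reduce to Theorems~\ref{thm:6.9} and~\ref{thm:6.10} by the standard fixed-point substitution. If $I$ is am-$\infty$ open, writing $I = ({}_{a_\infty}I)_{a_\infty}$ and applying Theorem~\ref{thm:6.9}(ii) followed by (iv) yields $\se I = ({}_{a_\infty}(\se I))_{a_\infty} = (\se I)^{o\infty}$, giving (i). For (i$'$), $\scop I = \scop I^{o\infty}$, and Theorem~\ref{thm:6.10}(iii$'$) gives $\scop I^{o\infty} = (\scop I)^{o\infty}$ if and only if $\scop I \subset \se(\omega)$. For (ii), Theorem~\ref{thm:6.10}(ii) handles $\se I$ directly; for $\scop I$, I first observe that $I = I^{-\infty} \subset \mathcal{L}_1$, so $\scop I \subset \scop \mathcal{L}_1 = \mathcal{L}_1$, and then Theorem~\ref{thm:6.10}(i) combined with the general inclusion $J \cap \mathcal{L}_1 \subset J^{-\infty}$ forces $\scop I = (\scop I)^{-\infty}$.

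For (iii), the inclusion $(\se I)^{oo\infty} \subset \se I^{oo\infty}$ holds by minimality of $(\se I)^{oo\infty}$: part (i) makes $\se I^{oo\infty}$ am-$\infty$ open, and distributivity of $\se$ over finite intersections gives $\se I \cap \se(\omega) = \se(I \cap \se(\omega)) \subset \se I^{oo\infty}$. For the reverse direction I will adapt the block construction from the proof of Theorem~\ref{thm:6.10}(iii$'$). Given $\zeta \leq \alpha\omega\uni\xo$ with $\alpha \in c_o^*$ and $\xi \in \Sigma(I \cap \se(\omega))$, I choose $n_k \uparrow \infty$ with $n_0 = 0$ such that $\uni\xo$ is constant on each block $(n_{k-1}, n_k]$, equal to $(\xo)_{n_k}$, and define the step sequence $\tilde\alpha_j := \alpha_{n_{k-1}}$ on the same blocks. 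A block-by-block comparison then yields $\alpha\omega\uni\xo \leq \omega\uni\frac{\tilde\alpha\xi}{\omega}$, and since $\tilde\alpha \in c_o^*$ and $\tilde\alpha\xi \in \Sigma(\se I \cap \se(\omega))$, Proposition~\ref{prop:3.14} puts $\zeta$ in $\Sigma((\se I)^{oo\infty})$. Part (iv) is proved by the same block construction, now observing that for any $\alpha \in c_o^*$ and $\xi \in \Sigma(\scop I \cap \se(\omega))$ the step sequence $\tilde\alpha$ still lies in $c_o^*$ and $\tilde\alpha\xi \in \Sigma(I \cap \se(\omega))$ because $\xi \in \Sigma(\scop I)$.

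Parts (v)--(viii$'$) are then formal. Part (v) uses $I_{-\infty} \subset I$ together with (ii) to make $\se I_{-\infty}$ am-$\infty$ closed and contained in $\se I$, then invokes maximality of $(\se I)_{-\infty}$. For (vi), the inclusion $\scop I_{-\infty} \subset (\scop I)_{-\infty}$ is analogous; for the reverse, (v) applied to $\scop I$ gives $\se(\scop I)_{-\infty} \subset (\se I)_{-\infty} \subset I_{-\infty}$, and applying $\scop$ together with Lemma~\ref{lem:4.2}(iv) (to replace $\scop\se$ by $\scop$) and idempotence of $\scop$ produces $(\scop I)_{-\infty} \subset \scop I_{-\infty}$. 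Part (vii) uses Theorem~\ref{thm:6.9}(ii), (iv) and part (iii) to propagate soft-edgedness through each of ${}_{a_\infty}I$, $I_{a_\infty}$, $I^{-\infty}$, $I^{o\infty}$, and $I^{oo\infty}$ when $I = \se I$. Part (viii) uses Theorem~\ref{thm:6.9}(i) for ${}_{a_\infty}I$ and (vi) for $I_{-\infty}$. Finally, (viii$'$) is immediate from Theorem~\ref{thm:6.10}(iii$'$) applied to the soft-complemented ideal $I$, together with the fact that every am-$\infty$ open ideal is automatically contained in $\se(\omega)$.

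The main obstacle is the block-construction estimate in (iii) and (iv). The naive inequality $\omega\uni\frac{\alpha\xi}{\omega} \leq \alpha\omega\uni\xo$, which holds because $\alpha$ is monotone nonincreasing, goes the wrong way for our purposes, so the replacement of $\alpha$ by the step sequence $\tilde\alpha$ (constant on intervals where $\uni\xo$ is constant) is essential to reverse it. Once the pointwise bound $\alpha\omega\uni\xo \leq \omega\uni\frac{\tilde\alpha\xi}{\omega}$ is checked block-by-block, everything else is routine bookkeeping using Proposition~\ref{prop:3.14}, Lemma~\ref{lem:4.2}, and Theorems~\ref{thm:6.9}--\ref{thm:6.10}.
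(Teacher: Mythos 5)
Your proposal is correct and follows essentially the same route as the paper's own proof: (i)--(i$'$)--(ii) reduce to Theorems~\ref{thm:6.9}--\ref{thm:6.10} via the fixed-point identities $I = I^{o\infty}$ or $I = I^{-\infty}$ together with $\scop I \subset \mathcal{L}_1$; (iii) and (iv) re-run the block/step-sequence estimate $\alpha\omega\uni\frac{\eta}{\omega} \le \omega\uni\frac{\tilde\alpha\eta}{\omega}$ from the proof of Theorem~\ref{thm:6.10}(iii$'$) through Proposition~\ref{prop:3.14}; (v)--(vi) use maximality of $I_{-\infty}$ and the $\se/\scop$ bookkeeping from Lemma~\ref{lem:4.2}; and (vii)--(viii$'$) are read off from the earlier parts exactly as in the paper. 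The only cosmetic difference is that you sometimes compose Theorem~\ref{thm:6.9}(ii) and (iv) directly where the paper cites the packaged Theorem~\ref{thm:6.10}(iv), but these are equivalent since the latter is derived from the former.
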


\begin{proof} (i) Immediate from Theorem~\ref{thm:6.10}(iv).

(i$'$) If $\scop I \subset  \se (\omega)$ then $\scop I = (\scop
I)^{o\infty}$  by Theorem~\ref{thm:6.10}(iii$'$) and hence $\scop
I$ is am-$\infty$ open. The necessity is clear since $\se (\omega)$
is the largest am-$\infty$  open ideal.

(ii) $\se I$ is am-$\infty$ closed by Theorem~\ref{thm:6.10}(ii). By Theorem~\ref{thm:6.10}(i) 
 and the am-$\infty$  analog of the 5-chain of inclusions given in Section~\ref{sec:2}, 
\[
\scop I = \scop I^{-\infty} \supset
(\scop I)^{-\infty} \supset \scop I \cap \mathcal{L}_1 = \scop I,
\] 
where the last equality holds because 
$\mathcal{L}_1$ is the largest am-$\infty$ closed ideal so contains $I$, 
and being soft-complemented it contains $\scop I$.

(iii) By (i), $\se I^{oo\infty}$ is am-$\infty$ open and by
Definition~\ref{def:3.12} and  Proposition \ref {prop:5.1} and following remark,  it contains $\se (I \cap \se (\omega)) =
\se I \cap \se (\omega)$, hence it must contain $(\se
I)^{oo\infty}$. On the other hand, if $\xi \in \Sigma(\se
I^{oo\infty})$, then by Proposition~\ref{prop:3.14} there is an
$\alpha \in c_o^*$ and $\eta \in \Sigma(I\cap \se (\omega))$ such
that $\xi \leq \alpha\omega \uni\frac{\eta}{\omega}$. Then, by the
proof in Theorem~\ref{thm:6.10}(iii$'$), there is an
$\tilde{\alpha} \in c_o^*$ such that $\alpha\omega
\uni\frac{\eta}{\omega} \leq
\omega\uni\frac{\tilde{\alpha}\eta}{\omega}$. Since $\tilde{\alpha}
\eta \in \Sigma(\se I \cap \se (\omega))$,
Proposition~\ref{prop:3.14} yields again $\xi \in \Sigma((\se
I)^{oo\infty})$ which proves the equality in (iii).

(iv) Let $\xi \in \Sigma((\scop I)^{oo\infty})$. By
Proposition~\ref{prop:3.14} there is an $\eta \in  \Sigma((\scop I)
\cap \se (\omega))$ such that $\xi  \leq\omega
\uni\frac{\eta}{\omega}$. Then, by the proof in
Theorem~\ref{thm:6.10}(iii$'$), for every $\alpha \in c_o^*$ there
is an $\tilde{\alpha} \in  c_o^*$ such that $\alpha\xi \leq
\alpha\omega \uni\frac{\eta}{\omega}\leq \omega\uni
\frac{\tilde{\alpha}\eta}{\omega}$. As $\tilde{\alpha} \eta \in
\Sigma(I \cap \se (\omega))$, again by Proposition~\ref{prop:3.14},
$\alpha\xi \in \Sigma(I^{oo\infty})$ and hence $\xi  \in
\Sigma(\scop (I^{oo\infty}))$.

(v) This is an immediate consequence of (ii).

(vi) The inclusion $\scop I_{-\infty} \subset (\scop I)_{-\infty}$
is similarly an immediate consequences of (ii).
The reverse inclusion follows from (v) applied to the ideal $\scop I$:
\[
\se(\scop I)_{-\infty} \subset  (\se \scop I)_{-\infty} = (\se I)_{-\infty} 
\subset  I_{-\infty}
\]
hence 
\[
(\scop I)_{-\infty}\subset \scop (\scop I)_{-\infty} = \scop ( \se(\scop I)_{-\infty}) \subset  \scop I_{-\infty}.
\]

(vii) The first two statements follow from Theorem~\ref{thm:6.9}
(iv) and (ii), the next two from Theorem~\ref{thm:6.10}(ii) and
(iv), and the last one from (iii).

(viii), (viii$'$) follow respectively from Theorem~\ref{thm:6.9}(i)
and Theorem~\ref{thm:6.10}(iii$'$).

\end{proof}

\subsection*{Acknowledgments} We wish to thank Ken Davidson for his
input on the initial phase of the research and Daniel Beltita for
valuable suggestions on this paper.

\end{document}